\DeclareMathOperator{\Gal}{Gal}
\def\C{{\mathbb C}}
\def\D{{\mathbb D}}
\def\F{{\mathbb F}}
\def\Q{{\mathbb Q}}
\def\R{{\mathbb R}}
\def\Z{{\mathbb Z}}
\newcommand{\wbar}[1]{\overline{#1}}
\newcommand{\bs}[1]{\left({#1}\right)}
\newcommand{\bm}[1]{\left\{{#1}\right\}}
\newcommand{\bl}[1]{\left[{#1}\right]}
\newcommand{\abs}[1]{\left|#1\right|}
\newcommand{\subgp}[1]{\left\langle#1\right\rangle}
\newcommand{\psmatrix}[1]{\bs{\begin{smallmatrix} #1 \end{smallmatrix}}}
\newcommand{\Union}[1]{{\bigsqcup\limits_{\substack{#1}}}}
\newcommand{\diag}[2][ ]{{\Delta^{#1}_{\bs{#2}}}}
\newcommand{\conj}[2]{\prescript{#1}{}{#2}}
\newcommand{\ep}{\epsilon}
\newcommand{\z}[1]{\zeta_{#1}}
\newcommand{\Tr}[1]{\mathrm{Tr}\bs{#1}}
\newcommand{\wtilde}[1]{\widetilde{#1}}
\newcommand{\fka}{\mathfrak{a}}
\newcommand{\fkb}{\mathfrak{b}}
\newcommand{\calB}{\mathcal{B}}
\theoremstyle{plain}
\newtheorem{theorem}{Theorem}[section]
\newtheorem{lemma}[theorem]{Lemma}
\newtheorem{proposition}[theorem]{Proposition}
\newtheorem{corollary}[theorem]{Corollary}
\newtheorem{remark}[theorem]{Remark}
\newtheorem{definition and lemma}[theorem]{Definition and Lemma}
\newtheorem{problem}[theorem]{Problem}
\theoremstyle{definition} 
\newtheorem{definition}[theorem]{Definition}
\newtheorem{example}[theorem]{Example}
\theoremstyle{remark} 
\DeclareFontFamily{U}{wncy}{}
\DeclareFontShape{U}{wncy}{m}{n}{<->wncyr10}{}
\DeclareSymbolFont{mcy}{U}{wncy}{m}{n}
\DeclareMathSymbol{\Sha}{\mathord}{mcy}{"58}
\title[Automorphism groups of polarized abelian threefolds]{A classification of the automorphism groups of polarized abelian threefolds over finite fields}
\author{WonTae Hwang, Bo-Hae Im and Hansol Kim}
\address{School of Mathematics, Korea Institute for Advanced Study, 85 Hoegiro, Dongdaemun-gu, Seoul 02455, South Korea}
\email{hwangwon@kias.re.kr}
\address{Dept. of Mathematical Sciences, KAIST, 291 Daehak-ro, Yuseong-gu, Daejeon, South Korea, 34141}
\email{bhim@kaist.ac.kr}
\address{Dept. of Mathematical Sciences, KAIST, 291 Daehak-ro, Yuseong-gu, Daejeon, South Korea, 34141}
\email{jawlang@kaist.ac.kr}
\begin{document}

\subjclass[2010]{Primary 11G10, 11G25, 14K02, 20B25}

\keywords{Polarized abelian threefolds over finite fields, Automorphism groups}

\maketitle

\begin{abstract}
We give a classification of maximal elements of the set of finite groups that can be realized as the automorphism groups of polarized abelian threefolds over finite fields.
\end{abstract}

\section{Introduction}
Let $k$ be a field, and let $X$ be an abelian threefold over $k.$ We denote the endomorphism ring of $X$ over $k$ by $\textrm{End}_k(X)$. It is a free $\Z$-module of rank $\leq 36.$ We also let $\textrm{End}^0_k(X)=\textrm{End}_k(X) \otimes_{\Z} \Q.$ This $\Q$-algebra $\textrm{End}_k^0(X)$ is called the endomorphism algebra of $X$ over $k.$ Then $\textrm{End}_k^0(X)$ is a finite dimensional semisimple algebra over $\Q$ with $6 \leq \textrm{dim}_{\Q} \textrm{End}_k^0(X) \leq 36.$ Moreover, if $X$ is $k$-simple, then $\textrm{End}_k^0(X)$ is a division algebra over $\Q$. Now, it is a well-known fact that $\textrm{End}_k (X)$ is a $\Z$-order in $\textrm{End}_k^0(X),$ and the group $\textrm{Aut}_k(X)$ of the automorphisms of $X$ over $k$ is not finite, in general. But if we fix a polarization $\mathcal{L}$ on $X$, then the group $\textrm{Aut}_k(X,\mathcal{L})$ of the automorphisms of the polarized abelian threefold $(X,\mathcal{L})$ over $k$ is always finite. In this regard, it might be interesting to consider the following problems.
\begin{problem}\label{main prob}
(1) Classify all (finite) groups $G$ (up to isomorphism) such that there exist a field $k$ and an abelian variety $X$ of dimension $3$ over $k$ with $G \leq  \textrm{Aut}_k(X)$.\\
(2) Classify all finite groups $G$ (up to isomorphism) such that there exist a field $k$ and an abelian variety $X$ of dimension $3$ over $k$ with $G = \textrm{Aut}_k(X,\mathcal{L})$ for some polarization $\mathcal{L}$ on $X.$
\end{problem}
We note that the answer for Problem~\ref{main prob}~(2) gives a (partial) answer for Problem~\ref{main prob}~(1). For the case when $k=\C,$ Birkenhake, Gonz$\acute{\textrm{a}}$lez and Lange \cite{BGL(1999)} computed all finite automorphism groups of complex tori of dimension 3, which are maximal in the isogeny class. The goal of this paper is to give an almost complete answer for Problem~\ref{main prob}~(2) for the case when $k$ is a finite field, by classifying all finite groups $G$ that can be realized as the automorphism group $\textrm{Aut}_k(X,\mathcal{L})$ of a polarized abelian threefold $(X,\mathcal{L})$ over a finite field $k,$ which are {\it{maximal}} in the following sense: there is no finite group $H$ such that $G$ is isomorphic to a proper subgroup of $H$ and $H=\textrm{Aut}_k (Y,\mathcal{M})$ for some abelian threefold $Y$ over $k$ that is $k$-isogenous to $X$ with a polarization $\mathcal{M}.$\\

Along this line, the first author gave a classification of maximal automorphism groups for arbitrary polarized abelian surfaces over finite fields in \cite{4}, and provided a complete list of finite groups that can be realized as the automorphism groups of simple polarized abelian varieties of odd prime dimension over finite fields in \cite{5}. In this paper, as the next step toward the goal of completing the three dimensional case, we will give such a classification for arbitrary polarized abelian threefolds over finite fields. Two main difficulties in achieving the goal are as follows: 1) there are way more cases to be considered according to the possible decomposition of abelian threefolds (compared to those of \cite{4}), and 2) we need to deal with finite subgroups of rings of matrices of larger dimension over real quadratic fields and over totally definite quaternion algebras over $\mathbb{Q}$ (compared to those of \cite{4,5}).       \\

Now, our main result is summarized in the following theorem.
\begin{theorem}\label{main theorem}
  The possibilities for maximal automorphism groups of a polarized abelian threefold $(X, \mathcal{L})$ over a finite field are given by the lists in Theorems~\ref{thm old 24},~\ref{prodnonisoellip},~\ref{prodnonisoellip2},~\ref{prodnonisoellip3},~\ref{powordelli}, and~\ref{pow of supell} in the case of $X$ being simple, a product of a simple abelian surface and an elliptic curve, a product of three pairwise non-isogenous elliptic curves, a product of a power of an elliptic curve and an elliptic curve that are non-isogenous, a power of an ordinary elliptic curve, and a power of a supersingular elliptic curve, respectively.
\end{theorem}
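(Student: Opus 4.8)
The plan is to reduce Theorem~\ref{main theorem} to a case analysis driven by the isogeny decomposition of the abelian threefold $X$ over the finite field $k$. By the Poincaré reducibility theorem, $X$ is $k$-isogenous to a product $\prod_i X_i^{n_i}$ of powers of pairwise non-isogenous $k$-simple abelian varieties $X_i$, with $\sum_i n_i \dim X_i = 3$. Since the automorphism group $\Aut_k(X,\cL)$ of a polarized abelian threefold is finite, and since we seek only those groups that are \emph{maximal} in the prescribed sense (i.e.\ maximal over the $k$-isogeny class), I would first observe that maximality is an invariant of the isogeny class, so it suffices to work one isogeny type at a time. The coarse partition of $3 = n_1\dim X_1 + n_2 \dim X_2 + \cdots$ yields exactly the six mutually exclusive cases listed in the statement: $X$ simple (so $\dim X_1 = 3$); $X \sim A \times E$ with $A$ a simple surface and $E$ an elliptic curve; $X \sim E_1 \times E_2 \times E_3$ with the $E_i$ pairwise non-isogenous; $X \sim E_1^2 \times E_2$ with $E_1, E_2$ non-isogenous; $X \sim E^3$ with $E$ ordinary; and $X \sim E^3$ with $E$ supersingular.

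Having fixed the decomposition type, the second step is to identify in each case the relevant ambient object in which $\Aut_k(X,\cL)$ embeds as a finite group. The key structural input is that a polarization $\cL$ induces a Rosati involution on $\End^0_k(X)$, and $\Aut_k(X,\cL)$ is precisely the group of units of the order $\End_k(X)$ fixed by the relevant unitary condition (equivalently, the isometries of the polarization form). For the product cases this amounts to understanding finite subgroups of matrix rings $M_{n}(R)$, where $R$ runs through the endomorphism algebras of the simple factors: real quadratic fields and totally definite quaternion algebras over $\Q$ in the ordinary and supersingular cases respectively, as flagged in the introduction. So in each case I would compute $\End^0_k(X)$ and the Rosati involution, then translate the maximality problem into the classification of maximal finite subgroups of the appropriate unitary or symplectic-type group, invoking the surface and odd-prime-dimensional results of \cite{4,5} and the complex classification of \cite{BGL(1999)} wherever the group-theoretic input transfers.

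The third step is, for each isogeny type, to produce the explicit list of maximal groups and to verify both \emph{realizability} (each listed group actually occurs as $\Aut_k(Y,\cM)$ for some $Y$ in the isogeny class with some polarization $\cM$) and \emph{maximality} (no listed group is a proper subgroup of another realizable group in the same class). Realizability requires exhibiting, for each candidate group, an abelian threefold over a suitable finite field together with a polarization whose automorphism group is exactly that group; here the Honda–Tate theory governing which endomorphism algebras arise over a given $\bF_q$ is the essential constraint, and I would lean on the existence results for the simple factors established in the cited works. Maximality is verified by the group-theoretic classification of maximal finite subgroups of the ambient unitary groups, ensuring the list is closed upward.

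The hard part will be the product cases with repeated factors, namely $X \sim E^3$, because there the automorphism group is a finite subgroup of $\GL_3$ over either a real quadratic field (ordinary case) or a totally definite quaternion algebra over $\Q$ (supersingular case), and these ambient groups are substantially larger and richer than the $\GL_2$ situations handled in \cite{4}. The combinatorics of which maximal finite subgroups survive the unitary (Rosati) constraint, together with the Honda–Tate realizability over a \emph{finite} field rather than over $\C$, is where the genuinely new work lies; the simple and pairwise-non-isogenous cases are comparatively rigid because the endomorphism algebra forces the automorphism group into a small number of possibilities. I would therefore expect the bulk of the argument, and the longest of the six subsidiary theorems, to concern the supersingular power case, where the quaternionic structure admits the most symmetry.
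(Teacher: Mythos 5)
Your overall skeleton matches the paper's: the same six-way split by isogeny type (via Poincar\'e reducibility), followed by a case-by-case classification together with verification of realizability and maximality; your prediction that the supersingular $E^3$ case carries the bulk of the work is also accurate. However, there is a genuine gap in your central reduction. You propose to translate the problem into classifying maximal finite subgroups of the unitary group cut out by the Rosati involution of the polarization. While it is true that $\Aut_k(X,\mathcal{L})$ is the Rosati-fixed part of $\End_k(X)^{\times}$, this is the wrong object to classify: the Rosati involution (hence the unitary group) varies with the polarization, and the maximality in Definition~\ref{def 20} ranges over \emph{all} polarizations on \emph{all} varieties in the isogeny class. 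The paper never classifies unitary groups at all. Instead it uses two tools that your proposal does not mention, and without which your realizability step has no mechanism. First, the averaging trick: for any finite subgroup $G \leq \Aut_k(X') = \End_k(X')^{\times}$ and any ample $\mathcal{L}$, the bundle $\mathcal{L}' := \bigotimes_{f \in G} f^{*}\mathcal{L}$ is an ample, $G$-invariant polarization, so $G \leq \Aut_k(X',\mathcal{L}')$, and if $G$ is a maximal finite subgroup then $G = \Aut_k(X',\mathcal{L}')$; hence the maximal realizable automorphism groups are exactly the maximal finite subgroups of unit groups of maximal orders in $\End^0_k(X)$ --- no unitary condition enters. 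Second, Waterhouse's theorem (\cite[Theorem 3.13]{12}) that every maximal $\Z$-order in $\End^0_k(X)$ is $\End_k(X')$ for some $X'$ in the isogeny class, which (combined with Theorems~\ref{mat max},~\ref{max gen}, and~\ref{mat max 2}) supplies the varieties on which to perform the averaging.

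Consequently, the group-theoretic input actually needed is the classification of maximal finite subgroups of $GL_3(K)$ for $K=\Q$ or real quadratic, of $GL_n(\mathcal{D})$ for totally definite quaternion algebras $\mathcal{D}$ (Sections~\ref{gl3 sec} and~\ref{quat mat rep}, following Nebe), and of finite subgroups of division algebras (Section~\ref{findiv}), together with Goursat's lemma to control finite subgroups of the product unit groups arising in the non-simple cases --- not a classification of isometry groups of polarization forms. Your plan, taken literally, would force you to classify finite subgroups of a family of unitary groups depending on an unknown polarization, which is both harder than and logically upstream of what is required; the averaging argument is precisely what lets one bypass that problem, and it is the missing idea in your proposal.
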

To obtain such a classification, we need to combine various mathematical areas, including the theory of abelian varieties over finite fields, finite subgroups of division rings, and quaternionic representation theory. For more detailed statements and proofs, see Theorems~\ref{thm old 24},~\ref{prodnonisoellip},~\ref{prodnonisoellip2},~\ref{prodnonisoellip3},~\ref{powordelli}, and~\ref{pow of supell} below. This kind of classification result might not only be interesting in its own sake, but also, have some applications to other areas of mathematics, such as group theory computing the exact values of Jordan constants of the automorphism groups of certain abelian varieties over fields of positive characteristic (see \cite{Hwa(2020)}), and (potentially) cryptography applying the classification to discuss the rationality of geometrically rational generalized Kummer surfaces over a finite field (see \cite{Kos(2021)}).\\

This paper is organized as follows: In Section~\ref{prelim}, we introduce several facts which are related to our desired classification. Explicitly, we will recall some facts about endomorphism algebras of abelian varieties ($\S$\ref{end alg av}), the theorem of Tate ($\S$\ref{thm Tate sec}), Honda-Tate theory ($\S$\ref{thm Honda}), and maximal orders over a Dedekind domain ($\S$\ref{max ord dede}). In Section~\ref{findiv}, we give a classification of all finite subgroups of the multiplicative subgroup of certain division algebras over $\Q$. In Section~\ref{gl3 sec}, we describe all the maximal finite subgroups of $GL_3(K)$ for the case when either $K=\Q$ or $K$ is a real quadratic field. In Section~\ref{quat mat rep}, we record some useful results on quaternionic matrix groups that are related to our classification, following a paper of Nebe \cite{8}. In Section~\ref{main}, we finally obtain the desired classification using the facts that were introduced in the previous sections. \\

In the sequel, let $q=p^a$ for some prime number $p$ and an integer $a \geq 1,$ unless otherwise stated. Also, for an integer $n \geq 1,$ let $\varphi(n)$ ($C_n$, $D_n$, respectively) denote the number of integers that are smaller than or equal to $n$ and relatively prime to $n$ (a cyclic group of order $n$, a dihedral group of order $2n$, respectively). Finally, for a matrix $A \in GL_2 (\C),$ $\widetilde{A}$ denotes the block matrix $\psmatrix{A & \bf{0} \\ \bf{0} & \det(A)^{-1}} \in GL_{3}(\C),$ and for an $\alpha \in \textrm{Sym}_3,$ $I_{3,\alpha} \in GL_3(\C)$ denotes the permutation matrix corresponding to $\alpha.$

\section{Preliminaries}\label{prelim}

In this section, we briefly recall some of the facts in the general theory of abelian varieties over a field and maximal orders over a Dedekind domain. Our main references are \cite{2}, \cite{7}, and \cite{9}.

\subsection{Endomorphism algebras of abelian varieties}\label{end alg av}
In this section, we give some basic facts about the endomorphism algebra of an abelian variety over a field. To this aim, let $X$ be an abelian variety over a field $k.$ Then the set $\textrm{End}_k (X)$ of endomorphisms of $X$ over $k$ is actually a ring. Because it is more difficult to deal with $\textrm{End}_k(X)$ itself, in general, if we want to work with $\Q$-algebras instead, then we define $\textrm{End}_k^0(X):=\textrm{End}_k (X) \otimes_{\Z} \Q.$ The $\Q$-algebra $\textrm{End}_k^0 (X)$ is called the \emph{endomorphism algebra} of $X$ over $k.$ \\
\indent If $X$ is a simple abelian variety over $k,$ then $\textrm{End}_k^0(X)$ is a division algebra over $\Q.$ If $X$ is an arbitrary abelian variety over $k,$ then it is well known that there exist simple abelian varieties $Y_1,\cdots,Y_n$ over $k,$ no two of which are $k$-isogenous, and positive integers $m_1,\cdots,m_n$ such that $X$ is $k$-isogenous to $Y_1^{m_1} \times \cdots \times Y_n^{m_n}.$ In this situation, we have
\begin{equation*}
\textrm{End}_k^0 (X) \cong M_{m_1}(D_1) \times \cdots \times M_{m_n}(D_n)
\end{equation*}
where $D_i :=\textrm{End}_k^0 (Y_i)$ for each $i.$ Note that each $D_i$ is a division algebra over $\Q.$ Moreover, $\textrm{End}_k^0 (X)$ is a finite dimensional semisimple $\Q$-algebra of dimension at most $4 \cdot (\textrm{dim}~X)^2$ (see \cite[Corollary 12.11]{2}).

\subsection{The theorem of Tate}\label{thm Tate sec}
In this section, we recall a theorem of Tate and a description on the structure of the endomorphism algebra of simple abelian varieties over finite fields, which play an important role throughout the paper. \\



We first recall that an abelian variety $X$ over a field $k$ is called \emph{elementary} if $X$ is $k$-isogenous to a power of a simple abelian variety over $k.$ Then Tate \cite{10} obtained the following result for the case when the base field $k$ is finite.
\begin{theorem}[{\cite[Theorem 2]{10}}]\label{cor TateEnd0}
  Let $X$ be an abelian variety of dimension $g$ over a finite field $k=\F_q$, and let $\pi_X$ be the Frobenius endomorphism of $X.$ Then we have:\\
  (a) The center of $\textrm{End}_k^0(X)$ is the subalgebra $\Q[\pi_X].$ In particular, $X$ is elementary if and only if $\Q[\pi_X]=\Q(\pi_X)$ is a field, and this occurs if and only if $f_X$ is a power of an irreducible polynomial in $\Q[t]$ where $f_X$ denotes the characteristic polynomial of $\pi_X.$ \\
 (b) Suppose that $X$ is elementary. Let $h=f_{\Q}^{\pi_X}$ be the minimal polynomial of $\pi_X$ over $\Q$. Further, let $d=[\textrm{End}_k^0(X):\Q(\pi_X)]^{\frac{1}{2}}$ and $e=[\Q(\pi_X):\Q].$ Then $de =2g$ and $f_X = h^d.$ \\
(c) We have $2g \leq \textrm{dim}_{\Q} \textrm{End}^0_k (X) \leq (2g)^2$ and $X$ is of CM-type. \\
(d) The following conditions are equivalent: \\
  \indent (d-1) $\textrm{dim}_{\Q} \textrm{End}_k^0(X)=2g$; \\
  \indent (d-2) $\textrm{End}_k^0(X)=\Q[\pi_X]$; \\
  \indent (d-3) $\textrm{End}_k^0(X)$ is commutative; \\
  \indent (d-4) $f_X$ has no multiple root. \\
(e) The following conditions are equivalent: \\
  \indent (e-1) $\textrm{dim}_{\Q} \textrm{End}_k^0(X)=(2g)^2$; \\
  \indent (e-2) $\Q[\pi_X]=\Q$; \\
  \indent (e-3) $f_X$ is a power of a linear polynomial; \\
  \indent (e-4) $\textrm{End}^0_k(X) \cong M_g(D_{p,\infty})$ where $D_{p,\infty}$ is the unique quaternion algebra over $\Q$ that is ramified at $p$ and $\infty$, and split at all other primes; \\
  \indent (e-5) $X$ is isogenous to $E^g$ for a supersingular elliptic curve $E$ over $k$ all of whose endomorphisms are defined over $k.$
\end{theorem}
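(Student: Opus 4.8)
The plan is to derive all five parts from one deep input---Tate's isogeny theorem---after which everything reduces to linear algebra over $\Q_\ell$ and the structure theory of semisimple algebras. Fix a prime $\ell\neq p$, write $E:=\End^0_k(X)$ and $\pi:=\pi_X$, and let $V_\ell:=T_\ell X\otimes_{\Z_\ell}\Q_\ell$, a $2g$-dimensional $\Q_\ell$-space carrying the $\Q_\ell$-linear action of $\pi$. The two facts I would take from Tate are: (i) $f_X(t)=\det(t-\pi\mid V_\ell)$ is a monic integer polynomial of degree $2g$, independent of $\ell$, and $\pi$ acts \emph{semisimply} on $V_\ell$; and (ii) the natural map $E\otimes_\Q\Q_\ell\to\End_{\Q_\ell[\pi]}(V_\ell)$ is an isomorphism onto the commutant $C:=C_{M_{2g}(\Q_\ell)}(\Q_\ell[\pi])$ of $\pi$. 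From semisimplicity the minimal polynomial $h$ of $\pi$ is squarefree, so $\Q[\pi]\cong\Q[t]/(h)$ and its scalar extension $\Q_\ell[\pi]=\Q[\pi]\otimes_\Q\Q_\ell$ are étale algebras, i.e. finite products of fields; moreover $h$ is the squarefree part of $f_X$, so the two have the same irreducible factors.

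First I would prove (a). Since every $k$-endomorphism commutes with the $q$-power Frobenius, $\pi\in Z(E)$ and hence $\Q[\pi]\subseteq Z(E)$. For the reverse inclusion I pass to $\Q_\ell$: writing the étale algebra $\Q_\ell[\pi]=\prod_j K_j$ and decomposing $V_\ell=\bigoplus_j V_j$ accordingly, the commutant is $C\cong\prod_j\End_{K_j}(V_j)$, whose center is $\prod_j K_j=\Q_\ell[\pi]$. Thus $Z(E)\otimes_\Q\Q_\ell=Z(C)=\Q[\pi]\otimes_\Q\Q_\ell$, which together with $\Q[\pi]\subseteq Z(E)$ forces $Z(E)=\Q[\pi]$. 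The equivalences in (a) are then formal: writing $X\sim\prod_i Y_i^{m_i}$ with $Y_i$ simple and pairwise non-isogenous gives $E\cong\prod_i M_{m_i}(D_i)$ and $Z(E)\cong\prod_i F_i$ with $F_i=Z(D_i)$, so $\Q[\pi]$ is a field exactly when there is a single factor (i.e. $X$ is elementary), and, because $h$ and $f_X$ share their irreducible factors, this happens exactly when $f_X$ is a power of an irreducible polynomial.

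Next, for (b) I assume $X$ elementary, so $E=M_m(D)$ with $F=Z(D)=\Q(\pi)$ of degree $e$, and set $d=[E:F]^{1/2}=m[D:F]^{1/2}$. The key computation is to read off the $E\otimes\Q_\ell$-module structure of the faithful module $V_\ell$: counting $\Q_\ell$-dimensions over the simple factors of $E\otimes\Q_\ell$---equivalently, tracking $\dim_{F_v}V_\ell$ over the places $v\mid\ell$ of $F$---shows that $\pi$ contributes its minimal polynomial $h$ with multiplicity exactly $d$, so $f_X=h^d$ and $2g=\deg f_X=d\deg h=de$. For (c) I use the general decomposition: with $\delta_i=[D_i:F_i]^{1/2}$ and $a_i:=m_i\delta_i e_i$, part (b) applied to each simple $Y_i$ gives $2g=\sum_i a_i$, while $\dim_\Q E=\sum_i m_i^2\delta_i^2 e_i$. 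Since $m_i\delta_i\geq1$ and $e_i\geq1$, term-by-term comparison gives $\dim_\Q E\geq\sum_i a_i=2g$, and $\dim_\Q E\leq\sum_i a_i^2\leq(\sum_i a_i)^2=(2g)^2$; that $X$ is of CM-type follows because each $M_{m_i}(D_i)$ contains a maximal commutative étale subalgebra of $\Q$-dimension $a_i$, so $E$ contains one of dimension $\sum_i a_i=2g$, and it is a product of CM fields since each $\pi_i$ is a Weil $q$-number with $\pi_i\overline{\pi_i}=q$.

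Finally, (d) and (e) are the two extremes of the inequalities in (c). For (d): a commutative $E$ equals $Z(E)=\Q[\pi]$, giving (d-3)$\Rightarrow$(d-2); equality $\dim_\Q E=2g$ forces every $m_i\delta_i=1$, so $E=\prod_i F_i$ is commutative and $f_X=\prod_i h_i$ is squarefree, giving (d-1)$\Rightarrow$(d-3),(d-4); and if $f_X$ is squarefree then $h=f_X$ has degree $2g=\dim_{\Q_\ell}V_\ell$, so $\Q_\ell[\pi]$ is a maximal étale subalgebra equal to its own commutant and $E\otimes\Q_\ell=\Q_\ell[\pi]$, closing the loop. For (e), from $\dim_\Q E=d^2e$ and $de=2g$ one gets $\dim_\Q E=(2g)^2\iff e=1\iff\Q(\pi)=\Q$, which (via $f_X=h^d$ with $h$ linear) is (e-1)$\Leftrightarrow$(e-2)$\Leftrightarrow$(e-3); and when $\pi\in\Q$ the Weil relation $\pi\overline{\pi}=q$ gives $\pi=\pm\sqrt{q}$, all Newton slopes $1/2$, so $X\sim E_0^{\,g}$ for a supersingular elliptic curve with Frobenius in $\Z$ (all endomorphisms defined over $k$), whence $E\cong M_g(D_{p,\infty})$ of dimension $(2g)^2$---yielding (e-4),(e-5). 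The hard part is entirely contained in the input: I would not reprove Tate's isogeny theorem or the semisimplicity of $\pi$, and, granting these, the only delicate bookkeeping is the place-by-place count in (b) pinning the multiplicity of $h$ in $f_X$ to $[E:\Q(\pi)]^{1/2}$, since this is where the local invariants of $D$ implicitly govern the module structure of $V_\ell$.
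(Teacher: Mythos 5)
The paper never proves this statement: it is imported verbatim as Tate's Theorem 2 (reference [10]), so the only meaningful comparison is with Tate's original derivation --- and your proposal is essentially a reconstruction of exactly that argument, deducing (a)--(e) from the isogeny theorem $\End_k(X)\otimes\Z_\ell \xrightarrow{\sim} \End_{\Z_\ell[\mathrm{Gal}]}(T_\ell X)$ together with semisimplicity of Frobenius. Your treatment of (a), (b), (c) is correct: the identification of $Z(\End_k^0(X))\otimes\Q_\ell$ with the center of the commutant of $\Q_\ell[\pi]$, and the place-by-place count $\dim_{F_v}V_v=d$ for all $v\mid\ell$ (forced by comparing the two product decompositions of the commutant as central simple $F_v$-algebras of equal degree), is precisely the standard bookkeeping.

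Two points deserve flagging. First, a cosmetic one: in (d) your implication digraph, as literally stated, has $\{$(d-2),(d-3)$\}$ as a sink --- you give (d-1)$\Rightarrow$(d-3),(d-4), (d-3)$\Rightarrow$(d-2), (d-4)$\Rightarrow$(d-1),(d-2), but no arrow out of (d-2)/(d-3) back to (d-1)/(d-4). The fix is contained in your own argument (commutativity forces $\End^0_k(X)=\prod_i F_i$, hence every $m_i\delta_i=1$, hence $f_X$ squarefree), but it should be said. Second, and more substantively: the last step of (e) is not a consequence of the input you declared. The $\ell$-adic commutant theorem with $\ell\neq p$ tells you nothing about the behavior of $\End^0_k(X)$ at $p$ and $\infty$, and your appeal to ``all Newton slopes $1/2$'' smuggles in $p$-adic information (Dieudonn\'e theory, or equivalently the local invariant formula $\mathrm{inv}_\nu = \frac{\mathrm{ord}_\nu(\pi)}{\mathrm{ord}_\nu(q)}[K_\nu:\Q_p]$, which this paper quotes separately as Proposition~\ref{local inv}). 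What the declared input does give is that $\End^0_k(X)$ is split at every finite $\ell\neq p$; one then needs either that formula, or a Brauer-theoretic argument (invariant at the real place lies in $\{0,\tfrac12\}$, Hasse reciprocity forces $\mathrm{inv}_p=\mathrm{inv}_\infty$, and $\mathrm{inv}_p=\mathrm{inv}_\infty=0$ is excluded because it would give a simple factor $Y$ with $\End^0_k(Y)=\Q$ and hence $2\dim Y=1$), to conclude $D\cong D_{p,\infty}$, $\dim Y=1$, and (e-4), (e-5). This is standard, but it is genuinely extra input beyond Tate's isogeny theorem and semisimplicity, so the closing claim that ``the hard part is entirely contained in the input'' overstates the case.
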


Now, in view of Theorem~\ref{cor TateEnd0}~(a), if $X$ is an elementary abelian variety over a finite field $k$, then $\textrm{End}_k^0(X)$ is a simple algebra over its center $\Q[\pi_X].$ For a more precise description on $\textrm{End}_k^0(X),$ we record the following two results.
\begin{proposition}[{\cite[Corollary 16.30]{2}}]\label{local inv}
  Let $X$ be an elementary abelian variety over a finite field $k=\F_q.$ Let $K=\Q[\pi_X].$ If $\nu$ is a place of $K$, then the local invariant of $\textrm{End}_k^0(X)$ in the Brauer group $\textrm{Br}(K_{\nu})$ is given by
  \begin{equation*}
    \textrm{inv}_{\nu}(\textrm{End}_k^0(X))=\begin{cases} 0 & \mbox{if $\nu$ is a finite place not above $p$}; \\ \frac{\textrm{ord}_{\nu}(\pi_X)}{\textrm{ord}_{\nu}(q)} \cdot [K_{\nu}:\Q_p] & \mbox{if $\nu$ is a place above $p$}; \\ \frac{1}{2} & \mbox{if $\nu$ is a real place of $K$}; \\ 0 & \mbox{if $\nu$ is a complex place of $K$}. \end{cases}
  \end{equation*}
\end{proposition}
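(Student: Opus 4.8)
The plan is to realize $D := \textrm{End}_k^0(X)$ as a central simple algebra over its center $K = \Q[\pi_X]$ and to compute its class in the Brauer group $\textrm{Br}(K)$ one place at a time, via the localization $\textrm{inv}_\nu(D) = \textrm{inv}(D \otimes_K K_\nu)$. Since $X$ is elementary, Theorem~\ref{cor TateEnd0}(a) guarantees that $K$ is a field and that $D$ is simple with center $K$; writing $X$ as $k$-isogenous to $Y^m$ with $Y$ simple, we have $D \cong M_m(\Delta)$ for a division algebra $\Delta$ central over $K$, and the local invariants of $D$ coincide with those of $\Delta$. The four branches of the formula correspond to the four qualitatively distinct kinds of place of $K$, and I would treat them by different tools.

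For a finite place $\nu$ lying over a rational prime $\ell \neq p$, I would invoke Tate's isogeny theorem in the form $\textrm{End}_k(X) \otimes_\Z \Z_\ell \cong \textrm{End}_{\Z_\ell[\pi_X]}(T_\ell X)$. Tensoring with $\Q_\ell$ and using $K \otimes_\Q \Q_\ell \cong \prod_{\nu \mid \ell} K_\nu$, the rational Tate module $V_\ell X$ is free over each factor $K_\nu$, so $D \otimes_K K_\nu$ is a matrix algebra over $K_\nu$ and hence split; thus $\textrm{inv}_\nu(D) = 0$. The archimedean places come next: a complex place has $K_\nu = \C$, where $\textrm{Br}(\C) = 0$ forces the invariant to vanish. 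A real place has $K_\nu = \R$ and $\textrm{Br}(\R) = \frac{1}{2}\Z/\Z$; such a place occurs only in the totally real cases $K = \Q$ or $K = \Q(\sqrt{p})$ (where $\pi_X = \pm\sqrt{q}$ and $X$ is of supersingular type), and there the positivity of the Rosati involution attached to a polarization, together with the Albert classification, forces $D \otimes_K \R \cong M_n(\mathbb{H})$ (Albert type III), so that $\textrm{inv}_\nu(D) = \frac{1}{2}$.

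The genuine difficulty is the place above $p$, where splitting fails. Here the plan is to pass to the $p$-divisible group $X[p^\infty]$ over $\F_q$ together with its (covariant) Dieudonn\'e module. The action of $K$ decomposes $X[p^\infty]$ up to isogeny into isoclinic summands indexed by the places $\nu \mid p$, the summand at $\nu$ carrying Frobenius slope $\lambda_\nu = \textrm{ord}_\nu(\pi_X)/\textrm{ord}_\nu(q)$. By the Dieudonn\'e--Manin classification of isoclinic $p$-divisible groups over $\overline{\F}_q$, together with the analysis of the $\F_q$-rational structure (the action of $\pi_X = \textrm{Frob}^a$), the endomorphism algebra of the $\nu$-summand is a central division algebra over $K_\nu$ whose invariant is $\lambda_\nu \cdot [K_\nu : \Q_p]$, which is exactly the stated formula. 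The main obstacle is precisely this identification of the local invariant with the Frobenius slope, since it rests on the Dieudonn\'e theory of $p$-divisible groups rather than on the $\ell$-adic linear algebra that suffices away from $p$.

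Finally, as a consistency check I would verify the reciprocity relation $\sum_\nu \textrm{inv}_\nu(D) = 0$ in $\Q/\Z$: using $\pi_X \overline{\pi_X} = q$ one computes that the contributions from the places above $p$ and from any real places sum to zero, in agreement with the fundamental exact sequence of class field theory for $\textrm{Br}(K)$. This both corroborates the formula at $p$ and, in the presence of real places, reconfirms that their invariants take the nonzero value $\frac{1}{2}$.
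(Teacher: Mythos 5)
Two things to note. First, the paper never proves Proposition~\ref{local inv} at all: it is imported, with attribution, from \cite[Corollary 16.30]{2} and used as a black box, so the only meaningful benchmark is the proof in that cited source. Your outline is essentially that proof: Tate's isogeny theorem plus the freeness of $V_\ell X$ over $K \otimes_{\Q} \Q_\ell$ (which follows from $f_X = h^d$ and unique factorization of $h$ over $\Q_\ell$) gives splitting at finite places prime to $p$; the decomposition of $X[p^{\infty}]$ up to isogeny under $K \otimes_{\Q} \Q_p \cong \prod_{\nu \mid p} K_\nu$ into isoclinic pieces, together with Dieudonn\'e--Manin theory, gives the formula at places over $p$; and $\textrm{Br}(\C)=0$ disposes of complex places. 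Those three steps are correct as sketched.

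The real-place step, however, contains a genuine gap. Positivity of the Rosati involution together with Albert's classification does \emph{not} force $D \otimes_K \R \cong M_n(\mathbb{H})$: over a totally real center, a positive involution of the first kind is equally compatible with Albert type I ($\Delta = K$) and type II (totally indefinite quaternion), and either of those would give real invariant $0$. Excluding types I and II is precisely the content of the real case, and it cannot come from the involution alone. The efficient repair is to promote your closing ``consistency check'' into the actual argument, using the $p$-adic invariants you have already computed. Write $X \sim Y^m$ with $Y$ simple and $\Delta = \textrm{End}_k^0(Y)$; a real place forces $\pi_X = \pm\sqrt{q}$, hence $K=\Q$ (one real place, $p$-invariant $\frac{a/2}{a}\cdot 1 = \frac{1}{2}$) or $K=\Q(\sqrt{p})$ (two real places, a unique $p$-adic place with invariant $\frac{1}{2}\cdot 2 \equiv 0$). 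In the first case reciprocity alone forces the real invariant to be $\frac{1}{2}$. In the second, reciprocity leaves two options (both real invariants $0$, or both $\frac{1}{2}$); if both were $0$, then $\Delta$ would be locally split at every place, hence $\Delta \cong M_d(K)$ by the Albert--Brauer--Hasse--Noether theorem, so $\Delta = K$ since $\Delta$ is a division algebra, and then Theorem~\ref{cor TateEnd0}(b) gives $2g = e = 2$, i.e., $Y$ is an elliptic curve with $\textrm{End}_k^0(Y) = \Q(\sqrt{p})$ --- impossible, since every endomorphism $\phi$ of an elliptic curve satisfies $\textrm{tr}(\phi)^2 \leq 4\deg\phi$, so $\Q(\phi)$ is $\Q$ or imaginary quadratic (equivalently, this contradicts $Y$ being of CM type, Theorem~\ref{cor TateEnd0}(c)). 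Alternatively, one can bypass reciprocity entirely: $\pi_X^2 = q$ means that over $\F_{q^2}$ all endomorphisms of a supersingular power become defined, $\textrm{End}^0_{\F_{q^2}}(X) \cong M_g(D_{p,\infty})$ by Theorem~\ref{cor TateEnd0}(e), and $\textrm{End}_k^0(X)$ is the centralizer of $K=\Q[\pi_X]$ in this algebra, whose Brauer class over $K$ is the restriction of $[D_{p,\infty}]$; restriction multiplies invariants by local degrees, giving $\frac{1}{2}$ at each real place (and $0$ at the $p$-adic place of $\Q(\sqrt{p})$) directly.
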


\begin{proposition}[{\cite[Corollary 16.32]{2}}]\label{index end alg}
  Let $X$ be a simple abelian variety over a finite field $k.$ Let $d$ be the index of the division algebra $D:=\textrm{End}_k^0(X)$ over its center $\Q(\pi_X)$ (so that $d=[D:\Q(\pi_X)]^{\frac{1}{2}}$ and $f_X = (f_{\Q}^{\pi_X})^d$). Then $d$ is the least common denominator of the local invariants $i_{\nu} = \textrm{inv}_{\nu}(D).$
\end{proposition}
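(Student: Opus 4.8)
The plan is to reduce the computation of the index $d$ to the local invariants $i_\nu$ by invoking the Albert--Brauer--Hasse--Noether description of the Brauer group of a global field. Set $K=\Q(\pi_X)$. Since $X$ is simple, Theorem~\ref{cor TateEnd0}~(a) guarantees that $K$ is a field and that $D=\textrm{End}_k^0(X)$ is a central division algebra over $K$, so $d=[D:K]^{1/2}$ is genuinely the index of $D$. For each place $\nu$ of $K$ the base change $D\otimes_K K_\nu$ is a central simple algebra over the local field $K_\nu$, with Brauer class measured by $i_\nu=\textrm{inv}_\nu(D)\in\Q/\Z$; by Proposition~\ref{local inv} all but finitely many $i_\nu$ vanish.

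First I would record the local input. For a local field $K_\nu$ the invariant map $\textrm{inv}_\nu\colon\textrm{Br}(K_\nu)\hookrightarrow\Q/\Z$ is injective (an isomorphism onto $\Q/\Z$ at a non-archimedean place, onto $\tfrac12\Z/\Z$ at a real place, and zero at a complex place), and over a local field the index of a central simple algebra equals the order of its Brauer class. Hence the local index of $D\otimes_K K_\nu$ is exactly the order of $i_\nu$ in $\Q/\Z$, which is the denominator $n_\nu$ when $i_\nu$ is written in lowest terms $a_\nu/n_\nu$.

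Next I would pass from local to global using the fundamental exact sequence
\[
0 \longrightarrow \textrm{Br}(K) \longrightarrow \bigoplus_\nu \textrm{Br}(K_\nu) \xrightarrow{\ \sum_\nu \textrm{inv}_\nu\ } \Q/\Z \longrightarrow 0
\]
together with the fact that over a global field the index and the exponent of a central simple algebra coincide. The exponent of $D$ is the order of $[D]$ in $\textrm{Br}(K)$, and by injectivity of the left-hand map this order equals the order of the tuple $(i_\nu)_\nu$ in the direct sum, namely the least common multiple of the local orders $n_\nu$. Therefore
\[
d=\textrm{index}(D)=\textrm{exponent}(D)=\operatorname{lcm}_\nu n_\nu,
\]
and $\operatorname{lcm}_\nu n_\nu$ is by definition the least common denominator of the fractions $i_\nu$, which is exactly the claim.

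The step I expect to be the main obstacle is the equality $\textrm{index}(D)=\textrm{exponent}(D)$ over the number field $K$: over a general field the exponent only divides the index, and the coincidence is a genuinely deep theorem of global class field theory (equivalently, the statement that every central division algebra over a number field is cyclic). By contrast, the local computation of the index as the denominator of the invariant, and the identification of an order in $\Q/\Z$ with a denominator, are formal consequences of local class field theory and require no further work.
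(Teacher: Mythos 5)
Your proof is correct. The paper gives no proof of Proposition~\ref{index end alg} at all --- it simply imports the result from \cite[Corollary 16.32]{2} --- and your argument is essentially the proof found in that reference: local class field theory identifies the local index at each place $\nu$ with the order (i.e.\ the denominator) of $i_\nu$ in $\Q/\Z$, the Albert--Brauer--Hasse--Noether exact sequence shows the class of $D$ in $\textrm{Br}(K)$ has order $\operatorname{lcm}_\nu n_\nu$, and the one genuinely deep input, as you correctly isolate, is the equality of index and exponent for central simple algebras over a number field, which converts that order into the index $d$.
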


\subsection{Abelian varieties up to isogeny and Weil numbers: Honda-Tate theory}\label{thm Honda}
In this section, we recall an important theorem of Honda and Tate with the following definition.
\begin{definition}\label{qWeil Def}
  (a) A \emph{$q$-Weil number} is an algebraic integer $\pi$ such that $| \iota(\pi) | = \sqrt{q}$ for all embeddings $\iota \colon \Q[\pi] \hookrightarrow \C.$ \\
  (b) Two $q$-Weil numbers $\pi$ and $\pi^{\prime}$ are said to be \emph{conjugate} if they have the same minimal polynomial over $\Q,$ or equivalently, there is an isomorphism $\Q[\pi] \rightarrow \Q[\pi^{\prime}]$ sending $\pi$ to $\pi^{\prime}.$
\end{definition}



\begin{theorem}[{\cite[Main Theorem]{3} or \cite[$\S16.5$]{2}}]\label{thm HondaTata}
 For every $q$-Weil number $\pi$, there exists a simple abelian variety $X$ over $\F_q$ such that $\pi_X$ is conjugate to $\pi$, where $\pi_X$ denotes the Frobenius endomorphism of $X.$ Moreover, we have a bijection between the set of isogeny classes of simple abelian varieties over $\F_q$ and the set of conjugacy classes of $q$-Weil numbers given by $X \mapsto \pi_X$.
\end{theorem}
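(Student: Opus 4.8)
The plan is to treat the map $X \mapsto \pi_X$ in three parts: well-definedness (including the Weil number property), injectivity, and surjectivity. For well-definedness, note that for a simple abelian variety $X$ over $\mathbb{F}_q$ the characteristic polynomial $f_X$ of $\pi_X$ has integer coefficients, so $\pi_X$ is an algebraic integer, and by Weil's Riemann Hypothesis for abelian varieties over finite fields every root of $f_X$ (equivalently, every conjugate of $\pi_X$) has complex absolute value $\sqrt{q}$; hence $\pi_X$ is a $q$-Weil number. Since $f_X$ is an isogeny invariant, $\mathbb{F}_q$-isogenous simple abelian varieties produce conjugate Weil numbers, so the assignment descends to isogeny classes.

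For injectivity I would invoke Tate's isogeny theorem: the natural map $\Hom_{\mathbb{F}_q}(X,Y)\otimes\Z_\ell \to \Hom_{\Gal}(T_\ell X, T_\ell Y)$ is an isomorphism for $\ell \neq p$, and consequently two abelian varieties over $\mathbb{F}_q$ are $\mathbb{F}_q$-isogenous if and only if $f_X = f_Y$. Specializing to simple $X$ and $Y$, this says precisely that $X$ and $Y$ are isogenous exactly when $\pi_X$ and $\pi_Y$ are conjugate, which gives injectivity of the induced map on isogeny classes.

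The substantial part is surjectivity, i.e. realizing an arbitrary $q$-Weil number $\pi$ by a simple abelian variety over $\mathbb{F}_q$. My strategy follows Honda and Tate in two stages. First, a descent argument reduces the problem to realizing a suitable power: if a simple $Y/\mathbb{F}_{q^n}$ has Frobenius conjugate to $\pi^n$, then tracking how $\pi_X$ and the center $\Q(\pi)$ transform under base change $\mathbb{F}_q \to \mathbb{F}_{q^n}$ produces an $\mathbb{F}_q$-isogeny class in which $\pi$ appears, the matching of centers and local data being controlled by Theorem~\ref{cor TateEnd0}. Second, I would realize a power via complex multiplication: put $E=\Q(\pi)$, which is either $\Q$, a real quadratic field (the exceptional real Weil numbers $\pm\sqrt{q}$), or a CM field; choosing a CM type $\Phi$ on a CM field containing $E$, construct a CM abelian variety in characteristic $0$, spread it out over a number field with good reduction at a prime above $p$, and reduce. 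The Shimura--Taniyama formula then computes the valuations of the Frobenius of the reduction in terms of $\Phi$, and choosing $\Phi$ appropriately forces this Frobenius to be conjugate to a power of $\pi$. That the resulting endomorphism algebra has the correct center, index, and local invariants—so that the constructed variety is genuinely simple with the prescribed $\pi$—is verified against $\ord_\nu(\pi)$ using Proposition~\ref{local inv} and Proposition~\ref{index end alg}.

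The main obstacle is this surjectivity step, and within it the CM construction together with the Shimura--Taniyama computation. The delicate points are choosing the CM type so that the reduction is $\mathbb{F}_q$-simple with exactly the right Frobenius, and executing the descent from $\pi^n$ back to $\pi$ without losing simplicity; I would isolate the real cases $\pi = \pm\sqrt{q}$ and handle them directly, matching them to the supersingular description in Theorem~\ref{cor TateEnd0}~(e). For the purposes of this paper, however, the full statement is classical and is quoted from \cite{3} and \cite{2}, so I would ultimately cite it rather than reproduce the construction.
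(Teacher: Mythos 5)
Your proposal is correct and takes essentially the same approach as the paper: the paper offers no proof of Theorem~\ref{thm HondaTata} at all, but simply quotes it from Honda \cite{3} and \cite[\S 16.5]{2}, which is exactly the conclusion you reach. Your three-part sketch (Weil's Riemann Hypothesis for well-definedness, Tate's isogeny theorem for injectivity, and CM lifting with the Shimura--Taniyama formula plus descent from $\F_{q^n}$ for surjectivity) is an accurate outline of the argument carried out in those cited references.
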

The inverse of the map $X \mapsto \pi_X$ associates to a $q$-Weil number $\pi$ a simple abelian variety $X$ such that $f_X$ is a power of the minimal polynomial $f_{\Q}^{\pi}$ of $\pi$ over $\Q.$

\subsection{Maximal orders over a Dedekind domain}\label{max ord dede}
In this section, we review the general theory of maximal orders over a Dedekind domain that will be used later in this paper. \\
\indent Throughout this section, let $R$ be a noetherian integral domain with the quotient field $K,$ and let $A$ be a finite dimensional $K$-algebra. Recall that a maximal $R$-order in $A$ is an $R$-order which is not properly contained in any other $R$-order in $A.$ For our later use, we introduce several results about maximal orders.
\begin{theorem}[{\cite[Theorem 8.7]{9}}]\label{mat max}
  Let $A$ be a finite dimensional $K$-algebra. If $\Lambda$ is a maximal $R$-order in $A,$ then for each $n \geq 1,$ $M_n(\Lambda)$ is a maximal $R$-order in $M_n(A).$ If $R$ is integrally closed, then $M_n(R)$ is a maximal $R$-order in $M_n(K).$
\end{theorem}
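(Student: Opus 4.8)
The plan is to prove the first assertion and then deduce the second as a special case. For the second statement, observe first that when $R$ is integrally closed, $R$ is itself a maximal $R$-order in $K$: any $R$-order $\Lambda \subseteq K$ is a finitely generated $R$-module, hence consists of elements integral over $R$, so $\Lambda$ lies in the integral closure of $R$ in $K$, which equals $R$; since every order contains $R = R \cdot 1$, this forces $\Lambda = R$. Granting the first assertion with $A = K$ and $\Lambda = R$, we conclude at once that $M_n(R)$ is a maximal $R$-order in $M_n(K)$. Thus everything reduces to the first claim.

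To prove the first claim, I would first check routinely that $M_n(\Lambda)$ is an $R$-order in $M_n(A)$: it is a subring, it is a finitely generated $R$-module because $\Lambda$ is, and $K \cdot M_n(\Lambda) = M_n(K \cdot \Lambda) = M_n(A)$. The heart of the argument is a correspondence between orders in $M_n(A)$ and orders in $A$ mediated by the matrix units. Let $e_{ij} \in M_n(\Lambda)$ be the standard matrix units, satisfying $e_{ij}e_{kl} = \delta_{jk}e_{il}$ and $\sum_i e_{ii} = 1$; these lie in $M_n(\Lambda)$ since $1 \in \Lambda$. Suppose $\Gamma$ is an $R$-order in $M_n(A)$ with $M_n(\Lambda) \subseteq \Gamma$; in particular $\Gamma$ is a subring containing every $e_{ij}$.

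Set $\Gamma_0 := e_{11}\Gamma e_{11}$, identified with a subset of $A$ via the standard isomorphism $e_{11}M_n(A)e_{11} \cong A$. I claim $\Gamma = M_n(\Gamma_0)$. For $M_n(\Gamma_0) \subseteq \Gamma$: if $a \in \Gamma_0$, then the matrix with $a$ in the $(i,j)$ slot equals $e_{i1}(ae_{11})e_{1j} \in \Gamma$, and summing such elements gives the inclusion. For the reverse inclusion: given $\gamma \in \Gamma$ with entries $a_{ij} \in A$, the product $e_{1i}\gamma e_{j1}$ equals $a_{ij}e_{11} \in \Gamma$, so every $a_{ij}$ lies in $\Gamma_0$ and $\gamma \in M_n(\Gamma_0)$. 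It remains to verify that $\Gamma_0$ is an $R$-order in $A$ containing $\Lambda$, for then maximality of $\Lambda$ forces $\Gamma_0 = \Lambda$ and hence $\Gamma = M_n(\Lambda)$, establishing maximality. That $\Gamma_0$ is a subring follows from $e_{11}^2 = e_{11}$; the inclusion $M_n(\Lambda) \subseteq \Gamma$ yields $\Lambda = e_{11}M_n(\Lambda)e_{11} \subseteq \Gamma_0$, so $K\Gamma_0 \supseteq K\Lambda = A$ and $\Gamma_0$ spans $A$.

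Finite generation of $\Gamma_0$ as an $R$-module is where the noetherian hypothesis enters: $\Gamma$ is a finitely generated module over the noetherian ring $R$, hence a noetherian module, so its $R$-submodule $\Gamma_0 = e_{11}\Gamma e_{11}$ is finitely generated. I expect the main obstacle to be setting up the matrix-unit correspondence cleanly—specifically, confirming that the Peirce components $e_{1i}\gamma e_{j1}$ reconstruct $\Gamma$ exactly and that no finiteness is lost in passing to $\Gamma_0$—since, once the correspondence $\Gamma = M_n(\Gamma_0)$ is in place, the conclusion is a direct appeal to the maximality of $\Lambda$.
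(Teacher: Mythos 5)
Your proof is correct. The paper gives no argument for this statement—it is quoted directly from Reiner's \emph{Maximal Orders} (Theorem 8.7)—and your matrix-unit (Peirce decomposition) argument, showing that any order $\Gamma \supseteq M_n(\Lambda)$ satisfies $\Gamma = M_n(e_{11}\Gamma e_{11})$ with $e_{11}\Gamma e_{11}$ an order in $A$ containing $\Lambda$, then invoking maximality of $\Lambda$ (and reducing the second assertion to the first via integral closedness of $R$), is essentially the standard proof given in that reference.
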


If we impose additional conditions that $R$ is integrally closed and $A$ is a separable $K$-algebra, then we have the following result saying that a decomposition of $A$ into simple components yields a decomposition of maximal orders in $A$.
\begin{theorem}[{\cite[Theorem 10.5]{9}}]\label{max gen}
  Let $A$ be a separable $K$-algebra with simple components $\{A_i\}_{1 \leq i \leq t}$ and let $R_i$ be the integral closure of $R$ in the center $K_i$ of $A_i$ for each $i.$ Then we have: \\
  (a) For each maximal $R$-order $\Lambda$ in $A,$ we have $\displaystyle \Lambda = \bigoplus_{i=1}^{t} \Lambda e_i$ where $\{e_i\}_{1 \leq i \leq t}$ are the central idempotents of $A$ such that $A_i = A e_i$ for each $i.$ Moreover, each $\Lambda e_i$ is a maximal $R$-order in $A_i=A e_i.$  \\
  (b) If $\Lambda_i $ is a maximal $R$-order in $A_i$ for each $i,$ then $\displaystyle \bigoplus_{i=1}^t \Lambda_i$ is a maximal $R$-order in $A.$ \\
  (c) An $R$-order $\Lambda_i $ in $A_i$ is a maximal $R$-order if and only if $\Lambda_i$ is a maximal $R_i$-order in $A_i.$ \\
\indent (Here, the symbol $\bigoplus$ denotes the (external) direct sum.)
\end{theorem}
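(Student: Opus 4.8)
The plan is to reduce all three parts to a single lemma: \emph{any maximal $R$-order $\Lambda$ in $A$ contains every element of the center $Z(A)$ that is integral over $R$}. To prove this lemma I would take $c \in Z(A)$ integral over $R$ and consider the $R$-subalgebra $\Lambda[c] = \sum_{j \geq 0} \Lambda c^{j}$. Because $c$ is central it commutes with $\Lambda$, so $\Lambda[c]$ is a subring of $A$ containing $1$; because $c$ is integral of some degree $n$, one has $\Lambda[c] = \sum_{j=0}^{n-1} \Lambda c^{j}$, a finitely generated $R$-module; and $\Lambda[c] \supseteq \Lambda$ forces $K \cdot \Lambda[c] = A$, so $\Lambda[c]$ is again an $R$-order in $A$. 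Maximality of $\Lambda$ then yields $\Lambda[c] = \Lambda$, i.e.\ $c \in \Lambda$. Applying this to the idempotents $e_i$ (which satisfy $e_i^2 = e_i$, hence are integral over $R$) and to every element of $R_i \subseteq K_i \subseteq Z(A)$ shows that every maximal order contains all the $e_i$ and all the $R_i$. I expect this lemma to be the only real obstacle; the rest is bookkeeping with orthogonal idempotents.

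Granting the lemma, part (a) is almost formal. Given a maximal order $\Lambda$, the lemma puts every $e_i$ in $\Lambda$; since $1 = \sum_i e_i$ with the $e_i$ orthogonal, each $\lambda \in \Lambda$ splits as $\lambda = \sum_i \lambda e_i$ with $\lambda e_i \in \Lambda e_i = \Lambda \cap A_i$, giving $\Lambda = \bigoplus_i \Lambda e_i$. Each summand $\Lambda e_i$ is a subring of $A_i$ with unit $e_i$, is finite over $R$ as a direct summand of $\Lambda$, and satisfies $K(\Lambda e_i) = (K\Lambda) e_i = A e_i = A_i$, so it is an $R$-order in $A_i$. To see it is maximal, I would argue by contradiction: if $\Lambda e_i \subsetneq \Gamma_i$ for some order $\Gamma_i$ in $A_i$, then $\Gamma_i \oplus \bigoplus_{j \neq i} \Lambda e_j$ is an $R$-order in $A$ strictly containing $\Lambda$, which is impossible.

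For part (b) I would set $\Lambda = \bigoplus_i \Lambda_i$ with each $\Lambda_i$ a maximal $R$-order in $A_i$. Since $A_i A_j = 0$ for $i \neq j$ this is a subring, and it is visibly a full $R$-lattice, hence an $R$-order. If $\Gamma \supseteq \Lambda$ is any $R$-order, then each $e_i$ (the unit of $A_i$) lies in $\Lambda_i \subseteq \Gamma$, so the same orthogonal-idempotent decomposition gives $\Gamma = \bigoplus_i \Gamma e_i$ with $\Gamma e_i$ an $R$-order in $A_i$ containing $\Lambda e_i = \Lambda_i$. Maximality of each $\Lambda_i$ forces $\Gamma e_i = \Lambda_i$, hence $\Gamma = \Lambda$, proving $\Lambda$ maximal. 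Note this argument needs only the idempotent decomposition, not the existence of maximal overorders.

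Finally, part (c) would follow by comparing the two notions of order in $A_i$. First I would record that the $R$-orders in $A_i$ containing $R_i$ coincide with the $R_i$-orders in $A_i$: an $R_i$-order is finite over $R_i$, and $R_i$ is finite over $R$ by separability of $K_i/K$ together with $R$ being integrally closed and Noetherian, so it is an $R$-order; conversely any ring containing the central subring $R_i$ is automatically $R_i$-stable, so an $R$-order containing $R_i$ is an $R_i$-order. Now if $\Lambda_i$ is a maximal $R$-order, the lemma applied inside $A_i$ gives $R_i \subseteq \Lambda_i$, so $\Lambda_i$ is an $R_i$-order, and any $R_i$-order above it is an $R$-order above it, hence equals it; thus $\Lambda_i$ is a maximal $R_i$-order. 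Conversely, if $\Lambda_i$ is a maximal $R_i$-order then $R_i \subseteq \Lambda_i$, and any $R$-order $\Gamma \supseteq \Lambda_i$ contains $R_i$, hence is an $R_i$-order, so $\Gamma = \Lambda_i$; thus $\Lambda_i$ is a maximal $R$-order.
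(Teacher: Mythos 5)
Your proof is correct. There is nothing in the paper to compare it against line by line: the statement is imported verbatim from Reiner \cite[Theorem 10.5]{9} and is not proved in the paper, so the relevant benchmark is Reiner's own argument. Your proof has the same skeleton as his: a key lemma asserting that every maximal $R$-order contains the central elements of $A$ that are integral over $R$ (in Reiner this appears as the fact that a maximal order contains the integral closure $R'$ of $R$ in the center of $A$), followed by exactly the orthogonal-idempotent bookkeeping you carry out for (a)--(c). Where you differ, you are slightly more economical: Reiner adjoins all of $R'$ at once, which requires knowing beforehand that $R'$ is a finitely generated $R$-module (hence separability plus $R$ noetherian and integrally closed), whereas your element-by-element adjunction $\Lambda[c]=\sum_{j=0}^{n-1}\Lambda c^{j}$ gets finite generation for free from the monic equation of $c$. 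As you observe, this confines the use of separability and integral closedness to part (c), where it is genuinely needed to conclude that $R_i$ is module-finite over $R$. The steps you leave implicit (e.g.\ that $K\Lambda_i=A_i$ for an $R_i$-order $\Lambda_i$, via $K\cdot R_i=K_i$) are routine.
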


Finally, we further assume that $R$ is a Dedekind domain with its quotient field $K \ne R$. Let $A$ be a separable $K$-algebra (which is simple). In the following last theorem, we determine all maximal $R$-orders in $A.$
\begin{theorem}[{\cite[Theorem 21.6]{9}}]\label{mat max 2}
  Let $A=\textrm{Hom}_{D}(V,V) \cong M_r(D)$ be a simple algebra, where $V$ is a right vector space of dimension $r$ over a division algebra $D$ with center $K$. Let $\Delta$ be a fixed maximal $R$-order in $D$, and let $M$ be a full right $\Delta$-lattice in $V.$ Then $\textrm{Hom}_{\Delta}(M,M)$ is a maximal $R$-order in $A.$ If $\Lambda^{\prime}$ is a maximal $R$-order in $A,$ then there is a full right $\Delta$-lattice $N$ in $V$ such that $\Lambda^{\prime}=\textrm{Hom}_{\Delta}(N,N).$
\end{theorem}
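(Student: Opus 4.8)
The plan is to prove the two assertions separately, reducing everything to the local structure theory of maximal orders. Throughout, I would identify $\Hom_\Delta(M,M)$ with the \emph{associated order} $\cO(M):=\{\phi\in A : \phi(M)\subseteq M\}$ of the lattice $M$: since $M$ is full, every $\Delta$-linear endomorphism of $M$ extends uniquely to a $D$-linear endomorphism of $V=M\otimes_\Delta D$, so the two descriptions agree. A routine check then shows that $\cO(M)$ is a subring of $A$ containing $R\cdot 1$, that it is a finitely generated $R$-module (as $M$ is finitely generated over the Noetherian ring $R$), and that $K\cdot\cO(M)=A$ because $M$ contains a $D$-basis of $V$; hence $\cO(M)$ is an $R$-order in $A$. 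It remains to prove (i) that $\cO(M)$ is maximal, and (ii) that every maximal $R$-order arises in this way.

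For (i) I would invoke the standard local criterion: an $R$-order $\Lambda$ in $A$ is maximal if and only if its localization $\Lambda_\fp$ is a maximal $R_\fp$-order in $A$ for every nonzero prime $\fp$ of $R$ (see \cite{9}), and maximality is both preserved under and detected by completion. Since $M$ is finitely presented over the Noetherian ring $R$, localization and completion commute with $\Hom$, so $\cO(M)_\fp\cong\Hom_{\Delta_\fp}(M_\fp,M_\fp)$ and similarly over the completion $\widehat R_\fp$. We are thereby reduced to the case where $R$ is a complete discrete valuation ring. Here $\widehat D_\fp\cong M_s(D')$ for a central division algebra $D'$ over $\widehat K_\fp$, and $\widehat\Delta_\fp$ is, up to conjugacy, the unique maximal order $\cong M_s(\Delta')$, where $\Delta'$ is the unique maximal order of $D'$. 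The key point, and the main obstacle, is that $\Delta'$ is a noncommutative discrete valuation ring, so every full $\widehat\Delta_\fp$-lattice in $\widehat V_\fp$ is free of the expected rank; via Morita equivalence this reduces to the freeness of lattices over $\Delta'$. Granting this, $\widehat M_\fp\cong\widehat\Delta_\fp^{\,r}$, whence $\Hom_{\widehat\Delta_\fp}(\widehat M_\fp,\widehat M_\fp)\cong M_r(\widehat\Delta_\fp)\cong M_{rs}(\Delta')$, which is a maximal order in $M_{rs}(D')\cong\widehat A_\fp$ by Theorem~\ref{mat max}. As $\fp$ was arbitrary, $\cO(M)$ is maximal.

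For (ii), let $\Lambda'$ be any maximal $R$-order in $A$ and fix a full right $\Delta$-lattice $M_0\subseteq V$ (for instance the $\Delta$-span of a $D$-basis). Put $N:=\Lambda' M_0=\sum_{\lambda\in\Lambda'}\lambda(M_0)\subseteq V$. Since the elements of $\Lambda'\subseteq A$ are $D$-linear, $N$ is stable under the right $\Delta$-action, hence a right $\Delta$-module; it is the image of the finitely generated $R$-module $\Lambda'\otimes_R M_0$, so $N$ is a finitely generated, torsion-free, full right $\Delta$-lattice in $V$. By construction $\Lambda' N\subseteq N$, i.e.\ $\Lambda'\subseteq\Hom_\Delta(N,N)$. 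By part (i) the right-hand side is a maximal $R$-order, and since $\Lambda'$ is itself maximal, this inclusion forces $\Lambda'=\Hom_\Delta(N,N)$, as desired. The only substantial work lies in part (i); part (ii) is then essentially formal, once one observes that multiplying a full $\Delta$-lattice by $\Lambda'$ yields a lattice whose associated order contains $\Lambda'$.
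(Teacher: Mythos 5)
Your proposal is correct, and both halves are sound: the identification of $\Hom_{\Delta}(M,M)$ with the associated order, the reduction of maximality to the complete local case via the local--global criterion, and the formal argument for part (ii) with $N=\Lambda'M_{0}$ all go through, with the facts you grant (freeness of lattices over the maximal order of a division algebra over a complete discrete valuation ring, preservation/detection of maximality under localization and completion) being standard results in \cite{9}. Note that the paper itself gives no proof of this statement---it is quoted directly from Reiner \cite{9}---and your argument is essentially a reconstruction of Reiner's own proof of Theorem 21.6, so there is nothing in the paper for it to diverge from.
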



\section{Finite Subgroups of Division Algebras}\label{findiv}
In this section, we give a classification of all possible finite groups that can be embedded in the multiplicative subgroup of a division algebra over $\Q$ with certain properties that are related to our situation later in Section~\ref{quat mat rep}. Our main reference is a paper of Amitsur~\cite{1}. We start with the following notion.

\begin{definition}\label{def 9}
  Let $m, r$ be two relatively prime positive integers, and we put $s:=\gcd(r-1, m)$ and $t:=\frac{m}{s}.$ Also, let $n$ be the smallest integer such that $r^n \equiv 1~(\textrm{mod}~m).$ We denote by $G_{m,r}$ the group generated by two elements $a,b$ satisfying the relations
  \begin{equation*}
    a^m=1,~b^n=a^t,~ bab^{-1}=a^r.
  \end{equation*}
  This type of groups includes the \emph{dicyclic group} of order $mn,$ in which case, we often write $\textrm{Dic}_{mn}$ for $G_{m,r}$. As a convention, if $r=1,$ then we put $n=s=1,$ and hence, $G_{m,1}$ is a cyclic group of order $m.$
\end{definition}

Given $m,r,s,t,n,$ as above, we will consider the following two conditions in the sequel: \\
  (C1) $\gcd(n,t)=\gcd(s,t)=1.$ \\
  (C2) $n=2n^{\prime}, m=2^{\alpha} m^{\prime}, s=2 s^{\prime}$ where $\alpha \geq 2,$ and $n^{\prime}, m^{\prime}, s^{\prime}$ are all odd integers. Moreover, $\gcd(n,t)=\gcd(s,t)=2$ and $r \equiv -1~(\textrm{mod}~2^{\alpha}).$ \\
\indent Now, let $p$ be a prime number that divides $m.$ We define: \\
(i) $\alpha_p$ is the largest integer such that $p^{\alpha_p}~|~m.$ \\
(ii) $n_p$ is the smallest integer satisfying $r^{n_p} \equiv 1~(\textrm{mod}~mp^{-\alpha_p}).$ \\
(iii) $\delta_p$ is the smallest integer satisfying $p^{\delta_p} \equiv 1~(\textrm{mod}~mp^{-\alpha_p}).$ \\

Then we have the following theorem that provides us with a useful criterion for a group $G_{m,r}$ to be embedded in a division ring.
\begin{theorem}[{\cite[Theorem 3, Theorem 4, and Lemma 10]{1}}]\label{thm 11}
  A group $G_{m,r}$ can be embedded in a division ring if and only if either (C1) or (C2) holds, and one of the following conditions holds: \\
  (1) $n=s=2$ and $r \equiv -1~(\textrm{mod}~m)$. \\
  (2) For every prime number $q~|~n,$ there exists a prime number $p~|~m$ such that $q \nmid n_p$ and that either \\
  (a) $p \ne 2$, and $\gcd(q,(p^{\delta_p}-1)/s)=1$, or \\
  (b) $p=q=2,$ (C2) holds, and $m/4 \equiv \delta_p \equiv 1~(\textrm{mod}~2).$
\end{theorem}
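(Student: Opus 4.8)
The plan is to realize $G_{m,r}$ explicitly inside a cyclic (crossed-product) algebra and then translate embeddability into a Schur-index computation via local invariants, exactly the mechanism furnished by Propositions~\ref{local inv} and~\ref{index end alg}. Set $L=\Q(\zeta_m)$ and let $\sigma\in\Gal(L/\Q)$ be the automorphism $\zeta_m\mapsto\zeta_m^r$; since $\gcd(r,m)=1$ it has order $n$, and $K:=L^{\langle\sigma\rangle}$ is its fixed field, so $L/K$ is cyclic of degree $n$. Because $s=\gcd(r-1,m)$ divides $r-1$, the element $\zeta_m^t$ (with $t=m/s$) is fixed by $\sigma$ and hence lies in $K$. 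I would then form the cyclic algebra $A=(L/K,\sigma,\zeta_m^t)=\bigoplus_{i=0}^{n-1}Lb^i$ with $b\zeta_m b^{-1}=\zeta_m^r$ and $b^n=\zeta_m^t$; the elements $a=\zeta_m$ and $b$ visibly generate a copy of $G_{m,r}$ in $A^\times$. The statement then becomes: $G_{m,r}$ embeds in a division ring if and only if $A$ is a division algebra, i.e. its Schur index over $K$ equals $n=(\dim_K A)^{1/2}$.

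First I would prove the necessity of (C1)/(C2). For this I would use the classical structural fact that every finite subgroup of a division ring has all odd Sylow subgroups cyclic and its Sylow $2$-subgroup cyclic or generalized quaternion (each finite abelian subgroup lies inside a field, hence is cyclic). Writing out what this forces on the metacyclic presentation of $G_{m,r}$ — in particular the coprimality relations among $n$, $s$, $t$ and the $2$-adic valuations — yields precisely conditions (C1) and (C2), the latter being the generalized-quaternion regime signalled by $r\equiv-1\pmod{2^{\alpha}}$.

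The core of the argument is the index computation. By Proposition~\ref{index end alg}, the index of $A$ is the least common denominator of its local invariants $\mathrm{inv}_\nu(A)$, so it suffices to determine, for each prime $q\mid n$, whether the $q$-part of $n$ already divides some local denominator. At a place $\nu$ of $K$ above a rational prime $p\mid m$, the invariant of the cyclic algebra is read off from the decomposition of $p$ in $L/K$ together with the valuation of $\zeta_m^t$; the quantities $n_p$ (the order of $r$ mod $mp^{-\alpha_p}$) and $\delta_p$ (the order of $p$ mod $mp^{-\alpha_p}$) are exactly the residue-and-ramification data governing this, and condition (2)(a)/(2)(b) says precisely that such a $p$ forces the full $q$-part into the denominator. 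The distinguished case (1), where $n=s=2$ and $r\equiv-1\pmod m$, is the dicyclic one: then $K=\Q(\zeta_m)^{+}$ is totally real, $A$ is a totally definite quaternion algebra, and its ramification at the real places (invariant $\tfrac12$, as in Proposition~\ref{local inv}) makes it a division algebra directly. Assembling the local contributions and checking that they have least common denominator $n$ under (1) or (2), and only then, would complete both directions.

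The main obstacle is the local invariant computation at the prime $2$ and the bookkeeping that separates the (C1) regime from the (C2) regime: the behaviour of $\zeta_m^t$ at the dyadic places, the interplay of $n_p$ and $\delta_p$ when $p=q=2$, and the role of the congruence $m/4\equiv\delta_p\equiv1\pmod 2$ all demand delicate $2$-adic analysis. A secondary subtlety is reconciling ``embeds in \emph{some} division ring'' with ``$A$ is a division algebra'': one must argue that the $\Q$-subalgebra generated by any faithful image of $G_{m,r}$ in a division ring is a simple quotient of $\Q[G_{m,r}]$ and is forced, via the faithful induced representation, to be isomorphic to the cyclic algebra $A$ constructed above, so that no different division algebra can accommodate $G_{m,r}$ once $A$ fails to be one.
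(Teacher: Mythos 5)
Your proposal cannot be compared against a proof in the paper, because the paper contains none: Theorem~\ref{thm 11} is quoted verbatim from Amitsur (Theorem 3, Theorem 4, and Lemma 10 of the cited reference), so the benchmark is Amitsur's original argument. Your architecture is in fact exactly his: realize $G_{m,r}$ inside the cyclic algebra $A=(\Q(\zeta_m)/K,\sigma,\zeta_m^t)$, reduce embeddability to $A$ being a division algebra, and then split the criterion into a group-theoretic part ((C1)/(C2)) and an arithmetic part ((1)/(2)) read off from local invariants. The construction itself is correct (your verification that $\zeta_m^t\in K$ is the right one), and the ``secondary subtlety'' you worry about at the end has a one-line resolution that is cleaner than the route you propose: if $G_{m,r}$ embeds faithfully in a division ring $D$, the image of $a$ generates a field isomorphic to $\Q(\zeta_m)$ (a finite-dimensional domain quotient of $\Q[x]/(x^m-1)$ in which $x$ has order $m$), so one gets a nonzero $\Q$-algebra homomorphism $A\to D$, which is injective because $A$ is central simple over $K$; hence $G_{m,r}$ embeds in a division ring if and only if $A$ is a division algebra, unconditionally, with no need to classify the faithful simple quotients of $\Q[G_{m,r}]$.

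The genuine gaps are the two steps you defer, and they are precisely where the content of the theorem lies. First, the necessity of (C1)/(C2) is asserted to follow by ``writing out'' the cyclic-Sylow/generalized-quaternion constraint, but this is not a routine translation: whether, say, the Sylow $q$-subgroup of $G_{m,r}$ is cyclic depends on the order of $r$ modulo the $q$-part of $m$, not merely on the quantities $\gcd(n,t)$ and $\gcd(s,t)$ appearing in (C1), and extracting exactly conditions (C1)/(C2) from embeddability is the content of Amitsur's group-theoretic lemmas. Second, and more seriously, the equivalence of (1)/(2) with $A$ being a division algebra is described but never computed, and the tools you cite do not apply: Propositions~\ref{local inv} and~\ref{index end alg} are statements about endomorphism algebras of abelian varieties over finite fields, where the invariants are pinned down by the Frobenius; for the cyclotomic algebra $A$ one needs the Albert--Brauer--Hasse--Noether theorem together with Hasse's explicit formulas for the local indices of cyclic algebras at the primes of $K$ above each $p\mid m$ (and a separate dyadic analysis in the (C2) regime). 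That computation---which is where $n_p$, $\delta_p$, the factor $(p^{\delta_p}-1)/s$, and the parity condition in (2)(b) actually come from---constitutes the bulk of Amitsur's proof, and your proposal explicitly labels it an ``obstacle'' rather than carrying it out. Only the sufficiency of case (1), the totally definite quaternion case, is actually completed in your sketch; as it stands, the rest is a correct roadmap, not a proof.
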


Now, let $G$ be a finite group. One of our main tools in this section is the following result.
\begin{theorem}[{\cite[Theorem 7]{1}}]\label{thm 12}
  $G$ can be embedded in a division ring if and only if $G$ is of one of the following types: \\
  (1) Cyclic groups. \\
  (2) $G_{m,r}$ where the integers $m,r,$ etc, satisfy Theorem~\ref{thm 11} (which is not cyclic). \\
  (3) $\mathfrak{T}^* \times G_{m,r}$ where $\mathfrak{T}^*$ is the binary tetrahedral group of order $24$, and $G_{m,r}$ is either cyclic of order $m$ with $\gcd(m,6)=1$, or of type (2) with $\gcd(|G_{m,r}|, 6)=1$ (where $|G_{m,r}|$ denotes the order of the group $G_{m,r}).$ In both cases, for all primes $p~|~m,$ the smallest integer $\gamma_p$ satisfying $2^{\gamma_p} \equiv 1~(\textrm{mod}~p)$ is odd. \\
  (4) $\mathfrak{O}^*$, the binary octahedral group of order $48$. \\
  (5) $\mathfrak{I}^*,$ the binary icosahedral group of order $120.$
\end{theorem}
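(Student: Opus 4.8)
The statement is a biconditional classification, so the plan is to establish the ``only if'' direction by reducing to representation theory and then invoking the structure theory of fixed-point-free groups, and to close the ``if'' direction by exhibiting an explicit division algebra for each listed type. The first move is a reduction to faithful irreducible $\mathbb{Q}$-representations. Suppose $G \hookrightarrow D^\times$ for a division ring $D$. The $\mathbb{Q}$-subalgebra $\mathbb{Q}[G] \subseteq D$ generated by $G$ is a finite-dimensional $\mathbb{Q}$-algebra (of dimension $\leq |G|$) with no zero divisors, hence itself a division algebra. Viewing $\mathbb{Q}[G]$ as a quotient of the group algebra, whose Wedderburn decomposition is $\prod_i M_{n_i}(D_i)$, a quotient that is a division ring must be a single simple component with $n_i = 1$. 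Thus $G$ embeds in a division ring \emph{if and only if} some Wedderburn component $D_0$ of the group algebra is a division algebra and the projection $G \to D_0^\times$ is faithful. Extending scalars to $\mathbb{C}$, faithfulness together with the division property forces $\rho(g) - 1$ to be invertible for every $g \neq 1$; that is, the associated complex representation is \emph{fixed-point-free}, so $G$ is necessarily a Frobenius complement.

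Next I would import the classical structure theory of Frobenius complements (Zassenhaus, Burnside, Vincent): every Sylow subgroup for an odd prime is cyclic, the Sylow $2$-subgroup is cyclic or generalized quaternion, the solvable complements are metacyclic ``$Z$-groups'' up to the exceptional binary-polyhedral extensions, and the non-solvable ones arise only through the binary icosahedral group $\mathrm{SL}_2(5)$. This already confines $G$ to the shapes appearing in the theorem: the metacyclic groups $G_{m,r}$, the binary tetrahedral, octahedral and icosahedral groups $\mathfrak{T}^*, \mathfrak{O}^*, \mathfrak{I}^*$, and products $\mathfrak{T}^* \times G_{m,r}$ with a coprime factor.

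The \textbf{main obstacle} is the arithmetic refinement deciding, for each surviving candidate, whether its faithful irreducible representation actually has a \emph{division} endomorphism algebra rather than a proper matrix algebra $M_n(D_0)$ with $n > 1$. This is a Schur-index computation: one presents the component attached to $G_{m,r}$ as a cyclic or crossed-product algebra over the cyclotomic field $\mathbb{Q}(\zeta_m)$ and computes its local Hasse invariants in the relevant Brauer groups. The division-algebra condition then translates prime-by-prime into the congruence hypotheses (C1)/(C2) together with the criteria recorded in Theorem~\ref{thm 11}; in type (3) the parity condition on $\gamma_p$ is precisely what makes the quaternionic factor $\mathfrak{T}^*$ (whose Schur index equals $2$) compatible, via coprime local indices, with the cyclic factor of order coprime to $6$.

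For the exceptional groups one instead checks directly that the relevant rational representation generates a quaternion division algebra --- over $\mathbb{Q}(\sqrt{2})$ for $\mathfrak{O}^*$ (forced by its order-$8$ elements, since $\zeta_8 + \zeta_8^{-1} = \sqrt{2}$) and over $\mathbb{Q}(\sqrt{5})$ for $\mathfrak{I}^*$, while $\mathfrak{T}^* = \mathrm{SL}_2(3)$ already sits inside the rational Hamilton quaternions, which is what permits the tensor constructions of type (3). Finally, the converse direction is obtained by reading these computations backwards: each listed group embeds into the division algebra produced by its faithful Wedderburn component, so the criterion is both necessary and sufficient. I expect the Brauer-group bookkeeping of the third step to be by far the most delicate part, since it is where the combinatorial-looking conditions (C1)/(C2) and the prime conditions of Theorem~\ref{thm 11} are genuinely forced.
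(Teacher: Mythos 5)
The paper does not actually prove this statement: it is recorded as a direct quotation of Amitsur (\cite[Theorem 7]{1}) with no proof given, so the only meaningful comparison is with Amitsur's original argument, and your outline reconstructs essentially that argument --- reduction to a faithful Wedderburn component of the rational group algebra $\mathbb{Q}G$, the fixed-point-free/Frobenius-complement structure theory of Zassenhaus--Vincent, and then the cyclotomic cyclic-algebra/Hasse-invariant computations that force (C1)/(C2) and the criteria of Theorem~\ref{thm 11}, with the binary polyhedral groups $\mathfrak{T}^*$, $\mathfrak{O}^*$, $\mathfrak{I}^*$ handled by direct quaternion-algebra considerations. The one point your reduction under-covers is positive characteristic (the theorem concerns arbitrary division rings): if $G$ lies in a division ring of characteristic $p$, then $\mathbb{Q}[G]$ makes no sense, and one instead notes that $\mathbb{F}_p[G]$ is a finite domain, hence a field by Wedderburn's little theorem, so $G$ is cyclic --- a standard one-line fix that should be inserted before your characteristic-zero argument.
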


Having stated most of the necessary results, we can proceed to achieve the goal of this section. First, we give two important lemmas, both of whose proofs follow from Theorem~\ref{thm 11}, unless otherwise stated.

\begin{lemma}\label{lem 15}
  Let $m,r,s,t,n$ be as in Definition~\ref{def 9} satisfying (C1) with $n=2$ and $m \in \{2,3,4,6,7,9,14,18\}.$ Then the group $G:=G_{m,r}$ can be embedded in a division ring if and only if $G$ is one of the following groups: \\
  (1) A cyclic group $C_4$\footnote{If we want $G_{m,r}$ to be non-cyclic as in Theorem~\ref{thm 12}, then we may exclude this case.}; \\
  (2) $\textrm{Dic}_{12}$, a dicyclic group of order $12$; \\
  (3) $\textrm{Dic}_{28}$, a dicyclic group of order $28$; \\
  (4) $\textrm{Dic}_{36}$, a dicyclic group of order $36.$
\end{lemma}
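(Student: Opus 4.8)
The plan is to run through the eight admissible values of $m$ one at a time, in each case pinning down the relevant $r$ from the requirement $n=2$, computing $s$ and $t$, verifying (C1), and then feeding the data into Theorem~\ref{thm 11}. The structural reduction I would record first is that $n=2$ forces $r$ to represent an element of order $2$ in $(\Z/m\Z)^{\times}$; since every $m\in\{2,3,4,6,7,9,14,18\}$ has the form $1,2,4,p^k$ or $2p^k$, the group $(\Z/m\Z)^{\times}$ is cyclic and so admits a unique such class, namely $r\equiv-1\pmod m$ (with $m=2$ to be treated as a formal, degenerate instance). It therefore suffices to analyze the single group $G_{m,-1}$ for each $m$. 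Computing $s=\gcd(r-1,m)=\gcd(2,m)$ and $t=m/s$ shows that (C1) holds exactly when $4\nmid m$; in particular $m=4$ (where $s=t=2$) is excluded by the hypothesis, while every other value survives.

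Next I would identify the isomorphism type of $G_{m,-1}$ in each surviving case, which is the bookkeeping that turns the abstract criterion into the concrete list. For odd $m\in\{3,7,9\}$ one has $s=1$, $t=m$, so $b^2=a^m=1$ and $bab^{-1}=a^{-1}$, giving the dihedral group $D_m$; for $m\equiv2\pmod4$, i.e. $m\in\{6,14,18\}$, one has $s=2$ and $t=m/2$ odd, so $b^2=a^{m/2}\neq1$ and the group is the dicyclic group of order $2m$, namely $\mathrm{Dic}_{12},\mathrm{Dic}_{28},\mathrm{Dic}_{36}$ respectively; and the degenerate value $m=2$ produces the dicyclic group of order $4$, which is just the cyclic group $C_4$.

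The decisive step is the case-by-case application of Theorem~\ref{thm 11}. For the three dicyclic groups the verification is immediate: one has $n=s=2$ and $r\equiv-1\pmod m$, so condition~(1) holds and each embeds; the group $C_4$ embeds trivially (e.g. in $\Q(i)$). The content of the ``only if'' direction lies in ruling out $D_3,D_7,D_9$. Here $s=1\neq2$ kills condition~(1), so I would check condition~(2) with the single prime $q=2\mid n$. Since each such $m$ is a prime power $p^k$, the modulus $mp^{-\alpha_p}$ collapses to $1$, forcing $n_p=\delta_p=1$; the relevant quantity is then $(p^{\delta_p}-1)/s=p-1$, which is even because $p$ is odd, so $\gcd\bigl(2,(p^{\delta_p}-1)/s\bigr)=2\neq1$ and condition~(2)(a) fails. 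As (C2) is excluded by hypothesis, (2)(b) is unavailable, so no dihedral group embeds.

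I expect the main obstacle to be the careful treatment of the two boundary values of $m$ rather than any single hard computation: one must notice that $m=4$ violates (C1), so that $Q_8$ (which would embed under (C2) via condition~(1)) simply does not arise under the present hypothesis, and that $m=2$ has to be read as the degenerate dicyclic case, with the formal choice $n=2$, in order to produce $C_4$. Once these conventions are fixed, the arithmetic in Theorem~\ref{thm 11} is routine and the four surviving groups $C_4,\mathrm{Dic}_{12},\mathrm{Dic}_{28},\mathrm{Dic}_{36}$ are exactly those claimed.
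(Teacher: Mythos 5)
Your proposal is correct, and it reaches the same list by the same overall strategy (case analysis on $m$, with Theorem~\ref{thm 11}(1) settling the positive cases), but it handles the crucial ``only if'' direction differently from the paper. The paper rules out the dihedral groups by citing two external results: for $m\in\{3,7\}$ it invokes Amitsur's result on groups of square-free order \cite[Corollary 5]{1}, and for $m=9$ it invokes Lam's criterion \cite[Theorem 3.1]{6}. You instead verify directly that condition (2) of Theorem~\ref{thm 11} fails: since $m=p^k$ is a prime power, the modulus $mp^{-\alpha_p}$ is $1$, so $n_p=\delta_p=1$, and then $\gcd\bigl(2,(p^{\delta_p}-1)/s\bigr)=\gcd(2,p-1)=2$ kills (2)(a), while (2)(b) is unavailable under (C1); this treats $m=3,7,9$ uniformly and keeps the proof self-contained within Theorem~\ref{thm 11}, at the cost of the $n_p,\delta_p$ bookkeeping that the paper's citations avoid. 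Two smaller divergences are also worth noting: your opening reduction to $r\equiv-1\pmod m$ via cyclicity of $(\Z/m\Z)^{\times}$ is done once and for all, where the paper re-derives it case by case (e.g.\ via $\gcd(r+1,r-1)\leq 2$ for $m=9$); and for $m=4$ you compute $s=t=2$ so that (C1) itself fails, whereas the paper argues under (C1) that $s=4$, $t=1$ would force $r\equiv 1\pmod 4$, contradicting $n=2$ --- equivalent conclusions by slightly different bookkeeping. Both treatments of the degenerate case $m=2$ (formally yielding $C_4$) agree.
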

\begin{proof}
  Note that $t$ is an odd integer since $n=2$ and $\gcd(n,t)=1.$ Hence, we have the following three cases to consider: \\
\indent (I) If $m \in \{3,7\},$ then $|G|=2m$ is square-free. Now, since $n=2$ and $m$ is an odd prime number, it is easy to see that $r \equiv -1~(\textrm{mod}~m)$ (by the definition of $n$), and then by looking at the presentation of $G,$ we can see that $G$ is not cyclic. By \cite[Corollary 5]{1}, $G$ cannot be embedded in a division ring. \\
\indent (II) Suppose that $m=9.$ Then since $\gcd(s,t)=1,$ we cannot have $s=t=3.$ Also, since $n=2$, we cannot have $s=9, t=1$ by the definition of $n.$ Hence, the only possible case is that $s=1, t=9.$ Since $n=2$ and $\gcd(r+1,r-1) \leq 2,$ we have $r \equiv -1~(\textrm{mod}~9)$ so that the order of $\overline{r}$ in the unit group $U(\Z/9\Z)$ is exactly $2.$ By \cite[Theorem 3.1]{6}, $G$ cannot be embedded in a division ring. \\
\indent (III) If $m \in \{2,4,6,14,18\},$ then we have the following five subcases to consider: \\
  (i) If $m=2,$ then since $\gcd(n,t)=1,$ we get $s=2, t=1.$ In particular, we have $n=s=2$ and $r \equiv -1~(\textrm{mod}~2).$ Hence, $G$ can be $G_{2,r}=C_4.$ \\
  (ii) If $m=4,$ then since $\gcd(n,t)=1,$ we get $s=4, t=1.$ By the definition of $s$, we have $m=4~|~r-1$, and hence, this case cannot occur because of our assumption that $n=2.$ \\
  (iii) If $m=6,$ then since $\gcd(n,t)=1,$ either $s=6, t=1$ or $s=2, t=3.$ If $s=6, t=1,$ then by a similar argument as in (ii), this case cannot occur. Now, if $s=2, t=3,$ then since $n=2,$ we have $r \equiv -1~(\textrm{mod}~6).$ Hence, $G$ can be $G_{6,r}=\textrm{Dic}_{12}.$ \\
  (iv) Similarly, if $m=14,$ then we only need to consider the case when $s=2, t=7.$ Since $n=2,$ we have $r \equiv -1~(\textrm{mod}~14).$ Hence, $G$ can be $G_{14,r}=\textrm{Dic}_{28}.$ \\
  (v) If $m=18,$ then, since $n=2,$ we can see that $r \equiv -1 \pmod{18}$ by a direct computation. Then it follows that $s=\gcd(r-1, 18)=2$ and $t=9.$ As before, $G$ can be $G_{18,r}=\textrm{Dic}_{36}.$ \\
\indent This completes the proof.
\end{proof}

\begin{lemma}\label{lem 16}
  Let $m,r,s,t,n$ be as in Definition~\ref{def 9} satisfying (C2) with $n=2$ and $m \in \{2,3,4,6,7,9,14,18\}.$ Then the group $G:=G_{m,r}$ can be embedded in a division ring if and only if $G=Q_8$, a quaternion group of order $8$.
\end{lemma}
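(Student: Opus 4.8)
The plan is to mirror the structure of the proof of Lemma~\ref{lem 15}, but now working under condition (C2) instead of (C1). Condition (C2) is far more restrictive: it forces $n=2n'$, $m=2^\alpha m'$, $s=2s'$ with $\alpha\geq 2$ and $n',m',s'$ all odd, together with $\gcd(n,t)=\gcd(s,t)=2$ and $r\equiv -1\pmod{2^\alpha}$. Since we are given $n=2$, we immediately get $n'=1$, and the parity constraints $\gcd(n,t)=\gcd(s,t)=2$ force $t$ to be \emph{even}, in sharp contrast to Lemma~\ref{lem 15} where $t$ was odd. First I would extract these divisibility consequences: $\alpha\geq 2$ means $4\mid m$, so among the candidate values $m\in\{2,3,4,6,7,9,14,18\}$ only those divisible by $4$ survive, namely $m=4$ (and one must check whether $m=12$-type obstructions arise, but $12\notin$ the list). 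This is the key pruning step that eliminates most cases before any real computation.

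Next I would carry out the case analysis over the surviving values of $m$. For each $m$ in the list I would check the four (C2)-constraints in turn; the $\alpha\geq 2$ requirement alone kills $m\in\{2,3,6,7,9,14,18\}$ because none of these is divisible by $4$. This leaves only $m=4$, where $\alpha=2$, $m'=1$. Here $s=2s'$ with $s'$ odd and $\gcd(s,t)=2$, combined with $st=m=4$, forces $s=2,t=2$; then $r\equiv-1\pmod{2^\alpha}=\pmod 4$ gives $r\equiv 3\pmod 4$, so $r=3$ and $G=G_{4,3}$. One then identifies this presentation $a^4=1,\ b^2=a^2,\ bab^{-1}=a^{-1}$ as exactly the quaternion group $Q_8$ of order $mn=8$. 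Finally I would invoke Theorem~\ref{thm 11} to confirm that $G_{4,3}=Q_8$ genuinely embeds in a division ring: condition (C2) holds, and I would verify clause (1) (namely $n=s=2$ and $r\equiv -1\pmod m$) is satisfied, since $n=2$, $s=2$, and $r=3\equiv -1\pmod 4$. This simultaneously proves the "if" direction and confirms $Q_8$ is the unique output.

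The main obstacle I anticipate is not the arithmetic but the \emph{bookkeeping} of verifying that no other configuration of $(s,t)$ slips through for $m=4$, and more subtly, making sure I have correctly read all the parity conditions in (C2) together. In particular, one must be careful that $\gcd(n,t)=2$ (not $1$) is what distinguishes this lemma from the previous one, and that this is consistent with $n=2$ forcing $2\mid t$; for $m=4$ this is automatic since $t=2$, but the logic should be spelled out so that the reader sees why $m=4$ with $(s,t)=(2,2)$ is the \emph{only} admissible data. A secondary point of care is confirming that the remaining clause-(2) possibilities of Theorem~\ref{thm 11} do not produce additional groups; but since clause (1) already accounts for $G_{4,3}$ and every other $m$ is eliminated at the (C2)-admissibility stage, no separate analysis of clause (2) is needed here. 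Thus the proof should be short, with essentially all the work concentrated in the single surviving case $m=4$.

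\begin{proof}
Since (C2) requires $m=2^{\alpha}m'$ with $\alpha\geq 2$, we have $4\mid m$, so among $m\in\{2,3,4,6,7,9,14,18\}$ only $m=4$ is admissible; all other values fail the condition $\alpha\geq 2$ and are excluded. For $m=4$ we have $\alpha=2$ and $m'=1$. The conditions $s=2s'$ with $s'$ odd and $st=m=4$ force $s=2$ and $t=2$; note $\gcd(n,t)=\gcd(2,2)=2$ and $\gcd(s,t)=\gcd(2,2)=2$, consistent with (C2). Moreover $r\equiv -1\pmod{2^{\alpha}}$ gives $r\equiv -1\equiv 3\pmod 4$, so $r=3$ and $G=G_{4,3}$, a group of order $mn=8$ with presentation $a^4=1,\ b^2=a^2,\ bab^{-1}=a^{-1}$, which is the quaternion group $Q_8$. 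Finally, applying Theorem~\ref{thm 11}, condition (C2) holds and clause (1) is satisfied since $n=s=2$ and $r\equiv -1\pmod m$; hence $G_{4,3}=Q_8$ embeds in a division ring. This completes the proof.
\end{proof}
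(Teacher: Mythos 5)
Your proposal is correct and follows essentially the same route as the paper's own proof: the condition $\alpha\geq 2$ in (C2) forces $4\mid m$, leaving only $m=4$, where one gets $n=s=2$, $t=2$, $r\equiv -1\pmod 4$, identifies $G_{4,r}$ as $Q_8$, and confirms embeddability via clause (1) of Theorem~\ref{thm 11}. Your write-up merely spells out the $(s,t)$ bookkeeping and the Theorem~\ref{thm 11} verification that the paper leaves implicit.
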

\begin{proof}
Since $\alpha \geq 2,$ we have that $4~|~m$, and hence, we have only one case to consider, namely, the case when $m=4.$ In this case, we get that $n=s=2$ and $r \equiv -1~(\textrm{mod}~4),$ and hence, $G$ can be $G_{4,r}=Q_8$. \\
\indent This completes the proof.
\end{proof}

Now, let $D$ be a division algebra of degree 2 over its center $K$ (i.e.\ $[D:K]=4$) where $K$ is an algebraic number field with $[K:\Q]=3$. Then it follows that $\textrm{dim}_{\Q}D = 12$ so that the order of every element of finite order of $D^{\times}$ is at most $18.$ If the group $G:=G_{m,r}$ is contained in $D^{\times},$ then $n~|~2$ (see \cite[\S7]{1}), and hence, we have either $n=1$ or $n=2.$ Also, since $G$ contains an element of order $m,$ we have $\varphi(m) ~|~ 6.$ The last preliminary result that we need is the following theorem.
\begin{theorem}[{\cite[Theorem 10]{1}}]\label{thm 17}
  Let $D$ be a division algebra of degree 2 over an algebraic number field $K$. \\
 (a) If $D$ contains a binary octahedral group $\mathfrak{O}^*,$ then $\sqrt{2}\in K.$ \\
 (b) If $D$ contains a binary icosahedral group $\mathfrak{I}^*,$ then $\sqrt{5}\in K.$
\end{theorem}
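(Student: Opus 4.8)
The plan is to turn the embedding into a two-dimensional representation and read off its character field. Since $D$ is a division algebra of degree $2$ over its center $K$, it is a quaternion algebra, hence splits over an algebraic closure: $D \otimes_K \overline{K} \cong M_2(\overline{K})$. Composing a given embedding $G \hookrightarrow D^{\times}$ (with $G = \mathfrak{O}^*$ in case (a) and $G = \mathfrak{I}^*$ in case (b)) with the inclusion $D^{\times} \hookrightarrow (D \otimes_K \overline{K})^{\times} = \GL_2(\overline{K})$ produces a representation $\rho \colon G \to \GL_2(\overline{K})$. Because $D \hookrightarrow D \otimes_K \overline{K}$ is injective, $\rho$ is faithful, and since $\mathfrak{O}^*$ and $\mathfrak{I}^*$ are non-abelian, a faithful two-dimensional $\rho$ cannot split as a sum of two characters; thus $\rho$ is absolutely irreducible, i.e.\ one of the faithful ``spin'' representations of $G$.

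Next I would record the arithmetic link between $\rho$ and $K$. For each $g \in G \subseteq D$, the reduced characteristic polynomial of $g$ over $K$ has the form $t^2 - \mathrm{Trd}_{D/K}(g)\, t + \mathrm{Nrd}_{D/K}(g)$ with coefficients in $K$, and under the splitting isomorphism the reduced trace becomes the ordinary matrix trace. Hence the character of $\rho$ satisfies $\chi_{\rho}(g) = \mathrm{tr}\,\rho(g) = \mathrm{Trd}_{D/K}(g) \in K$ for every $g \in G$, so the character field satisfies $\Q(\chi_{\rho}) \subseteq K$. (The Schur index $2$ is precisely why $\rho$ is not realizable over $K$ itself, but only its character matters here.)

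It then remains to compute $\Q(\chi_{\rho})$ for the faithful two-dimensional representations, which is a direct calculation using the realization of $\mathfrak{O}^*$ and $\mathfrak{I}^*$ as subgroups of the unit quaternions $\mathrm{SU}(2)$. An element of order $8$ in $\mathfrak{O}^*$ has eigenvalues $e^{\pm i\pi/4}$, so $\chi_{\rho} = 2\cos(\pi/4) = \sqrt{2}$; an element of order $5$ in $\mathfrak{I}^*$ has eigenvalues that are primitive fifth roots of unity, so $\chi_{\rho} = 2\cos(2\pi/5) = \tfrac{\sqrt{5}-1}{2}$. The two faithful two-dimensional representations of each group are Galois conjugate, and in each case the character field is $\Q(\sqrt{2})$ for $\mathfrak{O}^*$ and $\Q(\sqrt{5})$ for $\mathfrak{I}^*$. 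Combining with $\Q(\chi_{\rho}) \subseteq K$ yields $\sqrt{2} \in K$ in case (a) and $\sqrt{5} \in K$ in case (b).

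The main obstacle is the bookkeeping in the middle step: verifying cleanly that the reduced trace is $K$-valued and coincides with the matrix trace after base change to $\overline{K}$, and confirming that $\rho$ is genuinely a faithful spin representation rather than one inflated from a quotient such as $S_4$ or $A_5$ (this is settled by faithfulness together with non-abelianness, which forces irreducibility). Once that identification is secured, the conclusion follows from the elementary trace computations above.
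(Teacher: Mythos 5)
The paper offers no proof of this statement at all: it is imported verbatim from Amitsur's classification of finite subgroups of division rings (the bracketed citation), so there is no internal argument to compare yours against line by line. Your proof is correct and self-contained. The key steps all hold up: splitting $D\otimes_K\overline{K}\cong M_2(\overline{K})$ turns the embedding into a faithful two-dimensional representation $\rho$; in characteristic zero a reducible two-dimensional representation is a sum of characters and so has abelian image, whence $\rho$ is absolutely irreducible; faithfulness then pins $\rho$ down as a spin representation (for $\mathfrak{O}^*$ this is what excludes the two-dimensional representation inflated from the quotient $S_4$, and for $\mathfrak{I}^*=SL_2(\F_5)$ both two-dimensional irreducibles are faithful); the identity $\chi_{\rho}(g)=\operatorname{Trd}_{D/K}(g)\in K$ is legitimate, since reduced characteristic polynomials are $K$-valued by Galois descent and become ordinary characteristic polynomials under any splitting; and the trace values $\pm\sqrt{2}$ on order-$8$ classes, resp.\ $(-1\pm\sqrt{5})/2$ on order-$5$ classes, then force $\sqrt{2}\in K$, resp.\ $\sqrt{5}\in K$. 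This is genuinely a different route from the cited source: Amitsur argues with the rational group algebra, identifying the $\Q$-span of $G$ inside $D$ with a faithful simple component of $\Q G$ (a quaternion algebra with center $\Q(\sqrt{2})$, resp.\ $\Q(\sqrt{5})$) and then showing that this center must sit inside $K$, e.g.\ because the $K$-span of $G$ is all of $D$, so anything commuting with both $G$ and $K$ is central in $D$. Your reduced-trace argument buys a cleaner endgame — the $K$-rationality of character values is immediate, and no control of the center of a subalgebra is needed — at the mild cost of quoting the character tables of the binary polyhedral groups; the two proofs are nonetheless close in spirit, since the character field of the spin representation is exactly the center of the corresponding simple component.
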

An immediate consequence of Theorem~\ref{thm 17} is the following.
\begin{corollary}\label{O*I* not poss cor}
Let $D$ be a division algebra of degree 2 over an algebraic number field $K$ with $[K:\Q]=3$. Then $D$ does not contain $\mathfrak{O}^*$ and $\mathfrak{I}^*.$
\end{corollary}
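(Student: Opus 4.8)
The plan is to read Corollary~\ref{O*I* not poss cor} as the contrapositive of Theorem~\ref{thm 17} combined with the multiplicativity of field extension degrees. First I would note that the hypotheses of Theorem~\ref{thm 17} are exactly those of the corollary: $D$ is a division algebra of degree $2$ over the algebraic number field $K$. Suppose, for contradiction, that $D$ contained a copy of the binary octahedral group $\mathfrak{O}^*$. Then Theorem~\ref{thm 17}(a) would force $\sqrt{2} \in K$, so that $\Q(\sqrt{2})$ is a subfield of $K$. By the tower law we would have
\[
[K:\Q] = [K:\Q(\sqrt{2})] \cdot [\Q(\sqrt{2}):\Q] = 2 \cdot [K:\Q(\sqrt{2})],
\]
so $2 \mid [K:\Q] = 3$, which is absurd. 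Hence $D$ cannot contain $\mathfrak{O}^*$.

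The argument for the binary icosahedral group $\mathfrak{I}^*$ is identical: were $\mathfrak{I}^*$ embedded in $D$, Theorem~\ref{thm 17}(b) would give $\sqrt{5} \in K$, whence $\Q(\sqrt{5}) \subseteq K$ and again $2 \mid [K:\Q] = 3$, a contradiction. Combining the two cases completes the proof. I do not expect any genuine obstacle here: the entire content is the elementary observation that a degree-$2$ subextension cannot sit inside an extension of odd degree $3$, so the corollary follows immediately once Theorem~\ref{thm 17} is in hand.
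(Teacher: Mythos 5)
Your proposal is correct and is exactly the argument the paper intends: the corollary is stated as an ``immediate consequence'' of Theorem~\ref{thm 17}, the implicit reasoning being precisely your contrapositive via the tower law, since a quadratic subfield $\Q(\sqrt{2})$ or $\Q(\sqrt{5})$ cannot sit inside a cubic field because $2 \nmid 3$.
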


Summarizing, we have the following classification.
\begin{theorem}\label{thm 18}
  Let $D$ be a division algebra of degree 2 over an algebraic number field $K$ with $[K:\Q]=3$. A finite group $G$ (of even order\footnote{This assumption can be made based on the goal of this paper in the sense that the order of the automorphism group of a polarized abelian threefold over a finite field is even.}) can be embedded in $D^{\times}$ if and only if $G$ is one of the following groups: \\
  (1) $C_2, C_4, C_6, C_{14}, C_{18}$; \\
  (2) $Q_8, \textrm{Dic}_{12}, \textrm{Dic}_{28}, \textrm{Dic}_{36}$; \\
  (3) $\mathfrak{T}^*$, the binary tetrahedral group of order $24.$
\end{theorem}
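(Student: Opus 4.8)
The plan is to combine the general classification of finite subgroups of division rings (Theorem~\ref{thm 12}) with the numerical constraints forced by the specific algebra $D$, namely $\dim_\Q D = 12$ and $[K:\Q]=3$. The key structural constraints are already assembled in the paragraph preceding the statement: for a subgroup $G = G_{m,r} \le D^\times$ we have $n \mid 2$ and $\varphi(m) \mid 6$, and $|G|$ is assumed even. The strategy is to walk through the five types of Theorem~\ref{thm 12} and eliminate or pin down each one using these constraints together with the preparatory Lemmas~\ref{lem 15} and~\ref{lem 16} and Corollary~\ref{O*I* not poss cor}.

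First I would translate $\varphi(m) \mid 6$ into an explicit finite list of allowed $m$. The values of $m$ with $\varphi(m) \in \{1,2,3,6\}$ are exactly $m \in \{1,2,3,4,6,7,9,14,18\}$ (recalling $\varphi(7)=6$, $\varphi(9)=6$, $\varphi(14)=6$, $\varphi(18)=6$), which is precisely the list appearing in the hypotheses of Lemmas~\ref{lem 15} and~\ref{lem 16}. Since $|G|$ is even and $G$ contains an element of order $m$, the cyclic case (type (1) of Theorem~\ref{thm 12}) yields exactly the groups $C_m$ with $m$ even in this list, that is $C_2, C_4, C_6, C_{14}, C_{18}$, giving family (1) of the statement. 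For the non-cyclic groups $G_{m,r}$ of type (2), the condition $n \mid 2$ forces $n=2$ (as $n=1$ gives the cyclic case), so I would invoke Theorem~\ref{thm 11} with $n=2$: this splits into whether (C1) or (C2) holds, which is exactly the dichotomy handled by Lemmas~\ref{lem 15} and~\ref{lem 16}. Lemma~\ref{lem 15} (case (C1)) produces $\mathrm{Dic}_{12}, \mathrm{Dic}_{28}, \mathrm{Dic}_{36}$ (discarding the cyclic $C_4$ as non-cyclic was required here), and Lemma~\ref{lem 16} (case (C2)) produces $Q_8$; together these give family (2).

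Next I would handle types (3), (4), (5). Corollary~\ref{O*I* not poss cor} immediately rules out $\mathfrak{O}^*$ (type (4)) and $\mathfrak{I}^*$ (type (5)), since $[K:\Q]=3$ precludes $\sqrt 2, \sqrt 5 \in K$. For type (3), $\mathfrak{T}^* \times G_{m,r}$, the factor $G_{m,r}$ must have $\gcd(|G_{m,r}|,6)=1$; combined with $n \mid 2$ and $\varphi(m)\mid 6$ this forces $G_{m,r}$ trivial (any nontrivial such factor would contribute an order coprime to $6$, but the even-order and degree constraints leave no room — $m$ coprime to $6$ in our list is only $m=1,7$, and $m=7$ would require an order-$7$ element inside the degree-$12$ algebra alongside $\mathfrak{T}^*$, inflating the dimension beyond $12$). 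Hence type (3) contributes only $\mathfrak{T}^*$ itself, giving family (3). Finally I would verify the converse (realizability): each listed group genuinely embeds in some such $D$ — the cyclic and dicyclic cases embed in a suitable cyclotomic-type division algebra, and $\mathfrak{T}^*$ embeds via the Hurwitz quaternions inside a totally definite quaternion algebra extended appropriately over a cubic $K$; this direction is standard from Theorem~\ref{thm 12} once the numerical conditions are met.

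I expect the main obstacle to be the elimination argument in type (3): one must argue carefully that no nontrivial coprime-to-$6$ factor $G_{m,r}$ can coexist with $\mathfrak{T}^*$ inside a $12$-dimensional algebra. The clean way is a dimension count — $\mathfrak{T}^*$ already generates a $\Q$-subalgebra isomorphic to a quaternion division algebra (dimension $4$ over a field), and tensoring with the cyclotomic field generated by an extra element of order $m>1$ coprime to $6$ would force $\dim_\Q$ strictly larger than $12$ — but one should double-check that the center constraint $[K:\Q]=3$ (an \emph{odd} degree) is consistent with containing $\mathfrak{T}^*$ at all, which is why establishing the embedding $\mathfrak{T}^* \hookrightarrow D^\times$ in the converse direction is the delicate realizability point rather than a mere citation.
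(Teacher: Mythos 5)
Your necessity direction tracks the paper's proof almost step for step --- the same reduction via Theorem~\ref{thm 12}, the constraints $n \mid 2$ and $\varphi(m) \mid 6$ derived before Theorem~\ref{thm 17}, Lemmas~\ref{lem 15} and~\ref{lem 16} for the non-cyclic $G_{m,r}$, and Corollary~\ref{O*I* not poss cor} for $\mathfrak{O}^*$ and $\mathfrak{I}^*$ --- with one genuine divergence: the elimination of $\mathfrak{T}^* \times C_7$. The paper disposes of it by citing \cite[Table 5]{Inneke}, which forces $K=\Q(\zeta_7)$ and contradicts $[K:\Q]=3$; you propose a self-contained dimension count instead. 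Your count is correct in the form given in your final paragraph, but the version in your middle paragraph (``an order-$7$ element inside the degree-$12$ algebra \ldots inflating the dimension beyond $12$'') is not: $\Q(\zeta_7)$ has degree $6$ over $\Q$, so it fits inside $D$ perfectly well as a maximal subfield, and an order-$7$ element alone causes no contradiction. What kills $\mathfrak{T}^* \times C_7$ is commutativity: the enveloping $\Q$-algebra of $\mathfrak{T}^*$ in any division ring is the quaternion division algebra $D_{2,\infty}$, central simple of dimension $4$ over $\Q$, and since the $C_7$ factor centralizes it, the subalgebra of $D$ generated by both is a unital homomorphic image of the simple algebra $D_{2,\infty}\otimes_{\Q}\Q(\zeta_7)$, hence an injective image, of $\Q$-dimension $24 > 12 = \dim_{\Q} D$. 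Spelled out this way, your route is a legitimate (and arguably more self-contained) alternative to the paper's table citation.

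The genuine gap is in the converse (realizability) direction, which you dismiss as ``standard from Theorem~\ref{thm 12} once the numerical conditions are met.'' Theorem~\ref{thm 12} only guarantees an embedding into \emph{some} division ring; it says nothing about embedding into a division algebra of degree $2$ over a \emph{cubic} field, which is what the theorem requires, so this citation cannot carry that half of the equivalence. The paper does concrete work here: $\mathfrak{T}^*$ and $\textrm{Dic}_{12}$ lie in $D_{2,\infty}^{\times}$ and $D_{3,\infty}^{\times}$ by Theorem~\ref{aimf of GL1}, and for a totally real cubic field $K$ the algebras $D_{2,\infty}\otimes_{\Q}K$ and $D_{3,\infty}\otimes_{\Q}K$ remain division algebras (the real places of $K$ keep the local invariant $\tfrac{1}{2}$ alive); for $\textrm{Dic}_{28}$ and $\textrm{Dic}_{36}$ one invokes \cite[Theorem 6.1 (c)]{8} to realize them in quaternion division algebras over the cubic fields $\Q(\zeta_{14}+\zeta_{14}^{-1})$ and $\Q(\zeta_{18}+\zeta_{18}^{-1})$; the remaining groups then follow from the containments $C_2 \leq C_4 \leq Q_8 \leq \mathfrak{T}^*$, $C_6 \leq \textrm{Dic}_{12}$, $C_{14} \leq \textrm{Dic}_{28}$, and $C_{18} \leq \textrm{Dic}_{36}$. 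Your sketch gestures at roughly the right objects (cyclotomic-type algebras, quaternions over a cubic $K$), so the gap is fillable, but as written the realizability direction is asserted rather than proved, and the delicate point you yourself flag --- that $\mathfrak{T}^*$ is compatible with an odd-degree center at all --- is exactly what these explicit constructions settle.
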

\begin{proof}
  We refer the list of possible such groups to Theorem~\ref{thm 12} and Corollary~\ref{O*I* not poss cor}. Suppose that $G$ is cyclic. Then we can write $G=\langle f\rangle$ for some element $f$ of order $d$. Then according to the argument given before Theorem~\ref{thm 17}, we have $d \in \{2,4,6,14,18\}$. Hence, we obtain (1). If $G=G_{m,r}$ with $n=2$ and $m \in \{2,3,4,6,7,9,14,18\},$ then (2) follows from Lemmas~\ref{lem 15} and~\ref{lem 16}. (If $n=1,$ then $s=m$ so that $t=1.$ Now, the presentation of the group tells us that the group is cyclic of order $m$ in this case.) Now, if $G=\mathfrak{T}^* \times G_{m,r}$ is a general $T$-group, then the only possible such groups are $\mathfrak{T}^*$ and $\mathfrak{T}^* \times C_7.$ (If $n=2$ so that $G_{m,r}$ is not cyclic, then $|G_{m,r}|=2m,$ and hence, we have $\gcd(|G_{m,r}|,6) \ne 1.$) Also, clearly, both $\mathfrak{T}^*$ and $\mathfrak{T}^* \times C_7$ satisfy the condition of Theorem~\ref{thm 12}. On the other hand, if $G=\mathfrak{T}^* \times C_7,$ then, in view of \cite[Table 5]{Inneke}, we must have that $K=\Q(\zeta_7),$ which contradicts the fact that $[K:\Q]=3.$
\\
\indent Conversely, in view of Theorem~\ref{aimf of GL1} below, the two groups $\mathfrak{T}^*$ and $\textrm{Dic}_{12}$ are finite subgroups of $D^{\times}$ where $D=D_{2,\infty}$ and $D=D_{3,\infty}$, respectively. Let $K$ be a totally real cubic field. Then we can see that $\mathfrak{T}^*$ (resp.\ $\textrm{Dic}_{12}$) is a finite subgroup of $D^{\times}$ where $D=D_{2,\infty} \otimes_{\Q} K$ (resp.\ $D=D_{3,\infty} \otimes_{\Q} K)$, both of which are quaternion division algebras over $K.$ Also, by \cite[Theorem 6.1 (c)]{8}, the two groups $\textrm{Dic}_{28}$ and $\textrm{Dic}_{36}$ are finite subgroups of $D^{\times}$ where $D=D_{\zeta_{14}+\zeta_{14}^{-1}, \infty, 7}$ and $D=D_{\zeta_{18}+\zeta_{18}^{-1}, \infty, 3},$ respectively, where $D_{\zeta_m + \zeta_m^{-1}, \infty, p}$ denotes the quaternion division algebra over $\Q(\zeta_m+\zeta_m^{-1})$ that is ramified at the primes of $\Q(\zeta_m+\zeta_m^{-1})$ lying above $p$ and $\infty$. Then since $C_2 \leq C_4 \leq Q_8 \leq \mathfrak{T}^*$, $C_6 \leq \textrm{Dic}_{12},$ $C_{14} \leq \textrm{Dic}_{28},$ and $C_{18} \leq \textrm{Dic}_{36},$ the desired result follows. \\
\indent This completes the proof.
\end{proof}

\begin{remark}\label{Q8 not max rmk}
Let $D$ be as in Theorem~\ref{thm 18}. It turns out that the group $Q_8$ cannot be a maximal (up to isomorphism) finite subgroup of $D^{\times}$. Indeed, if $G=Q_8$ is a finite subgroup of $D^{\times},$ then by \cite[Theorem 9]{1}, we know that $D=D_{2,\infty} \otimes_{\mathbb{Q}} K$ for some number field $K$ with $[K:\mathbb{Q}] =3.$ Then it follows that $\mathfrak{T}^* \leq D^{\times},$ and since $Q_8 \leq \mathfrak{T}^*$, we can see that $Q_8$ cannot be a maximal finite subgroup of $D^{\times}.$
\end{remark}

\section{maximal finite subgroups of $GL_3(K)$ when $K$ is $\Q$ or a real quadratic field}\label{gl3 sec}
Throughout this section, either $K=\Q$ or $K$ is a real quadratic number field. Let $d >0$ be a square-free integer such that $K=\Q(\sqrt{d})$. (If $K=\Q,$ then we take $d=1$.) Note that $\mu_K =\{ \pm 1 \},$ where $\mu_K$ denotes the set of all roots of unity in $K.$ \\
\indent We begin with the following lemma, in which, we deal with finite subgroups of $GL_r(\C)$ consisting of diagonal matrices for some integer $r \geq 1$.

\begin{lemma}\label{Gal} Let $L/F$ be a finite Galois extension with $\Gamma = \textrm{Gal}(L/F)$, and let $H\subsetneq \bs{L^{\times}}^{r}\subsetneq GL_{r}(L)$ be a finite subgroup of diagonal matrices such that $\Tr{A}\in F$ for all $A\in H$. Then there exists a group action $*$ of $\Gamma$ on the set $\bm{1,2,\cdots,r}$ such that $^{\sigma}{x_{i}}=x_{\sigma* i}$ for all $\sigma\in \Gamma$ and $A=\Delta_{(x_1,\cdots,x_r)} \in H$ (where $\Delta_{(x_1,\cdots,x_r)}$ denotes the diagonal matrix with entries $x_1,\cdots,x_r$, here and in the sequel).
\end{lemma}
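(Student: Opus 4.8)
The plan is to reinterpret the diagonal entries as characters of the finite abelian group $H$ and to let $\Gamma$ act on the finitely many characters that occur. Since diagonal matrices commute, $H$ is a finite abelian group, and each coordinate defines a homomorphism $\chi_i \colon H \to L^{\times}$, $\chi_i(A) = x_i$ (where $A = \Delta_{(x_1,\dots,x_r)}$); as $H$ is finite these take values in the roots of unity of $L$. First I would collect the \emph{distinct} characters among $\chi_1,\dots,\chi_r$, say $\psi_1,\dots,\psi_s$, and record their multiplicities $m_j = \#\{\, i : \chi_i = \psi_j \,\}$, so that the block decomposition $\{1,\dots,r\} = \bigsqcup_{j} B_j$ with $B_j = \{\, i : \chi_i = \psi_j \,\}$ refines all the data and $\Tr{A} = \sum_{i} x_i = \sum_{j} m_j\, \psi_j(A)$ for every $A \in H$.

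The key step is to show that $\Gamma$ permutes the multiset $\{\psi_1,\dots,\psi_s\}$ preserving multiplicities, using only the single hypothesis $\Tr{A}\in F$ (no Newton identities are needed). Fix $\sigma \in \Gamma$. Since $\sigma$ fixes $F$ pointwise and fixes the integers $m_j$, applying $\sigma$ to the trace formula gives
\[
\sum_{j} m_j\,(\sigma\!\cdot\!\psi_j) = \sum_{j} m_j\, \psi_j
\]
as functions $H \to L$, where $(\sigma\!\cdot\!\psi_j)(A) := \sigma(\psi_j(A))$ is again a character and the $\sigma\!\cdot\!\psi_j$ are distinct (as $\sigma$ is bijective on $\Hom(H,L^{\times})$, with inverse induced by $\sigma^{-1}$). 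Now I would invoke the $L$-linear independence of distinct characters $H \to L^{\times}$ (Dedekind--Artin): comparing the coefficient of each character occurring on either side forces these two non-negative integer combinations to agree term by term. This yields a permutation $\rho_\sigma$ of $\{1,\dots,s\}$ with $\sigma\!\cdot\!\psi_j = \psi_{\rho_\sigma(j)}$ and $m_{\rho_\sigma(j)} = m_j$; the identity $(\sigma\tau)\!\cdot\!\psi_j = \sigma\!\cdot\!(\tau\!\cdot\!\psi_j)$ shows $\sigma \mapsto \rho_\sigma$ is a homomorphism $\Gamma \to \mathrm{Sym}(s)$, and in particular $|B_{\rho_\sigma(j)}| = |B_j|$ for all $j$.

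Finally I would promote $\rho$ to a genuine action $*$ on $\{1,\dots,r\}$; this consistency is the only real subtlety, since within a repeated block the covering permutation is not canonical and independent choices need not compose. The clean fix is to fix once and for all a bijection $B_j \xrightarrow{\ \sim\ } \{1,\dots,m_j\}$ for each $j$, writing $c(i)$ for the position of $i \in B_j$ (note that $m_j$ is constant along each $\rho$-orbit of block-indices), and to declare $\sigma * i$ to be the unique element of $B_{\rho_\sigma(j)}$ with position $c(i)$. Because positions live in a set depending only on the orbit, both $(\sigma\tau)*i$ and $\sigma*(\tau*i)$ are the element of $B_{\rho_{\sigma\tau}(j)} = B_{\rho_\sigma(\rho_\tau(j))}$ at position $c(i)$, so $*$ is a group action with $e * i = i$. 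By construction $\sigma*i \in B_{\rho_\sigma(j)}$ whenever $i \in B_j$, hence $\chi_{\sigma*i} = \psi_{\rho_\sigma(j)} = \sigma\!\cdot\!\psi_j = \sigma\!\cdot\!\chi_i$; evaluating at any $A = \Delta_{(x_1,\dots,x_r)} \in H$ yields $x_{\sigma*i} = \sigma(x_i) = {}^{\sigma}x_i$, as required.
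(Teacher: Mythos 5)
Your proof is correct, and it takes a genuinely different route from the paper's. The paper's proof starts from the fact that an abelian representation whose character takes values in $F$ is realizable over $F$, conjugates $H$ by some $P \in GL_r(L)$ into $GL_r(F)$, and then lets $\Gamma$ act on the maximal simultaneous eigenspaces $W_i$ of $PHP^{-1}$ (legitimate because $PHP^{-1}$ has entries in $F$); a Galois-compatible choice of bases of these eigenspaces, i.e.\ one with ${}^{\sigma}\mathcal{B}_j = \mathcal{B}_{\sigma\circ j}$ (implicitly a descent argument), then transports that action to the index set $\{1,\dots,r\}$. Your blocks $B_j$ are exactly the coordinate shadows of those eigenspaces, but you obtain the permutation of blocks with no realizability or descent input at all: applying $\sigma$ to the identity $\mathrm{Tr}(A)=\sum_j m_j\psi_j(A)$ and invoking Dedekind--Artin linear independence of characters forces $\sigma$ to permute the $\psi_j$ together with their multiplicities, and the homomorphism property of $\sigma\mapsto\rho_\sigma$ is immediate. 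This is more elementary and self-contained: one classical lemma on independence of characters replaces both the realizability theorem for abelian groups and the eigenspace/basis-choice argument. Your fix for promoting the block permutation to an action on points (fixed bijections $B_j \cong \{1,\dots,m_j\}$, with positions well-defined along $\rho$-orbits because multiplicities are constant there) addresses precisely the same subtlety that the paper handles by its compatible system of bases, and is if anything more explicit about why the choices compose. What the paper's route buys in exchange is the conjugating matrix $Q$ with $QHQ^{-1}\subseteq GL_r(F)$ tied to the Galois structure, an object it can point back to in later proofs, though nothing in the statement of the lemma itself requires it.
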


\begin{proof}
The inclusion map $\rho \colon H \hookrightarrow GL_r(\C)$ is a representation of $H$ whose associated character $\chi$ has values in $F$ by assumption. Since $H$ is abelian, $\rho$ is realizable over $F$ i.e.\ there is a matrix $P = (u_1 u_2 \cdots u_r) \in GL_r(L)$ such that $PHP^{-1} \subseteq GL_r(F).$ Hence $L^r$ decomposes uniquely into a direct sum $\bigoplus_{i \in I } W_i$ of maximal eigenspaces $W_i$ over $L$ of $PHP^{-1}$ for some index set $I.$ In other words, each $W_i$ is a maximal subspace of $L^r$ such that $S|_{W_i}$ is a scalar multiplication for any $S \in PHP^{-1}.$ \\
\indent Now, we define a group action $\circ$ of $\Gamma$ on the index set $I$ as follows: for any $\sigma \in \Gamma$ and $i \in I,$ note that $^{\sigma}W_i$ is a maximal eigenspace of $L^r$, and hence, there exists a unique $j \in I$ such that $^{\sigma}W_i = W_j.$ We let $\sigma \circ i = j.$ Then it is easy to see that $\circ$ is a group action of $\Gamma$ on $I.$ Moreover, we can choose bases $\mathcal{B}_i$ of $W_i$ for each $i \in I$ in such a way that $^{\sigma} \mathcal{B}_j =\mathcal{B}_{\sigma \circ j}$ for any $\sigma \in \Gamma$ and $j \in I.$ Then $\calB=\Union{i\in I}\calB_{i}$ is a basis of $L^{r}$ on which $\Gamma$ acts. Let $Q=(v_1 v_2 \cdots v_r) \in GL_r(L)$ be a square matrix consisting of column vectors in $\mathcal{B}$ such that $u_i$ and $v_i$ are in the same eigenspace for each $1 \leq i \leq r.$ Then we have $QAQ^{-1} = PAP^{-1}$ for all $A \in H.$ \\
\indent The action of $\Gamma$ on $\mathcal{B}$ induces an action $*$ of $\Gamma$ on the set $\{1,2,\cdots,r\}$ such that $^{\sigma} v_i = v_{\sigma * i}.$ Now, for any $A =\Delta_{(x_1,\cdots,x_r)}\in H,$ let $S=QAQ^{-1}=PAP^{-1} \in GL_r (F).$ Then it follows that
\begin{equation*}
x_{\sigma*i}v_{\sigma*i}=S{v_{\sigma*i}}=\conj{\sigma}{\bs{Sv_{i}}}=\conj{\sigma}{\bs{x_{i}v_{i}}}=\conj{\sigma}{x_{i}}v_{\sigma*i},
\end{equation*}
whence, $^{\sigma} x_i = x_{\sigma * i}$ for any $\sigma \in \Gamma$ and $i \in \{1,2,\cdots, r\}.$ \\
\indent This completes the proof.
\end{proof}
For our own purpose of this paper, we focus on the case when $r=3.$
\begin{lemma}\label{table_K}
	Let $H$ be a finite abelian subgroup of $GL_{3}(K)$ of exponent $N>1$. Then we have:\\
	(a) There is a finite subgroup $G$ of $GL_{3}(K)$ such that $G\cong  H \rtimes C_{2}$. Moreover, if $N > 2$, then there is a finite subgroup $G$ of $GL_{3}(K)$ that is not isomorphic to $H \times C_2$ (i.e.\ the conjugation action of such a $G$ on $H$ is not trivial). \\
	(b) If there is a finite subgroup $G$ of $GL_{3}(K)$ such that $G\cong  H \rtimes C_{3}$, then $H$ is similar to $\fkb=\subgp{\diag{-1,-1,1},\diag{1,-1,-1}}$, $\subgp{\fkb,-I_3}$, or a subgroup of $\mathfrak{a}= \subgp{-I_3,\diag{1,1,-1}}$. Moreover, $H$ is similar to $\fkb$ or $\subgp{\fkb,-I_3}$ if and only if $G\not \cong H\times C_{3}$, and $H$ is similar to a subgroup of $\fka$ if and only if $G\cong H\times C_{3}$.
\end{lemma}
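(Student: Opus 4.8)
The plan is to begin by normalizing the $C_3$. Write $G=H\rtimes\langle c\rangle$ with $\langle c\rangle\cong C_3$. The minimal polynomial of $c$ divides $t^3-1=(t-1)(t^2+t+1)$, and since $c\neq I$ while $\omega\notin K$, a rational-canonical-form computation forces $c$ to be non-derogatory with characteristic polynomial $t^3-1$; hence $c$ is $GL_3(K)$-conjugate to the permutation matrix $P:=I_{3,(123)}$. Conjugating $G$, I may assume $c=P$. As $H\trianglelefteq G$ is abelian, conjugation by $c$ is an automorphism of $H$ of order $1$ or $3$; the order-$1$ case is exactly $G\cong H\times C_3$, and I will show the order-$3$ case forces $G$ nonabelian, which yields the claimed equivalence. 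I then treat the two cases separately.

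\emph{Nontrivial action.} First I would diagonalize the finite abelian group $H$ over $\mathbb C$, writing $\mathbb C^3=\bigoplus_\chi V_\chi$, so that $c$ permutes the characters $\chi$ via $\chi\mapsto\chi^{c}$. Because $c$ has real entries its action commutes with complex conjugation, so it cannot move a real ($\pm1$-valued) character to a non-real one; and a single $3$-cycle of non-real characters is impossible, since complex conjugation is an involution of the $3$-element orbit and must fix a member of it. Thus a nontrivial $c$-action must be a $3$-cycle of three distinct real characters, whence $H$ has eigenvalues in $\{\pm1\}$ and is simultaneously $K$-diagonalizable into $\{\pm1\}^3$, with $c$ acting by a cyclic coordinate permutation. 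It then remains to list the $c$-stable subgroups of $(\mathbb Z/2)^3$: viewing it as $\mathbb F_2[C_3]\cong\mathbb F_2\times\mathbb F_4$, it splits as the fixed line $\langle -I_3\rangle$ plus the $2$-dimensional module $\fkb$, and the submodules on which $c$ acts nontrivially are precisely $\fkb$ and $\langle\fkb,-I_3\rangle=(\mathbb Z/2)^3$. Finally $\langle\fkb,P\rangle\cong A_4$ and $\langle(\mathbb Z/2)^3,P\rangle$ realize these with nontrivial (hence nonabelian) $C_3$-action.

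\emph{Trivial action.} Here $H$ centralizes $c=P$. Since $P$ is non-derogatory, its centralizer in $M_3(K)$ is the commutative algebra $K[P]\cong K[t]/(t^3-1)\cong K\times L$, where $L=K(\omega)$; thus $H$ is a finite subgroup of $(K\times L)^{\times}$, i.e. $H\subseteq\{\pm1\}\times\mu(L)$. The crux will be the computation $\mu(L)=\mu_6$ for $K=\mathbb Q$ and (generic) real quadratic $K$, giving $\{\pm1\}\times\mu(L)\cong C_2\times C_6$ with cyclic Sylow-$3$ subgroup equal to $\langle\omega\rangle=\langle c\rangle$. Since $G=H\times\langle c\rangle$ with $\langle c\rangle$ that Sylow-$3$, the complement $H$ is a $2$-group of exponent $\le 2$; its elements have eigenvalues in $\{\pm1\}$, so $H$ is simultaneously $K$-diagonalizable, and after a coordinate permutation it lands inside $\{I,-I_3,\diag{1,1,-1},\diag{-1,-1,1}\}=\fka$. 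Conversely, either the circulant $2$-torsion of $K\times L$ together with $c=P$, or the standard $\fka$ together with a commuting block-diagonal order-$3$ element (a $2\times2$ block $A\in GL_2(K)$ of order $3$ in the first two coordinates), realizes each subgroup of $\fka$ with $G\cong H\times C_3$.

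The step I expect to be the real obstacle is the root-of-unity count $\mu(K(\omega))=\mu_6$, which is genuinely field-dependent rather than formal: $K(\omega)$ is a quartic field containing $\mathbb Q(\omega)$, and it gains extra torsion exactly when it contains a primitive $4$th (equivalently $12$th) root of unity, i.e. when $K(\omega)=\mathbb Q(\zeta_{12})$, that is $K=\mathbb Q(\sqrt3)$. In that one case $K\times L$ contains an order-$4$ element commuting with $c$, so the trivial-action argument does not close and must be handled on its own; confirming that this is the only exception — and that no real quadratic $K$ can force a $5$th, $8$th, or $9$th root of unity into $K(\omega)$ — is the one place the argument needs real care rather than bookkeeping.
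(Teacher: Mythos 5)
Your proposal addresses only part (b); part (a) --- the existence of $H\rtimes C_2$ inside $GL_3(K)$, with nontrivial action when $N>2$ --- is never discussed, so half of the lemma is simply missing. Within part (b), your route is genuinely different from the paper's: you normalize the order-$3$ element first (rational canonical form correctly forces $c$ to be non-derogatory with characteristic polynomial $t^3-1$, hence $K$-conjugate to the $3$-cycle permutation matrix), and your nontrivial-action analysis --- reality of the three cyclically permuted characters, then the list of $C_3$-stable subgroups of $\{\pm 1\}^3$ --- is correct and cleaner than the paper's, which instead diagonalizes $H$ over $K(\zeta_N)$, bounds the exponent $N$ by cyclotomic degree considerations, and only then studies how an order-$3$ element permutes diagonal entries.

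The problem is the trivial-action case, and it is worse than you suggest: the exceptional case $K=\Q(\sqrt3)$ that you leave open is not a step needing ``real care'' but an actual counterexample to the statement. Let $W=\psmatrix{0&-1\\1&\sqrt{3}}\in GL_2(\Q(\sqrt3))$ be the companion matrix of $t^2-\sqrt3\,t+1$; its eigenvalues are $\zeta_{12}^{\pm 1}$, so $W$ has order $12$ and $\det W=1$. Then $H:=\subgp{\widetilde{W}^{3}}\cong C_4$ (of exponent $N=4>1$) and $G:=\subgp{\widetilde{W}}\cong C_{12}\cong H\times C_3$ are finite subgroups of $GL_3(\Q(\sqrt3))$, so the hypothesis of (b) holds; but $H$ contains an element of order $4$, so it is not similar to $\fkb$, to $\subgp{\fkb,-I_3}$, or to any subgroup of $\fka$, all of which have exponent $2$. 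This is exactly your exceptional case made concrete: $\mu(K(\omega))=\mu(\Q(\zeta_{12}))=\mu_{12}$, and the order-$4$ torsion unit of the centralizer $K[P]$ really does lie in $GL_3(K)$. So your argument cannot be completed as the lemma is stated --- nor can anyone's. In fact the paper's own proof stumbles at the same spot: its assertion that both $N$ and $\mathrm{lcm}(N,3)$ lie in $\{3,4,6\}$ is inconsistent with the first half of its own argument, which allows an abelian subgroup of $GL_3(K)$ to have exponent in $\{5,8,10,12\}$ when $K$ is the real quadratic subfield of $\Q(\zeta_N)$; for $K=\Q(\sqrt3)$ one may take $N=4$ and $\mathrm{lcm}(N,3)=12$, and no contradiction arises (one gets precisely the $C_{12}$ above). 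The statement, your argument, and the paper's proof are all salvaged by either excluding $K=\Q(\sqrt3)$ in part (b) or adding there the hypothesis $N\le 2$; the latter is all the paper ever uses, since in Lemma~\ref{(A),(C) lem} the group $H$ consists of literal diagonal matrices over the real field $K$ and therefore has exponent at most $2$.
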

\begin{proof}
We first observe that $H$ is similar to a subgroup of $GL_3(K)$ that consists of diagonal
matrices. Then some $P\in GL_{3}(K(\z{N}))$ diagonalizes $H$ i.e.\ $H':=P^{-1}HP\subseteq \bs{L^{\times}}^{3}$ where $L=K(\z{N})$. We find all possible $H'$ for each $N$.

If $N>2$, then there exists an $A\in H'$ of order $N$, and we may write $A=\diag{\z{N}^{a},\z{N}^{b},\z{N}^{c}}$ for some $a,b,c\in\Z$. Since $\abs{A}=N$, we get that $\gcd\bs{a,b,c,N}=1$ and $K(\z{N}^{a},\z{N}^{b},\z{N}^{c})=K(\z{N})$.\\
Since $\bl{K:\Q}=2$, $K\cap \Q(\z{N})$ equals either $K$ or $\Q$. If $K\cap \Q(\z{N})=K$, then $K\subseteq \Q(\z{N})$ and
$$
3\ge \bl{K(\z{N}^{a},\z{N}^{b},\z{N}^{c}):K}=\bl{K(\z{N}):K}=\bl{\Q(\z{N}):K}.
$$
Again, since $\bl{K:\Q}=2$, it follows that $\varphi(N)=\bl{\Q(\z{N}):\Q}\in \bm{2,4,6}$, and hence, we obtain
$$
N\in \bm{5,8,10,12}=\bm{3,4,5,6,7,8,9,10,12,14,18} \setminus \bm{3,4,6,7,9,14,18}
$$
because the unique quadratic number field in $\Q(\z{N})$ is not real for $N\in \bm{3,4,6,7,9,14,18}$. If $K\cap \Q(\z{N})=\Q$, then we have
$$
3\ge \bl{K(\z{N}):K}=\bl{K(\Q(\z{N})):K}=\bl{\Q(\z{N}):K\cap \Q(\z{N})}=\bl{\Q(\z{N}):\Q}=\varphi(N)
$$
so that $N\in\bm{3,4,6}$. To sum up, $\Gal(K(\z{N})/K)$ is of order $2$ and the non-trivial element of $\Gal(K(\z{N})/K)$ is the complex conjugation for any $K$. Because the diagonal entries are the solutions of the characteristic polynomial of $A$, which is defined over $K$, the group $\Gal(K(\z{N})/K)$ permutes the diagonal entries of $A$. Since $\abs{\Gal(K(\z{N})/K)}=2$, $\Gal(K(\z{N})/K)$ fixes a diagonal entry of $A$. We may assume that the last diagonal entry is fixed by $\Gal(K(\z{N})/K)$. Since $\abs{A}>2$, other diagonal entries of $A$ are not fixed. Thus we can say that $A=\diag{\z{N}^{a},\z{N}^{-a},\ep}$ for some $\ep\in \mu_{\Q}=\bm{\pm 1}$. Also, by Lemma~\ref{Gal}, any $B\in H'$ is of the form $\diag{\z{N}^{b},\z{N}^{-b},w}$ for some $b\in \Z$ and $w\in\bm{\pm1}$. Consequently, we see that $H'\leq \subgp{\diag{\z{N},\z{N}^{-1},1},\diag{1,1,-1}}$.

For $N=2$, let $r=\dim_{\F_{2}}H'>0$. If $r=1$, then $H'=\subgp{\diag{a,b,c}}$ for some $a,b,c\in \bm{\pm 1}$. At least two of $a, b,$ and $c$ are the same. We may assume that $a=b$, and then $H'$ is a subgroup of $\fka$. If $r=2$, then we consider the group homomorphism $\det \colon H'\to \bm{\pm1}$. Considering the orders of the domain and codomain of $\det$, we get that $|\ker\bs{\det}|\ge 2$. For a non-identity matrix $A\in \ker \bs{\det}$, we rearrange the diagonal entries of $A$ and may assume that $A=\diag{-1,-1,1}$. For $B\in H' \setminus \subgp{A}$, we may assume that $B=\diag{1,a,b}\ne I_3$ for some $a,b\in \bm{\pm 1}$ by replacing $B$ by $AB$ if the first diagonal entry of $B$ equals $-1$. Hence we can see that $H'$ is equal to $\subgp{\diag{-1,-1,1},\diag{1,-1,1}}$ or $\fka$, or $\fkb$. If $r=3$, then $H'=\subgp{\diag{-1,1,1},\diag{1,-1,1},\diag{1,1,-1}}=\subgp{\fkb,-I_3}$.

(a) It is easy to show that the matrix $\diag[\sigma]{1,1,-1}:=\psmatrix{0&1&0\\1&0&0\\0&0&-1}$ acts on any $H'$ and we have $G':=\subgp{H',\diag[\sigma]{1,1,-1}}\cong H'\rtimes C_{2}$. If $N>2$, the conjugation action of $\diag[\sigma]{1,1,-1}$ on $H'$ is not trivial.

(b) Assume that there is a finite group $G\lneq GL_{3}(K)$ such that $G\cong  H \rtimes C_{3}$. Then for some $X\in G':=P^{-1}GP$, one has that $\abs{X}=3$ and $X$ acts on $H'$ by conjugation. As before, $X$ permutes the diagonal entries of matrices of $H'$. There is a homomorphism $f \colon \subgp{X}\to Sym_{3}$ such that $XAX^{-1}=I_{3,\alpha}AI_{3,\alpha}^{-1}$ for any $A\in H'$ with $\alpha = f(X)$. Note that $\alpha$ is the identity $1\in Sym_{3}$ or a $3$-cycle. We claim that $N=2$. Indeed, if $N>2$, then the form of the matrices of $H'$ says that $\alpha=1$. Thus $G\cong G' :=H'\times \subgp{X}$ is abelian of exponent $\text{lcm}\bs{N,3}$. Since both $N$ and $\text{lcm}\bs{N,3}$ are contained in the set $\bm{3,4,6}$, we know that $N=\text{lcm}\bs{N,3}$ is equal to $3$ or $6$. Then since there is no abelian subgroup of $GL_3(K)$ of exponent $N$ and order $18=\abs{G}=\abs{G'}$, we conclude that $N=2$, and hence, the first assertion follows.

If $G\cong H\times C_{3}$, then $G$ is an abelian group of exponent $6$. Thus $G' :=P^{-1}GP$ is a subgroup of $\subgp{\diag{\z{6},\z{6}^{-1},1},\diag{1,1,-1}}$ and the $2$-torsion part $H'=P^{-1}HP$ of $G'$ is a subgroup of $\fka$. Conversely, if $H'$ is a subgroup of $\fka$, then the conjugation action by any matrix $X$ of order $3$ on $H'$ is trivial so that $G \cong H \times C_3$. Also, note that the form of the matrices of $H'$ says that $\alpha=1$.

Finally, it suffices to show that $H$ is not similar to $\subgp{\diag{-1,-1,1},\diag{1,-1,1}}$ to prove that $H$ is similar to $\fkb$ or $\subgp{\fkb,-I_3}$ if and only if $G\not \cong H\times C_{3}$. Indeed, if $H'=\subgp{\diag{-1,-1,1},\diag{1,-1,1}}$, then $\alpha=1$ and $G'$ is an abelian group of exponent $6$. However, $\subgp{\diag{-1,-1,1},\diag{1,-1,1}}$ is not a subgroup of $\fka$, which contradicts the above argument. \\
\indent This completes the proof.
\end{proof}

\begin{lemma}\label{dn_lemma}
Let $F$ be a number field and $n$ the order of the torsion part of $F^{\times}$ (i.e.\ we have $\mu_n = \mu_F$), and let $r>0$ be an integer that is relatively prime to $n.$ Then we have:\\
(a) If $G$ is a finite subgroup of $GL_r (F)$ containing $\mu_n \cdot I_r,$ then $G=\subgp{G_0, \mu_n \cdot I_r } \cong G_0 \times C_n$ where $G_0 = G \cap SL_r (F).$\\
(b) Assume that a finite subgroup $H$ of $GL_r(F)$ contains $\mu_n \cdot I_{r}.$ Then a group $G$ is a subgroup of $H$ if and only if $G_0$ is a subgroup of $H_0$, where $G_0 = G \cap SL_r(F)$ and $H_0 =H \cap SL_r(F)$.
\end{lemma}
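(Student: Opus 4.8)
The plan is to push everything through the determinant homomorphism $\det\colon \GL_r(F)\to F^{\times}$, exploiting the coprimality $\gcd(r,n)=1$ to split off the central scalar subgroup $\mu_n\cdot I_r$. First I would record that for a \emph{finite} subgroup $G\le \GL_r(F)$ the image $\det(G)$ is a finite subgroup of $F^{\times}$, hence $\det(G)\subseteq\mu_F=\mu_n$; writing $G_0=G\cap \SL_r(F)=\ker(\det|_G)$ we obtain a short exact sequence $1\to G_0\to G\xrightarrow{\det}\det(G)\to 1$. The decisive observation is that $\det(\zeta I_r)=\zeta^r$, and since $\gcd(r,n)=1$ the map $\zeta\mapsto\zeta^r$ is an automorphism of the cyclic group $\mu_n$. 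Thus the central subgroup $\mu_n\cdot I_r\subseteq G$ already maps \emph{onto} $\mu_n$ under $\det$, forcing $\det(G)=\mu_n$, and $\det$ restricts to an isomorphism $\mu_n\cdot I_r\xrightarrow{\sim}\mu_n$.

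For part (a) this isomorphism is exactly a splitting of the sequence above. Concretely, I would check that $G_0\cap(\mu_n\cdot I_r)=\bm{I_r}$ — if $\zeta I_r\in \SL_r(F)$ then $\zeta^r=1$, whence $\zeta=1$ by injectivity of $\zeta\mapsto\zeta^r$ on $\mu_n$ — and that $G=G_0\cdot(\mu_n\cdot I_r)$, since any $g\in G$ has $\det(g)=\zeta^r$ for a unique $\zeta\in\mu_n$, so that $g(\zeta I_r)^{-1}\in G_0$. As $\mu_n\cdot I_r$ consists of central scalars, the product is direct, giving $G=\subgp{G_0,\mu_n\cdot I_r}=G_0\times(\mu_n\cdot I_r)\cong G_0\times C_n$, as claimed.

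For part (b), the forward implication is immediate and uses nothing: if $G\le H$ then $G_0=G\cap\SL_r(F)\subseteq H\cap\SL_r(F)=H_0$. The substance is the converse, where I would feed $G$ — which, like $H$, is assumed to contain $\mu_n\cdot I_r$ — into part (a): it yields $G=\subgp{G_0,\mu_n\cdot I_r}$, and then $G_0\subseteq H_0\subseteq H$ together with $\mu_n\cdot I_r\subseteq H$ gives $G\subseteq H$. \textbf{The one point demanding care} is precisely the hypothesis $\mu_n\cdot I_r\subseteq G$ in the converse: without it the statement fails (e.g.\ over $F=\Q$ with $r=3$, $n=2$, the group $G=\subgp{\diag{1,1,-1}}$ has $G_0=\bm{I_3}=H_0$ for $H=\subgp{-I_3}$, yet $G\not\le H$). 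I would therefore phrase the converse under the standing assumption that $G$ contains the scalar subgroup $\mu_n\cdot I_r$, which is exactly the regime in which the lemma is applied later.
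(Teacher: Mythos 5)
Your proof is correct and takes essentially the same route as the paper's: both arguments split off the central scalars via the determinant, using that $\zeta \mapsto \zeta^r$ is an automorphism of $\mu_n$ when $\gcd(r,n)=1$ to obtain $G = G_0 \cdot (\mu_n \cdot I_r)$ with $G_0 \cap (\mu_n \cdot I_r) = \{I_r\}$, and then deduce (b) from (a). Your flag on part (b) is justified rather than pedantic: the paper's own proof asserts $G \leq \subgp{G_0, \mu_n \cdot I_r}$ ``by part (a)'' without imposing $\mu_n \cdot I_r \subseteq G$, and your counterexample $G=\subgp{\diag{1,1,-1}}$, $H=\subgp{-I_3}$ over $\Q$ shows this containment can fail, so the scalar hypothesis on $G$ must indeed be read into the statement; this is harmless for the paper's applications (maximal finite subgroups of $GL_3(K)$ automatically contain $-I_3$), which is exactly the regime you describe.
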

\begin{proof}Note first that for any $z\in \mu_{n}$, there is an $x\in \mu_{n}$ such that $\det\bs{x \cdot I_r}=z$ because $\gcd\bs{r,n}=1$. \\
 \indent (a) Clearly, we have $\langle G_0, \mu_n \cdot I_r \rangle \subseteq G$ by our assumption. To prove the reverse inclusion, let $A \in G.$ Then we have $\textrm{det}(A) \in \mu_n,$ and hence, there is an $x \in \mu_n$ such that $x^{-1} A \in G_0$ by the above observation. Thus we can see that $A=(x \cdot I_r ) \cdot (x^{-1} A) \in \langle G_0, \mu_n \cdot I_r \rangle,$ and hence, we have $A \subseteq \langle G_0, \mu_n \cdot I_r \rangle$. Now, since $\gcd(r,n)=1,$ we have $G_0 \cap (\mu_n \cdot I_r) = \{I_r \}.$ Moreover, any element of $\mu_n \cdot I_r$ commutes with any element of $G_0$. Therefore, we can conclude that $G \cong G_0 \times C_n.$ \\
\indent (b) It is clear that if $G \leq H,$ then $G_0 \leq H_0$ by definition. To prove the converse, we note that $G \leq \langle G_0, \mu_n \cdot I_r \rangle$ and $H=\langle H_0, \mu_n \cdot I_r \rangle$ by part (a). Since $G_0 \leq H_0,$ it follows that $G \leq H$. \\
\indent This completes the proof.
\end{proof}

As an immediate consequence of the above lemma, we get the following corollary.
\begin{corollary}\label{dn_corollary}
Let $F$, $n$, $r$, and $G$ be as in Lemma~\ref{dn_lemma}. If $G$ is a maximal finite subgroup of $GL_{r}(F)$, then  $G=\subgp{G_{0},\mu_{n} \cdot I_r}\cong G_{0}\times C_{n}$ where $G_{0}$ is a maximal finite subgroup of $SL_{r}(F)$.
\end{corollary}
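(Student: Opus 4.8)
The plan is to derive Corollary~\ref{dn_corollary} directly from Lemma~\ref{dn_lemma} by showing that the maximality of $G$ in $GL_r(F)$ forces $G$ to contain $\mu_n \cdot I_r$, after which part (a) of the lemma supplies the decomposition, and a short maximality argument transfers to the $SL_r(F)$ side. First I would observe that $\mu_n \cdot I_r$ is itself a finite subgroup of $GL_r(F)$, and that for any finite subgroup $G$ the group $\subgp{G, \mu_n \cdot I_r}$ is again finite (since $\mu_n \cdot I_r$ is central and of finite order). Thus $G \leq \subgp{G, \mu_n \cdot I_r}$ with the larger group finite, and maximality of $G$ immediately yields $G = \subgp{G, \mu_n \cdot I_r}$, i.e.\ $\mu_n \cdot I_r \subseteq G$. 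This is the key step that puts us into the hypothesis of Lemma~\ref{dn_lemma}~(a), which then gives $G = \subgp{G_0, \mu_n \cdot I_r} \cong G_0 \times C_n$ with $G_0 = G \cap SL_r(F)$.

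It remains to verify that $G_0$ is a \emph{maximal} finite subgroup of $SL_r(F)$. Here I would argue by contradiction: suppose $G_0$ were properly contained in some finite subgroup $H_0 \leq SL_r(F)$. Then I would set $H := \subgp{H_0, \mu_n \cdot I_r}$, which is a finite subgroup of $GL_r(F)$ containing $\mu_n \cdot I_r$. By Lemma~\ref{dn_lemma}~(b) applied with this $H$ (noting that $H \cap SL_r(F) = H_0$, which follows since $\gcd(r,n)=1$ forces $G_0 \cap \mu_n \cdot I_r = \{I_r\}$ and hence the $SL$-part of $H$ is exactly $H_0$), the inclusion $G_0 \leq H_0$ lifts to $G \leq H$. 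Since $G_0 \lneq H_0$ strictly, the orders satisfy $|H| = n \cdot |H_0| > n \cdot |G_0| = |G|$, so $G \lneq H$ is a proper containment in $GL_r(F)$, contradicting the maximality of $G$.

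The main obstacle—though it is a mild one—is the careful bookkeeping around the index $n$ and the condition $\gcd(r,n)=1$, which is what guarantees both that $G_0 \cap (\mu_n \cdot I_r) = \{I_r\}$ (so that the semidirect/direct product orders multiply cleanly as $|G| = n|G_0|$) and that $H \cap SL_r(F)$ recovers $H_0$ exactly rather than something larger. Once these order computations are set up correctly, the strictness of the containment $G \lneq H$ is forced by the strictness of $G_0 \lneq H_0$, and the contradiction closes. Everything else is a direct invocation of the two parts of Lemma~\ref{dn_lemma}, so the corollary follows with essentially no new computation beyond recognizing that maximality in $GL_r(F)$ compels membership of the central subgroup $\mu_n \cdot I_r$.
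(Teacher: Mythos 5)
Your proposal is correct and follows exactly the route the paper intends (the paper labels this an immediate consequence of Lemma~\ref{dn_lemma} and omits the details): maximality of $G$ forces $\mu_n \cdot I_r \subseteq G$ since $\subgp{G,\mu_n \cdot I_r}$ is finite, part (a) then gives the decomposition, and part (b) together with the order count $|H| = n|H_0| > n|G_0| = |G|$ yields maximality of $G_0$ in $SL_r(F)$. The only cosmetic point is that your justification of $H \cap SL_r(F) = H_0$ is worded loosely; the clean statement is that for $\zeta \in \mu_n$ and $h \in H_0$ one has $\det(\zeta h) = \zeta^r$, and $\gcd(r,n)=1$ makes $\zeta \mapsto \zeta^r$ injective on $\mu_n$, which is precisely the fact you invoke.
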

\begin{remark}\label{reduction rmk}
In light of Corollary~\ref{dn_corollary}, we can reduce the problem of finding all maximal finite subgroups of $GL_{3}(K)$ to that of finding all maximal finite subgroups of $SL_{3}(K)$ because we have $\mu_{K}=\mu_{2}$$=\{\pm 1\}$. In other words, there is a 1-1 correspondence between maximal finite subgroups of $GL_{3}(K)$ and maximal finite subgroups of $SL_{3}(K)$ given by $G \mapsto G_0 = G \cap SL_3(K)$. The inverse of the map is given by $G_0 \mapsto G= \subgp{G_0, -I_3}$.
\end{remark}

In view of Remark~\ref{reduction rmk}, we classify all maximal finite subgroups $G_0$ of $SL_3(K)$ for some $K$ in the following subsequent lemmas.
\begin{lemma}\label{(E),(G) lem}
Let $G_0$ be a finite subgroup of $\textrm{SL}_3(\C)$ of the type (E), (F), (G), (I), (J), (K), or (L) in \cite[Definition 17]{JCS}. Then there is no $K$ such that $G_0 \leq \textrm{SL}_3(K).$
\end{lemma}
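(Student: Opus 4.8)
The plan is to reduce the whole statement to a computation of character fields. Suppose, for contradiction, that $G_0 \leq \SL_3(K)$ for some $K$ that is either $\Q$ or a real quadratic field. The inclusion $G_0 \hookrightarrow \SL_3(K) \hookrightarrow \GL_3(\C)$ is then a faithful $3$-dimensional representation $\rho$ of $G_0$ that is \emph{defined over} $K$; in particular its character $\chi$ (the trace function $g \mapsto \operatorname{tr}\rho(g)$) takes all of its values in $K$, so $\Q(\chi) \subseteq K$. Since each of the types (E), (F), (G), (I), (J), (K), (L) specifies $G_0$ up to conjugacy in $\GL_3(\C)$, the inclusion representation $\rho$ is determined up to isomorphism, and hence so is the character field $\Q(\chi)$. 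Because every admissible $K$ satisfies $K \subseteq \R$ and $[K:\Q] \leq 2$, it suffices to show, for each of the seven types, that $\Q(\chi)$ is \emph{not} contained in $\R$ (or, as a backup, has degree $\geq 3$ over $\Q$); this single obstruction rules out all admissible $K$ simultaneously and, conveniently, avoids any Schur-index considerations.

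The first mechanism handles every type whose defining subgroup contains a non-trivial central scalar. A scalar matrix in $\SL_3(\C)$ has the form $\lambda I_3$ with $\lambda^3 = 1$, so a non-trivial one is $\omg I_3$ with $\omg = \z{3}$ a primitive cube root of unity, and $\chi(\omg I_3) = 3\omg \notin \R$. Thus $\Q(\chi) \supseteq \Q(\omg) = \Q(\sqrt{-3})$, which is imaginary, contradicting $K \subseteq \R$. I expect this to dispose of the Hessian-type groups (the groups of orders $108$, $216$, and $648$) and of the types obtained by adjoining the central $C_3$ to a primitive group, such as the Valentiner group $3 \cdot A_6$, since by construction these all contain $\omg I_3$.

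The second mechanism handles the Klein-type group $\cong \mathrm{PSL}(2,7)$, which is simple and hence contains no non-trivial central scalar. Here I would instead evaluate $\chi$ on an element $g$ of order $7$: its eigenvalues are primitive $7$-th roots of unity, and the trace works out to the Gauss-sum value $\tfrac{-1+\sqrt{-7}}{2}$, which again is not real, so $\Q(\chi) \supseteq \Q(\sqrt{-7}) \not\subseteq \R$. Uniformly, then, for each type in the list the goal is to exhibit a single element of $G_0$ on which $\chi$ takes a non-real value, which immediately forces $\Q(\chi) \not\subseteq \R$.

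The substantive work, and the main obstacle, is the bookkeeping: one must read off from \cite[Definition 17]{JCS} the explicit matrix generators of each of the seven types, determine which of the two mechanisms applies, and carry out the (single) trace computation in each case. The only conceptual points needing care are that the character field really is an invariant of the type --- guaranteed by $\GL_3(\C)$-conjugacy, which corresponds to isomorphism of representations --- and that the icosahedral type (H), whose defining $A_5$-representation has the \emph{real} character field $\Q(\sqrt{5})$, is correctly absent from the list, consistent with the fact that $A_5$ does embed in $\SL_3(\Q(\sqrt{5}))$. Confirming that none of (E), (F), (G), (I), (J), (K), (L) secretly has a real character field of degree $\leq 2$ is precisely the content to be verified type by type.
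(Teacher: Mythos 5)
Your proof is correct, but it takes a genuinely different route from the paper's. The paper disposes of all seven types in one line by citing the Schur-type order bound of Guralnick--Lorenz \cite[\S 5.3]{RM}: for each of these types the order $|G_0|$ (namely $108$, $216$, $648$, $168$, $180$, $504$, $1080$) fails to divide the bound $S(3,K)$ for $K=\Q$ or $K$ real quadratic, so no embedding can exist. You replace this order-divisibility obstruction with a rationality obstruction: every group on the list contains an element of non-real trace, while containment in $\textrm{SL}_3(K)$ forces all traces into $K \subseteq \R$. Your two mechanisms do cover all seven types, and the hedge ``I expect this to dispose of the Hessian-type groups'' can be discharged concretely: types (E), (F), (G) all contain the standard generators $S=\Delta_{(1,\omega,\omega^2)}$ and the $3$-cycle permutation matrix $T$, whose commutator $TST^{-1}S^{-1}$ is the nontrivial scalar $\omega^{\pm 1} I_3$, while for (J), (K), (L) the central $C_3$ acts by scalars (by Schur's lemma, or by the defining presentation in \cite[Definition 17]{JCS}), and for (I) the order-$7$ trace is $(-1\pm\sqrt{-7})/2$ as you say. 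Comparing what each approach buys: the paper's argument is shorter given the cited bound, but it depends on knowing $S(3,K)$ uniformly over all real quadratic fields; yours is self-contained, requires only one trace evaluation per type, and proves the stronger statement that these groups embed in $\textrm{SL}_3(K)$ for \emph{no} subfield $K$ of $\R$ of any degree, since the obstruction is that the character field is imaginary rather than that the group is too large. Your remark that type (H) (with real character field $\Q(\sqrt{5})$) is correctly excluded is a good consistency check that the paper's order-bound proof does not make as transparently.
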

\begin{proof}
This follows immediately from \cite[$\S$5.3]{RM} by observing that there is no $K$ such that $|G_0|$ divides $S(3, K)$ for all of these types.
\end{proof}


\begin{lemma}\label{(A),(C) lem}
Let $G_0$ be a finite subgroup of $\textrm{SL}_3(\C)$ of the type (A) or (C) or (D) in \cite[Definition 17]{JCS}. If $G_0$ is a maximal (up to isomorphism) finite subgroup of $\textrm{SL}_3(K)$ for some $K,$ then $G_0 = \textrm{Sym}_4$ and $K$ is arbitrary.
\end{lemma}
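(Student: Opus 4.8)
The plan is to exploit the structural description of these types: a finite subgroup $G_0\le\SL_3(\C)$ of type (A), (C) or (D) in \cite[Definition 17]{JCS} has a normal abelian subgroup $H$ of diagonal matrices with $G_0/H$ trivial (type (A)), cyclic of order $3$ generated by the coordinate cycle $T=\psmatrix{0&1&0\\0&0&1\\1&0&0}$ (type (C)), or isomorphic to $\textrm{Sym}_3$ generated by $T$ and a coordinate transposition (type (D)). Throughout I would use that $\mu_K=\{\pm1\}$ and that no element conjugate to $-I_3$ (which has determinant $-1$) can lie in $\SL_3(K)$. For type (A), $G_0$ is abelian; setting $G:=\subgp{G_0,-I_3}\cong G_0\times C_2\le\GL_3(K)$ (of exponent $>1$ since $-I_3\ne I_3$), Lemma~\ref{table_K}(a) furnishes a finite subgroup of $\GL_3(K)$ isomorphic to $G\rtimes C_2$, hence strictly larger than $G$ up to isomorphism, so $G$ is not maximal in $\GL_3(K)$; by Corollary~\ref{dn_corollary} and Remark~\ref{reduction rmk}, $G_0$ is then not maximal in $\SL_3(K)$. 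Hence a maximal $G_0$ is of type (C) or (D).

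For type (C) I would write $G_0\cong H\rtimes C_3$ with the $C_3$ acting nontrivially (a trivial action would make $G_0$ abelian, already treated). Lemma~\ref{table_K}(b) applied to $G_0\le\GL_3(K)$ gives, since $G_0\not\cong H\times C_3$, that $H$ is similar to $\fkb$ or to $\subgp{\fkb,-I_3}$; the latter contains an element conjugate to $-I_3$ and so cannot sit in $\SL_3(K)$, whence $H$ is similar to $\fkb=\subgp{\diag{-1,-1,1},\diag{1,-1,-1}}\cong C_2\times C_2$. As $T$ cyclically permutes the three involutions of $\fkb$, we get $G_0\cong(C_2\times C_2)\rtimes C_3\cong\textrm{Alt}_4$. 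Since $\textrm{Sym}_4$ embeds in $\SL_3(\Q)\subseteq\SL_3(K)$ (see the last paragraph), $\textrm{Alt}_4\subsetneq\textrm{Sym}_4$ shows $G_0$ is not maximal.

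For type (D), $G_0\cong H\rtimes\textrm{Sym}_3$ with $\textrm{Sym}_3$ acting by coordinate permutations. Restricting to $\subgp{H,T}$ and reusing the type-(C) analysis, if $T$ acts nontrivially on $H$ then $H$ is similar to $\fkb$, and because the coordinate permutations induce the full $\Aut(\fkb)\cong\textrm{Sym}_3$ on the three involutions of $\fkb$, one obtains $G_0\cong(C_2\times C_2)\rtimes\textrm{Sym}_3\cong\textrm{Sym}_4$, concretely the group of determinant-one signed permutation matrices. If instead $T$ acts trivially on $H$, then each element of $H$ is a scalar $\diag{x,x,x}$ with $x^3=1$ and $x\in\{\pm1\}$, forcing $H$ trivial and $G_0\cong\textrm{Sym}_3\subsetneq\textrm{Sym}_4$, again non-maximal. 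Thus the only maximal possibility of type (D) is $\textrm{Sym}_4$.

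Combining the cases, a maximal $G_0$ of type (A), (C) or (D) is forced to be $\textrm{Sym}_4$. This group is realized inside $\SL_3(\Q)$ by the $24$ signed permutation matrices of determinant $1$, whose entries lie in $\{0,\pm1\}$; since $\Q\subseteq K$ always, $\textrm{Sym}_4\le\SL_3(K)$ for arbitrary $K$, which supplies the embeddings used above and shows $K$ is unconstrained. (That $\textrm{Sym}_4$ is genuinely maximal uses in addition that it is irreducible, hence not of type (B), together with the exclusion of the primitive types in Lemma~\ref{(E),(G) lem}.) The hard part will be the type-(D) bookkeeping: identifying the $\textrm{Sym}_3$-action on $H$ as the full automorphism action so that $(C_2\times C_2)\rtimes\textrm{Sym}_3\cong\textrm{Sym}_4$, and carefully using the determinant-one condition to discard the options $\subgp{\fkb,-I_3}$ and the subgroups of $\fka$ permitted by Lemma~\ref{table_K}(b).
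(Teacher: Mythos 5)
Your overall strategy coincides with the paper's: use the diagonal part $H$ and Lemma~\ref{table_K} to show that among types (A), (C), (D) only $\textrm{Sym}_4=\subgp{\fkb,T,Q}$ survives, and then rule out proper finite overgroups of the remaining types. Your treatment of the first half is in fact more detailed than the paper's one-line appeal to Lemma~\ref{table_K} (the determinant argument excluding $\subgp{\fkb,-I_3}$, and the reduction of the abelian case via Lemma~\ref{table_K}(a) together with Corollary~\ref{dn_corollary} and Remark~\ref{reduction rmk}, are exactly the intended content), and your exclusion of type (B) overgroups by irreducibility is a legitimate variant of the paper's argument (the paper instead uses that type (B) subgroups of $SL_3(K)$ are cyclic or dihedral, via Lemma~\ref{(D) lem}). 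The genuine gap is in the maximality verification: you exclude type (B) and the types (E), (F), (G), (I), (J), (K), (L) covered by Lemma~\ref{(E),(G) lem}, but type (H) is covered by neither, and it is a live case: by Lemma~\ref{(F) lem}, a type (H) group $\textrm{Alt}_5$ genuinely embeds in $SL_3(\Q(\sqrt{5}))$, so for $K=\Q(\sqrt{5})$ your argument as written does not rule out $\textrm{Sym}_4\lneq G_0'$ with $G_0'$ of type (H). The paper treats this as its Case 2; the repair is one line: $|G_0'|=60$ is not divisible by $|\textrm{Sym}_4|=24$, so Lagrange's theorem forbids such a containment.

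Two smaller defects sit in your type (D) analysis. First, you assert $G_0\cong H\rtimes\textrm{Sym}_3$, i.e.\ splitness of the extension $1\to H\to G_0\to\textrm{Sym}_3\to 1$, without justification; the conclusion is correct for $H\cong C_2\times C_2$ with the faithful action (for instance because every such extension of $\textrm{Sym}_3$ by $\fkb$ splits, or more directly because once $H$ is identified with $\fkb$ the group $\subgp{\fkb,T,Q}$ is literally the group of determinant-one signed permutation matrices, visibly $\textrm{Sym}_4$), but as written this is an unproven step. Second, your sub-case ``$T$ acts trivially on $H$'' is vacuous and your conclusion there is inconsistent: for any type (D) group one computes $(QT)^2=\diag{-1,1,-1}\in G_0$, a non-scalar diagonal matrix, so the diagonal part satisfies $H\supseteq\fkb$ and $T$ never acts trivially; in particular $\subgp{T,Q}$ is all of $\textrm{Sym}_4$, not $\textrm{Sym}_3$ as you claim. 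Neither of these affects the final answer, but both should be repaired along with the missing type (H) case.
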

\begin{proof}
Let $H=\{A \in G_0~|~ A ~\textrm{is diagonal}\} \leq G_0.$ Then we need to consider the following three cases: \\
(i) If $G_0$ is of type (A), then we have $G_0 = H.$ \\
(ii) If $G_0$ is of type (C), then we have $G_0 =\langle H, T \rangle$ where $T=\psmatrix{0&1&0\\0&0&1\\1&0&0}.$ \\
(iii) If $G_0$ is of type (D), then we have $G_0 =\langle H, T, Q \rangle$ where $Q=\psmatrix{0&1&0\\1&0&0\\0&0&-1}.$ \\
By Lemma~\ref{table_K}, it follows that such a unique maximal $G_0$ is equal to $G_0 = \langle \fkb, T, Q \rangle \cong \textrm{Sym}_4$ (among the groups of type (A), (C), or (D)). Now, we show that $G_0= \langle \fkb, T, Q \rangle$ is a maximal finite subgroup of $SL_3(K).$ Indeed, suppose on the contrary that there is a finite subgroup $G_0^{\prime}$ of $SL_3(K)$ such that $G_0 \lneq G_0^{\prime}.$ Then by the above observation, $G_0^{\prime}$ cannot be of type (A), (C), or (D). Also, $G_0^{\prime}$ cannot be of type (E), (F), (G), (I), (J), (K), or (L) by Lemma~\ref{(E),(G) lem}. Hence it suffices to consider the following two cases: \\
\indent (Case 1): $G_0^{\prime}$ is of type (B). In this case, we can show that $G_0^{\prime}$ is a dihedral group (see Lemma~\ref{(D) lem} below). Then since $G_0 \leq G_0^{\prime},$ $G_0$ must be either cyclic or a dihedral group, which is absurd. \\
\indent (Case 2): $G_0^{\prime}$ is of type (H). In this case, we have $|G_0^{\prime}|=60$ and this contradicts the fact that $|G_0|$ must divide $|G_0^{\prime}|.$ \\
Therefore, from (Case 1) and (Case 2), we can conclude that $G_0$ is a maximal finite subgroup of $SL_3(K),$ as desired.\\
\indent This completes the proof.
\end{proof}

\begin{lemma}\label{(D) lem}
Let $G_0$ be a finite subgroup of $\textrm{SL}_3(\C)$ of the type (B) in \cite[Definition 17]{JCS}. If $G_0$ is a maximal (up to isomorphism) finite subgroup of $\textrm{SL}_3(K)$ for some $K,$ then $G_0 = D_{n} $ for some $n$, and $\zeta_n + \zeta_n^{-1} \in K$.
\end{lemma}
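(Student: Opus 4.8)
The plan is to unwind the definition of type (B) and reduce the statement to the classification of finite subgroups of $GL_2$. By \cite[Definition 17]{JCS}, a type (B) group is, up to conjugacy in $SL_3(\C)$, of the form $\{\widetilde{A} : A \in \Phi\}$ for a finite subgroup $\Phi \le GL_2(\C)$; since $A \mapsto \widetilde A$ is an injective homomorphism, $G_0 \cong \Phi$, and in particular $G_0$ is reducible over $\C$ (this is what distinguishes types (A) and (B) from the transitive types (C), (D) and the primitive types). Because $K$ is either $\Q$ or a real quadratic field, it is totally real, so I fix a real embedding $K \hookrightarrow \R$ and regard $G_0$ as a finite subgroup of $GL_3(\R)$.

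First I would show $G_0$ is cyclic or dihedral. Averaging an inner product, $G_0$ preserves a positive definite form on $\R^3$. I claim $\R^3$ is reducible as a real $G_0$-module: if it were $\R$-irreducible, then by the Frobenius--Schur trichotomy $\End_{\R G_0}(\R^3)$ would be $\R$, $\C$, or $\mathbb{H}$, forcing respectively that $\C^3$ is $\C$-irreducible, that $\C^3 \cong W \oplus \overline{W}$ with $\dim_{\C} W = 3/2$, or that $3 = \dim_{\R}\R^3$ is divisible by $4$; the last two are impossible because $3$ is odd, and the first contradicts the $\C$-reducibility of a type (B) group. Hence there is a proper nonzero $G_0$-invariant real subspace, and (passing to orthogonal complements if necessary) a $G_0$-invariant plane $W \subseteq \R^3$. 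The representation $G_0 \to GL(W) \cong GL_2(\R)$ is injective, since an element acting trivially on $W$ is scalar on the line $W^{\perp}$ and the $SL_3$ condition forces that scalar to be $1$. Thus $G_0$ embeds into $GL_2(\R)$, every finite subgroup of which is conjugate into $\mathrm{O}(2)$ and hence cyclic or dihedral.

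It remains to rule out the cyclic case and to extract the field condition, both via a trace computation. Because $G_0 \hookrightarrow GL(W)$ is injective, an element $g \in G_0$ of order $n$ acts on $W$ with order $n$, i.e.\ with eigenvalues $\zeta_n^{\pm j}$ for some $j$ coprime to $n$; as $\det(g|_W)=1$ it fixes $W^{\perp}$, so $\operatorname{tr}(g) = \zeta_n^{j} + \zeta_n^{-j} + 1 \in K$ and therefore $\zeta_n + \zeta_n^{-1} \in \Q(\zeta_n^{j}+\zeta_n^{-j}) \subseteq K$, which is the asserted field condition. To eliminate $G_0 \cong C_n$, I use this to realize $D_n$ over $K$: the companion matrix $R = \psmatrix{0 & -1 \\ 1 & \zeta_n+\zeta_n^{-1}} \in GL_2(K)$ has order $n$ and determinant $1$, a suitable reflection $S \in GL_2(K)$ with $\det S = -1$ satisfies $SRS^{-1}=R^{-1}$, and then $\widetilde R, \widetilde S \in SL_3(K)$ generate a type (B) copy of $D_n$ containing $\widetilde{\langle R\rangle}\cong C_n \cong G_0$. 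Hence a cyclic $G_0$ is isomorphic to a proper subgroup of a finite subgroup of $SL_3(K)$ and cannot be maximal (the cases $n \le 2$ being immediate), so a maximal type (B) group is $G_0 = D_n$ with $\zeta_n + \zeta_n^{-1}\in K$.

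The main obstacle is the passage from the complex reducibility built into type (B) to a genuine \emph{real} invariant plane, since a finite subgroup of $GL_3(\R)$ need not be cyclic or dihedral in general (the rotation groups of the regular polyhedra live in $\mathrm{SO}(3)$). The odd-dimensionality of the representation, together with the $SL_3$ condition that kills the kernel of $G_0 \to GL(W)$, is precisely what forces the reduction to $GL_2(\R)$. A secondary point requiring care is verifying that the faithful two-dimensional representation of $D_n$ has rotation eigenvalues of exact order $n$, so that $\gcd(j,n)=1$ and $\Q(\zeta_n^{j}+\zeta_n^{-j}) = \Q(\zeta_n+\zeta_n^{-1})$; this is what makes the field condition clean, and it follows from the injectivity of $G_0 \hookrightarrow GL(W)$ established above.
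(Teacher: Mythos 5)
Your proposal is correct, but it takes a genuinely different route from the paper's. The paper stays inside the complex block structure of a type (B) group: writing $G_0=\{\widetilde{A}\,:\,A\in H\}$ for a finite $H\leq GL_2(\C)$, it invokes the classification of finite subgroups of $SL_2(\C)$ to list the possibilities for $H_0=H\cap SL_2(\C)$, rules out the binary dihedral/tetrahedral/octahedral/icosahedral cases because each would place a dicyclic group inside $GL_3(\R)$ (impossible by the classification of finite subgroups of $GL_3(\R)$, cited from Shafarevich), and then shows by a direct matrix computation that every element of $H\setminus H_0$ is anti-diagonal, so that $G_0$ is either $\langle\widetilde{\Delta_n}\rangle$ or $\langle\widetilde{\Delta_n},\widetilde{B}\rangle\cong D_n$; maximality discards the cyclic option, and the field condition is read off from the characteristic polynomial of $\widetilde{\Delta_n}$. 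You avoid both classifications: the Frobenius--Schur trichotomy shows that a finite subgroup of $SL_3(K)\subseteq GL_3(\R)$ which is reducible over $\C$ is already reducible over $\R$ (the complex and quaternionic types are impossible in odd real dimension, and the real type would keep the complexification irreducible), the determinant condition makes $G_0\hookrightarrow GL(W)\cong GL_2(\R)$ faithful, and finite subgroups of $O(2)$ are cyclic or dihedral. This is more elementary and self-contained, and your elimination of the cyclic case is in fact tighter than the paper's: by first extracting $\zeta_n+\zeta_n^{-1}\in K$ from traces and then exhibiting $D_n=\langle R,S\rangle$ with $\widetilde{R},\widetilde{S}\in SL_3(K)$, you verify that the dihedral overgroup genuinely lives in $SL_3(K)$, a point the paper's appeal to maximality leaves implicit (generators of $D_n$ over $K$ appear in the paper only later, in Remark~\ref{QG_dih3}). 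What the paper's computation buys in exchange is an explicit normal form for $G_0$ (diagonal $\widetilde{\Delta_n}$ plus an anti-diagonal involution), which it reuses afterwards. Two small points in your write-up deserve a finishing touch: the claim that $g|_W$ has eigenvalues $\zeta_n^{\pm j}$ with $\det(g|_W)=1$ is valid only for elements of order $n\geq 3$ (an order-$2$ element of $O(2)$ may be a reflection of determinant $-1$), which is harmless since you apply it to the rotation generator and treat $n\leq 2$ separately; and the reflection $S$ should be displayed explicitly, e.g.\ $S=\psmatrix{1 & \zeta_n+\zeta_n^{-1}\\ 0&-1}$, which satisfies $S^2=I$, $\det S=-1$, and $SRS^{-1}=R^{-1}$.
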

\begin{proof}
Note that $G_0 = \{ \widetilde{A} ~|~A \in H \}$ for some finite subgroup $H$ of $GL_2(\C).$ Let $H_0 = H \cap SL_2(\C).$ Then by \cite[$\S$2]{JCS}, we have
\begin{equation*}
H_0 \in \{\langle \Delta_n \rangle,~ \langle \Delta_n, R \rangle, \langle \Delta_4, R, C \rangle, \langle \Delta_8, R, C \rangle, \langle \Delta_{10}, R, D \rangle : n~\textrm{is even} \}
\end{equation*}
where $\Delta_n = \Delta_{(\zeta_n^{-1},\zeta_n)},$ $R=\psmatrix{0&1\\-1&0}$, $C=\frac{1}{2}\psmatrix{-1-i&\phantom{..}& 1-i\\\phantom{..}&\phantom{..}&\phantom{..}\\-1-i&\phantom{..}&-1+i}$, and $D=\frac{1}{\sqrt{5}}\psmatrix{-\z{5}+\z{5}^{-1} & \z{5}^{2}-\z{5}^{-2}\\ \z{5}^{2}-\z{5}^{-2}& \z{5}-\z{5}^{-1}}$. We claim further that $H_0 = \langle \Delta_n \rangle$ for an even integer $n.$ Indeed, suppose on the contrary that $H_0$ is similar to one of the groups $\langle \Delta_n, R \rangle ~(n: \textrm{even})$ or $\langle \Delta_n, R, C \rangle$ for $n \in \{4,8\}$ or $\langle \Delta_{10}, R, D \rangle$. Then it follows that $G$ contains a subgroup $\langle \widetilde{\Delta_n }, \widetilde{R} \rangle$, that is isomorphic to the dicyclic group $\textrm{Dic}_{2n},$ which is absurd, because of the well-known classification of finite subgroups of $GL_3(\R)$ (see \cite[Example 4 in $\S13$]{Sha(2005)}). Hence we can see that $H_0 = \langle \Delta_n \rangle$ for some even integer $n.$ \\
\indent Now, let $\widetilde{A} \in \widetilde{H}=G_0.$ In view of the proof of Lemma~\ref{table_K}, we can see that $\widetilde{A}$ commutes with $\widetilde{\Delta_n }$ if and only if $A \in H_0.$ Let $B \in H \setminus H_0.$ Then $B \Delta_n B^{-1} \in H_0$ and it is a diagonal matrix. Since the conjugation by the matrix $B$ permutes the diagonal entries of $\Delta_n$, we have $B \Delta_n B^{-1} = \Delta_n^{-1}.$ Hence it follows that $B=\psmatrix{0&x\\y&0}$ for some $x,y$, and $\widetilde{B}^2 = \Delta_{(xy, xy, 1/(xy)^2)} \in G_0.$ Then since $\langle \widetilde{B}^2 \rangle $ is an abelian finite subgroup of $GL_3(K),$ the proof of Lemma~\ref{table_K} shows that $xy=\pm 1.$ Since $B \not \in H_0,$ we can see that $B= \psmatrix{0&x \\ x^{-1} & 0}.$ Now, for any $B^{\prime} \in H$ which does not commute with $\Delta_n,$ we can see that $BB^{\prime}$ commutes with $\Delta_n$ and $BB^{\prime} \in H_0.$ Hence it follows that $G_0 \in \left\{\langle \widetilde{\Delta_n} \rangle, \langle \widetilde{\Delta_n}, \widetilde{B} \rangle \right\},$ and then, since $G_0$ is assumed to be maximal, we can conclude that $G_0 =  \langle \widetilde{\Delta_n}, \widetilde{B} \rangle \cong D_{n}.$ The last assertion follows from the observation that the characteristic polynomial of $\widetilde{\Delta_n} \in G_0$ is defined over $K$ if and only if the characteristic polynomial of $\Delta_n \in H_0$ is defined over $K.$ \\
\indent This completes the proof.
\end{proof}

\begin{lemma}\label{(F) lem}
Let $G_0$ be a finite subgroup of $\textrm{SL}_3(\C)$ of the type (H) in \cite[Definition 17]{JCS}. If $G_0$ is a maximal (up to isomorphism) finite subgroup of $\textrm{SL}_3(K)$ for some $K,$ then $G_0 = \textrm{Alt}_5$ and $K=\Q(\sqrt{5})$.
\end{lemma}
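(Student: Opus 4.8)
The plan is to prove that among finite subgroups of $\textrm{SL}_3(\C)$ of type (H)—which is the exceptional family whose order is divisible by $60$ and which is isomorphic to $\textrm{Alt}_5$ or its central extensions—the only one that can be realized inside $\textrm{SL}_3(K)$, and maximally so, is the simple group $\textrm{Alt}_5$ over the field $K = \Q(\sqrt{5})$. I would first recall from \cite[Definition 17]{JCS} that the groups of type (H) are exactly the finite subgroups of $\textrm{SL}_3(\C)$ containing a normal subgroup isomorphic to $\textrm{Alt}_5$ acting via the irreducible $3$-dimensional complex representation of $\textrm{Alt}_5$; in particular $|G_0| = 60$, so $G_0 \cong \textrm{Alt}_5$ as an abstract group. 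The goal then splits into two independent tasks: a \emph{field-of-definition} computation that forces $\sqrt{5} \in K$, and a \emph{maximality} verification ruling out any strictly larger finite subgroup of $\textrm{SL}_3(K)$.

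For the field-of-definition step, I would use the same character-theoretic principle that underlies Lemma~\ref{Gal} and the later lemmas: if $G_0 \leq \textrm{SL}_3(K)$, then the character $\chi$ of the defining $3$-dimensional representation takes values in $K$. The icosahedral $3$-dimensional representation of $\textrm{Alt}_5$ is one of the two irreducible $3$-dimensional representations, and its character values on the two classes of $5$-cycles are $\frac{1 \pm \sqrt{5}}{2}$ (the golden-ratio values). Hence the trace of an element of order $5$ in $G_0$ already lies outside $\Q$ and generates $\Q(\sqrt{5})$, forcing $\sqrt{5} \in K$. Conversely, the standard fact that this representation is realizable over $\Q(\sqrt{5})$ (indeed $\textrm{Alt}_5 \hookrightarrow \textrm{SL}_3(\Q(\sqrt{5}))$ via the symmetry group of the icosahedron) shows the embedding does occur for $K = \Q(\sqrt{5})$, so this value of $K$ is both necessary and sufficient for a type-(H) subgroup to exist.

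For the maximality step, I would argue exactly as in the proof of Lemma~\ref{(A),(C) lem}: suppose there is a finite subgroup $G_0'$ of $\textrm{SL}_3(K)$ with $G_0 \lneq G_0'$, and run through the classification of types in \cite[Definition 17]{JCS}. By Lemma~\ref{(E),(G) lem}, $G_0'$ cannot be of type (E), (F), (G), (I), (J), (K), or (L) for any $K$. If $G_0'$ were of type (A), (C), or (D), then by Lemma~\ref{(A),(C) lem} it would have order dividing $24$, contradicting $60 = |G_0| \mid |G_0'|$. If $G_0'$ were of type (B), then by Lemma~\ref{(D) lem} it would be cyclic or dihedral, which cannot contain the simple group $\textrm{Alt}_5$. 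The only remaining possibility is that $G_0'$ is itself of type (H), whence $|G_0'| = 60 = |G_0|$ and $G_0 = G_0'$, contradicting the strict inclusion. Therefore $G_0 = \textrm{Alt}_5$ is maximal in $\textrm{SL}_3(\Q(\sqrt{5}))$.

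The main obstacle I anticipate is pinning down the precise character values of the icosahedral representation and confirming which of the two inequivalent $3$-dimensional representations of $\textrm{Alt}_5$ corresponds to type (H) in the convention of \cite{JCS}; the two differ by swapping the roles of the two classes of $5$-cycles, but both have character field $\Q(\sqrt{5})$, so the conclusion $\sqrt{5} \in K$ is robust to that ambiguity. The maximality argument is then essentially a bookkeeping exercise over the finite list of types, entirely parallel to Lemma~\ref{(A),(C) lem}, and should present no real difficulty beyond citing the order constraints already established.
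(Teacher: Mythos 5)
Your proposal is correct and takes essentially the same approach as the paper, whose entire proof of this lemma is a one-line appeal to the argument of Lemma~\ref{(A),(C) lem}: your character-field computation forcing $\sqrt{5}\in K$ (hence $K=\Q(\sqrt{5})$, since $K$ is assumed to be $\Q$ or real quadratic) together with the type-by-type exclusion via Lemmas~\ref{(E),(G) lem},~\ref{(A),(C) lem}, and~\ref{(D) lem} is exactly what that appeal unpacks to. One cosmetic remark: types (A), (C), (D) are most safely excluded by noting they are solvable (or by the order bounds coming from Lemma~\ref{table_K}), rather than by attributing a literal ``order dividing $24$'' statement to Lemma~\ref{(A),(C) lem}, but this does not affect the substance.
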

\begin{proof}
This follows from a similar argument as in the proof of Lemma~\ref{(A),(C) lem}.
\end{proof}



Consequently, we can obtain all maximal finite subgroups of $GL_3(K)$ for some $K$ as follows.
\begin{corollary}\label{max cor gl3}
Let $G$ be a maximal (up to isomorphism) finite subgroup of $\textrm{GL}_3(K)$ for some $K.$ Then $G \in \{C_2 \wr \textrm{Sym}_3, D_{n} \times C_2 ~(\textrm{some}~n), \textrm{Alt}_5 \times C_2 \}.$
\end{corollary}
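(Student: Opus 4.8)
The plan is to assemble Corollary~\ref{max cor gl3} directly from the classification of maximal finite subgroups of $\textrm{SL}_3(K)$ established in the preceding lemmas, together with the $\textrm{GL}$--$\textrm{SL}$ bridge of Corollary~\ref{dn_corollary} and Remark~\ref{reduction rmk}. Since $\mu_K = \{\pm 1\}$ in all cases under consideration, Corollary~\ref{dn_corollary} gives that every maximal finite subgroup $G$ of $\textrm{GL}_3(K)$ has the form $G = \subgp{G_0, -I_3} \cong G_0 \times C_2$, where $G_0 = G \cap \textrm{SL}_3(K)$ is a maximal finite subgroup of $\textrm{SL}_3(K)$. Thus it suffices to enumerate all possible maximal $G_0 \leq \textrm{SL}_3(K)$ and then adjoin the central factor $C_2$.

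First I would invoke the Miller--Blichfeldt--Dickson classification of finite subgroups of $\textrm{SL}_3(\C)$ into the types (A)--(L) of \cite[Definition 17]{JCS}, so that any finite $G_0 \leq \textrm{SL}_3(K) \subseteq \textrm{SL}_3(\C)$ falls into exactly one of these types. Lemma~\ref{(E),(G) lem} eliminates types (E), (F), (G), (I), (J), (K), and (L) entirely, since none of these can sit inside $\textrm{SL}_3(K)$ for any admissible $K$. The remaining types are then handled by the structural lemmas: Lemma~\ref{(A),(C) lem} shows that the only maximal $G_0$ among types (A), (C), (D) is $\textrm{Sym}_4 = \subgp{\fkb, T, Q}$; Lemma~\ref{(D) lem} shows that a maximal $G_0$ of type (B) is a dihedral group $D_n$ with $\zeta_n + \zeta_n^{-1} \in K$; and Lemma~\ref{(F) lem} shows that a maximal $G_0$ of type (H) is $\textrm{Alt}_5$ with $K = \Q(\sqrt 5)$. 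This produces the complete list $G_0 \in \{\textrm{Sym}_4, D_n, \textrm{Alt}_5\}$ of maximal finite subgroups of $\textrm{SL}_3(K)$.

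Adjoining $C_2$ then yields the three asserted groups. The only point requiring care is the identification $\textrm{Sym}_4 \times C_2 \cong C_2 \wr \textrm{Sym}_3$: I would note that $C_2 \wr \textrm{Sym}_3 = C_2^3 \rtimes \textrm{Sym}_3$ has order $48$, and that the central element $(-1,-1,-1)$ of the base $C_2^3$ generates a normal $C_2$ direct factor whose quotient is the signed permutation group modulo $\pm I_3$, which is $\textrm{Sym}_4$; comparing orders and centers gives the isomorphism. The main obstacle is not the group theory but ensuring that the maximality passes correctly through the $\times C_2$ construction: one must confirm that no two of the candidate $\textrm{GL}_3(K)$ groups are contained in one another (so that each is genuinely maximal), and that the realizability conditions on $K$ from Lemmas~\ref{(D) lem} and~\ref{(F) lem} are compatible with the existence of the maximal group in each case. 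These follow because the three maximal $\textrm{SL}_3$ groups are pairwise non-comparable (a dihedral group cannot contain $\textrm{Sym}_4$ or $\textrm{Alt}_5$, and $\textrm{Sym}_4$ and $\textrm{Alt}_5$ have incompatible orders), and maximality in $\textrm{SL}_3(K)$ transfers to maximality of the product in $\textrm{GL}_3(K)$ by the bijection of Remark~\ref{reduction rmk}.
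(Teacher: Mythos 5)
Your proposal is correct and follows essentially the same route as the paper: the paper's own proof of Corollary~\ref{max cor gl3} is precisely the combination of Remark~\ref{reduction rmk} (the $\textrm{GL}_3$--$\textrm{SL}_3$ reduction via $G = \subgp{G_0, -I_3}$) with Lemmas~\ref{(E),(G) lem},~\ref{(A),(C) lem},~\ref{(D) lem}, and~\ref{(F) lem}, plus the identification $\textrm{Sym}_4 \times C_2 \cong C_2 \wr \textrm{Sym}_3$. Your additional remarks on pairwise non-comparability and on maximality transferring through the $\times C_2$ construction are sound elaborations of points the paper leaves implicit in the statements of those lemmas and in Remark~\ref{reduction rmk}.
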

\begin{proof}
This follows from Remark~\ref{reduction rmk} and Lemmas~\ref{(E),(G) lem},~\ref{(A),(C) lem},~\ref{(D) lem},~\ref{(F) lem}, together with the fact that $\textrm{Sym}_4 \times C_2 \cong C_2 \wr \textrm{Sym}_3.$
\end{proof}
The next lemma says that the group $D_n \times C_{2}$ (for $n \geq 4$ even) is isomorphic to a subgroup of $GL_3(\Q(\zeta_n + \zeta_n^{-1}))$, which is not irreducible.
\begin{lemma}\label{not irre lem}
Let $n>2$ be an even integer and let $\rho$ be a $3$-dimensional representation of the group $D_n \times C_{2}$. Then $\rho$ realizes over $\Q(\zeta_n + \zeta_n^{-1})$ and it is not irreducible.
\end{lemma}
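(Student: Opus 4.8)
The plan is to decompose $\rho$ into irreducible constituents and realize each one separately over $F:=\Q(\z{n}+\z{n}^{-1})$. First I would record the character theory of $G:=D_{n}\times C_{2}$ (of order $4n$): every complex irreducible representation of $G$ is an external tensor product $\psi\boxtimes\chi$ of an irreducible representation $\psi$ of $D_{n}$ with an irreducible representation $\chi$ of $C_{2}$. Since the irreducible representations of the dihedral group $D_{n}$ have dimension $1$ or $2$, and those of $C_{2}$ are $1$-dimensional, every irreducible representation of $G$ has dimension at most $2$. Hence $G$ has no $3$-dimensional irreducible representation, and by Maschke's theorem the $3$-dimensional representation $\rho$ splits over $\C$ as a direct sum of irreducibles whose dimensions add up to $3$; in particular $\rho$ is reducible, which already proves the non-irreducibility assertion.

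It then remains to realize $\rho$ over $F$, and for this it suffices to realize each irreducible constituent over $F$: a direct sum of representations with entries in $F$ again has entries in $F$ (and is visibly reducible, so in fact $\rho$ is reducible already over $F$). The constituents come in two shapes. For a $1$-dimensional constituent I would note that, since $n$ is even, the abelianization of $G$ is the elementary abelian $2$-group $C_{2}\times C_{2}\times C_{2}$, so every $1$-dimensional representation takes values in $\{\pm 1\}\subset\Q\subseteq F$ and is realizable over $\Q$.

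For a $2$-dimensional constituent, which is of the form $\psi_{k}\otimes\chi$ with $\psi_{k}$ a $2$-dimensional irreducible representation of $D_{n}=\subgp{r,s}$ and $\chi$ a character of $C_{2}$ valued in $\{\pm 1\}$, I would write $\alpha=\z{n}^{k}+\z{n}^{-k}\in F$ and exhibit the explicit model $\psi_{k}(r)=\psmatrix{0&-1\\1&\alpha}$ and $\psi_{k}(s)=\psmatrix{1&\alpha\\0&-1}$. A short direct computation checks the dihedral relations $\psi_{k}(s)^{2}=I_{2}$ and $\psi_{k}(s)\psi_{k}(r)\psi_{k}(s)^{-1}=\psi_{k}(r)^{-1}=\psmatrix{\alpha&1\\-1&0}$, while the characteristic polynomial $t^{2}-\alpha t+1$ of $\psi_{k}(r)$ shows that $\psi_{k}(r)$ has eigenvalues $\z{n}^{\pm k}$, so that $\psi_{k}$ has the correct character and hence is equivalent to the desired representation. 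All entries lie in $F$, and twisting by the $\pm 1$-valued character $\chi$ preserves realizability over $F$. Assembling the constituents, $\rho$ realizes over $F$.

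The hard part will be precisely this last point: realizing the $2$-dimensional dihedral representations over the maximal real subfield $F=\Q(\z{n}+\z{n}^{-1})$ rather than merely over $\Q(\z{n})$, equivalently verifying that their Schur index over $F$ is $1$. The companion-type matrices above settle this by an elementary calculation, so no deeper input beyond the standard character theory of $D_{n}\times C_{2}$ is needed; everything else is bookkeeping.
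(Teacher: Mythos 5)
Your proposal is correct and follows essentially the same route as the paper: both reduce the claim to showing that every irreducible representation of $D_n \times C_2$ has dimension at most $2$ and is realizable over $\Q(\zeta_n+\zeta_n^{-1})$, which forces any $3$-dimensional representation to be reducible and defined over that field. Your explicit companion-type matrices for the $2$-dimensional constituents $\psi_k$ are in fact slightly more thorough than the paper's one-line appeal to an embedding $D_n \hookrightarrow GL_2(\Q(\zeta_n+\zeta_n^{-1}))$, since they realize every $\psi_k$ (including the non-faithful ones) uniformly over that field.
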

\begin{proof}
Note that it suffices to show that all irreducible representations of $D_n \times C_{2}$ have dimension at most $2$ and realize over $\Q(\zeta_n + \zeta_n^{-1})$. Then any subgroup of $GL_{3}(\Q(\zeta_n + \zeta_n^{-1}))$ which is isomorphic to $D_n \times C_{2}$ is not irreducible. Indeed, we recall that the group $D_n$ has the presentation $D_n = \langle \alpha, \beta~|~\alpha^n = \beta^2= 1, \beta \alpha \beta^{-1}=\alpha^{-1} \rangle$, and $D_n$ has four $1$-dimensional irreducible representations and $(n/2-1)$ number of $2$-dimensional irreducible representations. Any $1$-dimensional irreducible character is given by $\chi_{x,y}\colon \alpha^{m}\mapsto x^{m},\alpha^{m}\beta \mapsto x^{m}y$ for some $\bs{x,y}\in\bm{\pm1}^{2}$. Each $2$-dimensional irreducible character is given by $\chi_{u}\colon \alpha^{m}\mapsto \z{n}^{m u}+\z{n}^{-m u},\alpha^{m}\beta \mapsto 0$ for each $1\le u\le n/2-1$. Since $D_n$ can be embedded into $GL_2(\Q(\zeta_n + \zeta_n^{-1}))$ and $SL_3(\Q(\zeta_n + \zeta_n^{-1})),$ $\chi_{u}$ realizes over $\Q(\zeta_n + \zeta_n^{-1})$ for all $1\le u\le n/2-1$. Also, any irreducible representation of $C_{2}$ realizes over $\Q(\zeta_n + \zeta_n^{-1})$. Hence, we can see that any irreducible representation of $D_n \times C_{2}$ has dimension at most $2$ and realizes over $\Q(\zeta_n + \zeta_n^{-1})$. \\
\indent This completes the proof.
\end{proof}

\begin{remark}\label{QG_dih3}
We compute the rational group algebra $\Q G$ of $G=\subgp{\wtilde{\D_{n}},\diag{1,1,-1}} \cong D_{n} \times C_2$ where $\D_n=\left\langle \psmatrix{0&1\\1&\zeta_n + \zeta_n^{-1}}, \psmatrix{1&0\\ \zeta_n + \zeta_n^{-1} & -1} \right\rangle$. More precisely, we show that
$$\Q G=M_{2}(F)\bigoplus M_{1}(\Q)$$
where $F=\Q(\z{n}+\z{n}^{-1})$. Indeed, clearly, we have $\Q G \subset M_{2}(F)\bigoplus M_{1}(\Q)$. Now, since $\diag{1,1,0}\in \Q\subgp{\diag{1,1,-1}}$ and $\Q \D_{n}=M_{2}(F)$,
we have
\begin{equation*}
M_{2}(F)\bigoplus 0=\Q \wtilde{\D_{n}}\diag{1,1,0}\subseteq \Q G,
\end{equation*}
and since $\diag{0,0,1}\in \Q\subgp{\diag{1,1,-1}}$, we also have \[
0\bigoplus M_{1}(\Q)\subseteq \Q G.
\] Hence it follows from dimension counting that $\Q G = M_2(F) \oplus M_1(\Q),$ as desired.
\end{remark}

In summary, we obtain the following useful result.
\begin{lemma}\label{imag lem5}
Let $G$ be an irreducible maximal (up to isomorphism) finite subgroup of $GL_{3}(K)$ for some $K=\Q(\sqrt{d})$ where $d>0$ is a square-free integer. Then $G$ is one of the following groups:
\begin{center}
	\begin{tabular}{ccc}
		\hline
		& $G$  & $d$ \\
		\hline
		$\sharp 1$ & $C_{2}\wr \textrm{Sym}_{3}$ & any $d$ \\
		\hline
		$\sharp 2$ & $\textrm{Alt}_{5}\times C_{2}$  & $5$ \\
		\hline
	\end{tabular}
	\vskip 4pt
	\textnormal{Table 1}
\end{center}
\end{lemma}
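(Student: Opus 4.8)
The plan is to feed the classification of \emph{all} maximal finite subgroups of $GL_3(K)$ from Corollary~\ref{max cor gl3} into the reducibility dichotomy recorded in Lemma~\ref{not irre lem}, and then read off the field $d$ from the $SL_3$-side lemmas via Remark~\ref{reduction rmk}. By Corollary~\ref{max cor gl3}, a maximal finite subgroup $G$ of $GL_3(K)$ is isomorphic to one of $C_{2}\wr \textrm{Sym}_{3}$, $D_{n}\times C_{2}$ for some $n$, or $\textrm{Alt}_{5}\times C_{2}$. First I would invoke the irreducibility hypothesis to delete the dihedral family: by Lemma~\ref{not irre lem}, every $3$-dimensional representation of $D_{n}\times C_{2}$ (for the relevant even $n>2$) is reducible, so no irreducible copy of $D_{n}\times C_{2}$ can sit inside $GL_{3}(K)$. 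This already narrows $G$ to the two groups appearing in Table 1.

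Next I would pin down $d$ in each surviving case through the associated $SL_3$-subgroup $G_{0}=G\cap SL_{3}(K)$ of Remark~\ref{reduction rmk}. For $G\cong \textrm{Alt}_{5}\times C_{2}$, the group $G_{0}$ is the type (H) subgroup $\textrm{Alt}_{5}$, and Lemma~\ref{(F) lem} forces $K=\Q(\sqrt{5})$, i.e.\ $d=5$; this yields line $\sharp 2$. For $G\cong C_{2}\wr \textrm{Sym}_{3}\cong \textrm{Sym}_{4}\times C_{2}$, the group $G_{0}$ is the type (A)/(C)/(D) subgroup $\textrm{Sym}_{4}=\subgp{\fkb,T,Q}$, which by Lemma~\ref{(A),(C) lem} is realizable in $SL_{3}(K)$ for arbitrary $K$; hence $d$ may be any square-free positive integer, giving line $\sharp 1$.

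Finally, for consistency of the table, I would confirm that both listed groups genuinely afford an \emph{irreducible} $3$-dimensional representation over the stated field, so that the hypothesis is not vacuous in either row: the signed-permutation representation of $C_{2}\wr \textrm{Sym}_{3}$ (acting by permutations and sign changes of the three coordinates) is the reflection representation of the Weyl group of type $B_{3}$, which is absolutely irreducible and defined over $\Q$, while the $3$-dimensional icosahedral representation of $\textrm{Alt}_{5}$ twisted by the sign character of $C_{2}$ is irreducible and realizes over $\Q(\sqrt{5})$. The only real step is the elimination of $D_{n}\times C_{2}$, and that is handed to us wholesale by Lemma~\ref{not irre lem}; the remainder is bookkeeping that matches the abstract isomorphism type from Corollary~\ref{max cor gl3} with the value of $d$ extracted from Lemmas~\ref{(A),(C) lem} and~\ref{(F) lem}.
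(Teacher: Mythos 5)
Your proposal is correct and follows essentially the same route as the paper's proof: Corollary~\ref{max cor gl3} together with Lemma~\ref{not irre lem} cuts the list down to $C_2\wr\textrm{Sym}_3$ and $\textrm{Alt}_5\times C_2$, and the field $d$ is pinned down through the $SL_3$-side results (Remark~\ref{reduction rmk}, Lemmas~\ref{(A),(C) lem} and~\ref{(F) lem}), exactly the ingredients underlying the paper's argument. The only difference is in the final verification, where you cite the classical facts that the type $B_3$ reflection representation is irreducible over $\Q$ and that the icosahedral representation of $\textrm{Alt}_5$ realizes over $\Q(\sqrt{5})$, while the paper instead derives irreducibility from the Schur orthogonality relations and proves realizability over $\Q(\sqrt{5})$ by showing the Schur index $m$ of $\subgp{F_{60},-I_3}$ satisfies $m^2\mid 120$ and $m\mid 3$, hence $m=1$; the two justifications are interchangeable.
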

\begin{proof}
In view of Corollary~\ref{max cor gl3} and Lemma~\ref{not irre lem}, we only need to consider those two groups in Table 1 above.
The irreducibility of the two groups can be derived directly from the Schur orthogonality relations. To show that $\textrm{Alt}_5 \times C_2 \cong$$\subgp{F_{60},-I_{3}}$ is similar to a subgroup of $GL_{3}(\Q(\sqrt{5}))$, we show that the Schur index of $\subgp{F_{60},-I_{3}}$ equals $1$. (Here, $F_{60}$ denotes the group generated by $\psmatrix{1&0&0\\ 0 & \zeta_5^{-1} & 0 \\ 0 & 0 & \zeta_5}, \psmatrix{-1&0&0\\ 0 & 0 & -1 \\ 0 & -1 & 0}$, and $\frac{1}{\sqrt{5}} \psmatrix{1&1&1\\ 2 & \zeta_5^2 +\zeta_5^{-2} & \zeta_5 + \zeta_5^{-1} \\ 2 & \zeta_5 + \zeta_5^{-1} & \zeta_5^2 +\zeta_5^{-2}}.$) To this aim, let $G=\subgp{F_{60},-I_{3}}$ and consider the inclusion representation $\rho \colon G\hookrightarrow GL_3(\C)$. Let $m$ be the Schur index of $\rho$. Then it is well known that we have $m^{2}\mid |G|= 120=2^{3}\cdot 3^{1}\cdot 5^{1}$ and $m\mid \dim \rho = 3$, and hence, it follows that $m=1$. \\
\indent This completes the proof.
\end{proof}
\begin{remark}\label{GQ_3}
We compute the rational group algebra $\Q G$ of the two groups $G$ in Table 1. \\
(i) For $C_2 \wr \textrm{Sym}_3 \cong G= \subgp{\diag{-1,1,1},I_{3,\alpha}:\alpha\in \textrm{Sym}_{3}}$, we have $\Q G=M_{3}(\Q)$. Indeed, clearly, we have $\Q G \subseteq M_3(\Q).$ Also, it is easy to see that $\bm{d_{i}T^{j}:1\le i,j\le 3}$ are linearly independent where $d_{1}=\diag{-1,1,1}$, $d_{2}=\diag{1,-1,1}$, $d_{3}=\diag{1,1,-1}$, and $T=\psmatrix{0&1&0\\0&0&1\\1&0&0}$. Then it follows that $9\le \dim_{\Q}\bs{\Q G}\le \dim_{\Q}\bs{M_{3}(\Q)}=9$, and hence, we get $\Q G=M_{3}(\Q)$. \\
(ii) Note that there is a $P\in GL_{3}(\C)$ such that $\textrm{Alt}_5 \times C_2 \cong G=P \cdot \subgp{F_{60},-I_3} \cdot P^{-1} \subsetneq GL_{3}(\Q(\sqrt{5}))$ according to the argument in the proof of Lemma~\ref{imag lem5}. Then by a similar argument as in (i), together with the observation that we have \begin{align*}
\Q \subgp{F_{60},-I_3}&\supseteq \bm{\psmatrix{0&0&0\\0&\alpha&0\\0&0&\wbar{\alpha}\\}:\alpha\in \Q(\z{5})} \bigoplus \bm{\psmatrix{0&0&0\\0&0&\alpha\\0&\wbar{\alpha}&0\\}:\alpha\in \Q(\z{5})}\\
&\bigoplus \bm{\psmatrix{0&0&0\\\alpha&0&0\\\wbar{\alpha}&0&0\\}:\alpha\in \Q(\z{5})}
\bigoplus \bm{\psmatrix{0&\alpha&\wbar{\alpha}\\0&0&0\\0&0&0\\}:\alpha\in \Q(\z{5})}\\
&\bigoplus \bm{\psmatrix{\beta&0&0\\0&0&0\\0&0&0\\}:\beta\in K=\Q(\sqrt{5})},
\end{align*}
we can also see that $\Q G = M_3(\Q(\sqrt{5})).$
\end{remark}

\section{Quaternionic matrix representations}\label{quat mat rep}
In this section, we introduce some facts about finite quaternionic matrix groups that will be used later, following a paper of Nebe~\cite{8}. \\

Throughout this section, let $\mathcal{D}$ be a definite quaternion algebra over a totally real number field $K$ and let $V=\mathcal{D}^{1 \times n}$ be a right module over $M_n(\mathcal{D}).$ Endomorphisms of $V$ are given by left multiplication by elements of $\mathcal{D}.$ For computations, it is also convenient to let the endomorphisms act from the right. Then we have $\textrm{End}_{M_n(\mathcal{D})}(V) \cong \mathcal{D}^{\textrm{op}}$, which we identify with $\mathcal{D}$ since $\mathcal{D}$ is a quaternion algebra. \\

Now, we start with the following definition.
\begin{definition}[{\cite[Definition 2.1]{8}}]\label{Nebe def}
Let $G$ be a finite group and $\Delta \colon G \rightarrow GL_n (\mathcal{D})$ be a representation of $G$. \\
(a) Let $L$ be a subring of $K$. Then the \emph{enveloping $L$-algebra} $\overline{L \Delta(G)}$ is defined as
\begin{equation*}
\overline{L \Delta(G)} = \left\{\sum_{x \in \Delta(G)} l_x \cdot x~|~l_x \in L \right\} \subseteq M_n (\mathcal{D}).
\end{equation*}
(b) $\Delta$ is called \emph{absolutely irreducible} if the enveloping $\Q$-algebra $\overline{\Delta(G)}:=\overline{\Q \Delta(G)}$ of $\Delta(G)$ equals $M_n(\mathcal{D}).$ \\
(c) $\Delta$ is \emph{irreducible} if the commuting algebra $C_{M_n(\mathcal{D})}(\Delta(G))$ is a division algebra. \\
(d) A subgroup $G \leq GL_n(\mathcal{D})$ is called \emph{irreducible} (resp.\ \emph{absolutely irreducible}) if the natural representation $\textrm{id} \colon G \rightarrow GL_n(\mathcal{D})$ is irreducible (resp.\ absolutely irreducible).
\end{definition}

\begin{remark}[{\cite[page 110]{8}}]\label{irred max}
(a) The irreducible maximal finite subgroups $G$ of $GL_n(\mathcal{D})$ are absolutely irreducible in their enveloping $\Q$-algebras $\overline{G}$ where $\overline{G} \cong M_m(\mathcal{D}^{\prime})$ for some integer $m \geq 1$ and division algebra $\mathcal{D}^{\prime}$ with $m^2 \cdot \textrm{dim}_{\Q} \mathcal{D}^{\prime} $ dividing $n^2 \cdot \textrm{dim}_{\Q} \mathcal{D}.$ \\
(b) The reducible maximal finite subgroups of $GL_n(\mathcal{D})$ can be built up from the irreducible maximal finite subgroups of $GL_l (\mathcal{D})$ for $l < m.$
\end{remark}

For Lemmas~\ref{irred max lem},~\ref{imag lem},~\ref{irre max real mat},~\ref{irre max Q mat}, and~\ref{div alg lem} below, we assume further that $\mathcal{D}=D_{p,\infty}$ for some prime number $p.$ Then in view of Remark~\ref{irred max} and the double centralizer theorem, we have the following result.
\begin{lemma}\label{irred max lem}
  Let $G$ be an irreducible, but not absolutely irreducible, maximal finite subgroup of $GL_3(\mathcal{D})$. Then $\overline{G}$ is one of the followings:
  \begin{center}
  \begin{tabular}{cccc}
\hline
$$ & $\overline{G}$ & $\textrm{dim}_{\Q} \overline{G}$ & $C_{M_3(\mathcal{D})}(\overline{G})$ \\
\hline
$\sharp 1$ & $M_3(K^{\prime})$ ~$\textrm{where~$K^{\prime}$~is a quadratic field}$ & $18$ & $K^{\prime}$ \\
\hline
$\sharp 2$ & $\mathcal{D}^{\prime},$ a quaternion division algebra over a cubic field $K^{\prime}$ & $12$ & $K^{\prime}$ \\
\hline
$\sharp 3$ & $M_3(\Q)$ & $9$ & $\mathcal{D}$ \\
\hline
$\sharp 4$ & $K^{\prime}$, a sextic field & $6$ & $K^{\prime}$ \\
\hline
\end{tabular}
\vskip 4pt
\textnormal{Table 2}
\end{center}
\end{lemma}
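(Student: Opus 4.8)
The plan is to use Remark~\ref{irred max} as the skeleton and then eliminate all but four possibilities by a combination of dimension/divisibility constraints and the classification of finite subgroups of division rings from Section~\ref{findiv}. Write $\mathcal{D} = D_{p,\infty}$, so $\dim_{\Q}\mathcal{D}=4$ and $n=3$, giving $n^2 \cdot \dim_{\Q}\mathcal{D} = 36$. By Remark~\ref{irred max}~(a), since $G$ is an irreducible maximal finite subgroup, it is absolutely irreducible in its enveloping algebra $\overline{G} \cong M_m(\mathcal{D}^{\prime})$ for some division algebra $\mathcal{D}^{\prime}$ with center a field $K^{\prime}$, and $m^2 \cdot \dim_{\Q}\mathcal{D}^{\prime}$ divides $36$. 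The hypothesis that $G$ is \emph{not} absolutely irreducible in $M_3(\mathcal{D})$ means precisely that $\overline{G} \subsetneq M_3(\mathcal{D})$, equivalently $\dim_{\Q}\overline{G} < 36$; by the double centralizer theorem the commuting algebra $C_{M_3(\mathcal{D})}(\overline{G})$ is then a \emph{nontrivial} division algebra, so $\dim_{\Q}\overline{G}$ is a proper divisor of $36$.

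Next I would enumerate the proper divisors of $36$ that can arise as $\dim_{\Q}\overline{G} = m^2 \cdot \dim_{\Q}\mathcal{D}^{\prime} \cdot [K^{\prime}:\Q]$ and match them against the double centralizer relation $\dim_{\Q}\overline{G} \cdot \dim_{\Q} C_{M_3(\mathcal{D})}(\overline{G}) = 36$. The candidate values of $\dim_{\Q}\overline{G}$ are the proper divisors $1,2,3,4,6,9,12,18$, but several are immediately excluded: the enveloping algebra of an irreducible group must act irreducibly, so its commuting algebra is a division algebra whose dimension forces $\dim_{\Q}\overline{G}$ to be large enough to carry a $3$-dimensional $\mathcal{D}$-module structure. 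Running through the arithmetic of $m^2 \cdot \dim_{\Q}\mathcal{D}^{\prime} \cdot [K^{\prime}:\Q] \mid 36$ together with the requirement that $\overline{G}$ be a simple algebra sitting inside $M_3(\mathcal{D})$ and acting irreducibly, the surviving cases are exactly $\dim_{\Q}\overline{G} \in \{18, 12, 9, 6\}$, with complementary commuting algebras of dimension $2, 3, 4, 6$ respectively. For $\dim 18$: $m=3$, $\mathcal{D}^{\prime}=K^{\prime}$ a quadratic field, centralizer $K^{\prime}$. For $\dim 12$: $m=1$, $\mathcal{D}^{\prime}$ a quaternion division algebra over a cubic field $K^{\prime}$, centralizer $K^{\prime}$. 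For $\dim 9$: $m=3$, $\mathcal{D}^{\prime}=\Q$, centralizer $\mathcal{D}$. For $\dim 6$: $m=1$, $\mathcal{D}^{\prime}=K^{\prime}$ a sextic field, centralizer $K^{\prime}$. These are precisely the four rows of Table~2.

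I expect the main obstacle to be ruling out the spurious small-dimensional candidates and pinning down the arithmetic type of $K^{\prime}$ and $\mathcal{D}^{\prime}$ in each surviving row, rather than the bookkeeping of divisors. The key constraint is that $\overline{G}$, as the enveloping algebra of an \emph{irreducible} finite group acting on the simple module $V=\mathcal{D}^{1\times 3}$, must itself be a simple $\Q$-algebra (it is semisimple by Maschke and, by irreducibility together with the double centralizer theorem applied inside the simple algebra $M_3(\mathcal{D})$, it is in fact simple with the stated commutant); this both eliminates the too-small divisors $1,2,3,4$ and forces the center $K^{\prime}$ of $\mathcal{D}^{\prime}$ to have the degree recorded in each case, since $[K^{\prime}:\Q] \cdot \dim_{K^{\prime}}\mathcal{D}^{\prime} \cdot m^2 = \dim_{\Q}\overline{G}$ must pair with the centralizer dimension to give $36$. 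In the quaternion case $\sharp 2$ one must further argue that $\mathcal{D}^{\prime}$ is genuinely a division algebra (not a split matrix algebra), which follows from $G$ being irreducible but not absolutely irreducible: the commuting algebra $K^{\prime}$ is a field of degree $3$, forcing $\overline{G}$ to be a quaternion division algebra over $K^{\prime}$ by the invariant computation. This finishes the identification of all four rows of Table~2.
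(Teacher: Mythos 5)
Your skeleton---Remark~\ref{irred max}(a) giving $\overline{G}\cong M_m(\mathcal{D}')$ with $\dim_{\Q}\overline{G}$ dividing $36$, the double centralizer theorem giving $\dim_{\Q}\overline{G}\cdot\dim_{\Q}C_{M_3(\mathcal{D})}(\overline{G})=36$ with matching centers, and irreducibility forcing the commutant to be a division algebra---is exactly the justification the paper itself cites for this lemma, and your identification of the four rows with their commutants is correct. However, the real content of such a proof is the elimination of the competing factorizations, and there your argument has genuine gaps. The most serious one: your claim that simplicity of $\overline{G}$ ``eliminates the too-small divisors $1,2,3,4$'' is false, since a quadratic field, a cubic field, $M_2(\Q)$, a quaternion division algebra over $\Q$, and a quartic field are all simple $\Q$-algebras. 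In particular the case $\overline{G}=K'$ a \emph{cubic} field survives every constraint you impose: it is simple, it embeds in $M_3(\mathcal{D})$ (e.g.\ via the regular representation inside $M_3(\Q)$), and its commutant is $\mathcal{D}\otimes_{\Q}K'$, which is a quaternion \emph{division} algebra over $K'$ because a cubic field has a real place and $D_{p,\infty}$ ramifies at $\infty$; so the dimension count $3\cdot 12=36$ and irreducibility both hold. The only way to exclude it is finite group theory: a finite subgroup of a field is cyclic, so $\overline{G}$ would be a cyclotomic field $\Q(\zeta_d)$, whose degree $\varphi(d)$ is never $3$. You advertise ``the classification of finite subgroups of division rings'' in your opening sentence but never actually invoke it; without this input the lemma as stated is not proved.

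Second, within the surviving dimensions you do not rule out the alternative shapes: an index-$3$ division algebra over a quadratic field (dimension $18$), an index-$3$ division algebra over $\Q$ (dimension $9$), and the split algebra $M_2(K')$ over a cubic field (dimension $12$). For the last of these the reason you give is wrong: you assert that $\mathcal{D}'$ being division ``follows from $G$ being irreducible but not absolutely irreducible,'' but the commutant of $M_2(K')$ in $M_3(\mathcal{D})$ is the cubic field $K'$ itself, which \emph{is} a division algebra, so irreducibility cannot distinguish $M_2(K')$ from a quaternion division algebra over $K'$. The correct exclusions require either the Brauer-class relation for mutual commutants, $[\overline{G}\otimes_{K'}C]=[M_3(\mathcal{D})\otimes_{\Q}K']$ in $\mathrm{Br}(K')$---which fails for $M_2(K')$ because $\mathcal{D}\otimes_{\Q}K'$ is nonsplit at the real places of the cubic field $K'$, and fails for the index-$3$ candidates because their classes have order $3$ while $[\mathcal{D}\otimes_{\Q}K']$ has order at most $2$---or, equivalently, the module-theoretic bound that the simple module of $\mathcal{D}\otimes_{\Q}\overline{G}^{\mathrm{op}}$ must have $\Q$-dimension dividing $\dim_{\Q}\mathcal{D}^{1\times 3}=12$ (which fails in all three cases). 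Your proposal needs these two repairs---the cyclotomic-degree argument and the Brauer/module embedding obstruction---before the enumeration actually closes up to the four rows of Table~2.
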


We examine each case of $\sharp1 \sim \sharp3$ of Lemma~\ref{irred max lem} in the subsequent lemmas. We first consider the cases $\sharp1$ and $\sharp3$.
\begin{example}\label{GL(2,3)}
  Let $G=GL_3(\F_2) \times C_2.$ Then it is known in \cite{BGL(1999)} that $G$ is a maximal finite subgroup of $GL_3(\Q(\sqrt{-7})).$ Now, since $13$ is inert in $\Q(\sqrt{-7})$ so that $\Q(\sqrt{-7}) \subset D_{13,\infty},$ we have $G \leq GL_3(D_{13,\infty}).$  We claim that $G$ is a maximal finite subgroup of $GL_3(D_{13,\infty})$ (up to isomorphism). Indeed, suppose on the contrary that there is a finite subgroup $H$ of $GL_{3}(D_{13,\infty})$ such that $G$ is (isomorphic to) a proper subgroup of $H$. Since $G$ is an absolutely irreducible maximal finite subgroup of $GL_3(\Q(\sqrt{-7})),$ it follows that $\overline{H}=M_3(D_{13,\infty})$ by dimension counting over $\Q$. This means that $H$ is an absolutely irreducible finite subgroup of $GL_3(D_{13,\infty}),$ and this contradicts Theorems~\ref{prim aimf of GL3} and~\ref{imprim aimf GL2} below. Hence, $G$ is a maximal finite subgroup of $GL_3(D_{13,\infty}).$ Also, by definition, $G$ is irreducible because we have $C_{M_3(D_{13,\infty})}(\overline{G})=\Q(\sqrt{-7}),$ which is a division algebra. Similar arguments apply for cases $\sharp 2 \sim \sharp 4$ of Table 3 below, with $p=11.$
\end{example}

By a similar argument as in Example~\ref{GL(2,3)}, together with the use of \cite[Theorem 6.1]{BGL(1999)}, we can obtain the following lemma.
\begin{lemma}\label{imag lem}
  Let $G$ be an irreducible maximal (up to isomorphism) finite subgroup of $GL_3(\mathcal{D})$ with $\mathcal{D}=D_{p,\infty}$ for some prime $p$, whose enveloping $\Q$-algebra is $M_3(K^{\prime})$ for some imaginary quadratic field $K^{\prime}$. Then $G$ is one of the following groups:
 \begin{center}
    \begin{tabular}{cc}
\hline
$$ & $G$   \\
\hline
$\sharp 1$ & $GL_3(\F_2) \times C_2$ \\ 
\hline
$\sharp 2$ & $C_4^3 \rtimes \textrm{Sym}_3 \cong C_4 \wr \textrm{Sym}_3$ \\ 
\hline
$\sharp 3$ & $C_6^3 \rtimes \textrm{Sym}_3 \cong C_6 \wr \textrm{Sym}_3$  \\
\hline
$\sharp 4$ & $[([(C_3 \times C_6) \rtimes \textrm{Sym}_3 ] \cdot C_2) \cdot C_2 ]\cdot C_3 \cong (\textit{He}_3 \times \langle -\textbf{1}_3 \rangle ) \rtimes SL_2(\F_3)$    \\
\hline
\end{tabular}
\vskip 4pt
\textnormal{Table 3}
\end{center}
\end{lemma}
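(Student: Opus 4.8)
The plan is to follow exactly the strategy set out in Example~\ref{GL(2,3)}, treating that example as the template and extending it uniformly to all four entries of Table 3. First I would invoke Theorem 6.1 of \cite{BGL(1999)}, which classifies the irreducible maximal finite subgroups $G$ of $GL_3(K')$ for $K'$ an imaginary quadratic field: each such $G$ comes paired with a specific field $K'=\Q(\sqrt{-m})$ over which it is absolutely irreducible (so that $\overline{\Q G}=M_3(K')$). The four groups in Table 3 are precisely the groups arising this way, and I would record for each one the associated imaginary quadratic field $K'$ (for $\sharp 1$ this is $\Q(\sqrt{-7})$ as in the Example; the remaining fields come straight from \cite[Theorem 6.1]{BGL(1999)}).

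Next, for each such pair $(G, K')$ I would choose a prime $p$ so that $K'$ embeds into $\mathcal{D}=D_{p,\infty}$. The relevant criterion is that an imaginary quadratic field $K'$ embeds into $D_{p,\infty}$ precisely when $p$ does not split in $K'$; concretely, it suffices to pick $p$ inert in $K'$, exactly as $p=13$ is inert in $\Q(\sqrt{-7})$ in the Example (and $p=11$ is indicated for $\sharp 2\sim\sharp 4$). Once $K' \subset D_{p,\infty}$, the embedding $G \le GL_3(K')$ composes with $GL_3(K') \le GL_3(D_{p,\infty})$ to give $G \le GL_3(\mathcal{D})$. Since the commuting algebra is $C_{M_3(\mathcal{D})}(\overline{G}) = K'$, which is a field and hence a division algebra, $G$ is irreducible but not absolutely irreducible in $GL_3(\mathcal{D})$, placing it in case $\sharp 1$ of Lemma~\ref{irred max lem}.

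The crux is maximality, and here I would argue by contradiction exactly as in Example~\ref{GL(2,3)}. Suppose $G$ is a proper subgroup of some finite $H \le GL_3(\mathcal{D})$. Because $G$ is already absolutely irreducible over $\Q$ in $M_3(K')$, which has $\Q$-dimension $18$, any overgroup $H$ must have enveloping algebra containing $M_3(K')$; a dimension count over $\Q$ then forces $\overline{\Q H} = M_3(\mathcal{D})$ (dimension $36$), i.e.\ $H$ is \emph{absolutely} irreducible in $GL_3(\mathcal{D})$. But the absolutely irreducible finite subgroups of $GL_3(\mathcal{D})$ are classified by Theorems~\ref{prim aimf of GL3} and~\ref{imprim aimf GL2}, and checking that no group there properly contains $G$ yields the contradiction. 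Thus $G$ is maximal.

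The main obstacle is the maximality step, and specifically making the dimension-counting argument airtight across all four groups: I must confirm in each case that the chosen prime $p$ is genuinely inert (so the embedding $K' \hookrightarrow D_{p,\infty}$ exists and the centralizer is exactly $K'$, not something larger), and then verify against the absolutely-irreducible classification that $G$ is not a proper subgroup of any group listed in Theorems~\ref{prim aimf of GL3} and~\ref{imprim aimf GL2}. The latter is where genuine case-checking is required, since one must rule out each candidate absolutely irreducible overgroup by order and structure considerations; the abstract dimension count only reduces the problem to that finite list, it does not by itself close it. Everything else (the embeddings, the identification of the commuting algebra, and the irreducibility) is routine once the data from \cite[Theorem 6.1]{BGL(1999)} is in hand.
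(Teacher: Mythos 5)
Your ingredients are exactly the paper's --- \cite[Theorem 6.1]{BGL(1999)} plus the template of Example~\ref{GL(2,3)} --- but your argument points in the wrong logical direction. The lemma asserts exhaustiveness: \emph{any} $G$ satisfying the hypotheses (irreducible maximal finite in $GL_3(\mathcal{D})$ with $\overline{G}=M_3(K^{\prime})$, $K^{\prime}$ imaginary quadratic) must be one of the four listed groups. What your second and third paragraphs establish is the converse, namely that each of the four groups is realized as such a subgroup for a suitable $p$; that is the content of Example~\ref{GL(2,3)} (and is what Theorem~\ref{pow of supell} later needs), not the implication stated in the lemma. To get the stated implication you need a short reduction that you never make: since $G \subseteq \overline{G}$ and the invertible elements of the subalgebra $\overline{G} \cong M_3(K^{\prime})$ form a subgroup of $GL_3(\mathcal{D})$ isomorphic to $GL_3(K^{\prime})$, the group $G$ is an absolutely irreducible finite subgroup of $GL_3(K^{\prime})$, and it is maximal there, because any finite overgroup of $G$ inside this copy of $GL_3(K^{\prime})$ would already violate $G$'s maximality in $GL_3(\mathcal{D})$. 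Only after this observation does \cite[Theorem 6.1]{BGL(1999)} apply and force $G$ to be one of the four table entries. The step is routine, but it is the actual proof of the lemma; your detailed maximality work is supplementary to it.

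A second, smaller point: you assert that the maximality step requires ``genuine case-checking,'' ruling out each candidate absolutely irreducible overgroup by order and structure, and that the dimension count ``does not by itself close it.'' In fact it does, and this is precisely why the paper chooses $p=13$ (resp.\ $p=11$): Theorems~\ref{prim aimf of GL3} and~\ref{imprim aimf GL2} state that absolutely irreducible maximal finite subgroups of $GL_3(D_{p,\infty})$ exist only when $p \in \{2,3,7\}$ (primitive case) or $p \in \{2,3\}$ (imprimitive case). Since every absolutely irreducible finite subgroup is contained in a maximal one, which is then itself absolutely irreducible, a hypothetical absolutely irreducible overgroup $H \leq GL_3(D_{13,\infty})$ or $GL_3(D_{11,\infty})$ cannot exist at all: the contradiction comes from the mismatch of quaternion algebras, and no comparison of $G$ against the groups in Tables 5 and 6 is needed. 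Your version of the check is vacuously satisfied, so the proof would go through, but the difficulty you identify as the main obstacle is not where the content lies.
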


If $\overline{G} = M_3 (K^{\prime})$ for some totally real field with $[K^{\prime}:\Q] \leq 2,$ then in view of Lemma~\ref{imag lem5}, we have the following two results.
\begin{lemma}\label{irre max real mat}
There is no irreducible maximal (up to isomorphism) finite subgroup $G$ of $GL_3(\mathcal{D})$ with $\mathcal{D}=D_{p,\infty}$ for some prime $p$, whose enveloping $\Q$-algebra is $M_3(K^{\prime})$ for some real quadratic field $K^{\prime}$.

\end{lemma}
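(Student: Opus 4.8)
The plan is to argue by contradiction, extracting from the hypothetical $G$ a real quadratic subfield of $M_3(\mathcal{D})$ and then showing that no such subfield can exist. Suppose $G$ is an irreducible maximal finite subgroup of $GL_3(\mathcal{D})$ falling into case $\sharp 1$ of Lemma~\ref{irred max lem}, so that $\overline{G} = M_3(K')$ with $K'$ a real quadratic field and $C_{M_3(\mathcal{D})}(\overline{G}) = K'$. Since $\overline{G}$ is, by Definition~\ref{Nebe def}(a), a $\Q$-subalgebra of $A := M_3(\mathcal{D})$, its center $Z(\overline{G}) = K'$ is (isomorphic to) a subfield of $A$. Hence it suffices to prove that a real quadratic field cannot be embedded into $A = M_3(\mathcal{D})$ as a $\Q$-subalgebra.

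To this end, I would recall that $A$ is a central simple $\Q$-algebra of degree $6$ whose Brauer class is that of $\mathcal{D} = D_{p,\infty}$, hence of index $2$ and ramified exactly at $p$ and $\infty$. By the standard criterion for embedding a subfield into a central simple algebra, a field $K'$ with $[K':\Q] = 2 \mid 6$ embeds into $A$ if and only if the index of $A \otimes_{\Q} K' \cong M_3(\mathcal{D} \otimes_{\Q} K')$ divides $\deg A / [K':\Q] = 3$. Now $\mathcal{D} \otimes_{\Q} K'$ is a quaternion algebra over $K'$, and since $K'$ is real quadratic its two archimedean places are both real; at each of them $\mathcal{D} \otimes_{\Q} K'$ localizes to $\mathcal{D} \otimes_{\Q} \mathbb{R} = \mathbb{H}$, which is ramified. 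Thus $\mathcal{D} \otimes_{\Q} K'$ is a division algebra and the index of $A \otimes_{\Q} K'$ equals $2$. As $2 \nmid 3$, no such embedding exists, which is the desired contradiction. Equivalently, one may invoke the double centralizer theorem: a dimension count gives $C_A(K') = \overline{G} = M_3(K')$, which is split over $K'$ and Brauer-equivalent to $A \otimes_{\Q} K'$, forcing $\mathcal{D} \otimes_{\Q} K'$ to split over $K'$ and contradicting its ramification at the real places of $K'$.

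This dovetails with the framing ``in view of Lemma~\ref{imag lem5}'': writing $G \le \overline{G}^{\times} = GL_3(K')$, the group $G$ is absolutely irreducible over $K'$, hence is contained in an irreducible maximal finite subgroup of $GL_3(K')$, which by Lemma~\ref{imag lem5} is $C_2 \wr \textrm{Sym}_3$ or $\textrm{Alt}_5 \times C_2$; by Remark~\ref{GQ_3} their enveloping $\Q$-algebras are $M_3(\Q)$ and $M_3(\Q(\sqrt{5}))$, so a real quadratic center $K'$ leaves only $K' = \Q(\sqrt{5})$ with $G$ tied to $\textrm{Alt}_5 \times C_2$, and the obstruction above then eliminates this last candidate. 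I expect the main obstacle to be the careful application of the field-embedding (equivalently, double centralizer) criterion, and in particular the archimedean local computation: one must use that a real quadratic $K'$ has both infinite places real, so that $\mathcal{D}$ remains ramified there. This is precisely the feature that distinguishes the present case from the imaginary quadratic case handled in Lemma~\ref{imag lem}, where splitting at the (now complex) archimedean place is what permits the embedding.
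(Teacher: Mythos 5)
Your proof is correct, and it hinges on exactly the arithmetic fact the paper uses --- a real quadratic field $K'$ has two real places, at each of which $\mathcal{D}=D_{p,\infty}$ stays ramified --- but your route to the contradiction is genuinely different. The paper first invokes Lemma~\ref{imag lem5} and Remark~\ref{GQ_3} to cut the problem down to the single candidate $G=\textrm{Alt}_5\times C_2$ with $\overline{G}=M_3(\Q(\sqrt{5}))$, and then runs the elementary centralizer chain $M_3(\Q)\subseteq \overline{G}\subseteq M_3(\mathcal{D})$, whose inclusion-reversal under centralizers produces an embedding $\Q(\sqrt{5})\hookrightarrow \mathcal{D}$; this is absurd because the infinite place of $\Q$ splits in $\Q(\sqrt{5})$. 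Your main argument bypasses the group classification altogether: from $\overline{G}\cong M_3(K')$ you extract only the subfield $K'=Z(\overline{G})\subseteq A:=M_3(\mathcal{D})$ and rule it out by the embedding criterion for central simple algebras --- the index of $A\otimes_{\Q}K'\cong M_3(\mathcal{D}\otimes_{\Q}K')$ would have to divide $\deg A/[K':\Q]=3$, yet it equals $2$ since $\mathcal{D}\otimes_{\Q}K'$ is ramified at both real places of $K'$ --- or, equivalently, by the double-centralizer computation $C_A(K')=\overline{G}=M_3(K')$, which would force $\mathcal{D}\otimes_{\Q}K'$ to split. (Note you only need the easy ``embedding implies index divides $3$'' direction of the criterion, which follows from the double centralizer theorem and Brauer equivalence, so no heavy machinery is left unjustified.) What your approach buys is generality and logical independence: every real quadratic $K'$ is excluded at once, with no need for Lemma~\ref{imag lem5} or Remark~\ref{GQ_3}, whereas the paper's argument is more elementary but leans on that classification to get started; your closing paragraph then recovers the paper's reduction as a consistency check, so the two arguments dovetail exactly as you say.
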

\begin{proof}
By Lemma~\ref{imag lem5} and Remark~\ref{GQ_3}, we only need to consider the group $G:=\textrm{Alt}_5 \times C_2$. Suppose on the contrary that $G$ is an irreducible maximal finite subgroup of $GL_3(\mathcal{D})$ with $\mathcal{D}=D_{p,\infty}$ for some prime $p.$ Recall that we have $\overline{G}=M_3(\Q(\sqrt{5})).$ Then since we have
\begin{equation*}
M_3(\Q) \subseteq \overline{G} =M_3(\Q(\sqrt{5})) \subseteq M_3(\mathcal{D}),
\end{equation*}
it follows that
\begin{equation*}
C_{M_3(\mathcal{D})}(M_3(\mathcal{D})) \subseteq C_{M_3(\mathcal{D})}(M_3(\Q(\sqrt{5}))) \subseteq C_{M_3(\mathcal{D})}(M_3(\Q)),
\end{equation*}
or equivalently,
\begin{equation*}
\Q \subseteq \Q(\sqrt{5}) \subseteq \mathcal{D}=D_{p,\infty}.
\end{equation*}
But then, since the infinite place of $\Q$ splits completely in $\Q(\sqrt{5}),$ the last inclusion is absurd. \\
\indent This completes the proof.
\end{proof}






\begin{lemma}\label{irre max Q mat}
Let $G$ be an irreducible maximal (up to isomorphism) finite subgroup of $GL_3(\mathcal{D})$ with $\mathcal{D}=D_{p,\infty}$ for some prime $p,$ whose enveloping $\Q$-algebra is $M_3(\Q)$. Then $G=C_2 \wr \textrm{Sym}_3$.  

\end{lemma}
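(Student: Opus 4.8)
The plan is to reduce the problem to the classification over $\Q$ already obtained in Lemma~\ref{imag lem5}. I would first observe that the enveloping-algebra hypothesis forces $G$ into $GL_3(\Q)$. By Definition~\ref{Nebe def}(a), the enveloping $\Q$-algebra $\overline{G}$ is the $\Q$-span of $G$ inside $M_3(\mathcal{D})$, so in particular $G\subseteq\overline{G}=M_3(\Q)$; since every element of $G$ is invertible, this gives $G\subseteq GL_3(\Q)$. Moreover, by Remark~\ref{irred max}(a), an irreducible maximal $G$ is absolutely irreducible in its enveloping algebra $\overline{G}=M_3(\Q)$, which is precisely the statement that the natural degree-$3$ representation of $G$ over $\Q$ is absolutely irreducible; in particular $G$ is an irreducible finite subgroup of $GL_3(\Q)$.

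Next I would transfer maximality from $GL_3(\mathcal{D})$ down to $GL_3(\Q)$. Since $\mathcal{D}=D_{p,\infty}$ has center $\Q$, we have the inclusion $GL_3(\Q)\subseteq GL_3(\mathcal{D})$. If $G$ were isomorphic to a proper subgroup of some finite subgroup $H$ of $GL_3(\Q)$, then $H$ would also be a finite subgroup of $GL_3(\mathcal{D})$, contradicting the maximality of $G$ in $GL_3(\mathcal{D})$. Hence $G$ is an irreducible maximal (up to isomorphism) finite subgroup of $GL_3(\Q)$.

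The classification in Lemma~\ref{imag lem5}, applied with $K=\Q$ (that is, $d=1$), then finishes the argument: its list of irreducible maximal finite subgroups of $GL_3(\Q(\sqrt{d}))$ consists of $C_2\wr\textrm{Sym}_3$ for any $d$ and $\textrm{Alt}_5\times C_2$ only for $d=5$. Since $d=1$ excludes the latter, we conclude $G\cong C_2\wr\textrm{Sym}_3$. For completeness I would also record, exactly as in Example~\ref{GL(2,3)}, that this group is genuinely realized as a maximal finite subgroup of $GL_3(\mathcal{D})$ with $\overline{G}=M_3(\Q)$ (cf.\ Remark~\ref{GQ_3}(i)) and commuting algebra $\mathcal{D}$, so that the case is non-vacuous and consistent with Table 2 of Lemma~\ref{irred max lem}.

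The step I would treat most carefully is the maximality transfer: one must verify that maximality-up-to-isomorphism descends along $GL_3(\Q)\hookrightarrow GL_3(\mathcal{D})$, which rests on the fact that $\mathcal{D}$ has center $\Q$ so that $GL_3(\Q)$ genuinely embeds in $GL_3(\mathcal{D})$. Everything else is a direct appeal to results already established in the paper.
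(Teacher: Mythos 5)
Your proof is correct and follows essentially the same route as the paper: the paper's own proof is a one-line reduction to Lemma~\ref{imag lem5} together with Remark~\ref{GQ_3}, and your argument merely makes explicit the intermediate steps (that $\overline{G}=M_3(\Q)$ places $G$ inside $GL_3(\Q)$ as an absolutely irreducible subgroup, and that maximality descends along $GL_3(\Q)\subseteq GL_3(\mathcal{D})$). The only point the paper adds that you leave unspecified is the concrete realization, namely that $C_2\wr\textrm{Sym}_3$ actually occurs as an irreducible maximal finite subgroup of $GL_3(D_{p,\infty})$ for $p=109$.
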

\begin{proof}
This follows from Lemma~\ref{imag lem5} and Remark~\ref{GQ_3}. In fact, $G$ is an irreducible maximal finite subgroup of $GL_3 (\mathcal{D})$ with $\mathcal{D}=D_{p,\infty}$ for $p=109$.
\end{proof}


Now, we move to the case $\sharp2$ of Lemma~\ref{irred max lem}.
\begin{lemma}\label{div alg lem}
  Let $G$ be an irreducible maximal (up to isomorphism) finite subgroup of $GL_3(\mathcal{D})$ with $\mathcal{D}=D_{p,\infty}$ for some prime $p$, whose enveloping $\Q$-algebra is $M_1(\mathcal{D}^{\prime})$ for a quaternion division algebra $\mathcal{D}^{\prime}$ over a cubic field $K^{\prime}.$ Then $G=\textrm{Dic}_{28}$ with $K^{\prime}=\Q(\zeta_{14}+\zeta_{14}^{-1}).$
\end{lemma}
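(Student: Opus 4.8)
The plan is to cut the candidate list down with Theorem~\ref{thm 18}, then use the hypothesis $\mathcal{D}=D_{p,\infty}$ to identify $K'$ and $p$, and finally decide the matter by a maximality comparison against the absolutely irreducible subgroups. First I would note that, since $\overline{G}=\mathcal{D}'$ is a \emph{division} algebra, $G$ is a finite subgroup of $(\mathcal{D}')^{\times}$, where $\mathcal{D}'$ is a quaternion algebra of degree $2$ over the cubic field $K'$. Thus Theorem~\ref{thm 18} applies and $G$ lies in the explicit list $C_2,C_4,C_6,C_{14},C_{18},Q_8,\textrm{Dic}_{12},\textrm{Dic}_{28},\textrm{Dic}_{36},\mathfrak{T}^{*}$. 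Because $G$ must generate the \emph{noncommutative} algebra $\mathcal{D}'$ whose centre is a \emph{cubic} field, I would discard every candidate whose enveloping $\Q$-algebra is either commutative or has centre $\Q$ or a quadratic field: the cyclic groups generate fields; $Q_8$ and $\textrm{Dic}_{12}$ generate quaternion algebras over $\Q$; and $\mathfrak{T}^{*}$ generates $D_{2,\infty}$ over $\Q$ (its remaining faithful component living over the quadratic field $\Q(\zeta_3)$). The only survivors are $\textrm{Dic}_{28}$ and $\textrm{Dic}_{36}$, whose faithful quaternionic $2$-dimensional representations have $\Q$-irreducible components that are quaternion division algebras over the cubic fields $\Q(\zeta_{14}+\zeta_{14}^{-1})$ and $\Q(\zeta_{18}+\zeta_{18}^{-1})$; by the computation in the proof of Theorem~\ref{thm 18} these are $D_{\zeta_{14}+\zeta_{14}^{-1},\infty,7}$ and $D_{\zeta_{18}+\zeta_{18}^{-1},\infty,3}$, respectively.

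Next I would pin down the prime $p$. Since $\mathcal{D}'=\overline{G}\subseteq M_3(\mathcal{D})$ with $C_{M_3(\mathcal{D})}(\mathcal{D}')=K'$ and $\dim_{\Q}\mathcal{D}'=12=\dim_{\Q}\mathcal{D}^{1\times 3}$, the module $V=\mathcal{D}^{1\times 3}$ is the regular left $\mathcal{D}'$-module, so $\operatorname{End}_{\mathcal{D}'}(V)\cong(\mathcal{D}')^{\mathrm{op}}$. The right multiplication action of $\mathcal{D}=D_{p,\infty}$ commutes with the left $\mathcal{D}'$-action and is therefore an embedding $D_{p,\infty}\hookrightarrow\mathcal{D}'$. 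As $K'$ centralizes this copy of $D_{p,\infty}$ inside $\mathcal{D}'$ and $\dim_{\Q}\bigl(D_{p,\infty}\cdot K'\bigr)=12=\dim_{\Q}\mathcal{D}'$, I conclude $\mathcal{D}'\cong D_{p,\infty}\otimes_{\Q}K'$. Comparing finite ramification then forces $p=7$ for $\textrm{Dic}_{28}$ (where $7$ is totally ramified in $\Q(\zeta_{14}+\zeta_{14}^{-1})$) and $p=3$ for $\textrm{Dic}_{36}$.

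It remains to decide maximality inside $GL_3(D_{p,\infty})$, which I expect to be the hard part. Any finite overgroup $H\supseteq G$ is again irreducible (an overgroup of an irreducible group on $V$ acts irreducibly), so a maximal such $H$ has $\overline{H}$ simple and containing $\mathcal{D}'$. Using that $M_3(D_{p,\infty})$ is central simple over $\Q$, the double centralizer theorem gives $C_{M_3(\mathcal{D})}(\overline{H})\subseteq C_{M_3(\mathcal{D})}(\mathcal{D}')=K'$, a cubic field; running through Lemma~\ref{irred max lem}, a quadratic or a degree-$9$ centralizer cannot embed in $K'$ and a commutative $\overline{H}$ cannot contain the noncommutative $\mathcal{D}'$, so the only options are $\overline{H}=\mathcal{D}'$ or $\overline{H}=M_3(D_{p,\infty})$. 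In the first case $H\subseteq(\mathcal{D}')^{\times}$ and $G$ is already maximal there by Theorem~\ref{thm 18}; hence $G$ is maximal in $GL_3(D_{p,\infty})$ if and only if $G$ is contained in no absolutely irreducible finite subgroup. I would then invoke the classification of absolutely irreducible maximal finite subgroups in Theorems~\ref{prim aimf of GL3} and~\ref{imprim aimf GL2}: for $p=3$ the group $\textrm{Dic}_{36}$ embeds into a strictly larger absolutely irreducible maximal finite subgroup of $GL_3(D_{3,\infty})$ (matching the $3$-power structure attached to the ramified prime), so $\textrm{Dic}_{36}$ is \emph{not} maximal and is eliminated; whereas for $p=7$ no absolutely irreducible maximal finite subgroup of $GL_3(D_{7,\infty})$ contains $\textrm{Dic}_{28}$, so $\textrm{Dic}_{28}$ is maximal. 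This yields $G=\textrm{Dic}_{28}$ with $K'=\Q(\zeta_{14}+\zeta_{14}^{-1})$. The genuine obstacle is exactly this last dichotomy, which rests on the detailed tables of absolutely irreducible maximal finite subgroups of $GL_3(D_{p,\infty})$ rather than on any soft argument.
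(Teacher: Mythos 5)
Your proposal is correct and follows the same overall route as the paper: Theorem~\ref{thm 18} together with the computation of $\overline{G}$ reduces to the two candidates $\textrm{Dic}_{28}$ and $\textrm{Dic}_{36}$, and the decision between them is made by comparison with the absolutely irreducible maximal finite subgroups classified in Theorems~\ref{prim aimf of GL3} and~\ref{imprim aimf GL2}. Two of your intermediate steps are genuinely your own, however. You determine $p$ by necessity: $\dim_{\Q}V=12=\dim_{\Q}\mathcal{D}'$ makes $V$ the regular $\mathcal{D}'$-module, so $\mathcal{D}=D_{p,\infty}$ embeds in $\End_{\mathcal{D}'}(V)\cong(\mathcal{D}')^{\mathrm{op}}\cong\mathcal{D}'$, giving $\mathcal{D}'\cong D_{p,\infty}\otimes_{\Q}K'$ and hence $p=7$ (resp.\ $p=3$) by comparing finite ramification; the paper instead argues in the existence direction, embedding $K'\subseteq M_3(\Q)$ (via Linowitz--Shemanske) to get $D_{7,\infty}\otimes_{\Q}K'\subseteq M_3(D_{7,\infty})$. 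Your necessity argument is what the stated implication strictly requires, while the paper's construction is what gets reused later (case (11) of Theorem~\ref{pow of supell}). Likewise, your double-centralizer analysis of a finite overgroup $H\supseteq G$ (irreducibility of $H$, then $C_{M_3(\mathcal{D})}(\overline{H})\subseteq C_{M_3(\mathcal{D})}(\overline{G})=K'$ excludes rows $\sharp 1$, $\sharp 3$, $\sharp 4$ of Table 2, so $\overline{H}=\mathcal{D}'$ or $\overline{H}=M_3(\mathcal{D})$) makes explicit a reduction that the paper leaves implicit; that is a genuine gain in rigor. The one place you fall short of the paper is the final, concrete step: eliminating $\textrm{Dic}_{36}$ requires exhibiting an actual containment, namely $\textrm{Dic}_{36}\leq\textrm{Dic}_{12}\wr\textrm{Sym}_3$ inside $GL_3(D_{3,\infty})$, and keeping $\textrm{Dic}_{28}$ requires checking $\textrm{Dic}_{28}\not\leq\pm L_2(7).2$, the unique absolutely irreducible maximal finite subgroup for $p=7$; your appeal to the tables and to the ``$3$-power structure'' does not by itself supply these two group-theoretic facts, which the paper states explicitly.
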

\begin{proof}
In light of Theorem~\ref{thm 18}, together with the computation of $\overline{G}$, we can see that all the possible candidates for such maximal finite subgroups are $\textrm{Dic}_{28}$ and $\textrm{Dic}_{36}.$ 
Now, we consider those two groups one by one. \\
\indent (1) Take $G:=\textrm{Dic}_{28}.$ Let $K=\Q(\zeta_{14}+\zeta_{14}^{-1}).$ By the first theorem of \cite[page 407]{LS(2012)}, we have $K \subseteq M_3(\Q)$, which, in turn, implies that $K \subseteq M_3(D_{7,\infty}).$ Then it follows that $D_{7,\infty} \otimes_{\Q} K \subseteq D_{7,\infty} \otimes_{\Q} M_3(\Q) \cong M_3 (D_{7,\infty})$. Now, we recall that $\overline{G}=D_{\zeta_{14}+\zeta_{14}^{-1}, \infty, 7} \cong D_{7,\infty} \otimes_{\Q} K$ by \cite[Theorem 6.1]{8}. Also, note that $\textrm{Dic}_{28}$ is not isomorphic to a subgroup of $\pm L_2(7).2$, the latter being a primitive absolutely irreducible maximal finite subgroup of $GL_3(D_{7,\infty})$ (see Theorem~\ref{prim aimf of GL3} below), and hence, $\textrm{Dic}_{28}$ is maximal. \\
\indent (2) For the group $G:=\textrm{Dic}_{36},$ we take $K:=\Q(\zeta_{18}+\zeta_{18}^{-1})$, and then, we can proceed as in the proof of (1) with the choices of $D_{3,\infty} \otimes_{\Q} K \cong D_{\zeta_{18}+\zeta_{18}^{-1}, \infty, 3}$ and $M_3(D_{3,\infty}).$ Here, note that we have $\textrm{Dic}_{36} \leq \textrm{Dic}_{12} \wr \textrm{Sym}_3$, the latter being an imprimitive absolutely irreducible maximal finite subgroup of $GL_3(D_{3,\infty})$ (see Theorem~\ref{imprim aimf GL2} below), and hence, $\textrm{Dic}_{36}$ is not maximal.\\
\indent This completes the proof.
\end{proof}

Now, for the rest of this section, $\mathcal{D}$ will denote a totally definite quaternion algebra over a totally real number field $K$, unless otherwise specified. As in the case of $GL_n (\Q),$ the notion of primitivity gives an important reduction in the determination of the maximal finite subgroups of $GL_n(\mathcal{D}).$

\begin{definition}[{\cite[Definition 2.2]{8}}]\label{prim def}
  Let $G $ be an irreducible finite subgroup of $GL_n(\mathcal{D})$. Consider $V:=\mathcal{D}^{1 \times n}$ as a $\mathcal{D}$-$G$-bimodule. Then $G$ is called \emph{imprimitive} if there is a decomposition $V=V_1 \oplus \cdots \oplus V_s$ $(s>1)$ of $V$ as a direct sum of nontrivial $\mathcal{D}$-left modules such that $G$ permutes the $V_i$ (i.e.\ for all $x \in G$ and for all $1 \leq i \leq s,$ there is a $j \in \{1,\cdots,s\}$ such that $V_i x \subseteq V_j$). If $G$ is not imprimitive, then $G$ is called \emph{primitive}.
\end{definition}

The following remark gives a way to produce the imprimitive absolutely irreducible maximal finite subgroups of $GL_n(\mathcal{D})$ from the primitive absolutely irreducible maximal finite subgroups of smaller degree.

\begin{remark}[{\cite[page 110]{8}}]\label{impri and prim}
  Let $G $ be an imprimitive absolutely irreducible maximal finite subgroup of $GL_n(\mathcal{D})$. Since $G$ is maximal and finite, it follows that $G$ is a wreath product of primitive absolutely irreducible maximal finite subgroups of $GL_d(\mathcal{D})$ with the symmetric group $\textrm{Sym}_{\frac{n}{d}}$ of degree $\frac{n}{d}$ for divisors $d$ of $n.$ In particular, every imprimitive absolutely irreducible maximal finite subgroup of $GL_3(\mathcal{D})$ is a wreath product of primitive absolutely irreducible maximal finite subgroups of $GL_1(\mathcal{D})=\mathcal{D}^{\times}$ with the symmetric group $\textrm{Sym}_3.$
\end{remark}

Now, we introduce two results about absolutely irreducible maximal finite subgroups of $GL_n(\mathcal{D})$ with $n \in \{1,3\}$. \\

Suppose first that $n=1.$ Then we have the following fact.
\begin{theorem}[{\cite[Theorem 6.1]{8}}]\label{aimf of GL1}
Let $G $ be an absolutely irreducible maximal finite subgroup of $GL_1(\mathcal{D})=\mathcal{D}^{\times}$ for some totally definite quaternion algebra $\mathcal{D}$ over a totally real field $K$. Then: \\
(a) $K$ is the maximal totally real subfield of a cyclotomic field. \\
(b) If $[K : \mathbb{Q}] \leq 2,$ then $G$ is one of the following groups (up to isomorphism):
\begin{center}
  \begin{tabular}{ccc}
\hline
$$ & $G$ & $\mathcal{D}$ \\
\hline
$\sharp 1$ & $\mathfrak{T}^* \cong SL_2(\F_3)$ & $D_{2,\infty}$ \\
\hline
$\sharp 2$ & $\widetilde{\textrm{Sym}_3} =\textrm{Dic}_{12}$ & $D_{3,\infty}$  \\
\hline
$\sharp 3$ & $\widetilde{\textrm{Sym}_4} = \mathfrak{O}^*$ & $D_{\sqrt{2},\infty}$  \\
\hline
$\sharp 4$ & $Q_{24}=\textrm{Dic}_{24} $ & $D_{\sqrt{3},\infty}$    \\
\hline
$\sharp 5$ & $\mathfrak{I}^* \cong SL_2(\F_5) $ & $D_{\sqrt{5},\infty}$ \\
\hline
\end{tabular}
\vskip 4pt
\textnormal{Table 4}
\end{center}
where $D_{\alpha,\infty}$ (for $\alpha \in \{\sqrt{2},\sqrt{3},\sqrt{5} \})$ denotes the definite quaternion algebra over $K$ such that $K=\mathbb{Q}(\alpha)$ and $D_{\alpha, \infty}$ is ramified only at the infinite places of $K.$
\end{theorem}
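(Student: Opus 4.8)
The plan is to translate the hypothesis into rational representation theory and then feed it into the classification of finite subgroups of division rings that is already at our disposal. Since $G$ is absolutely irreducible in $GL_1(\mathcal{D})=\mathcal{D}^{\times}$, its enveloping $\Q$-algebra is the whole quaternion division algebra, $\overline{\Q G}=\mathcal{D}$, with center $Z(\mathcal{D})=K$. Because $\mathcal{D}$ is a division ring, $G$ is in particular a finite subgroup of a division ring; and because $\mathcal{D}\otimes_{K}\mathbb{R}\cong\mathbb{H}$ at each of the $[K:\Q]$ real places (totally definite), composing the faithful embedding $G\hookrightarrow\mathcal{D}^{\times}$ with any real place yields a faithful irreducible $2$-dimensional complex representation $\rho$ of quaternionic (Frobenius--Schur indicator $-1$) type, with character field $\Q(\chi_{\rho})=K$ and Schur index $2$.

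For part (a), I would argue that the values of the character of any complex representation of a finite group lie in a cyclotomic field $\Q(\zeta_{\exp G})$, so $K=\Q(\chi_{\rho})\subseteq\Q(\zeta_{m})$ for some $m$. As $K$ is totally real, it is fixed by complex conjugation, whence $K\subseteq\Q(\zeta_{m}+\zeta_{m}^{-1})$. The statement that $K$ \emph{equals} the maximal totally real subfield (rather than merely sits inside one) then drops out once the candidate groups are pinned down: the binary dihedral groups give $\Q(\chi_{\rho})=\Q(\zeta_{N}+\zeta_{N}^{-1})$, which is precisely the maximal real subfield of $\Q(\zeta_{N})$, while the exceptional groups give $\Q,\Q(\sqrt{2}),\Q(\sqrt{5})$, equal to $\Q(\zeta_{3}+\zeta_{3}^{-1}),\Q(\zeta_{8}+\zeta_{8}^{-1}),\Q(\zeta_{5}+\zeta_{5}^{-1})$ respectively.

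For part (b), I would invoke Amitsur's classification (Theorem~\ref{thm 12}): the finite subgroups of a division ring are the cyclic groups, the groups $G_{m,r}$, the products $\mathfrak{T}^{*}\times G_{m,r}$, and the exceptional groups $\mathfrak{O}^{*}$ and $\mathfrak{I}^{*}$. The requirement that $\overline{\Q G}$ be a quaternion algebra (degree $2$ over $K$) carrying an \emph{irreducible} $2$-dimensional representation forces $G_{m,r}$ into the binary dihedral case $n=2$, $r\equiv-1\pmod m$; the cyclic groups are excluded because their $2$-dimensional representations split over $\bC$, and any $\mathfrak{T}^{*}\times G_{m,r}$ with a nontrivial second factor is excluded because its faithful character field then contains $\Q(\zeta_{m})$ and so fails to be totally real. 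Imposing $[K:\Q]=\varphi(N)/2\le 2$ on the binary dihedral groups, where $N$ is the order of the rotation generator, leaves $N\in\{4,6,8,10,12\}$, that is, the groups $Q_{8},\textrm{Dic}_{12},Q_{16},\textrm{Dic}_{20},Q_{24}$, together with $\mathfrak{T}^{*}$ and the two exceptional groups $\mathfrak{O}^{*},\mathfrak{I}^{*}$ (admissible since $\sqrt{2},\sqrt{5}\in K$, cf.\ Theorem~\ref{thm 17}).

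The final, and hardest, step is to decide maximality and to identify each algebra $\mathcal{D}$ by its ramification. Maximality would be settled by exhibiting, inside the \emph{same} quaternion algebra, the proper containments $Q_{8}<\mathfrak{T}^{*}$, $Q_{16}<\mathfrak{O}^{*}$, and $\textrm{Dic}_{20}<\mathfrak{I}^{*}$ (realizing the first two as Sylow $2$-subgroups and the third as the normalizer of a Sylow $5$-subgroup of $\mathfrak{I}^{*}\cong SL_{2}(\bF_{5})$), which removes the three non-maximal candidates, and then checking from the subgroup lattices of the binary polyhedral groups that each of the five survivors admits no proper finite overgroup in $\mathcal{D}^{\times}$. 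Identifying the precise algebra in each row --- that is, locating the unique finite prime at which $\mathcal{D}$ ramifies so that $\mathcal{D}$ is $D_{2,\infty},D_{3,\infty},D_{\sqrt{2},\infty},D_{\sqrt{3},\infty}$, or $D_{\sqrt{5},\infty}$ --- is the main obstacle, since it requires computing the local Schur indices of $\chi_{\rho}$ at the finite places via the Benard--Schacher theorem or a direct Hasse-invariant calculation, rather than a purely group-theoretic argument.
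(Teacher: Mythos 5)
A preliminary remark: the paper contains no proof of this statement at all --- it is imported verbatim from Nebe \cite[Theorem 6.1]{8} as a black box --- so your proposal has to be judged on its own merits rather than against an in-paper argument.

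Your skeleton is sound and is essentially the right reconstruction: absolute irreducibility gives $\overline{\Q G}=\mathcal{D}$, so $K$ is the character field and the Schur index is $2$; Amitsur's classification (Theorem~\ref{thm 12}) then reduces the candidates to the dicyclic family and the binary polyhedral groups after your exclusions (cyclic groups are commutative, hence never absolutely irreducible; $\mathfrak{T}^*\times G_{m,r}$ with nontrivial second factor has character field containing $\zeta_m$, hence not totally real). One inaccuracy along the way: $n=2$ is indeed forced by $\deg\mathcal{D}=2$, but $r\equiv -1\pmod{m}$ is \emph{not} a consequence of carrying an irreducible $2$-dimensional representation; it is forced by total reality of $K$, since the center of $\overline{\Q G_{m,r}}$ is the index-$2$ fixed field of $\zeta_m\mapsto\zeta_m^{r}$ inside $\Q(\zeta_m)$, and this fixed field is totally real exactly when that automorphism is complex conjugation.

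The genuine gap is the step you explicitly defer: identifying $\mathcal{D}$ in each row. Without it the proof is incomplete in \emph{two} places, because the maximality analysis also depends on it: any finite overgroup $H\supseteq G$ inside $\mathcal{D}^{\times}$ is automatically absolutely irreducible (as $\overline{\Q H}\supseteq\overline{\Q G}=\mathcal{D}$), hence lies on your candidate list \emph{with the same algebra}; so deciding which candidates are maximal requires knowing which of them generate the same $\mathcal{D}$ (e.g.\ $\textrm{Dic}_{12}$ is maximal precisely because no candidate generating $D_{3,\infty}$ properly contains it, whereas $Q_8$ fails precisely because $\mathfrak{T}^*$ generates the same $D_{2,\infty}$). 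Fortunately, the identification does not need Benard--Schacher or local Schur-index machinery, so the ``main obstacle'' you fear is not one. Each survivor contains $Q_8$ or $\textrm{Dic}_{12}$ (for $\textrm{Dic}_{24}$ take the subgroup $\langle a^2,b\rangle$; $\mathfrak{O}^*$ and $\mathfrak{I}^*$ contain $\mathfrak{T}^*\supset Q_8$), and the enveloping algebras of these two small groups are the classical symbol algebras $\left(\frac{-1,-1}{\Q}\right)=D_{2,\infty}$ and $\left(\frac{-1,-3}{\Q}\right)=D_{3,\infty}$. A dimension count over $K$ then gives $\mathcal{D}\cong D_{2,\infty}\otimes_{\Q}K$ or $\mathcal{D}\cong D_{3,\infty}\otimes_{\Q}K$ in every case, and the finite ramification disappears exactly when it should: $\Q_2(\sqrt{2})$ (ramified) and $\Q_2(\sqrt{5})$ (unramified, since $5\not\equiv 1 \pmod 8$) are quadratic extensions of $\Q_2$, and $\Q_3(\sqrt{3})$ is a quadratic extension of $\Q_3$, and any quadratic extension of a local field splits the quaternion division algebra over that field. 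This yields precisely the five rows of Table 4 and completes both the identification and, with it, your maximality argument.
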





Next, we suppose that $n=3$. For the purpose of this paper, we only consider the case when $K=\Q$. Then the following important result is obtained.

\begin{theorem}[{\cite[Theorem 13.1]{8}}]\label{prim aimf of GL3}
Let $G $ be a primitive absolutely irreducible maximal finite subgroup of $ GL_3(\mathcal{D})$ for some totally definite quaternion algebra $\mathcal{D}$ over $\Q$. Then $\mathcal{D}$ is one of $D_{2,\infty}, D_{3,\infty},$ or $D_{7,\infty}$ and $G$ is (conjugate to) one of the following groups:
\begin{center}
 \begin{tabular}{ccc}
\hline
$$ & $G$ & $\mathcal{D}$ \\
\hline
$\sharp 1$ & $SL_2(\F_5)$ & $D_{2,\infty}$ \\
\hline
$\sharp 2$ & $\pm U_3(3)$ & $D_{3,\infty}$  \\
\hline
$\sharp 3$ & $\pm 3_{+}^{1+2}.GL_2(\F_3)$ & $D_{3,\infty}$  \\
\hline
$\sharp 4$ & $ \pm L_2(7).2$ & $D_{7,\infty}$    \\
\hline
\end{tabular}
\vskip 4pt
\textnormal{Table 5}
\end{center}
where $\pm H$ (for a finite matrix group $H$) denotes the group generated by $H$ and the negative identity matrix.
\end{theorem}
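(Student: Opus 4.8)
The plan is to translate the problem into ordinary complex representation theory and then invoke the structure theory of primitive linear groups. Let $G$ be a primitive absolutely irreducible maximal finite subgroup of $GL_3(\mathcal{D})$ with $\mathcal{D}$ a totally definite quaternion algebra over $\Q$. By absolute irreducibility the enveloping algebra is $\overline{G}=M_3(\mathcal{D})$, which has center $\Q$ and $\Q$-dimension $36$; extending scalars gives $M_3(\mathcal{D})\otimes_{\Q}\C\cong M_6(\C)$. Hence the natural module furnishes a faithful irreducible character $\chi$ of $G$ of degree $6$ with character field $\Q(\chi)=\Q$ and with (real) Schur index $2$, i.e.\ $\chi$ is of symplectic (quaternionic) type; moreover $\mathcal{D}$ is precisely the quaternion algebra over $\Q$ cut out by $\chi$ through its local Schur indices, and total definiteness says that $\mathcal{D}$ ramifies at $\infty$. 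Thus the task becomes: classify the finite groups admitting a faithful, primitive, $6$-dimensional complex representation of quaternionic type with rational character field, and for each one compute the ramification of the associated quaternion algebra.

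For the structural analysis I would use the generalized Fitting subgroup $F^{*}(G)=F(G)E(G)$, which controls $G$ since $C_G(F^{*}(G))\leq F^{*}(G)$. Primitivity of $\chi$ forbids nontrivial imprimitive decompositions, and this splits the analysis into two regimes. In the first, the layer $E(G)$ is nontrivial and contains a quasisimple component $L$ whose faithful constituent has degree dividing $6$; going through the (CFSG-based) list of quasisimple groups with a faithful irreducible representation of degree at most $6$ that is primitive and can be of quaternionic type, the survivors are $SL_2(\F_5)=2.\textrm{Alt}_5$ (degree $6$), $U_3(3)=PSU_3(3)$ (degree $6$), and $L_2(7)=PSL_2(7)$ (degree $3$, of complex type, doubled to degree $6$). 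In the second regime $E(G)=1$, so $F^{*}(G)=F(G)$ is nilpotent and $G$ normalizes a symplectic-type (extraspecial) normal $r$-subgroup; for complex degree $6=2\cdot 3$ the only relevant such subgroup is the extraspecial group $3^{1+2}_{+}$ of order $27$ with its $3$-dimensional faithful representation, whose full normalizer of quaternionic type is $\pm 3^{1+2}_{+}.GL_2(\F_3)$.

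For each candidate I would then identify the quaternion algebra by a Schur-index computation: the real Schur index is forced to be $2$ (quaternionic type), while the finite ramification is read off from the local invariants of the simple component of $\Q G$ containing $\chi$, yielding $p=2$ for $SL_2(\F_5)$, $p=3$ for both $U_3(3)$ and $3^{1+2}_{+}.GL_2(\F_3)$, and $p=7$ for $L_2(7).2=PGL_2(7)$ (the passage from $L_2(7)$ to $L_2(7).2$ being exactly what fuses the two conjugate degree-$3$ characters over $\Q(\sqrt{-7})$ into a single rational quaternionic character of degree $6$). Finally, maximality must be verified in each case. Since $-I_3$ is central in $GL_3(\mathcal{D})$, any maximal finite $G$ must contain it, which accounts for the $\pm$ in the list (and is automatic for $SL_2(\F_5)$, where $-I_3$ is the central involution); one then checks that the group is not properly contained in any larger finite subgroup of $GL_3(\mathcal{D})$ with the same enveloping algebra, e.g.\ by showing that it equals the normalizer of $F^{*}(G)$ (or of the relevant component) inside the unit group of a maximal order, and that any strict enlargement would either destroy primitivity---forcing the wreath-product picture of Remark~\ref{impri and prim}---or enlarge the enveloping algebra beyond $M_3(\mathcal{D})$.

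The hard part will be the exhaustiveness of the enumeration in the quasisimple regime: proving that no other quasisimple group contributes rests on the classification of finite simple groups together with detailed character-table (Atlas) information, needed to control which $6$-dimensional faithful representations are simultaneously primitive, quaternionic, and rationally written, and to exclude spurious ramification at other primes. The extraspecial regime and the maximality verifications are comparatively routine once the correspondence of the first paragraph and the generalized Fitting dichotomy are in place; the genuinely delicate bookkeeping is the Schur-index and ramification calculation that pins down $p\in\{2,3,7\}$ and rules out every other totally definite quaternion algebra over $\Q$.
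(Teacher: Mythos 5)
First, a point of reference: the paper does not prove this statement at all --- it is quoted verbatim from Nebe \cite[Theorem 13.1]{8} --- so the only meaningful comparison is with Nebe's own proof, whose general skeleton (analysis of normal subgroups/the generalized Fitting subgroup, enumeration of quasisimple and extraspecial candidates, Schur-index and ramification computations, normalizer-of-order arguments for maximality) your sketch does resemble.

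However, your opening reduction contains a fatal gap: primitivity of $G$ as a subgroup of $GL_3(\mathcal{D})$ (Definition~\ref{prim def}) does \emph{not} translate into primitivity of the degree-$6$ complex representation $\chi$. Quaternionic imprimitivity refers only to $G$-stable decompositions of $V=\mathcal{D}^{1\times 3}$ into $\mathcal{D}$-submodules, which have even complex dimension; a $G$-stable decomposition of $\C^6$ into subspaces of odd complex dimension is invisible to this notion. This is not pedantry: three of the four groups in the theorem are \emph{imprimitive} as complex linear groups of degree $6$. The faithful degree-$6$ representation of $SL_2(\F_5)$ is a principal series representation, induced from a linear character of the Borel subgroup of order $20$; and for $\pm 3_{+}^{1+2}.GL_2(\F_3)$ and $\pm L_2(7).2$ the restriction to the normal subgroup $3_{+}^{1+2}$, resp.\ $L_2(7)$, splits into two non-isomorphic $3$-dimensional constituents (swapped by $GL_2(\F_3)\setminus SL_2(\F_3)$, resp.\ by the outer involution), so the degree-$6$ representation is induced from the index-$2$ stabilizer. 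Consequently the problem you pose --- ``classify faithful, \emph{primitive}, $6$-dimensional complex representations of quaternionic type with rational character field'' --- has essentially only $\pm U_3(3)$ as a solution, and executing your program faithfully would miss the other three entries. The error propagates through your structural analysis: complex primitivity forces isotypic Clifford restrictions (implicit when you say a component's constituent has ``degree dividing $6$,'' and when you normalize $3^{1+2}_{+}$ ``with its $3$-dimensional faithful representation''), whereas the correct $\mathcal{D}$-linear Clifford theory also allows a normal subgroup to have exactly \emph{two} homogeneous components of complex dimension $3$ fused by the quaternionic structure (two odd-dimensional subspaces cannot be $\mathcal{D}$-submodules, so this does not violate $\mathcal{D}$-primitivity); this fused-pair configuration is precisely how entries $\sharp 3$ and $\sharp 4$ arise, and your own description of $L_2(7)$ as a degree-$3$ representation ``doubled to degree $6$'' (i.e.\ induced, hence complex-imprimitive) contradicts your stated framework. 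A correct proof along your lines --- which is what Nebe actually carries out --- must set up (im)primitivity and Clifford theory for $\mathcal{D}G$-modules from the start and only afterwards descend to rational representation theory; your Schur-index/ramification bookkeeping and the maximality discussion are reasonable sketches once that foundation is repaired.
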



Finally, in view of Remark~\ref{impri and prim} and Theorem~\ref{aimf of GL1}, we also have the following result.
\begin{theorem}\label{imprim aimf GL2}
Let $G $ be an imprimitive absolutely irreducible maximal finite subgroup of $ GL_3(\mathcal{D})$ for some totally definite quaternion algebra $\mathcal{D}$ over $\Q$. Then $\mathcal{D}$ is either $D_{2,\infty}$ or $D_{3,\infty}$ and $G$ is one of the following groups:
\begin{center}
 \begin{tabular}{ccc}
\hline
$$ & $G$ & $\mathcal{D}$ \\
\hline
$\sharp 1$ & $ SL_2(\F_3) \wr \textrm{Sym}_3$ & $D_{2,\infty}$ \\
\hline
$\sharp 2$ & $ \textrm{Dic}_{12} \wr \textrm{Sym}_3$ & $D_{3,\infty}$  \\
\hline
\end{tabular}
\vskip 4pt
\textnormal{Table 6}
\end{center}
\end{theorem}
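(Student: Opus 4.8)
The plan is to invoke the reduction machinery built up in the preceding results, so that classifying the imprimitive absolutely irreducible maximal finite subgroups of $GL_3(\mathcal{D})$ becomes purely a matter of enumerating wreath products. First I would apply Remark~\ref{impri and prim}: since $G$ is an imprimitive absolutely irreducible maximal finite subgroup of $GL_3(\mathcal{D})$ with $\mathcal{D}$ a totally definite quaternion algebra over $\Q$, it must be a wreath product $P \wr \textrm{Sym}_3$, where $P$ is a primitive absolutely irreducible maximal finite subgroup of $GL_1(\mathcal{D})=\mathcal{D}^{\times}$. This immediately reduces the problem to the degree-one case, which is exactly the content of Theorem~\ref{aimf of GL1}.

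Next I would feed in the constraint $K=\Q$. Since the base field is $\Q$, part (b) of Theorem~\ref{aimf of GL1} (the case $[K:\Q]\leq 2$) applies, but we must further restrict to those entries of Table~4 for which $K=\Q$, i.e.\ for which $\mathcal{D}$ is a quaternion algebra \emph{over $\Q$} rather than over a genuinely larger totally real field. Inspecting Table~4, the entries $\sharp 3$ ($D_{\sqrt{2},\infty}$, with $K=\Q(\sqrt{2})$), $\sharp 4$ ($D_{\sqrt{3},\infty}$, with $K=\Q(\sqrt{3})$), and $\sharp 5$ ($D_{\sqrt{5},\infty}$, with $K=\Q(\sqrt{5})$) are all defined over real quadratic fields and are therefore excluded. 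Only $\sharp 1$ with $P=SL_2(\F_3)=\mathfrak{T}^*$ over $\mathcal{D}=D_{2,\infty}$ and $\sharp 2$ with $P=\textrm{Dic}_{12}$ over $\mathcal{D}=D_{3,\infty}$ survive. Taking wreath products with $\textrm{Sym}_3$ then yields precisely the two groups $SL_2(\F_3)\wr\textrm{Sym}_3$ (over $D_{2,\infty}$) and $\textrm{Dic}_{12}\wr\textrm{Sym}_3$ (over $D_{3,\infty}$) listed in Table~6.

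The remaining point is to confirm that both of these wreath products actually are absolutely irreducible maximal finite subgroups of $GL_3$ of the respective algebras, and that no other $\mathcal{D}$ can occur. That $\mathcal{D}$ is forced to be $D_{2,\infty}$ or $D_{3,\infty}$ is automatic from the pairing in Theorem~\ref{aimf of GL1}: each admissible primitive constituent $P$ lives in a unique $\mathcal{D}^{\times}$, and the enveloping algebra of $P\wr\textrm{Sym}_3$ over $GL_3$ is built from that same $\mathcal{D}$. For maximality and absolute irreducibility, I would note that Remark~\ref{impri and prim} is a biconditional in the relevant direction (a wreath product of primitive absolutely irreducible maximal finite subgroups with the full symmetric group is again absolutely irreducible maximal), so that each of the two candidates is genuinely of the required type.

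The main obstacle I anticipate is not in the logical flow, which is essentially a bookkeeping exercise over Table~4, but in justifying the converse assertion of Remark~\ref{impri and prim} cleanly: one must be sure that forming $P\wr\textrm{Sym}_3$ from a \emph{maximal} primitive $P$ does yield a \emph{maximal} imprimitive subgroup, and that absolute irreducibility is preserved by the wreath construction. Since the excerpt states Remark~\ref{impri and prim} as a structural description of \emph{every} imprimitive absolutely irreducible maximal finite subgroup, I would lean on it directly rather than reproving the wreath-product stability, treating the verification that the two displayed groups indeed satisfy the hypotheses as the only genuine check. This keeps the proof short: it is really just the composition of Remark~\ref{impri and prim} with the $K=\Q$ restriction of Theorem~\ref{aimf of GL1}.
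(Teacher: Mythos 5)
Your proposal is correct and follows essentially the same route as the paper: the paper deduces this theorem directly "in view of Remark~\ref{impri and prim} and Theorem~\ref{aimf of GL1}", i.e.\ by reducing to wreath products of primitive absolutely irreducible maximal finite subgroups of $\mathcal{D}^{\times}$ with $\textrm{Sym}_3$ and then keeping only the $K=\Q$ entries ($SL_2(\F_3)$ with $D_{2,\infty}$ and $\textrm{Dic}_{12}$ with $D_{3,\infty}$) of Table 4, exactly as you do. Your worry about the converse (that the wreath construction preserves maximality and absolute irreducibility) is resolved the same way the paper resolves it, namely by leaning on the structural statement from Nebe quoted in Remark~\ref{impri and prim}.
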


We conclude this section by introducing reducible maximal finite subgroups of $GL_3 (\mathcal{D})$ with $\mathcal{D}=D_{p,\infty}$ for some prime $p.$ To this aim, let $G \leq GL_3(\mathcal{D})$ be a reducible maximal finite subgroup. Then in view of Remark~\ref{irred max} (together with \cite[Remark II.4]{Ples}), we have the following two cases: \\
\indent (Type I) $G$ is conjugate to a diagonal block matrix $\textrm{Diag}(G_1, G_2)$ where $G_1$ (resp.\ $G_2$) is an irreducible maximal finite subgroup of $GL_2(\mathcal{D})$ (resp.\ $\mathcal{D}^{\times}$). \\
\indent (Type II) $G$ is conjugate to a diagonal block matrix $\textrm{Diag}(G_1, G_2, G_3)$ where $G_1, G_2, G_3$ are irreducible maximal finite subgroups of $\mathcal{D}^{\times}$. \\
\indent We examine each case in the following subsequent theorems.
\begin{theorem}\label{type I red new}
Let $G$ be a reducible maximal finite subgroup of $GL_3(\mathcal{D})$ of (Type I) above with $\mathcal{D}=D_{p,\infty}$ for some prime $p$. Then $G = G_1 \times G_2$ (up to isomorphism) with the following specified groups $G_1$ and $G_2$: \\
(1) $G_1 \in \{2_{-}^{1+4}.\textrm{Alt}_5, SL_2(\F_3 ) \times \textrm{Sym}_3 \}$ and $G_2 = \mathfrak{T}^*$;\\ 
(2) $G_1 \in \{SL_2(\F_9), C_3 : (SL_2(\F_3).2), \mathfrak{T}^* \rtimes C_4 \}$ and $G_2 =\textrm{Dic}_{12}$;\\ 
(3) $G_1 \in \{SL_2(\F_5).2, SL_2(\F_5):2, \mathfrak{T}^* \times C_3\}$ and $G_2 =C_6$; \\ 
(4) $G_1 \in \{GL_2(\F_3), \mathfrak{I}^*, \mathfrak{O}^* \}$ and $G_2 =C_2$;\\ 
(5) $G_1 \in \{ GL_2(\F_3), C_{12} \rtimes C_2, \mathfrak{T}^* \rtimes C_4, \textrm{Dic}_{24}, \mathfrak{O}^* \}$ and $G_2 =C_4$;\\ 
(6) $G_1 \in \{GL_2(\F_3), \mathfrak{O}^*, \textrm{Dic}_{24}, \mathfrak{I}^* \}$ and $G_2 =C_6$;\\ 
(7) $G_1 = \mathfrak{T}^* \rtimes C_4$ and $G_2 =C_6$; \\ 
(8) $G_1 \in \{\mathfrak{T}^* \times C_3, \textrm{Dic}_{12} \rtimes C_6\}$ and $G_2 = C_4$; \\ 
(9) $G_1=\mathfrak{I}^*$ and $G_2 = C_4$;\\ 
(10) $G_1 \in \{D_6, \mathfrak{T}^*, \textrm{Dic}_{12}\}$ and $G_2 = C_2$.  
\end{theorem}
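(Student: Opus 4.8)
The plan is to build on the reduction that precedes the statement. Since $G$ is a reducible maximal finite subgroup of (Type I), Remark~\ref{irred max} together with \cite[Remark II.4]{Ples} gives, up to conjugacy, a block decomposition $G = \textrm{Diag}(G_1, G_2) \cong G_1 \times G_2$ in which $G_1$ is an irreducible maximal finite subgroup of $GL_2(\mathcal{D})$ and $G_2$ is an irreducible maximal finite subgroup of $GL_1(\mathcal{D}) = \mathcal{D}^{\times}$, both over the \emph{same} algebra $\mathcal{D} = D_{p,\infty}$. So the task splits into: (i) listing the admissible $G_2$ together with the primes $p$ for which each occurs, (ii) listing the admissible $G_1$ together with their primes, (iii) forming all products $G_1 \times G_2$ whose primes agree, and (iv) verifying that each such product is genuinely maximal in $GL_3(\mathcal{D})$.

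First I would pin down the $G_2$. Since $D_{p,\infty}$ is a division algebra whose only proper subalgebras are $\Q$ and imaginary quadratic fields, any \emph{non-abelian} finite subgroup of $D_{p,\infty}^{\times}$ spans a non-commutative subalgebra, hence all of $D_{p,\infty}$, and is therefore absolutely irreducible; by Theorem~\ref{aimf of GL1} the only such maximal groups with center $\Q$ are $\mathfrak{T}^*$ (forcing $p=2$) and $\textrm{Dic}_{12}$ (forcing $p=3$). Every remaining finite subgroup is cyclic, equal to the roots of unity of an imaginary quadratic subfield $\Q(\zeta_n) \hookrightarrow D_{p,\infty}$; as $\varphi(n) \le 2$ forces $n \in \{1,2,3,4,6\}$, and a quadratic field embeds into $D_{p,\infty}$ exactly when it is imaginary and non-split at $p$, the maximal cyclic possibilities are $C_4$ (when $p \equiv 3 \pmod 4$), $C_6$ (when $p \equiv 2 \pmod 3$ with $p \ge 5$), and $C_2$ (when $p \equiv 1 \pmod{12}$); for $p \in \{2,3\}$ these are absorbed into $\mathfrak{T}^*$, $\textrm{Dic}_{12}$. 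This yields $G_2 \in \{\mathfrak{T}^*, \textrm{Dic}_{12}, C_6, C_4, C_2\}$ with the indicated congruence conditions on $p$.

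Next I would invoke the classification of irreducible maximal finite subgroups $G_1$ of $GL_2(\mathcal{D})$ for $\mathcal{D} = D_{p,\infty}$ (by the methods of Nebe~\cite{8}, the $n=2$ analogue of Theorems~\ref{prim aimf of GL3} and~\ref{imprim aimf GL2}), which attaches to each candidate its ramification prime(s) $p$. Cross-referencing the prime of each $G_1$ against the congruence conditions of the previous paragraph produces the ten families: the non-abelian $G_2$ force $p \in \{2,3\}$ and give cases (1)--(2), while the cyclic $G_2$ correspond to the remaining congruence classes, each admissible prime contributing the $GL_2$-groups listed in cases (3)--(10). This step is pure bookkeeping by matching primes.

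The main obstacle is step (iv). Suppose $G_1 \times G_2 \lneq H$ with $H \le GL_3(\mathcal{D})$ finite. If $H$ is irreducible, then by Remark~\ref{irred max} it is absolutely irreducible, hence one of the groups of Theorem~\ref{prim aimf of GL3} or~\ref{imprim aimf GL2}; I would exclude this case by case, using that the irreducibility of $G_1$ on the $2$-dimensional block already forbids $H$ imprimitive with $1$-dimensional blocks, and that order divisibility rules out the primitive candidates for the relevant $p$. If $H$ is reducible, it is itself of (Type I) or (Type II); since $G_1$ acts irreducibly on the degree-$2$ block, $G_1 \times G_2$ preserves no decomposition into three degree-$1$ blocks, so $H$ cannot be of (Type II), and if $H$ is of (Type I) its blocks refine those of $G$, whence the maximality of $G_1$ in $GL_2(\mathcal{D})$ and of $G_2$ in $\mathcal{D}^{\times}$ forces $H = G_1 \times G_2$. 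The reducible alternative is thus routine once the irreducible overgroups are excluded; the genuinely delicate part is checking, for each of the finitely many primes involved, that none of the absolutely irreducible maximal groups of Tables~5 and~6 contains the given $G_1 \times G_2$.
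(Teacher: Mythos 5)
Your overall architecture (block decomposition, classification of the admissible $G_2$ together with their primes, importing the classification of irreducible maximal finite subgroups of $GL_2(\mathcal{D})$, matching primes, then verifying maximality) is the same as the paper's: the paper's proof fixes, for each case, an explicit prime $p$, quotes the list of irreducible maximal finite subgroups of $GL_2(D_{p,\infty})$ from \cite{4}, and checks by explicit containments which products survive. Your steps (i)--(iii) are essentially sound.

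The genuine gap is in your step (iv), and it is fatal on two counts. First, you claim that an irreducible finite overgroup $H$ of $G$ is absolutely irreducible ``by Remark~\ref{irred max}'', hence must appear in Theorem~\ref{prim aimf of GL3} or Theorem~\ref{imprim aimf GL2}. That remark only says that an irreducible maximal finite subgroup is absolutely irreducible \emph{in its enveloping algebra} $\overline{H}$, which may be a proper subalgebra of $M_3(\mathcal{D})$; the paper's Lemmas~\ref{irred max lem},~\ref{imag lem},~\ref{irre max Q mat}, and~\ref{div alg lem} are devoted precisely to the irreducible but not absolutely irreducible maximal groups ($C_2 \wr \textrm{Sym}_3$, $C_4 \wr \textrm{Sym}_3$, $C_6 \wr \textrm{Sym}_3$, $GL_3(\F_2)\times C_2$, $\textrm{Dic}_{28}$, etc.). These are exactly the overgroups needed to exclude some candidates: for $p=241$, the product $D_4 \times C_2$ (with $D_4$ an irreducible maximal finite subgroup of $GL_2(D_{241,\infty})$ and $C_2$ maximal in $D_{241,\infty}^{\times}$) is contained in $C_2 \wr \textrm{Sym}_3 \leq GL_3(\Q) \leq GL_3(D_{241,\infty})$, which is irreducible but not absolutely irreducible, and for $p=241$ Tables 5 and 6 are empty. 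Your test would therefore declare $D_4 \times C_2$ maximal, contradicting case (10) of the theorem, which deliberately omits $D_4$. Second, your claim that the irreducibility of $G_1$ on the $2$-dimensional block forbids $H$ from being imprimitive confuses \emph{irreducible} with \emph{primitive}: $\mathfrak{T}^* \wr \textrm{Sym}_2$ is an irreducible (imprimitive) maximal finite subgroup of $GL_2(D_{2,\infty})$, yet $(\mathfrak{T}^* \wr \textrm{Sym}_2) \times \mathfrak{T}^* \leq \mathfrak{T}^* \wr \textrm{Sym}_3$, and likewise $(\textrm{Dic}_{12} \wr \textrm{Sym}_2) \times \textrm{Dic}_{12} \leq \textrm{Dic}_{12} \wr \textrm{Sym}_3$. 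This containment is precisely why $\mathfrak{T}^* \wr \textrm{Sym}_2$ and $\textrm{Dic}_{12} \wr \textrm{Sym}_2$ are absent from the $G_1$ lists in cases (1) and (2); your argument would erroneously retain them. To repair step (iv) you must compare each candidate $G_1 \times G_2$ against \emph{all} irreducible maximal finite subgroups of $GL_3(D_{p,\infty})$ (absolutely irreducible or not, i.e.\ the full content of Section~\ref{quat mat rep}), and the imprimitive case must be settled by explicit containment checks rather than by the primitivity argument you give.
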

\begin{proof}
We provide a detailed proof for (1), and then, we can proceed in a similar fashion with the specified choices for (2)-(10). \\
\indent (1) Take $p=2$ so that $\mathcal{D}=D_{2,\infty}.$ Among all irreducible finite subgroups of $GL_{2}(D_{2,\infty}),$ only the three groups $2_{-}^{1+4}.\textrm{Alt}_5, SL_{2}(\F_3) \times \textrm{Sym}_3,$ and $\mathfrak{T}^* \wr \textrm{Sym}_2$ are maximal (up to isomorphism). (For the list of such irreducible finite subgroups, see \cite[$\S$4]{4}.) Also, recall from \cite[Theorem 6.1]{8} that the group $\mathfrak{T}^*$ is an absolutely irreducible maximal finite subgroup of $D_{2,\infty}^{\times}.$ Then case (1) follows from the observation that $(\mathfrak{T}^* \wr \textrm{Sym}_2) \times \mathfrak{T}^* \leq \mathfrak{T}^* \wr \textrm{Sym}_3,$ the latter being an imprimitive absolutely irreducible maximal finite subgroup of $GL_3(D_{2,\infty})$ (see Theorem~\ref{imprim aimf GL2}). \\
\indent (2) Take $p=3$. Then among all irreducible finite subgroups of $GL_2(D_{3,\infty})$, only the four groups $SL_2(\F_9), C_3:(SL_2(\F_3).2), \textrm{Dic}_{12} \wr \textrm{Sym}_2,$ and $\mathfrak{T}^* \rtimes C_4$ are maximal, and $\textrm{Dic}_{12}$ is an absolutely irreducible maximal finite subgroup of $D_{3,\infty}^{\times}.$ (In fact, we have $(\textrm{Dic}_{12} \wr \textrm{Sym}_2) \times \textrm{Dic}_{12} \leq \textrm{Dic}_{12} \wr \textrm{Sym}_3,$ while the other three groups are maximal finite subgroups of $GL_3(D_{3,\infty})$.) \\
\indent (3) Take $p=5$. Then among all irreducible finite subgroups of $GL_2(D_{5,\infty})$, only the three groups $SL_2(\F_5).2, SL_2(\F_5):2,$ and $\mathfrak{T}^* \times C_3$ are maximal, and $C_6$ is an irreducible maximal finite subgroup of $D_{5,\infty}^{\times}.$  \\
\indent (4) Take $p=37$. Then among all irreducible finite subgroups of $GL_2(D_{37,\infty})$, only the three groups $GL_2(\F_3), \mathfrak{I}^*$, and $\mathfrak{O}^*$ are maximal, and $C_2$ is an irreducible maximal finite subgroup of $D_{37,\infty}^{\times}.$  \\
\indent (5) For the groups $G_1 = GL_2(\F_3), C_{12} \rtimes C_2, \mathfrak{T}^* \rtimes C_4$, or $\textrm{Dic}_{24},$ take $p=31$. Then all those four groups are irreducible finite subgroups of $GL_{2}(D_{31,\infty})$, which are (the only) maximal, and $C_4$ is an irreducible maximal finite subgroup of $D_{31,\infty}^{\times}.$ \\
For the group $G_1 = \mathfrak{O}^*$, take $p=19$, and note that $\mathfrak{O}^*$ is an irreducible finite subgroup of $GL_2(D_{19,\infty})$, that is maximal (along with three other maximal finite subgroups $C_{12} \rtimes C_2, \mathfrak{T}^* \rtimes C_4, \textrm{Dic}_{24}$), and $C_4$ is an irreducible maximal finite subgroup of $D_{19,\infty}^{\times}.$ \\
\indent (6) Take $p=53$. Then among all irreducible finite subgroups of $GL_2(D_{53,\infty})$, the six groups $\mathfrak{T}^* \times C_3, \textrm{Dic}_{12} \rtimes C_6, GL_2(\F_3), \mathfrak{O}^*, \textrm{Dic}_{24},$ and $\mathfrak{I}^*$ are maximal, and $C_6$ is an irreducible maximal finite subgroup of $D_{53,\infty}^{\times}.$ \\
\indent (7)-(8) Take $p=71$. Then among all irreducible finite subgroups of $GL_2(D_{71,\infty})$, the four groups $C_{12} \rtimes C_2, \mathfrak{T}^* \rtimes C_4, \mathfrak{T}^* \times C_3,$ and $\textrm{Dic}_{12} \rtimes C_6$ are maximal, and $C_4$ and $C_6$ are irreducible maximal finite subgroups of $D_{71,\infty}^{\times}.$ (In fact, if we fix $G_2=C_6,$ then only two groups $(\mathfrak{T}^* \rtimes C_4) \times C_6$ and $(\mathfrak{T}^* \times C_3) \times C_6$ are maximal finite subgroups of $GL_3(D_{71,\infty})$. (Note that the latter group is in case (3).) Also, if we fix $G_2 = C_4,$ then the two groups $(\mathfrak{T}^* \times C_3) \times C_4$ and $(\textrm{Dic}_{12} \rtimes C_6) \times C_4$ are maximal finite subgroups of $GL_3(D_{71,\infty})$, while the other two groups are in case (5).)   \\
\indent (9) Take $p=43$. Then among all irreducible finite subgroups of $GL_2(D_{43,\infty})$, the five groups $C_{12} \rtimes C_2, \mathfrak{T}^* \rtimes C_4, \mathfrak{O}^*, \textrm{Dic}_{24}$, and $\mathfrak{I}^*$ are maximal, and $C_4$ is an irreducible maximal finite subgroup of $D_{43,\infty}^{\times}.$ (In fact, the group $\mathfrak{I}^* \times C_4 $ is a maximal finite subgroup of $GL_3(D_{43,\infty})$, while the other four groups are in case (5).)   \\
\indent (10) Take $p=241$. Then all the four irreducible finite subgroups $D_4, D_6, \textrm{Dic}_{12},$ and $\mathfrak{T}^*$ of $GL_2(D_{241,\infty})$ are maximal, and $C_2$ is an irreducible maximal finite subgroup of $D_{241,\infty}^{\times}.$ (In fact, we have $D_4 \times C_2 \leq C_2 \wr \textrm{Sym}_3$, the latter being an irreducible finite subgroup of $GL_3(D_{241,\infty})$, while the other three groups are maximal finite subgroups of $GL_3(D_{241,\infty})$.)   \\
\indent This completes the proof.
\end{proof}

\begin{theorem}\label{type II red}
There exists no reducible maximal finite subgroup of $\textrm{GL}_3(\mathcal{D})$ of (Type II) above with $\mathcal{D}=D_{p,\infty}$ for some prime $p.$
\end{theorem}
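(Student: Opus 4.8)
The plan is to show that a Type II reducible maximal finite subgroup, conjugate to a block-diagonal group $\textrm{Diag}(G_1,G_2,G_3)$ with each $G_i$ an irreducible (hence absolutely irreducible, since $\dim = 1$) maximal finite subgroup of $\mathcal{D}^{\times} = GL_1(\mathcal{D})$, can never be itself maximal. The underlying principle, coming from Remark~\ref{irred max} together with the wreath-product description in Remark~\ref{impri and prim}, is that whenever three copies of the \emph{same} irreducible maximal finite subgroup $H$ of $\mathcal{D}^{\times}$ sit along the diagonal, the group $\textrm{Diag}(H,H,H)$ is properly contained in the imprimitive group $H \wr \textrm{Sym}_3$, which permutes the three blocks and is an irreducible (indeed absolutely irreducible) finite subgroup of $GL_3(\mathcal{D})$. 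Thus such a $G$ is never maximal. The task is therefore to rule out, for the \emph{maximality} of $G$, both the possibility that the three blocks are genuinely distinct as groups and the possibility that they are all isomorphic.

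First I would fix the ramification structure: by Remark~\ref{irred max}(a), the enveloping algebra $\overline{G}$ of a Type II group decomposes, over $\Q$, as a product governed by the three one-dimensional blocks, each contributing a copy of a definite quaternion algebra over a totally real field that is the maximal totally real subfield of a cyclotomic field (Theorem~\ref{aimf of GL1}(a)). Since $\mathcal{D} = D_{p,\infty}$ has center $\Q$, each $G_i$ must embed in $\mathcal{D}^{\times}$ with $\overline{G_i} = \mathcal{D}$ itself, forcing $K = \Q$ in Theorem~\ref{aimf of GL1}; but the only absolutely irreducible maximal finite subgroups of $D_{p,\infty}^{\times}$ with center $\Q$ are $\mathfrak{T}^* \cong SL_2(\F_3)$ (for $p=2$) and $\textrm{Dic}_{12}$ (for $p=3$), by the $[K:\Q]=1$ entries of Table~4. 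In particular all three blocks must be forced to live over the \emph{same} prime $p$ (they all sit inside the single algebra $\mathcal{D}=D_{p,\infty}$), hence are all isomorphic to the same $H \in \{\mathfrak{T}^*, \textrm{Dic}_{12}\}$.

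Next I would invoke the imprimitivity argument to defeat maximality. For $\mathcal{D}=D_{2,\infty}$, we have $\textrm{Diag}(\mathfrak{T}^*, \mathfrak{T}^*, \mathfrak{T}^*) \lneq \mathfrak{T}^* \wr \textrm{Sym}_3$, and by Theorem~\ref{imprim aimf GL2} the latter is an (imprimitive absolutely irreducible) maximal finite subgroup of $GL_3(D_{2,\infty})$; for $\mathcal{D}=D_{3,\infty}$, we have $\textrm{Diag}(\textrm{Dic}_{12}, \textrm{Dic}_{12}, \textrm{Dic}_{12}) \lneq \textrm{Dic}_{12} \wr \textrm{Sym}_3$, which is likewise maximal in $GL_3(D_{3,\infty})$ by the same theorem. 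In either case $G$ is properly contained in a strictly larger finite subgroup of $GL_3(\mathcal{D})$, contradicting the assumed maximality of $G$. Since these are the only two quaternion algebras that can arise, no Type II group can be maximal.

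The main obstacle I anticipate is the very first reduction: justifying rigorously that for a Type II group the center condition truly forces each $G_i$ to be an absolutely irreducible maximal finite subgroup of $\mathcal{D}^{\times}$ over the base field $\Q$ (rather than over some larger totally real $K'$), and hence that only the two $[K:\Q]=1$ entries of Table~4 survive. One must argue carefully via the double centralizer theorem and the structure of $\overline{G}$ as dictated by Remark~\ref{irred max} that the three blocks cannot be realized over distinct or larger fields while still assembling into a subgroup of $GL_3(D_{p,\infty})$ for a single prime $p$. Once that field-theoretic bookkeeping is pinned down, the remaining wreath-product containment is routine and immediately contradicts maximality.
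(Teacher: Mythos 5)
Your proposal contains a genuine gap, and it is precisely the point you flagged as the ``anticipated obstacle'': your claim that each block $G_i$, being an irreducible maximal finite subgroup of $GL_1(\mathcal{D})=\mathcal{D}^{\times}$, is automatically \emph{absolutely} irreducible ``since $\dim = 1$'', and hence must appear among the $K=\Q$ entries of Table~4 in Theorem~\ref{aimf of GL1}. This is false. In the sense of Definition~\ref{Nebe def}, a subgroup $H \leq \mathcal{D}^{\times}$ is absolutely irreducible only if its enveloping $\Q$-algebra is all of $\mathcal{D}$, whereas irreducibility merely requires the commuting algebra to be a division algebra. For example, a cyclic group $C_4 \leq D_{p,\infty}^{\times}$ (which exists whenever $\Q(\sqrt{-1})$ embeds in $D_{p,\infty}$, e.g.\ for $p \equiv 3 \pmod 4$) has enveloping algebra $\Q(\sqrt{-1}) \subsetneq \mathcal{D}$ and commuting algebra $\Q(\sqrt{-1})$: it is irreducible but not absolutely irreducible, and for $p \geq 5$ it can be a \emph{maximal} finite subgroup of $D_{p,\infty}^{\times}$, since for $p \geq 5$ the algebra $D_{p,\infty}$ contains neither $\mathfrak{T}^*$ nor $\textrm{Dic}_{12}$ and all of its finite subgroups are cyclic of order $1,2,3,4$, or $6$. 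The same applies to $C_2$ and $C_6$. Consequently your reduction ``all three blocks are $\mathfrak{T}^*$ (so $p=2$) or all are $\textrm{Dic}_{12}$ (so $p=3$)'' omits the Type II candidates $C_2^3$, $C_4^3$, $C_6^3$, as well as the mixed candidates $C_6^2 \times C_4$ and $C_6 \times C_4^2$ (both $C_4$ and $C_6$ are maximal in the same $D_{p,\infty}^{\times}$ when $p \equiv 11 \pmod{12}$, e.g.\ $p=11$), and these are exactly the cases the paper has to work to exclude. Your assertion that the three blocks are forced to be pairwise isomorphic is likewise false at the candidate-enumeration stage.

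The missing cases also cannot be dispatched by your wreath-product argument alone, because the relevant overgroups are not the imprimitive absolutely irreducible maximal subgroups of Theorem~\ref{imprim aimf GL2}. The paper disposes of $C_n^3$ ($n \in \{2,4,6\}$) via the containment $C_n^3 \leq C_n \wr \textrm{Sym}_3$, where $C_n \wr \textrm{Sym}_3$ is an \emph{irreducible but not absolutely irreducible} finite subgroup of $GL_3(D_{p,\infty})$ (cf.\ Lemma~\ref{imag lem} for $n=4,6$ and Lemma~\ref{irre max Q mat} for $n=2$), and it disposes of the mixed cases via containments into \emph{Type I reducible} groups, namely $C_6^2 \times C_4 \leq (\textrm{Dic}_{12} \rtimes C_6) \times C_4$ and $C_6 \times C_4^2 \leq (\mathfrak{T}^* \rtimes C_4) \times C_6$ (cf.\ Theorem~\ref{type I red new}). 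For the two cases you do treat, $(\mathfrak{T}^*)^3$ and $(\textrm{Dic}_{12})^3$, your argument coincides with the paper's. To repair your proof you would need to (i) replace the appeal to Table~4 by the full list of irreducible maximal finite subgroups of $D_{p,\infty}^{\times}$, including $C_2, C_4, C_6$ for $p \geq 5$, and (ii) supply the additional containments just described.
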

\begin{proof}
In view of Theorem~\ref{aimf of GL1} and the observation that the cyclic groups $C_6, C_4,$ and $C_2$ are irreducible maximal finite subgroups of $D_{p,\infty}^{\times}$ for some corresponding prime $p \geq 5$, we can see that the possible maximal reducible finite subgroups $G$ of $GL_3(D_{p,\infty})$ of (Type II) are of the form $G=G_1^3$ for $G_1 \in \{\mathfrak{T}^*, \textrm{Dic}_{12}, C_6, C_4, C_2\}$ or $G=G_1 \times G_2 \times G_3$ for $G_1, G_2, G_3 \in \{C_4, C_6\}$ (depending on $p$). If $G=(\mathfrak{T}^*)^3$ (resp.\ $G=\textrm{Dic}_{12}^3$) so that $p=2$ (resp.\ $p=3$), then $G$ is isomorphic to a subgroup of $SL_2(\F_3) \wr \textrm{Sym}_3$ (resp.\ $\textrm{Dic}_{12} \wr \textrm{Sym}_3$), which is the imprimitive absolutely irreducible maximal finite subgroup of $GL_3(D_{2,\infty})$ (resp.\ $GL_3(D_{3,\infty}))$. If $G=C_6^3$ so that $C_6 \leq D_{p,\infty}^{\times},$ then $G \leq C_6 \wr \textrm{Sym}_3$, the latter being an irreducible finite subgroup of $GL_3(D_{p,\infty}).$ Similar arguments apply for the cases when $G=C_4^3$ or $G=C_2^3.$ Finally, if $G=C_6^2 \times C_4$ (resp.\ $G=C_6 \times C_4^2)$ so that $C_4, C_6 \leq D_{p,\infty}^{\times},$ then $G \leq  (\textrm{Dic}_{12} \rtimes C_6 ) \times C_4$ (resp.\ $G \leq (\mathfrak{T}^* \rtimes C_4) \times C_6)$, which is a reducible finite subgroup of $GL_3(D_{p,\infty})$ of (Type I). \\
\indent This completes the proof.
\end{proof}

\section{Main Result}\label{main}
In this section, we give a classification of finite groups that can be realized as the automorphism group of a polarized abelian threefold over a finite field which is maximal in its isogeny class in the following sense.

\begin{definition}\label{def 20}
  Let $X$ be an abelian variety over a field $k$, and let $G$ be a finite group. Suppose that the following two conditions hold: \\
\indent (i) (realizability) there exists an abelian variety $X^{\prime}$ over $k$ that is $k$-isogenous to $X$ with a polarization $\mathcal{L}$ such that $G=\textrm{Aut}_k(X^{\prime},\mathcal{L}),$ and \\
\indent (ii) (maximality) there is no finite group $H$ such that $G$ is isomorphic to a proper subgroup of $H$ and $H=\textrm{Aut}_k (Y,\mathcal{M})$ for some abelian variety $Y$ over $k$ that is $k$-isogenous to $X$ with a polarization $\mathcal{M}.$ \\
\indent In this case, $G$ is said to be \emph{realizable maximally (or maximal, in short) in the isogeny class of $X$} as the full automorphism group of a polarized abelian variety over $k$.
\end{definition}

The resulting classification is much more complicated than that of polarized abelian surfaces case \cite[$\S6$]{4}. In the sequel, $G$ will always denote a finite group. Following \cite{5}, we are ready to introduce one of the main results of this section.

\begin{theorem}[{\cite[Theorem 4.1]{5}}]\label{thm old 24}
  There exists a finite field $k$ and a simple abelian threefold $X$ over $k$ such that $G$ is the automorphism group of a polarized abelian threefold over $k,$ which is maximal in the isogeny class of $X$ if and only if $G$ is one of the cyclic groups $C_n$ for $n \in \{2,4,6,14,18\}$ (up to isomorphism).
\end{theorem}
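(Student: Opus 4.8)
The plan is to reduce the statement to a question about the finite subgroups of the division algebra $D := \End^0_k(X)$ attached to the isogeny class, classify those subgroups, and then realize the survivors via Honda--Tate theory.

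First I would record the structural constraints. For any $X'$ that is $k$-isogenous to $X$ and any polarization $\mathcal{L}$, the group $\Aut_k(X',\mathcal{L})$ is a finite subgroup of $(\End^0_k(X'))^{\times}=D^{\times}$ containing the inversion $[-1]$, so it has even order. Since $X$ is simple, $D$ is a division algebra with center $\Q(\pi_X)$, and Theorem~\ref{cor TateEnd0} gives $de=2g=6$, where $e=[\Q(\pi_X):\Q]$ and $d$ is the index of $D$. As $\Q(\pi_X)$ is generated by a $q$-Weil number, any real embedding must send $\pi_X$ to $\pm\sqrt q$ and hence forces $[\Q(\pi_X):\Q]\le 2$; thus a cubic center (which always has a real place) cannot occur. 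A totally real center is also excluded, because by Proposition~\ref{local inv} every real place contributes local invariant $\tfrac12$, forcing $2\mid d$, incompatible with the odd index $d=3$ that $e=2$ would require. This leaves exactly two cases: $(d,e)=(1,6)$, with $D=K$ a sextic CM field, and $(d,e)=(3,2)$, with $D$ an index-$3$ division algebra over an imaginary quadratic field $K$.

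Next I would show that in both cases every finite subgroup of $D^{\times}$ is cyclic. This is automatic when $D=K$ is a field. In the index-$3$ case, suppose $G\le D^{\times}$ is non-cyclic and finite; then $\Q[G]\subseteq D$ is a non-commutative division algebra, and by Amitsur's classification (Theorem~\ref{thm 12}) it is of quaternion type, i.e.\ a division algebra of index $2$ over its center $F$, so $\dim_F\Q[G]=4$. Because $K=Z(D)$ is central, $FK$ is a subfield of $D$ and $\Q[G]\cdot K$ is a central simple $FK$-subalgebra of $D$ of $K$-dimension $4[FK:K]$; the double centralizer theorem then forces $4[FK:K]\mid \dim_K D=9$, which is absurd since $4\nmid 9$. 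Hence $G$ is cyclic. Any cyclic subgroup lies in $\mu_L$ for a subfield $L\subseteq D$ with $[L:\Q]\le de=6$, and since $[-1]\in G$ makes $n:=|G|$ even, we get $\varphi(n)\in\{2,6\}$ or $n=2$, that is, $n\in\{2,4,6,14,18\}$.

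Finally I would establish realizability and maximality, working in the case $D=K$. Given $n\in\{2,4,6,14,18\}$, choose a sextic CM field $K$ with $\mu_K\cong C_n$: for example $K=\Q(\z{7})$ for $n=14$, $K=\Q(\z{9})$ for $n=18$, a sextic CM field containing $\Q(i)$ or $\Q(\z{3})$ for $n=4$ or $6$, and a generic sextic CM field for $n=2$. By Theorem~\ref{thm HondaTata} I would select an ordinary $q$-Weil number $\pi$ generating $K$, so that the local-invariant computation of Proposition~\ref{local inv} yields $d=1$ and the associated simple abelian threefold $X$ has $\End^0_k(X)=K$; passing to the member $X'$ with complex multiplication by the maximal order $\mathcal{O}_K$ and a $\mu_K$-invariant polarization $\mathcal{L}$ gives $\Aut_k(X',\mathcal{L})=\mu_K\cong C_n$. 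Maximality follows at once: throughout the isogeny class $D$ is fixed, every finite subgroup of $D^{\times}$ is cyclic of order at most $|\mu_K|$, and $\mu_K$ is attained; conversely, any realizable maximal $G$ is a maximal finite subgroup of $D^{\times}$, hence cyclic of even order in $\{2,4,6,14,18\}$. The main obstacle is precisely this realizability step: one must produce $q$-Weil numbers whose CM field is genuinely sextic (so that the variety is simple) and has exactly the prescribed group of roots of unity, and then exhibit a polarization whose full automorphism group is all of $\mu_K$ rather than a proper subgroup, which requires controlling the Rosati involution and the endomorphism order simultaneously. The index-$3$ exclusion of non-cyclic groups is the other delicate point, but the dimension count above disposes of it cleanly.
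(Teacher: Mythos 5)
Your overall strategy (reduce to finite subgroups of $D^{\times}$, then realize via Honda--Tate, Waterhouse's theorem and averaging a polarization over the group) is the standard one, but the decisive exclusion step in the index-$3$ case rests on a false claim. You assert that if $G\le D^{\times}$ is finite and non-cyclic, then by Theorem~\ref{thm 12} its enveloping algebra $\Q[G]$ is ``of quaternion type,'' i.e.\ of index $2$ over its center $F$. Amitsur's classification says no such thing: for a non-cyclic group of type $G_{m,r}$ the enveloping algebra is a cyclic algebra of degree $n$ over its center, and $n$ can exceed $2$. A concrete counterexample is $G_{21,4}\cong C_7\rtimes C_9$ of order $63$: here $n=3$, $s=3$, $t=7$, and conditions (C1) and (2)(a) of Theorem~\ref{thm 11} hold (take $p=7$: $n_7=1$, $\delta_7=1$, $\gcd(3,(7-1)/3)=\gcd(3,2)=1$), so this non-cyclic group does embed in a division ring, and its enveloping algebra is a division algebra of degree $3$ over a \emph{quartic} center. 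Consequently your dimension count, which derives the contradiction $4[FK:K]\mid 9$, does not cover all non-cyclic candidates: for a degree-$3$ enveloping algebra the same count gives $9[FK:K]\mid 9$, which is no contradiction at all. To close the gap one must treat the groups $G_{m,r}$ with $3\mid n$ separately: dimension forces $n=3$ and $[F:\Q]\le 2$, hence $\varphi(m)=6$, i.e.\ $m\in\{7,9,14,18\}$, and then each of these must be shown to fail Amitsur's criterion (Theorem~\ref{thm 11}, or \cite[Theorem 3.1]{6}) by direct computation --- exactly the kind of case analysis the paper itself carries out in Lemma~\ref{lem 15} for the degree-$2$-over-cubic situation, and which the cited proof in \cite{5} performs for the present case. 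This verification is the real content of the non-cyclicity claim, and it is absent from your argument.

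Two smaller gaps. First, your exclusion of a totally real center only addresses $e=2$: the case $\Q(\pi_X)=\Q$ (which would need $d=6$) is untouched by the parity argument; it is excluded because all local invariants then have denominator at most $2$, or more simply by Theorem~\ref{cor TateEnd0}(e), which would make $X$ isogenous to the cube of a supersingular elliptic curve, contradicting simplicity. Second, from $[L:\Q]\le 6$ alone you cannot conclude $n\in\{2,4,6,14,18\}$: the inequality still permits $\varphi(n)=4$, i.e.\ $n\in\{8,10,12\}$. You need the divisibility $\varphi(n)=[\Q(\zeta_n):\Q]\mid[L:\Q]\mid 6$; this matters, since $C_8$, $C_{10}$, $C_{12}$ do occur for non-simple threefolds (Theorem~\ref{prodnonisoellip}) and must be ruled out here by divisibility, not by a size bound.
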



For the rest of this section, we take care of various cases of non-simple abelian threefolds in view of the decomposition of abelian varieties as the product of powers of simple abelian varieties, up to $k$-isogeny (see $\S2.1$ above). The next theorem takes care of the cases in which our abelian threefold is isogenous to the product of a simple abelian surface and an elliptic curve.

\begin{theorem}\label{prodnonisoellip}
There exist a finite field $k$, a simple abelian surface $Y$, and an elliptic curve $E$ over $k$ such that $G$ is the automorphism group of a polarized abelian threefold over $k,$ which is maximal in the isogeny class of $X:=Y \times E$ if and only if $G=G_1 \times G_2$ is one of the following groups (up to isomorphism) with the specified groups $G_1$ and $G_2$: \\ 
(1) $G_1 = \mathfrak{I}^* $ and $G_2 \in \{C_2, C_4 \}$; \\
(2) $G_1 = \mathfrak{O}^* $ and $G_2 \in \{C_2, C_4 \}$; \\
(3) $G_1 = \textrm{Dic}_{24}$ and $G_2 \in \{C_2, C_6 \}$; \\
(4) $G_1 = \mathfrak{T}^* $ and $G_2 \in \{C_2, C_4, C_6 \}$; \\
(5) $G_1 = \textrm{Dic}_{12} $ and $G_2 \in \{C_2, C_4 \}$; \\
(6) $G_1 \in \{C_8,C_{10},C_{12}\}$ and $G_2 \in \{C_2, C_4, C_6, \textrm{Dic}_{12}, \mathfrak{T}^* \}$; \\
(7) $G_1 = C_6 $ and $G_2 \in \{C_2, C_4, C_6 \}$; \\
(8) $G_1 = C_4 $ and $G_2 \in \{C_2, C_4 \}$; \\
(9) $G_1 = C_2$ and $G_2 = C_2.$
\end{theorem}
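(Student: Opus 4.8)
The strategy is to exploit that a simple abelian surface and an elliptic curve, having different dimensions, can never be $k$-isogenous. First I would record that $\operatorname{Hom}_k(Y,E)=\operatorname{Hom}_k(E,Y)=0$, so $\operatorname{End}_k^0(X)=D_1\times D_2$ with $D_1=\operatorname{End}_k^0(Y)$ and $D_2=\operatorname{End}_k^0(E)$; in particular every automorphism of $X$ and every symmetric isogeny $X\to X^\vee$ is block-diagonal for $X=Y\times E$. Since $E^\vee\cong E$, $Y^\vee$ is $k$-isogenous to $Y$, and there are no nonzero maps between the two factors, a polarization on any member of the isogeny class of $X$ may, after a $k$-isogeny, be replaced by a product polarization $\mathcal L_1\boxtimes\mathcal L_2$. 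This yields the splitting
\[
\operatorname{Aut}_k(X,\mathcal L)=\operatorname{Aut}_k(Y,\mathcal L_1)\times\operatorname{Aut}_k(E,\mathcal L_2)=:G_1\times G_2.
\]
Because a $k$-isogeny preserves the simple decomposition, every variety $k$-isogenous to $X$ has the form $Y'\times E'$ with $Y'\sim_k Y$ and $E'\sim_k E$; hence maximality of $G=G_1\times G_2$ in the isogeny class of $X$ is equivalent to the \emph{simultaneous} maximality of $G_1$ for $Y$ and of $G_2$ for $E$, realized over one common field $\mathbb F_q$.

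Next I would feed in the two one-variable classifications. For the surface, $\operatorname{Aut}_k(Y,\mathcal L_1)\leq D_1^\times$, and the classification of maximal automorphism groups of simple polarized abelian surfaces from \cite{4} gives $G_1\in\{\mathfrak I^*,\mathfrak O^*,\textrm{Dic}_{24},\mathfrak T^*,\textrm{Dic}_{12}\}$ when $D_1$ is a quaternion division algebra over a quadratic field (the first three via Theorem~\ref{aimf of GL1}, the last two via the embedding criteria of Amitsur in Theorem~\ref{thm 12}), and $G_1\in\{C_2,C_4,C_6,C_8,C_{10},C_{12}\}$ when $D_1$ is a quartic CM field. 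For the elliptic curve the canonical principal polarization is preserved by all automorphisms, so $G_2=\operatorname{Aut}_k(E)$, which is $\mathfrak T^*$ in characteristic $2$, $\textrm{Dic}_{12}$ in characteristic $3$, and otherwise $C_6$ (for $j=0$), $C_4$ (for $j=1728$) or $C_2$, in each case only after passing to a $q$ over which the relevant automorphisms are $k$-rational. A point I would single out is that when $G_1\in\{\mathfrak I^*,\mathfrak O^*,\textrm{Dic}_{24}\}$ the center $\mathbb Q(\pi_Y)$ of $D_1$ is the \emph{real} quadratic field $\mathbb Q(\sqrt5),\mathbb Q(\sqrt2),\mathbb Q(\sqrt3)$ respectively (Theorem~\ref{aimf of GL1}, with Theorem~\ref{thm 17} for the first two), forcing $\pi_Y=\pm\sqrt q$ and hence $p=5,2,3$ with $[\mathbb F_q:\mathbb F_p]$ odd; by contrast $\mathfrak T^*,\textrm{Dic}_{12}$ and the cyclic CM cases arise from an imaginary quadratic or quartic CM center and so occur over a wide range of characteristics allowed by Honda--Tate.

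The heart of the proof, and the step I expect to be the main obstacle, is the arithmetic compatibility pinning down exactly which products $G_1\times G_2$ are simultaneously realizable and maximal over a single $\mathbb F_q$. For each $G_1$ I would use Honda--Tate theory (Theorem~\ref{thm HondaTata}) together with the local-invariant formula (Proposition~\ref{local inv}) and the index criterion (Proposition~\ref{index end alg}) to determine the set of pairs $(p,a)$ over which a simple surface $Y$ with $\operatorname{Aut}_k(Y,\mathcal L_1)=G_1$ exists; dually, the condition $\operatorname{Aut}_k(E)=G_2$ becomes a congruence condition on $q=p^a$ (for instance $4\mid q-1$ with $j=1728$ gives $C_4$, and $3\mid q-1$ with $j=0$ gives $C_6$). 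The admissible pairs $(G_1,G_2)$ are exactly those for which the two families of $(p,a)$ intersect, and the prohibited ones are ruled out by an empty intersection. For example $\mathfrak I^*$ forces $p=5$ with $a$ odd, so $q=5^a\equiv2\pmod3$ and $3\nmid q-1$; the order-$3$ automorphism needed for $C_6$ is then never $\mathbb F_q$-rational, which is precisely why case~(1) lists $G_2\in\{C_2,C_4\}$ but excludes $C_6$. Running this bookkeeping through all $G_1$ produces the lists (1)--(9).

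For the converse (realizability) direction I would, for each listed pair, exhibit a concrete $q$-Weil number producing the required $Y$ and a compatible $E$ over the same $\mathbb F_q$, and then verify maximality using the surface classification and the tabulated automorphism groups of elliptic curves. Maximality of the product then reduces to factorwise maximality over the common field: any realizable competitor is again a product $G_1''\times G_2''$ with $Y''\sim_k Y$ and $E''\sim_k E$, and the block structure forced by $\operatorname{End}_k^0(X)=D_1\times D_2$ prevents an irreducibly acting enlargement, so a strict containment $G_1\times G_2\subsetneq G_1''\times G_2''$ would force a strict enlargement of $G_1$ or of $G_2$, contradicting their maximality. This closes the argument.
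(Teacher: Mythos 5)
Your overall strategy---split $\textrm{End}_k^0(X)=D_1\times D_2$, reduce maximality of $G$ to simultaneous factorwise maximality of $G_1$ and $G_2$ over a common $\F_q$, then do arithmetic bookkeeping and exhibit explicit examples---is the same as the paper's. But two of your steps have genuine problems. First, the structural claims you use to obtain the splitting are false or unjustified: an abelian threefold $k$-isogenous to $Y\times E$ need not itself be a product $Y'\times E'$, and ``replacing'' a polarization by a product polarization after a $k$-isogeny changes the polarized variety, hence its automorphism group, so it cannot be used to conclude that $\textrm{Aut}_k(Z,\mathcal{M})$ splits as $G_1''\times G_2''$ for \emph{every} member $(Z,\mathcal{M})$ of the isogeny class. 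Your maximality argument for competitors (``any realizable competitor is again a product'') inherits this gap. The paper avoids the issue entirely: it only uses that every such automorphism group is a \emph{finite subgroup} of $D_1^{\times}\times D_2^{\times}$, and then Goursat's Lemma (equivalently, the projection argument $H\leq p_1(H)\times p_2(H)$) together with the maximality hypothesis forces the product structure on the maximal groups; that is also the correct way to repair your last paragraph.

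Second, and more damaging for what you yourself call the heart of the proof, your description of when the surface groups arise is wrong: you assert that $\mathfrak{T}^*$ and $\textrm{Dic}_{12}$ come from ``an imaginary quadratic or quartic CM center and so occur over a wide range of characteristics.'' A nonabelian group cannot embed in the multiplicative group of a (commutative) CM field; these groups require $\textrm{End}_k^0(Y)$ to be a quaternion division algebra over the \emph{real} quadratic center $\Q(\pi_Y)=\Q(\sqrt{p})$, which forces $\pi_Y=\pm\sqrt{q}$ and $q=p^a$ with $a$ odd---exactly as for $\mathfrak{I}^*,\mathfrak{O}^*,\textrm{Dic}_{24}$. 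This parity constraint (surface nonabelian $\Rightarrow$ $q$ a nonsquare, versus elliptic $\mathfrak{T}^*$ or $\textrm{Dic}_{12}$ $\Rightarrow$ $q$ a square) is precisely what excludes the pairs $\mathfrak{T}^*\times\mathfrak{T}^*$, $\mathfrak{T}^*\times\textrm{Dic}_{12}$, $\textrm{Dic}_{12}\times\textrm{Dic}_{12}$, $\textrm{Dic}_{24}\times\textrm{Dic}_{12}$, etc.\ (cf.\ Remark~\ref{exclude rmk1}); with your premise these would wrongly appear realizable and the lists (1)--(9) would not come out correctly. Finally, the realizability direction is only sketched: beyond one example you neither carry out the intersection-of-congruence-conditions bookkeeping for all $45$ combinations, nor supply the mechanism the paper uses to realize each listed $G$ as a \emph{full} automorphism group---namely Waterhouse's theorem \cite{12} realizing the maximal order $\mathcal{O}_1\oplus\mathcal{O}_2$ (via Theorem~\ref{max gen}) as $\textrm{End}_k(X')$ for some $X'$ in the isogeny class, followed by the averaging trick $\mathcal{L}'=\bigotimes_{f\in G}f^{*}\mathcal{L}$ and the maximality of $G$ in $\mathcal{O}^{\times}$. (Your product-polarization idea does work for realizability on an honest product $Y'\times E'$, granting factorwise realizability from \cite{4} over the same field, but that compatibility is exactly the part left undone.)
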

\begin{proof}
Suppose first that there exist a finite field $k$, a simple abelian surface $Y$, and an elliptic curve $E$ over $k$ such that $G$ is the automorphism group of a polarized abelian threefold over $k$, which is maximal in the isogeny class of $X:=Y \times E.$ In particular, we have $\textrm{End}_k^0(X)=\textrm{End}_k^0(Y) \oplus \textrm{End}_k^0(E).$ Since $G$ is a maximal finite subgroup of $\textrm{End}_k^0(X)$ by assumption, it follows from Goursat's Lemma, \cite[Theorem 6.5 and Corollary 3.4]{4}, and Remark~\ref{exclude rmk1} below that $G$ must be one of the 32 groups (up to isomorphism) in the statement of the theorem. Hence, it suffices to show the converse. We prove the converse by considering them one by one. First, we provide a detailed proof for the groups in case (1).\\
\indent (1) Take $G=\mathfrak{I}^* \times C_2$. Let $\pi$ be a zero of the quadratic polynomial $h:=t^2 -5 \in \Z[t]$ so that there is a simple abelian surface $Y$ over $k=\F_5$ such that $\textrm{End}_k^0(Y)=D_{2,\infty} \otimes_{\Q} \Q(\sqrt{5})$. (For the existence of such a $Y$, see the proof of \cite[Theorem 6.5]{4}.) Also, note that there is an ordinary elliptic curve $E$ over $k$ such that $\textrm{End}_k^0(E)=\Q(\sqrt{-19})$ by \cite[Theorem 4.1]{12}. Let $X=Y \times E.$ Then we have $D:=\textrm{End}_k^0(X)=(D_{2,\infty} \otimes_{\Q} \Q(\sqrt{5})) \oplus \Q(\sqrt{-19}).$ Let $\mathcal{O}_1$ be a maximal $\Z$-order in $D_{2,\infty} \otimes_{\Q} \Q(\sqrt{5})$ with $\mathfrak{I}^* \leq \mathcal{O}_1^{\times}$, and let $\mathcal{O}_2 = \Z \left[\frac{1+\sqrt{-19}}{2}\right].$ Then $\mathcal{O}:=\mathcal{O}_1 \oplus \mathcal{O}_2$ is a maximal $\Z$-order in $D$ by Theorem~\ref{max gen}, and hence, there exists an abelian threefold $X^{\prime}$ over $k$ such that $X^{\prime}$ is $k$-isogenous to $X$ and $\textrm{End}_k(X^{\prime})=\mathcal{O}$ by \cite[Theorem 3.13]{12}. By Goursat's lemma, $G$ is a maximal finite subgroup of $\mathcal{O}^{\times}$.\\
\indent   Now, let $\mathcal{L}$ be an ample line bundle on $X^{\prime}$, and put $\displaystyle \mathcal{L}^{\prime}:=\bigotimes_{f \in G} f^* \mathcal{L}.$ Then $\mathcal{L}^{\prime}$ is also an ample line bundle on $X^{\prime}$ that is preserved under the action of $G$ so that $G \leq \textrm{Aut}_k(X^{\prime},\mathcal{L}^{\prime}).$ Since $\textrm{Aut}_k (X^{\prime},\mathcal{L}^{\prime})$ is a finite subgroup of $\textrm{Aut}_k(X^{\prime})=\mathcal{O}^{\times},$ it follows from the maximality of $G$ that $G=\textrm{Aut}_k(X^{\prime},\mathcal{L}^{\prime}).$ \\
\indent  Now, suppose that $Z$ is an abelian threefold over $k$ which is $k$-isogenous to $X.$ In particular, we have $\textrm{End}_k^0(Z)=(D_{2,\infty} \otimes_{\Q} \Q(\sqrt{5})) \oplus \Q(\sqrt{-19}).$ Suppose that there is a finite group $H$ such that $H=\textrm{Aut}_k(Z, \mathcal{M})$ for a polarization $\mathcal{M}$ on $Z,$ and $G$ is isomorphic to a proper subgroup of $H.$ Then $H$ is a finite subgroup of $D^{\times},$ and hence, it follows from the maximality of $G$ as a finite subgroup of $D^{\times}$ that $H=G$, which is a contradiction. Thus, we can conclude that $G$ is maximal in the isogeny class of $X.$ \\ \indent For the group $G=\mathfrak{I}^* \times C_4,$ we can proceed as above with the choices of $h=t^2 -5 \in \Z[t]$ and an ordinary elliptic curve $E$ over $k=\F_5$ with $\textrm{End}_k^0(E)=\Q(\sqrt{-1}).$ \\

Then, for the groups $G$ in (2)-(9), we proceed in a similar argument as in (1) with the following choices for a simple abelian surface $Y$ over a finite field $k$, and an elliptic curve $E$ over $k$ satisfying the endomorphism conditions specified below.

\begin{longtable}{|c||c|c|c|c|c|c|}
  \hline
   & $G$ & $h\in \Z[t]$ & $k$ & $\textrm{End}_k^0(Y)$ &  $\textrm{End}_k^0(E)$ \\\hline\hline
  \multirow{2}{*}{(2)} & $\mathfrak{O}^* \times C_2$ & \multirow{2}{*}{$t^2-2$} & \multirow{2}{*}{$\F_2$} & \multirow{2}{7em}{$D_{2,\infty} \otimes_{\Q} \Q(\sqrt{2})$} & $\Q(\sqrt{-2})$ \\
      & $\mathfrak{O}^* \times C_4$ &  &  &    & $\Q(\sqrt{-1})$ \\\hline
   \multirow{2}{*}{(3)} & $\textrm{Dic}_{24} \times C_2$ & \multirow{2}{*}{$t^2-3$}&  \multirow{2}{*}{$\F_3$} &  \multirow{2}{*}{$D_{3,\infty} \otimes_{\Q} \Q(\sqrt{3})$} & $\Q(\sqrt{-2})$  \\
      &$\textrm{Dic}_{24} \times C_6$ &  & &   & $\Q(\sqrt{-3})$  \\\hline
 \multirow{3}{*}{(4)} & $\mathfrak{T}^* \times C_2$ &  \multirow{2}{*}{$t^2-3$}&  \multirow{2}{*}{$\F_3$} &  \multirow{2}{*}{$D_{3,\infty} \otimes_{\Q} \Q(\sqrt{3})$} &  $\Q(\sqrt{-2})$  \\
 & $\mathfrak{T}^* \times C_6$ &  & &    & $\Q(\sqrt{-3})$ \\\cline{2-7}
& $\mathfrak{T}^* \times C_4$ & $t^2 -13$ & $\F_{13}$ & $D_{13,\infty} \otimes_{\Q} \Q(\sqrt{13})$   & $\Q(\sqrt{-1})$ \\\hline
 \multirow{2}{*}{(5)} & $\textrm{Dic}_{12} \times C_2$ &  $t^2-11$& $\F_{11}$ &  $D_{11,\infty} \otimes_{\Q} \Q(\sqrt{11})$ &  $\Q(\sqrt{-2})$  \\
 & $\textrm{Dic}_{12} \times C_4$ &$t^2-17$ &$\F_{17}$ &  $D_{17,\infty} \otimes_{\Q} \Q(\sqrt{17}) $& $\Q(\sqrt{-1})$\\\hline
 \multirow{15}{*}{(6)} & $C_8 \times C_2$ & \multirow{4}{*}{$t^4+16$} & \multirow{4}{*}{$\F_4$} & \multirow{4}{*}{$\Q(\zeta_8)$} &  $\Q(\sqrt{-15})$ \\
      & $C_8 \times C_4$ &  &  &    & $\Q(\sqrt{-1})$ \\
     & $C_8 \times C_6$ &  &  &    & $\Q(\sqrt{-3})$ \\
       & $C_8 \times \mathfrak{T}^*$  &  &  &    & $D_{2,\infty}$ \\\cline{2-7}
      & $C_8 \times \textrm{Dic}_{12}$ & $t^4+81$ & $\F_{9}$  & $\Q(\zeta_8)$ & $D_{3,\infty}$ \\\cline{2-7}
  & $C_{10} \times C_2$ & \multirow{2}{*}{$t^4-5t^3+25t^2-125t+625$} & \multirow{2}{*}{$\F_{25}$} & \multirow{2}{*}{$\Q(\zeta_{10})$} & $\Q(\sqrt{-11})$ \\
      & $C_{10} \times C_6$ & &  &   & $\Q(\sqrt{-3})$  \\\cline{2-7}
      & $C_{10} \times C_4$ &\multirow{2}{*}{$t^4-3t^3+9t^2-27t+81$} & \multirow{2}{1em}{$\F_{9}$} &\multirow{2}{*}{$\Q(\zeta_{10})$}  & $\Q(\sqrt{-1})$\\
      & $C_{10} \times \textrm{Dic}_{12}$ &  &  &   & $D_{3,\infty}$ \\\cline{2-7}
      & $C_{10} \times \mathfrak{T}^*$ & $t^4-2t^3+4t^2-8t+16$ & $\F_{4}$  & $\Q(\zeta_{10})$ &   $D_{2,\infty}$ \\\cline{2-7}
  & $C_{12}\times C_2$ & \multirow{4}{*}{$t^4-9t^2+81$} & \multirow{4}{*}{$\F_9$} & \multirow{4}{*}{$\Q(\zeta_{12})$}  & $\Q(\sqrt{-2})$ \\
      &$C_{12}\times C_4$ &  &  &   & $\Q(\sqrt{-1})$ \\
      & $C_{12}\times C_6$ &  &  &    & $\Q(\sqrt{-3})$ \\
      & $C_{12} \times \textrm{Dic}_{12}$ &  &   &    & $D_{3,\infty}$ \\\cline{2-7}
      & $C_{12} \times \mathfrak{T}^*$ & $t^4-4t^2+16$ & $\F_{4}$  & $\Q(\zeta_{12})$ &   $D_{2,\infty}$ \\\hline
\multirow{3}{*}{(7)} & $C_6 \times C_2$ & \multirow{2}{*}{$t^4+2t^2+25$} & \multirow{2}{*}{$\F_5$} & \multirow{2}{7em}{$\Q(\sqrt{2}+\sqrt{-3})$} &   $\Q(\sqrt{-19})$ \\
 & $C_6 \times C_4$ &  &  &    & $\Q(\sqrt{-1})$ \\\cline{2-7}
      & $C_6 \times C_6$ & $t^4+10t^2+361$ & $\F_{19}$  & $\Q(\sqrt{7}+2\sqrt{-3})$ &  $\Q(\sqrt{-3})$ \\\hline
\multirow{2}{*}{(8)} & $C_4 \times C_2$ & $t^4-10t^2+49$ & $\F_7$ & $\Q(\sqrt{6}+\sqrt{-1})$ &  $\Q(\sqrt{-6})$ \\
      & $C_4 \times C_4$ & $t^4-18t^2+289$ & $\F_{17}$  & $\Q(\sqrt{13}+2\sqrt{-1})$ &   $\Q(\sqrt{-1})$ \\\hline
 (9) & $C_2\times C_2$  & $t^4-6t^2+49$ & $\F_7$ & $\Q(\sqrt{5}+\sqrt{-2})$ &  $\Q(\sqrt{-6})$ \\ \hline
\end{longtable}
We remark that $D_{3,\infty} \otimes_{\Q} \Q(\sqrt{3}) \cong D_{2,\infty} \otimes_{\Q} \Q(\sqrt{3})$, $D_{13,\infty} \otimes_{\Q} \Q(\sqrt{13}) \cong D_{2,\infty} \otimes_{\Q} \Q(\sqrt{13})$, $D_{11,\infty} \otimes_{\Q} \Q(\sqrt{11}) \cong D_{3,\infty} \otimes_{\Q} \Q(\sqrt{11})$, and   $D_{17,\infty} \otimes_{\Q} \Q(\sqrt{17}) \cong D_{3,\infty} \otimes_{\Q} \Q(\sqrt{17})$. \\
\indent This completes the proof.
\end{proof}

\begin{remark}\label{exclude rmk1}
Among all possible $45$ combinations of $G:=G_1 \times G_2 $ with
\begin{equation*}
G_1 \in \{C_2, C_4, C_6, C_8, C_{10}, C_{12}, \textrm{Dic}_{12}, \mathfrak{T}^* , \textrm{Dic}_{24}, \mathfrak{O}^*, \mathfrak{I}^* \}
\end{equation*}
and $G_2 \in \{C_2, C_4, C_6, \textrm{Dic}_{12}, \mathfrak{T}^*\}$, up to isomorphism, there are 13 groups which cannot be indeed realized as the automorphism group of a polarized abelian threefold over a finite field $k,$ which is maximal in the isogeny class of the product a simple abelian surface and an elliptic curve over $k.$ For example, the groups $G:= G_1 \times \textrm{Dic}_{12}$ with $G_1 \in \{\mathfrak{I}^*, \mathfrak{O}^* \}$ cannot occur due to the issue of the characteristic of the base field $k.$ On the other hand, the group $G:=\textrm{Dic}_{24} \times \textrm{Dic}_{12}$ cannot be realized due to the fact that the latter group can occur only when the cardinality of the base field $k$ is an even power of $p=3$, while the former group occurs only when the cardinality of $k$ is an odd power of its characteristic.
\end{remark}

If our abelian threefold happens to be $k$-isogenous to the product of three non-isogenous elliptic curves over a finite field $k,$ then we have the following result.
\begin{theorem}\label{prodnonisoellip2}
There exist a finite field $k$ and three non-isogenous elliptic curves $E_1,E_2,E_3$ over $k$ such that $G$ is the automorphism group of a polarized abelian threefold over $k,$ which is maximal in the isogeny class of $X:=E_1 \times E_2 \times E_3$ if and only if $G=G_1 \times G_2 \times G_3$ is one of the following groups (up to isomorphism) with the specified groups $G_1$, $G_2$, and $G_3$: \\
(1) $G_1 =G_2 = C_2$ and $G_3 \in \{C_2, C_4, C_6, \textrm{Dic}_{12}, \mathfrak{T}^* \}$; \\
(2) $G_1 =C_2, G_2=C_4$, and $G_3 \in \{C_4, C_6 , \textrm{Dic}_{12}, \mathfrak{T}^* \}$; \\
(3) $G_1 =C_2, G_2 = C_6,$ and $G_3 \in \{C_6, \textrm{Dic}_{12}, \mathfrak{T}^* \}$; \\
(4) $G_1 \in \{C_2, C_4, C_6\}$ and $G_2=G_3= \textrm{Dic}_{12}$; \\
(5) $G_1 \in \{C_2, C_4, C_6\}$ and $G_2 =G_3 =\mathfrak{T}^*$; \\
(6) $G_1 =G_2 = C_4$ and $G_3 \in \{C_4, C_6\}$; \\
(7) $G_1 = C_4, G_2 = C_6,$ and $G_3 \in \{C_6, \textrm{Dic}_{12}, \mathfrak{T}^* \}$; \\
(8) $G_1 =G_2 = C_6$ and $G_2 \in \{C_6, \textrm{Dic}_{12}, \mathfrak{T}^* \}$.
\end{theorem}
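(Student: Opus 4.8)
The plan is to follow the template of the proof of Theorem~\ref{prodnonisoellip}, exploiting the fact that for pairwise non-isogenous $E_1,E_2,E_3$ the endomorphism algebra splits as a direct sum
\[
\textrm{End}_k^0(X)=\textrm{End}_k^0(E_1)\oplus \textrm{End}_k^0(E_2)\oplus \textrm{End}_k^0(E_3),
\]
where each summand $D_i:=\textrm{End}_k^0(E_i)$ is a division algebra over $\Q$, namely either an imaginary quadratic field or the quaternion algebra $D_{p,\infty}$. For the ``only if'' direction I would first note that a finite subgroup $G$ of $\mathcal{O}_1^{\times}\times\mathcal{O}_2^{\times}\times\mathcal{O}_3^{\times}$ (for $\mathcal{O}_i$ a maximal order in $D_i$) is contained in the product of its three coordinate projections, so by Goursat's Lemma a maximal such $G$ is forced to be a genuine direct product $G_1\times G_2\times G_3$ with each $G_i$ a maximal finite subgroup of $D_i^{\times}$. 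The maximal finite subgroups of $D_i^{\times}$ are then read off: for imaginary quadratic $D_i$ they are the torsion group $\mu_{D_i}\in\{C_2,C_4,C_6\}$, and for $D_i=D_{p,\infty}$ they are given by Theorem~\ref{aimf of GL1} (with $K=\Q$), yielding $\mathfrak{T}^*$ when $p=2$, $\textrm{Dic}_{12}$ when $p=3$, and one of $C_2,C_4,C_6$ when $p\ge 5$. Hence each $G_i\in\{C_2,C_4,C_6,\textrm{Dic}_{12},\mathfrak{T}^*\}$.

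Next I would cut the resulting list of unordered triples down to cases (1)--(8) using characteristic and isogeny constraints. Since all three curves live over one field of characteristic $p$, a $\mathfrak{T}^*$ factor forces $p=2$ and a $\textrm{Dic}_{12}$ factor forces $p=3$, so these two can never occur together; this removes every triple containing both. By Honda--Tate (Theorem~\ref{thm HondaTata}), a supersingular curve with $\textrm{End}_k^0=D_{p,\infty}$ corresponds to a real Weil number $\pm\sqrt{q}$, of which there are only two conjugacy classes, so at most two of the three factors can be $\mathfrak{T}^*$ (resp.\ $\textrm{Dic}_{12}$); this is exactly what bounds cases (4) and (5). Finally, the presence of a $C_4$ (resp.\ $C_6$) factor requires $\Q(\sqrt{-1})$ (resp.\ $\Q(\sqrt{-3})$) to embed into the relevant $D_i$, which is a congruence condition on $p$ that must be checked to be compatible with the other factors and with the existence of three distinct isogeny classes. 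Tracking these conditions simultaneously yields precisely the admissible triples listed, up to the ordering convention in the statement.

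For the converse I would realize each group on the list exactly as in case (1) of Theorem~\ref{prodnonisoellip}. For a fixed target $G=G_1\times G_2\times G_3$ I would exhibit, via Honda--Tate, three pairwise non-isogenous elliptic curves $E_1,E_2,E_3$ over a common finite field $k$ whose Frobenius characteristic polynomials force the prescribed endomorphism algebras, choose maximal $\Z$-orders $\mathcal{O}_i\subseteq D_i$ with $G_i\le\mathcal{O}_i^{\times}$, and set $\mathcal{O}=\mathcal{O}_1\oplus\mathcal{O}_2\oplus\mathcal{O}_3$, which is maximal in $D=\bigoplus_i D_i$ by Theorem~\ref{max gen}. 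Then an $X'$ with $\textrm{End}_k(X')=\mathcal{O}$ exists and is $k$-isogenous to $X$; averaging an ample line bundle $\mathcal{L}$ over $G$ produces a $G$-invariant polarization $\mathcal{L}'=\bigotimes_{f\in G}f^*\mathcal{L}$, and the maximality of $G$ as a finite subgroup of $\mathcal{O}^{\times}$ gives $G=\textrm{Aut}_k(X',\mathcal{L}')$. Maximality in the isogeny class follows because any competitor $H$ would be a finite subgroup of $D^{\times}$ strictly containing $G$, contradicting that $G_1\times G_2\times G_3$ is already maximal finite there. I would present one case in full and record the remaining data (the polynomial $h$, the field $k$, and the three algebras $\textrm{End}_k^0(E_i)$) in a table, as in the previous theorem.

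The hard part will be the second and third paragraphs together: assembling, for every entry of the list, three pairwise non-isogenous elliptic curves over a single finite field carrying all of the prescribed endomorphism algebras at once. The quaternionic factors pin down $p$, and then the cyclic factors impose further splitting and ramification conditions at that same $p$, so the genuine obstruction is showing these constraints are simultaneously satisfiable by writing down explicit Weil numbers (and, conversely, that the excluded triples are truly unrealizable rather than merely awkward). The line-bundle averaging and the order-theoretic bookkeeping are then routine.
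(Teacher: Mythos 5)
Your proposal is correct and follows essentially the same route as the paper's proof: the forward direction via the projection/Goursat argument together with the classification of maximal finite subgroups of imaginary quadratic fields and of $D_{p,\infty}^{\times}$, with the same two exclusion mechanisms (incompatible characteristics for $\mathfrak{T}^*$ versus $\textrm{Dic}_{12}$, and the fact that Honda--Tate provides only two isogeny classes of supersingular curves with all endomorphisms defined over $k$), and the converse via Waterhouse's realization theorems, maximal orders in the direct sum, line-bundle averaging, and the maximality argument. The part you defer --- the explicit table of fields $k$ and endomorphism algebras realizing each of the 26 groups, including checking the finer exclusions such as $C_4 \times C_4 \times \textrm{Dic}_{12}$ (which fails because characteristic $3$ admits only one isogeny class of elliptic curves with endomorphism algebra $\Q(\sqrt{-1})$) --- is exactly the case-by-case content of the paper's proof, and the method you outline produces it.
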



\begin{proof}
Suppose first that there exist a finite field $k$ and three non-isogenous elliptic curves $E_1,E_2,E_3$ over $k$ such that $G$ is the automorphism group of a polarized abelian threefold over $k,$ which is maximal in the isogeny class of $X:=E_1 \times E_2 \times E_3$. In particular, we have $\textrm{End}_k^0(X)=\textrm{End}_k^0(E_1) \oplus \textrm{End}_k^0(E_2) \oplus \textrm{End}_k^0(E_3).$ Since $G$ is a maximal finite subgroup of $\textrm{End}_k^0(X)$ by assumption, it follows from Goursat's Lemma, \cite[Corollary 3.4]{4}, and Remark~\ref{exclude rmk2} that $G$ must be one of the 26 groups (up to isomorphism) in the statement of the theorem. Hence, it suffices to show the converse. We prove the converse by considering them one by one. First, we provide a detailed proof for the groups in case (1).\\
\indent (1) Take $G=C_2 \times C_2 \times C_2$. Let $k=\F_3.$ Then there are three non-isogenous elliptic curves $E_1, E_2,$ and $E_3$ over $k$ such that $\textrm{End}_k^0(E_1)=\Q(\sqrt{-2})$ and $\textrm{End}_k^0(E_2)=\textrm{End}_k^0(E_3)=\Q(\sqrt{-11})$ by \cite[Theorem 4.1]{12}. Let $X=E_1 \times E_2 \times E_3$ so that $\textrm{End}_k^0(X)=D:=\Q(\sqrt{-2})\oplus \Q(\sqrt{-11}) \oplus \Q(\sqrt{-11}).$ By Theorem~\ref{max gen}, $\mathcal{O}:=\Z[\sqrt{-2}]\oplus \Z \left[\frac{1+\sqrt{-11}}{2}\right] \oplus \Z \left[\frac{1+\sqrt{-11}}{2}\right]$ is a maximal $\Z$-order in $D$, and hence, there exists an abelian threefold $X^{\prime}$ over $k$ such that $X^{\prime}$ is $k$-isogenous to $X$ and $\textrm{End}_k(X^{\prime})=\mathcal{O}$ by \cite[Theorem 3.13]{12}. Then it follows that $G \cong \mathcal{O}^{\times}=\textrm{Aut}_k(X^{\prime})$ (with the principal product polarization). It is also easy to see that $G$ is maximal in the isogeny class of $X.$ \\
\indent For the group $G=C_2 \times C_2 \times C_4,$ we can proceed as above with the choices $k=\F_5$ and three non-isogenous elliptic curves $E_1,E_2,E_3$ with $\textrm{End}_k^0(E_1)=\Q(\sqrt{-5}), \textrm{End}_k^0(E_2)=\Q(\sqrt{-19}),$ and $\textrm{End}_k^0(E_3)=\Q(\sqrt{-1}).$ Similarly:\\
\indent For the group $G=C_2 \times C_2 \times C_6,$ we take $k=\F_7$ and three non-isogenous elliptic curves $E_1,E_2,E_3$ with $\textrm{End}_k^0(E_1)=\textrm{End}_k^0(E_2)=\Q(\sqrt{-6})$ and $\textrm{End}_k^0(E_3)=\Q(\sqrt{-3}).$ \\
\indent For the group $G=C_2 \times C_2 \times \textrm{Dic}_{12},$ we take $k=\F_9$ and three non-isogenous elliptic curves $E_1,E_2,E_3$ with $\textrm{End}_k^0(E_1)=\Q(\sqrt{-2}), \textrm{End}_k^0(E_2)=\Q(\sqrt{-5}),$ and $\textrm{End}_k^0(E_3)=D_{3,\infty}.$ \\
\indent Finally, for the group $G=C_2 \times C_2 \times \mathfrak{T}^*,$ we take $k=\F_4$ and three non-isogenous elliptic curves $E_1,E_2,E_3$ with $\textrm{End}_k^0(E_1)=\Q(\sqrt{-15}), \textrm{End}_k^0(E_2)=\Q(\sqrt{-7}),$ and $\textrm{End}_k^0(E_3)=D_{2,\infty}.$\\

Then, for (2)-(8), we can proceed in a similar argument as in (1) with the following specified choices for the groups $G$, the finite field $k$, and three non-isogenous elliptic curves $E_1,E_2, E_3$ over $k$ with the endomorphism conditions given below.

\begin{longtable}{|c||c|c|c|}
  \hline
  & $G$ & $k$ & $\textrm{End}_k^0(E_1), \textrm{End}_k^0(E_2), \textrm{End}_k^0(E_3)$\\\hline\hline
  \multirow{4}{*}{(2)} & $C_2 \times C_4 \times C_4$ & $\F_5$ & $\Q(\sqrt{-5}), \Q(\sqrt{-1}), \Q(\sqrt{-1})$\\
  & $C_2 \times C_4 \times C_6$ & $\F_4$ & $\Q(\sqrt{-15}), \Q(\sqrt{-1}), \Q(\sqrt{-3})$\\
   & $C_2 \times C_4 \times \textrm{Dic}_{12}$ & $\F_9$ & $\Q(\sqrt{-2}), \Q(\sqrt{-1}),  D_{3,\infty}$\\
    & $C_2 \times C_4 \times \mathfrak{T}^*$ & $\F_4$ & $\Q(\sqrt{-15}), \Q(\sqrt{-1}), D_{2,\infty}$\\\hline
   \multirow{3}{*}{(3)} & $C_2 \times C_6 \times C_6$ & $\F_4$ & $\Q(\sqrt{-15}), \Q(\sqrt{-3}), \Q(\sqrt{-3})$\\
  & $C_2 \times C_6 \times \textrm{Dic}_{12}$ & $\F_9$ & $\Q(\sqrt{-2}), \Q(\sqrt{-3}), D_{3,\infty}$\\
& $C_2 \times C_6 \times \mathfrak{T}^*$ & $\F_4$ & $\Q(\sqrt{-15}), \Q(\sqrt{-3}), D_{2,\infty}$\\\hline
 \multirow{3}{*}{(4)} & $C_2 \times \textrm{Dic}_{12}\times \textrm{Dic}_{12}$ & $\F_9$ & $\Q(\sqrt{-2}), D_{3,\infty}, D_{3,\infty}$\\
 & $C_4 \times \textrm{Dic}_{12}\times \textrm{Dic}_{12}$ & $\F_9$ & $\Q(\sqrt{-1}), D_{3,\infty}, D_{3,\infty}$\\
& $C_6 \times \textrm{Dic}_{12}\times \textrm{Dic}_{12}$ & $\F_9$ & $\Q(\sqrt{-3}), D_{3,\infty}, D_{3,\infty}$\\\hline
\multirow{3}{*}{(5)} & $C_2 \times \mathfrak{T}^*\times \mathfrak{T}^*$ & $\F_4$ & $\Q(\sqrt{-15}), D_{2,\infty}, D_{2,\infty}$\\
& $C_4 \times \mathfrak{T}^*\times \mathfrak{T}^*$ & $\F_4$ & $\Q(\sqrt{-1}), D_{2,\infty}, D_{2,\infty}$\\
& $C_6 \times \mathfrak{T}^*\times \mathfrak{T}^*$ & $\F_4$ & $\Q(\sqrt{-3}), D_{2,\infty}, D_{2,\infty}$\\\hline
\multirow{2}{*}{(6)} & $C_4 \times C_4 \times C_4 $ & $\F_5$ & $\Q(\sqrt{-1}), \Q(\sqrt{-1}), \Q(\sqrt{-1})$\\
 & $C_4 \times C_4 \times C_6 $ & $\F_{13}$ & $\Q(\sqrt{-1}), \Q(\sqrt{-1}), \Q(\sqrt{-3})$\\\hline
 \multirow{3}{*}{(7)} & $C_4 \times C_6 \times C_6 $ & $\F_4$ & $\Q(\sqrt{-1}), \Q(\sqrt{-3}), \Q(\sqrt{-3})$\\
 &  $C_4 \times C_6 \times \textrm{Dic}_{12} $ & $\F_9$ & $\Q(\sqrt{-1}), \Q(\sqrt{-3}), D_{3,\infty}$\\
 & $C_4 \times C_6 \times \mathfrak{T}^* $ & $\F_4$ & $\Q(\sqrt{-1}), \Q(\sqrt{-3}), D_{2,\infty}$\\\hline
 \multirow{3}{*}{(8)} & $C_6 \times C_6 \times C_6 $ & $\F_7$ & $\Q(\sqrt{-3}), \Q(\sqrt{-3}), \Q(\sqrt{-3})$\\
 &  $C_6 \times C_6 \times \textrm{Dic}_{12} $ & $\F_9$ & $\Q(\sqrt{-3}), \Q(\sqrt{-3}), D_{3,\infty}$\\
 & $C_6 \times C_6 \times \mathfrak{T}^* $ & $\F_4$ & $\Q(\sqrt{-3}), \Q(\sqrt{-3}), D_{2,\infty}$\\\hline
  \end{longtable}
This completes the proof.
\end{proof}

\begin{remark}\label{exclude rmk2}
Among all possible $35$ combinations of $G:=G_1 \times G_2 \times G_3$ with $G_1,G_2,G_3 \in \{C_2, C_4, C_6, \textrm{Dic}_{12}, \mathfrak{T}^* \}$, up to isomorphism, there are 9 groups which cannot be indeed realized as the automorphism group of a polarized abelian threefold over a finite field $k,$ which is maximal in the isogeny class of the product of three non-isogenous elliptic curves over $k.$ For example, the groups $G:= G_1 \times \textrm{Dic}_{12} \times \mathfrak{T}^*$ with $G_1 \in \{C_2, C_4, C_6, \textrm{Dic}_{12}, \mathfrak{T}^*\}$ cannot occur due to the issue of the characteristic of the base field $k.$ On the other hand, the groups $G:=\textrm{Dic}_{12} \times \textrm{Dic}_{12} \times \textrm{Dic}_{12}$ and $G:=\mathfrak{T}^* \times \mathfrak{T}^* \times \mathfrak{T}^*$ cannot be realized due to the fact that any two of those three elliptic curves $E_1,E_2,E_3$ are necessarily isogenous to each other.
\end{remark}

The next theorem deals with the case when exactly two of those three elliptic curves in the above are isogenous to each other.
\begin{theorem}\label{prodnonisoellip3}
There exist a finite field $k$ and two non-isogenous elliptic curves $E_1,E_2$ over $k$ such that $G$ is the automorphism group of a polarized abelian threefold over $k,$ which is maximal in the isogeny class of $X:=E_1^2 \times E_2$ if and only if $G=G_1 \times G_2$ is one of the following groups (up to isomorphism) with the specified groups $G_1$ and $G_2$: \\ 
(1) $G_1 \in \{D_4, D_6 \} $ and $G_2 \in \{C_2, C_4, C_6, \textrm{Dic}_{12}, \mathfrak{T}^* \}$;\\ 
(2) $G_1 = \textrm{Dic}_{12}$ and $G_2 \in \{C_2, C_4, C_6, \mathfrak{T}^* \}$; \\ 
(3) $G_1 = SL_2(\mathbb{F}_3)$ and $G_2 \in \{C_2, C_4, C_6 \}$; \\ 
(4) $G_1 \in \{ C_{12} \rtimes C_2, \mathfrak{T}^* \rtimes C_4 \}$ and $G_2 \in \{C_2, C_4, C_6, \textrm{Dic}_{12}, \mathfrak{T}^* \}$; \\ 
(5) $G_1 = GL_{2}(\F_3) $ and $G_2 \in \{C_2, C_4, C_6, \textrm{Dic}_{12} \}$; \\ 
(6) $G_1 \in \{(C_6 \times C_6) \rtimes C_2, \mathfrak{T}^* \times C_3 \}$ and $G_2 \in \{C_2, C_4, C_6, \textrm{Dic}_{12}, \mathfrak{T}^* \}$; \\ 
(7) $G_1 \in \{\mathfrak{I}^*, \textrm{Dic}_{24}, \mathfrak{O}^*\}$ and $G_2 \in \{C_2, C_4, C_6 \}$; \\ 
(8) $G_1 \in \{2^{1+4}_{-}.\textrm{Alt}_5, \textrm{SL}_{2}(\F_3) \times \textrm{Sym}_3, (\textrm{SL}_2(\F_3))^2 \rtimes \textrm{Sym}_2 \}$ and $G_2 \in \{C_2, C_4, C_6, \mathfrak{T}^* \}$; \\ 
(9) $G_1 \in \{\textrm{SL}_2(\F_9), C_3 : (\textrm{SL}_2(\F_3).2), (\textrm{Dic}_{12})^2 \rtimes \textrm{Sym}_2 \} $ and $G_2 \in \{C_2, C_4, C_6, \textrm{Dic}_{12} \}$; \\ 
(10) $G_1 \in \{\textrm{SL}_2(\F_5).2, \textrm{SL}_2(\F_5):2 \} $ and $G_2 \in \{C_2, C_4, C_6 \}$. 
\end{theorem}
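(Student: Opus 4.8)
The plan is to follow the two-part strategy of the proofs of Theorems~\ref{prodnonisoellip} and~\ref{prodnonisoellip2}: a forward (necessity) direction that pins down the finite list, and a converse (realizability) direction that exhibits each group explicitly. For the forward direction I would first note that, since $X$ is $k$-isogenous to $E_1^2 \times E_2$ with $E_1$ and $E_2$ non-isogenous,
$$\textrm{End}_k^0(X) \cong M_2(\textrm{End}_k^0(E_1)) \oplus \textrm{End}_k^0(E_2),$$
so that $G = \textrm{Aut}_k(X', \mathcal{L})$ sits as a maximal finite subgroup of the unit group of this algebra. The first summand is the endomorphism algebra of the abelian surface $E_1^2$, a square of an elliptic curve; hence a maximal finite subgroup $G_1$ of $(M_2(\textrm{End}_k^0(E_1)))^\times = GL_2(\textrm{End}_k^0(E_1))$ is one of the maximal automorphism groups of a polarized abelian surface isogenous to a square of an elliptic curve, which I would read off from the classification in \cite{4}. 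The second summand is the endomorphism algebra of the elliptic curve $E_2$, whose maximal finite unit subgroups $G_2$ are exactly $C_2, C_4, C_6$ (ordinary case) and $\textrm{Dic}_{12}, \mathfrak{T}^*$ (supersingular case), again by \cite{4}.

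With these two ingredients, I would apply Goursat's Lemma to $G \leq GL_2(\textrm{End}_k^0(E_1)) \times \textrm{End}_k^0(E_2)^\times$. Since $G_1 \times G_2$ is itself a finite subgroup, any proper subdirect product glued along a nontrivial common quotient is strictly contained in it and therefore not maximal; thus maximality forces $G \cong G_1 \times G_2$ with each factor maximal of the types above. Running through the finitely many resulting pairs $(G_1, G_2)$ and discarding those that cannot be realized simultaneously over one finite field — by an exclusion argument entirely analogous to Remarks~\ref{exclude rmk1} and~\ref{exclude rmk2}, using compatibility of the characteristic $p$ and the parity of the residue degree — would leave precisely the groups recorded in cases (1)--(10).

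For the converse I would realize each surviving pair one at a time, exactly as in the earlier proofs. Given $(G_1, G_2)$, I would select a characteristic polynomial $h \in \Z[t]$ (equivalently a $q$-Weil number) so that, by Honda--Tate theory (Theorem~\ref{thm HondaTata}) and the existence results \cite[Theorems 3.13 and 4.1]{12}, there is an elliptic curve $E_1$ over an appropriate $k = \F_q$ whose endomorphism algebra (imaginary quadratic or $D_{p,\infty}$) carries $G_1$ inside $GL_2$, together with a non-isogenous $E_2$ whose endomorphism algebra carries $G_2$. Choosing a maximal order $\mathcal{O} = \mathcal{O}_1 \oplus \mathcal{O}_2$ in $\textrm{End}_k^0(X)$ via Theorem~\ref{max gen} with $G_1 \times G_2 \leq \mathcal{O}^\times$, I would obtain an abelian threefold $X'$ in the isogeny class with $\textrm{End}_k(X') = \mathcal{O}$, and then force $G$ to be the full automorphism group of a polarized $(X', \mathcal{L}')$ by the averaging construction $\mathcal{L}' = \bigotimes_{f \in G} f^* \mathcal{L}$; maximality of $G$ in $\mathcal{O}^\times$ then yields both $G = \textrm{Aut}_k(X', \mathcal{L}')$ and isogeny-class maximality.

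The hard part will be the two places where realizability is genuinely constrained rather than routine. First, in the forward direction I must confirm that no twisted fiber product of $G_1$ and $G_2$ slips through as a new maximal group; this amounts to checking the abelianizations of the surface-type groups $G_1$ and verifying that any gluing along a shared $C_2$ embeds into the strictly larger direct product. Second, and more delicate, is the simultaneous realization when \emph{both} factors are supersingular --- notably case (8) taken with $G_2 = \mathfrak{T}^*$ and case (9) taken with $G_2 = \textrm{Dic}_{12}$ --- where $E_1$ and $E_2$ must be supersingular over the same prime $p \in \{2,3\}$ yet non-isogenous. Here I would need two distinct $q$-Weil numbers realizing $D_{p,\infty}$ for the respective curves over a common $\F_q$, and it is exactly the failure of such compatibility (characteristic or residue-degree parity) that produces the entries excluded in the analogue of Remarks~\ref{exclude rmk1} and~\ref{exclude rmk2}.
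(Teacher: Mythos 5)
Your proposal is correct and takes essentially the same route as the paper's proof: the forward direction via the decomposition $\textrm{End}_k^0(X) \cong M_2(\textrm{End}_k^0(E_1)) \oplus \textrm{End}_k^0(E_2)$, Goursat's Lemma, the surface and elliptic-curve classifications of \cite{4}, and the compatibility exclusions of Remark~\ref{exclude rmk3}; the converse via Honda--Tate/Waterhouse existence, maximal orders $\mathcal{O}_1 \oplus \mathcal{O}_2$ from Theorem~\ref{max gen}, \cite[Theorem 3.13]{12}, and the averaging polarization $\mathcal{L}' = \bigotimes_{f \in G} f^* \mathcal{L}$. In particular, the delicate point you flag is resolved exactly as the paper does it: for case (8) with $G_2 = \mathfrak{T}^*$ (resp.\ case (9) with $G_2 = \textrm{Dic}_{12}$) one takes $k = \F_4$ (resp.\ $k = \F_9$) and the two non-conjugate Weil numbers $\pm 2$ (resp.\ $\pm 3$), which give non-isogenous supersingular curves having the same endomorphism algebra $D_{2,\infty}$ (resp.\ $D_{3,\infty}$).
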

\begin{proof}
Suppose first that there exist a finite field $k$ and two non-isogenous elliptic curves $E_1, E_2$ over $k$ such that $G$ is the automorphism group of a polarized abelian threefold over $k$, which is maximal in the isogeny class of $X:=E_1^2 \times E_2.$ In particular, we have $\textrm{End}_k^0(X)=M_2(\textrm{End}_k^0(E_1)) \oplus \textrm{End}_k^0(E_2).$ Since $G$ is a maximal finite subgroup of $\textrm{End}_k^0(X)$ by assumption, it follows from Goursat's Lemma, \cite[Theorems 6.8 and 6.9]{4}, and Remark~\ref{exclude rmk3} below that $G$ must be one of the 80 groups (up to isomorphism) in the statement of the theorem. Hence, it suffices to show the converse. We prove the converse by considering them one by one. First, we provide a detailed proof for the groups in case (1).\\
\indent (1) Take $G=D_4 \times C_2$ (resp.\ $G=D_6 \times C_2$). Let $k=\F_4$. Then there are two non-isogenous elliptic curves $E_1$ and $E_2$ over $k$ such that $\textrm{End}_k^0(E_1)=\textrm{End}_k^0(E_2)=\Q(\sqrt{-15})$ by \cite[Theorem 4.1]{12}. Let $X=E_1^2 \times E_2 $ so that $\textrm{End}_k^0(X)=D:=M_2(\Q(\sqrt{-15}))\oplus \Q(\sqrt{-15}).$ By Theorems~\ref{mat max} and~\ref{max gen}, $\mathcal{O}:= M_2 \left(\Z \left[\frac{1+\sqrt{-15}}{2}\right] \right) \oplus \Z \left[\frac{1+\sqrt{-15}}{2}\right]$ is a maximal $\Z$-order in $D$, and hence, there exists an abelian threefold $X^{\prime}$ over $k$ such that $X^{\prime}$ is $k$-isogenous to $X$ and $\textrm{End}_k(X^{\prime})=\mathcal{O}$ by \cite[Theorem 3.13]{12}. Also, it is known \cite[Table 2]{CN(2014)} that $D_4$ (resp.\ $D_6$) is a maximal finite subgroup of $GL_2 \left(\Z \left[\frac{1+\sqrt{-15}}{2}\right] \right)$, and hence, $G$ is a maximal finite subgroup of $\mathcal{O}^{\times}$ by Goursat's Lemma. \\
\indent Now, let $\mathcal{L}$ be an ample line bundle on $X^{\prime}$, and put $\displaystyle \mathcal{L}^{\prime}:=\bigotimes_{f \in G} f^* \mathcal{L}.$ Then $\mathcal{L}^{\prime}$ is also an ample line bundle on $X^{\prime}$ that is preserved under the action of $G$ so that $G \leq \textrm{Aut}_k(X^{\prime},\mathcal{L}^{\prime}).$ Since $\textrm{Aut}_k (X^{\prime},\mathcal{L}^{\prime})$ is a finite subgroup of $\textrm{Aut}_k(X^{\prime})=\mathcal{O}^{\times},$ it follows from the maximality of $G$ that $G=\textrm{Aut}_k(X^{\prime},\mathcal{L}^{\prime}).$ Furthermore, by a similar argument, we can see that $G$ is maximal in the isogeny class of $X.$  \\
\indent For the groups $G=D_4 \times C_4$ or $G=D_6 \times C_4,$ we can proceed as above with the choices $k=\F_4$ and two non-isogenous elliptic curves $E_1,E_2$ with $\textrm{End}_k^0(E_1)=\Q(\sqrt{-15})$ and $\textrm{End}_k^0(E_2)=\Q(\sqrt{-1}).$ Similarly: \\
 \indent For the groups $G=D_4 \times C_6$ or $G=D_6 \times C_6,$ we take $k=\F_7$ and two non-isogenous elliptic curves $E_1,E_2$ with $\textrm{End}_k^0(E_1)=\Q(\sqrt{-6})$ and $\textrm{End}_k^0(E_2)=\Q(\sqrt{-3}).$ \\
\indent For the groups $G=D_4 \times \textrm{Dic}_{12}$ or $G=D_6 \times \textrm{Dic}_{12},$ we take $k=\F_9$ and two non-isogenous elliptic curves $E_1,E_2$ with $\textrm{End}_k^0(E_1)=\Q(\sqrt{-5})$ and $\textrm{End}_k^0(E_2)=D_{3,\infty}.$ \\
\indent Finally, for the groups $G=D_4 \times \mathfrak{T}^*$ or $G=D_6 \times \mathfrak{T}^*,$ we take $k=\F_4$ and two non-isogenous elliptic curves $E_1,E_2$ with $\textrm{End}_k^0(E_1)=\Q(\sqrt{-15})$ and $\textrm{End}_k^0(E_2)=D_{2,\infty}.$ \\

Then, for (2)-(10), we can proceed in a similar argument as in (1) with the following specified choices for the groups $G$, the finite field $k$, and two non-isogenous elliptic curves $E_1,E_2$ over $k$ with the endomorphism conditions given below.

\begin{longtable}{|c||c|c|c|}
  \hline
  & $G$ & $k$ & $\textrm{End}_k^0(E_1), \textrm{End}_k^0(E_2)$\\\hline\hline
  \multirow{4}{*}{(2)} & $\textrm{Dic}_{12} \times C_2 $ & $\F_{25}$ & $\Q(\sqrt{-21}), \Q(\sqrt{-6})$\\\cline{2-4}
  & $\textrm{Dic}_{12} \times C_4$ & $\F_{25}$ & $\Q(\sqrt{-21}), \Q(\sqrt{-1})$\\\cline{2-4}
  & $\textrm{Dic}_{12} \times C_6$ & $\F_{25}$ & $\Q(\sqrt{-21}), \Q(\sqrt{-3})$\\\cline{2-4}
  & $\textrm{Dic}_{12} \times \mathfrak{T}^*$ & $\F_{4}$ & $\Q(\sqrt{-7}), D_{2,\infty}$\\\hline
  \multirow{3}{*}{(3)} & $SL_2(\F_3) \times C_2 $ & $\F_{25}$ & $\Q(\sqrt{-6}), \Q(\sqrt{-6})$\\\cline{2-4}
  & $SL_2(\F_3) \times C_4 $ & $\F_{25}$ & $\Q(\sqrt{-6}), \Q(\sqrt{-1})$\\\cline{2-4}
  & $SL_2(\F_3) \times C_6$ & $\F_{25}$ & $\Q(\sqrt{-6}), \Q(\sqrt{-3})$\\\hline
   \multirow{5}{*}{(4)} & $(C_{12} \rtimes C_2) \times C_2,~~ (\mathfrak{T}^* \rtimes C_4) \times C_2$ & $\F_{5}$ & $\Q(\sqrt{-1}), \Q(\sqrt{-11})$\\\cline{2-4}
   & $(C_{12} \rtimes C_2 )\times C_4, ~~(\mathfrak{T}^* \rtimes C_4) \times C_4$ & $\F_{5}$ & $\Q(\sqrt{-1}), \Q(\sqrt{-1})$\\\cline{2-4}
   &$(C_{12} \rtimes C_2) \times C_6, ~~(\mathfrak{T}^* \rtimes C_4) \times C_6$ & $\F_{25}$ & $\Q(\sqrt{-1}), \Q(\sqrt{-3})$\\\cline{2-4}
   & $ (C_{12} \rtimes C_2) \times \textrm{Dic}_{12}, ~~(\mathfrak{T}^* \rtimes C_4) \times \textrm{Dic}_{12}$ & $\F_{9}$ & $\Q(\sqrt{-1}), D_{3,\infty}$\\\cline{2-4}
   &$(C_{12} \rtimes C_2) \times \mathfrak{T}^*,~~(\mathfrak{T}^* \rtimes C_4) \times \mathfrak{T}^*$ & $\F_4$ &  $\Q(\sqrt{-1}), D_{2,\infty}$\\\hline
    \multirow{4}{*}{(5)} & $GL_2(\F_3) \times C_2$ & $\F_{17}$ & $\Q(\sqrt{-2}), \Q(\sqrt{-13})$\\\cline{2-4}
    & $GL_2(\F_3) \times C_4$ &$\F_{17}$ &$\Q(\sqrt{-2}), \Q(\sqrt{-1})$\\\cline{2-4}
    &$GL_2(\F_3) \times C_6 $& $\F_{9}$ & $\Q(\sqrt{-2}), \Q(\sqrt{-3})$\\\cline{2-4}
    & $GL_2(\F_3) \times \textrm{Dic}_{12}$ &$\F_{9}$ & $\Q(\sqrt{-2}), D_{3,\infty}$\\\hline
    \multirow{5}{*}{(6)}
    & $((C_6 \times C_6)\rtimes C_2) \times C_2,~~(\mathfrak{T}^* \times C_3) \times C_2$ & $\F_{7}$ & $\Q(\sqrt{-3}), \Q(\sqrt{-6})$ \\\cline{2-4}
    & $((C_6 \times C_6)\rtimes C_2) \times C_6,~~(\mathfrak{T}^* \times C_3) \times C_6$ & $\F_{7}$ & $\Q(\sqrt{-3}), \Q(\sqrt{-3})$ \\\cline{2-4}
    & $((C_6 \times C_6)\rtimes C_2) \times C_4,~~(\mathfrak{T}^* \times C_3) \times C_4$ & $\F_{25}$ & $\Q(\sqrt{-3}), \Q(\sqrt{-1})$ \\\cline{2-4}
    & $((C_6 \times C_6)\rtimes C_2) \times \textrm{Dic}_{12},~~(\mathfrak{T}^* \times C_3) \times \textrm{Dic}_{12}$ & $\F_{9}$ & $\Q(\sqrt{-3}), D_{3,\infty}$ \\\cline{2-4}
    & $((C_6 \times C_6)\rtimes C_2) \times \mathfrak{T}^*,~~(\mathfrak{T}^* \times C_3) \times \mathfrak{T}^*$ & $\F_{4}$ & $\Q(\sqrt{-3}), D_{2,\infty}$\\\hline
    \multirow{6}{*}{(7)}
    & for $G_1 \in \{\mathfrak{I}^*, \textrm{Dic}_{24} \}$ & & \\
    & $G_1 \times C_2 $ & \multirow{3}{*}{$\F_{49}$} & $D_{7,\infty}, \Q(\sqrt{-5})$\\
    & $G_1 \times C_4$ &   &$D_{7,\infty}, \Q(\sqrt{-1})$\\
    & $G_1 \times C_6$ &   &$D_{7,\infty}, \Q(\sqrt{-3})$\\\cline{2-4}
    & $\mathfrak{O}^* \times C_2 $ & \multirow{3}{*}{$\F_{121}$} & $D_{11,\infty}, \Q(\sqrt{-2})$\\
    & $\mathfrak{O}^* \times C_4$ &  &$D_{11,\infty}, \Q(\sqrt{-1})$\\
    & $\mathfrak{O}^* \times C_6$ &   &$D_{11,\infty}, \Q(\sqrt{-3})$\\\hline
     \multirow{5}{*}{(8)}
    & for $G_1 \in \{2^{1+4}_{-}.\textrm{Alt}_5, SL_{2}(\F_3) \times \textrm{Sym}_3, (SL_2(\F_3))^2 \rtimes \textrm{Sym}_2 \}$ & & \\
    & $G_1 \times C_2 $ & \multirow{4}{*}{$\F_{4}$} & $D_{2,\infty}, \Q(\sqrt{-15})$\\
    & $G_1 \times C_4$ &   &$D_{2,\infty}, \Q(\sqrt{-1})$\\
    & $G_1 \times C_6$ &   &$D_{2,\infty}, \Q(\sqrt{-3})$\\
    & $G_1 \times \mathfrak{T}^* $
    & & $D_{2,\infty}, D_{2,\infty}$\\\hline
     \multirow{5}{*}{(9)}
    & for $G_1 \in \{SL_2(\F_9), C_3 : (SL_2(\F_3).2), (\textrm{Dic}_{12})^2 \rtimes \textrm{Sym}_2 \}$ & & \\
    & $G_1 \times C_2 $ & \multirow{4}{*}{$\F_{9}$} & $D_{3,\infty}, \Q(\sqrt{-11})$\\
    & $G_1 \times C_4$ &   &$D_{3,\infty}, \Q(\sqrt{-1})$\\
    & $G_1 \times C_6$ &   &$D_{3,\infty}, \Q(\sqrt{-3})$\\
    & $G_1 \times \textrm{Dic}_{12}$
    & & $D_{3,\infty}, D_{3,\infty}$\\\hline
     \multirow{4}{*}{(10)}
    & for $G_1 \in \{SL_2(\F_5).2, SL_2(\F_5):2 \}$ & & \\
    & $G_1 \times C_2 $ & \multirow{3}{*}{$\F_{25}$} & $D_{5,\infty}, \Q(\sqrt{-6})$\\
    & $G_1 \times C_4$ &   &$D_{5,\infty}, \Q(\sqrt{-1})$\\
    & $G_1 \times C_6$ &  &$D_{5,\infty}, \Q(\sqrt{-3})$\\\hline
  \end{longtable}
This completes the proof.
\end{proof}

\begin{remark}\label{exclude rmk3}
Among all possible $99$ combinations of $G:=G_1 \times G_2$ with $G_1,G_2$ being as above, up to isomorphism, there are 19 groups which cannot be indeed realized as the automorphism group of a polarized abelian threefold over a finite field $k,$ which is maximal in the isogeny class of the product of a power of an elliptic curve and a non-isogenous elliptic curve over $k.$ For example, the groups $G:= G_1 \times \textrm{Dic}_{12} $ with $G_1 \in \{2_{-}^{1+4}.\textrm{Alt}_5, \textrm{SL}_2(\F_3) \times \textrm{Sym}_3, (\textrm{SL}_2(\F_3))^2 \rtimes \textrm{Sym}_2 \}$ cannot occur due to the issue of the characteristic of the base field $k.$
\end{remark}

For the case when the abelian threefold is a power of an ordinary elliptic curve over a finite field, we may use a result of \cite{BGL(1999)} to obtain the following theorem.

\begin{theorem}\label{powordelli}
  There exists a finite field $k$ and an ordinary elliptic curve $E$ over $k$ such that $G$ is the automorphism group of a polarized abelian threefold over $k,$ which is maximal in the isogeny class of $X:=E^3$ if and only if $G$ is one of the following groups (up to isomorphism): \\
(1) $D_6 \times C_2$; \\
(2) $C_2^3 \rtimes \textrm{Sym}_3$;\\
(3) $GL_3(\F_2) \times C_2 $; \\
(4) $[(Q_8 \rtimes C_3) \rtimes C_2] \times C_2 $;\\
(5) $(C_{12} \rtimes C_2 ) \times C_4 $;\\
(6) $C_4^3 \rtimes \textrm{Sym}_3 $; \\
(7) $ [(Q_8 \rtimes C_3) \cdot C_4] \times C_4 $;\\
(8) $[(Q_8 \rtimes C_3) \times C_3] \times C_6$;\\
(9) $C_6^3 \rtimes \textrm{Sym}_3$;\\
(10) $[([(C_3 \times C_6) \rtimes \textrm{Sym}_3 ] \cdot C_2 ) \cdot C_2] \cdot C_3$.
\end{theorem}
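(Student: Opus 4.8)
The plan is to reduce the entire statement to the classification, due to Birkenhake, Gonz\'alez and Lange \cite{BGL(1999)}, of the maximal finite subgroups of $GL_3(K')$ as $K'$ ranges over the imaginary quadratic fields. First I would note that because $E$ is an \emph{ordinary} elliptic curve over the finite field $k$, its endomorphism algebra $\textrm{End}_k^0(E)$ is an imaginary quadratic field $K'$, and all of its endomorphisms are defined over $k$; hence by $\S\ref{end alg av}$ we have $\textrm{End}_k^0(X) = \textrm{End}_k^0(E^3) \cong M_3(K')$. Any automorphism group $G = \textrm{Aut}_k(X', \mathcal{L})$ of a polarized abelian threefold $X'$ in the isogeny class of $X$ is therefore a finite subgroup of $(M_3(K'))^{\times} = GL_3(K')$, and the maximality hypothesis forces $G$ to be a maximal finite subgroup of $GL_3(K')$.

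For the forward implication I would invoke \cite[Theorem 6.1]{BGL(1999)}, which lists all maximal finite subgroups of $GL_3(K')$ together with the corresponding imaginary quadratic fields $K'$. As an internal consistency check, the irreducible members of this list are already recovered by the present paper: $C_2 \wr \textrm{Sym}_3$ (from Lemma~\ref{imag lem5} and Lemma~\ref{irre max Q mat}, valid for every $K'$) and the four groups $GL_3(\F_2) \times C_2$, $C_4 \wr \textrm{Sym}_3$, $C_6 \wr \textrm{Sym}_3$, and the group of item (10) (from Lemma~\ref{imag lem}, Table 3), attached to the fields $\Q(\sqrt{-7})$, $\Q(i)$, and $\Q(\sqrt{-3})$; the remaining reducible possibilities supply items (1), (4), (5), (7), (8). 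Matching each group against the imaginary quadratic field that must contain the requisite roots of unity yields exactly the ten groups in the statement.

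For the converse I would argue group by group, following the template of the proof of Theorem~\ref{prodnonisoellip}. Given $G$ and its associated field $K'$, Deuring's theorem (\cite[Theorem 4.1]{12}) provides a prime $p$ that splits in $K'$, a power $q = p^a$, and an ordinary elliptic curve $E$ over $k = \F_q$ with $\textrm{End}_k^0(E) = K'$. Since $M_3(\mathcal{O}_{K'})$ is a maximal $\Z$-order in $M_3(K')$ by Theorem~\ref{mat max}, and since the explicit data of \cite{BGL(1999)} exhibit a conjugate of $G$ as a maximal finite subgroup of $GL_3(\mathcal{O}_{K'}) = \mathcal{O}^{\times}$, the existence result \cite[Theorem 3.13]{12} yields an abelian threefold $X'$ in the isogeny class of $E^3$ with $\textrm{End}_k(X') = \mathcal{O}$. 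Choosing any ample line bundle $\mathcal{L}$ on $X'$ and averaging it over $G$ via $\mathcal{L}' := \bigotimes_{f \in G} f^* \mathcal{L}$ produces a $G$-invariant polarization, so that $G \leq \textrm{Aut}_k(X', \mathcal{L}')$; the maximality of $G$ in $GL_3(K')$ then forces $G = \textrm{Aut}_k(X', \mathcal{L}')$ and simultaneously shows that $G$ is maximal in the isogeny class of $X$.

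The main obstacle I anticipate lies entirely on the bookkeeping side of the converse rather than in any new geometric input. For each of the ten groups one must verify two things: that the relevant imaginary quadratic field $K'$ is genuinely realized as the CM field of an ordinary elliptic curve over a suitable finite field (which requires a split prime $p$ and is guaranteed by Deuring), and that the chosen $G$ really is a \emph{maximal} finite subgroup of the unit group of the specific maximal order $M_3(\mathcal{O}_{K'})$, not merely of $GL_3(K')$. The latter is where the detailed order-theoretic computations of \cite{BGL(1999)} are indispensable; once they are imported, the argument reduces to the routine case analysis carried out above.
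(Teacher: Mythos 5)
Your proposal is correct and follows essentially the same route as the paper: the forward direction reduces to the Birkenhake--Gonz\'alez--Lange classification of maximal finite subgroups of $GL_3(K')$ for imaginary quadratic $K'$, and the converse uses Waterhouse's existence theorems (\cite[Theorems 4.1 and 3.13]{12}) to produce an ordinary curve with $\textrm{End}_k^0(E)=K'$ and an abelian threefold in the isogeny class whose endomorphism ring is the maximal order $M_3(\mathcal{O}_{K'})$, followed by the polarization-averaging argument and the maximality of $G$ in $GL_3(K')$. The only difference is that the paper carries out the bookkeeping you defer, exhibiting for each of the ten groups an explicit choice of $k$ and $K'$ (e.g.\ $\F_4$ with $\Q(\sqrt{-15})$, $\F_4$ with $\Q(\sqrt{-7})$, $\F_{17}$ with $\Q(\sqrt{-2})$, $\F_5$ with $\Q(\sqrt{-1})$, $\F_7$ with $\Q(\sqrt{-3})$).
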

\begin{proof}
Suppose first that there exists a finite field $k$ and an ordinary elliptic curve $E$ over $k$ such that $G$ is the automorphism group of a polarized abelian threefold over $k,$ which is maximal in the isogeny class of $X:=E^3.$ In particular, we have $\textrm{End}_k^0(X)=M_3(\Q(\sqrt{-d}))$ for some square-free positive integer $d.$ Thus, by assumption, $G$ is a maximal finite subgroup of $GL_3(\Q(\sqrt{-d})),$ and then, by \cite[Theorems 6.1 and 7.1]{BGL(1999)}, we can see that $G$ must be one of the 10 groups in the statement of the theorem. Hence, it suffices to show the converse. We prove the converse by considering them one by one. We provide a detailed proof for (1), and then, we can proceed in a similar fashion with the specified choices for (2)-(10). \\
\indent (1) Take $G=D_6 \times C_2.$ Let $k=\F_{4}.$ Then there is an ordinary elliptic curve $E$ over $k$ such that $\textrm{End}_k^0(E)=\Q(\sqrt{-15})$ by \cite[Theorem 4.1]{12}. Let $X=E^3.$ Then we have $D:=\textrm{End}_k^0(X)=M_3(\Q(\sqrt{-15})).$ Let $\mathcal{O}=M_3 \left(\Z \left[\frac{1+\sqrt{-15}}{2} \right] \right).$ By Theorem~\ref{mat max}, $\mathcal{O}$ is a maximal $\Z$-order in $D,$ and hence, there exists an abelian threefold $X^{\prime}$ over $k$ such that $X^{\prime}$ is $k$-isogenous to $X$ and $\textrm{End}_k(X^{\prime})=\mathcal{O}$ by \cite[Theorem 3.13]{12}. In view of \cite[Theorem 7.1]{BGL(1999)}, $G$ is a maximal finite subgroup of $GL_3(\Q(\sqrt{-15})),$ and hence, $G$ is a maximal finite subgroup of $\mathcal{O}^{\times},$ too. \\
\indent   Now, let $\mathcal{L}$ be an ample line bundle on $X^{\prime}$, and put $\displaystyle \mathcal{L}^{\prime}:=\bigotimes_{f \in G} f^* \mathcal{L}.$ Then $\mathcal{L}^{\prime}$ is also an ample line bundle on $X^{\prime}$ that is preserved under the action of $G$ so that $G \leq \textrm{Aut}_k(X^{\prime},\mathcal{L}^{\prime}).$ Since $\textrm{Aut}_k (X^{\prime},\mathcal{L}^{\prime})$ is a finite subgroup of $\textrm{Aut}_k(X^{\prime})=\mathcal{O}^{\times},$ it follows from the maximality of $G$ that $G=\textrm{Aut}_k(X^{\prime},\mathcal{L}^{\prime}).$ \\
\indent  Now, suppose that $Y$ is an abelian threefold over $k$ which is $k$-isogenous to $X.$ In particular, we have $\textrm{End}_k^0(Y)=M_3(\Q(\sqrt{-15})).$ Suppose that there is a finite group $H$ such that $H=\textrm{Aut}_k(Y, \mathcal{M})$ for a polarization $\mathcal{M}$ on $Y,$ and $G$ is isomorphic to a proper subgroup of $H.$ Then $H$ is a finite subgroup of $GL_3(\Q(\sqrt{-15})),$ and hence, it follows from the maximality of $G$ as a finite subgroup of $GL_3(\Q(\sqrt{-15}))$ that $H=G$, which is a contradiction. Thus, we can conclude that $G$ is maximal in the isogeny class of $X.$ \\
\indent (2) For $G=C_2^3 \rtimes \textrm{Sym}_3,$ we take $k=\F_4$ and an ordinary elliptic curve $E$ over $k$ with $\textrm{End}_k^0(E)=\Q(\sqrt{-15}).$ \\
\indent (3) For $G=GL_3(\F_2) \times C_2$, we take $k=\F_{4}$ and an ordinary elliptic curve $E$ over $k$ with $\textrm{End}_k^0(E)=\Q(\sqrt{-7}).$ \\
\indent (4) For $G=[(Q_8 \rtimes C_3) \rtimes C_2] \times C_2,$ we take $k=\F_{17}$ and an ordinary elliptic curve $E$ over $k$ with $\textrm{End}_k^0(E)=\Q(\sqrt{-2}).$  \\
\indent (5)-(7) For the groups $G=(C_{12} \rtimes C_2 ) \times C_4, C_4^3 \rtimes \textrm{Sym}_3,$ or $ [(Q_8 \rtimes C_3) \cdot C_4] \times C_4 ,$ we take $k=\F_{5}$ and an ordinary elliptic curve $E$ over $k$ with $\textrm{End}_k^0(E)=\Q(\sqrt{-1}).$ \\
\indent (8)-(10) For the groups $G=[(Q_8 \rtimes C_3) \times C_3] \times C_6, C_6^3 \rtimes \textrm{Sym}_3,$ or $ [([(C_3 \times C_6) \rtimes \textrm{Sym}_3 ] \cdot C_2 ) \cdot C_2] \cdot C_3,$ we take $k=\F_{7}$ and an ordinary elliptic curve $E$ over $k$ with $\textrm{End}_k^0(E)=\Q(\sqrt{-3}).$\\
\indent This completes the proof.
\end{proof}

 Finally, in the next theorem, we consider the case when the abelian threefold is a power of a supersingular elliptic curve over a finite field.
\begin{theorem}\label{pow of supell}
  There exists a finite field $k$ and a supersingular elliptic curve $E$ over $k$ (all of whose endomorphisms are defined over $k$) such that $G$ is the automorphism group of a polarized abelian threefold over $k,$ which is maximal in the isogeny class of $X:=E^3$ if and only if $G$ is one of the following groups (up to isomorphism): \\
(1) $SL_2(\F_5) $;\\
(2) $(SL_2(\F_3))^3 \rtimes \textrm{Sym}_3 $; \\
(3) $\pm U_3(3) $; \\
(4) $\pm 3_{+}^{1+2}.GL_2(\F_3) $;\\
(5) $(\textrm{Dic}_{12})^3 \rtimes \textrm{Sym}_3 $;\\
(6) $\pm L_2(7).2 $;\\
(7) $GL_3(\F_2) \times C_2  $;\\
(8) $ C_4^3 \rtimes \textrm{Sym}_3 \cong C_4 \wr \textrm{Sym}_3 $; \\
(9) $C_6^3 \rtimes \textrm{Sym}_3 \cong C_6 \wr \textrm{Sym}_3$; \\
(10) $[([(C_3 \times C_6) \rtimes \textrm{Sym}_3 ] \cdot C_2) \cdot C_2 ]\cdot C_3 \cong (\textit{He}_3 \times \langle -\textbf{1}_3 \rangle ) \rtimes SL_2(\F_3) $; \\
(11) $\textrm{Dic}_{28} $;\\ 
(12) $C_2^3 \rtimes \textrm{Sym}_3 \cong C_2 \wr \textrm{Sym}_3$;\\
(13) $2_{-}^{1+4}.\textrm{Alt}_5 \times \mathfrak{T}^*$ and $(SL_2(\F_3 ) \times \textrm{Sym}_3) \times \mathfrak{T}^*$;\\
(14) $SL_2(\F_9) \times \textrm{Dic}_{12}, C_3 : (SL_2(\F_3).2) \times \textrm{Dic}_{12}$, and $(\mathfrak{T}^* \rtimes C_4) \times \textrm{Dic}_{12}$;\\
(15) $(SL_2(\F_5).2) \times C_6, (SL_2(\F_5):2) \times C_6,$ and $(\mathfrak{T}^* \times C_3) \times C_6$; \\
(16) $GL_2(\F_3) \times C_2, \mathfrak{I}^* \times C_2,$ and $\mathfrak{O}^* \times C_2$;\\
(17) $GL_2(\F_3) \times C_4, (C_{12} \rtimes C_2) \times C_4, (\mathfrak{T}^* \rtimes C_4) \times C_4, \textrm{Dic}_{24} \times C_4,$ and $\mathfrak{O}^* \times C_4$;\\
(18) $GL_2(\F_3) \times C_6, \mathfrak{O}^* \times C_6, \textrm{Dic}_{24}\times C_6,$ and $\mathfrak{I}^* \times C_6$;\\
(19) $(\mathfrak{T}^* \rtimes C_4) \times C_6$; \\
(20) $(\mathfrak{T}^* \times C_3) \times C_4$ and $(\textrm{Dic}_{12} \rtimes C_6) \times C_4$; \\
(21) $\mathfrak{I}^* \times C_4$;\\
(22) $D_6 \times C_2, \mathfrak{T}^* \times C_2,$ and $\textrm{Dic}_{12} \times C_2.$
\end{theorem}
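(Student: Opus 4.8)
The plan is to follow the same two-step template used throughout this section, taking advantage of the fact that the supersingular case collapses the endomorphism algebra to a single well-understood form. By Theorem~\ref{cor TateEnd0}(e), if $X$ is $k$-isogenous to $E^3$ for a supersingular elliptic curve $E$ all of whose endomorphisms are defined over $k,$ then $\textrm{End}_k^0(X) \cong M_3(D_{p,\infty})$ where $p = \textrm{char}(k)$ and $D_{p,\infty}$ is the quaternion algebra over $\Q$ ramified exactly at $p$ and $\infty.$ Consequently, any finite group realized as $\textrm{Aut}_k(X',\mathcal{L})$ for $X'$ in this isogeny class embeds as a finite subgroup of $(M_3(D_{p,\infty}))^{\times} = GL_3(D_{p,\infty}),$ and the maximality hypothesis forces such a $G$ to be a maximal finite subgroup of $GL_3(D_{p,\infty}).$ The complete classification of these has already been assembled in Section~\ref{quat mat rep}: the primitive and imprimitive absolutely irreducible ones are listed in Theorems~\ref{prim aimf of GL3} and~\ref{imprim aimf GL2} (yielding items (1)--(6)); the irreducible-but-not-absolutely-irreducible ones are determined by Lemma~\ref{irred max lem} together with Lemmas~\ref{imag lem},~\ref{irre max real mat},~\ref{irre max Q mat}, and~\ref{div alg lem} (yielding items (7)--(12)); and the reducible maximal ones are governed by Theorems~\ref{type I red new} and~\ref{type II red}, the latter eliminating Type II entirely and the former producing items (13)--(22). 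Collecting these gives exactly the list in the statement, establishing the forward direction.

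For the converse I would proceed group by group, in each case realizing $G$ explicitly. For the prime $p$ attached to $G$ by the classification, I take $k = \F_{p^2}$ and a supersingular elliptic curve $E$ over $k$ with $\pi_E = -p$, whose characteristic polynomial $(t+p)^2$ is a power of a linear polynomial, so that $\textrm{End}_k^0(E) = D_{p,\infty}$ with all endomorphisms defined over $k$ by Theorem~\ref{cor TateEnd0}(e). Setting $X = E^3$ gives $\textrm{End}_k^0(X) = M_3(D_{p,\infty}).$ Using Theorems~\ref{mat max},~\ref{max gen}, and~\ref{mat max 2} I would choose a maximal $\Z$-order $\mathcal{O}$ in $M_3(D_{p,\infty})$ with $G \leq \mathcal{O}^{\times}$ (for the reducible items (13)--(22), a block-diagonal maximal order compatible with the decomposition $GL_2(D_{p,\infty}) \times D_{p,\infty}^{\times} \hookrightarrow GL_3(D_{p,\infty})$), then invoke the existence result \cite[Theorem 3.13]{12} to obtain an abelian threefold $X'$ that is $k$-isogenous to $X$ with $\textrm{End}_k(X') = \mathcal{O}.$ The standard averaging trick, replacing an ample $\mathcal{L}$ by $\mathcal{L}' = \bigotimes_{f \in G} f^*\mathcal{L},$ produces a $G$-invariant polarization, so $G \leq \textrm{Aut}_k(X',\mathcal{L}'),$ and since this automorphism group is a finite subgroup of $\mathcal{O}^{\times},$ the maximality of $G$ forces equality. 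Maximality within the isogeny class then follows because any strictly larger $H = \textrm{Aut}_k(Y,\mathcal{M})$ would again be a finite subgroup of $GL_3(D_{p,\infty})$ properly containing the maximal $G,$ a contradiction.

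The correct prime for each item is read off from the preceding results: $p=2$ for the $SL_2(\F_3)$-type data (items (1), (2), and the $\mathfrak{T}^*$-factors in (13)), $p=3$ for the $U_3(3),$ $3_{+}^{1+2}.GL_2(\F_3),$ and $\textrm{Dic}_{12}$ data (items (3)--(5), (14)), $p=7$ for $\pm L_2(7).2$ and $\textrm{Dic}_{28}$ (items (6), (11)), and various larger inert primes as in Example~\ref{GL(2,3)} and the tables of Theorem~\ref{type I red new} for the remaining reducible products. The main obstacle is the bookkeeping for the reducible families (13)--(22): one must verify via Goursat's lemma applied to the block-diagonal subgroup structure that each listed $G_1 \times G_2$ is genuinely maximal, rather than sitting inside a wreath product or a larger imprimitive group, and confirm that the characteristic constraints on the two factors are simultaneously satisfiable by a single $p$. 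These are precisely the subtleties already resolved in the cited results of Section~\ref{quat mat rep} and in \cite{4}, so no genuinely new computation is required beyond invoking those results and performing the Honda--Tate construction of $E.$
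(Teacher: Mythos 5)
Your forward direction is exactly the paper's argument, and your converse follows the paper's template (Honda--Tate/Waterhouse construction of a supersingular $E$ over $\F_{p^2}$ with $\textrm{End}_k^0(E)=D_{p,\infty}$, a maximal order containing $G$, \cite[Theorem 3.13]{12}, the averaging trick $\mathcal{L}'=\bigotimes_{f\in G}f^*\mathcal{L}$, and the maximality argument), with the same primes read off from Section~\ref{quat mat rep}. Your construction of $E$ via the Weil number $\pi=-p$ is a harmless variant of the paper's citation of \cite[Theorem 4.1]{12}.

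There is, however, one genuine gap: the step ``choose a maximal $\Z$-order $\mathcal{O}$ in $M_3(D_{p,\infty})$ with $G\leq\mathcal{O}^{\times}$'' is not delivered by Theorems~\ref{mat max},~\ref{max gen}, and~\ref{mat max 2}. Those results describe what maximal orders look like (e.g.\ $\textrm{Hom}_{\Delta}(N,N)$ for a full lattice $N$), but they do not produce one whose unit group contains a prescribed finite group; for that you need a $G$-invariant full $\mathcal{O}$-lattice $L$ in $V=D_{p,\infty}^3$ (obtained by averaging, $\sum_{g\in G}Lg$), after which $\textrm{Hom}_{\mathcal{O}}(L,L)$ is a maximal order containing $G$ by Theorem~\ref{mat max 2}. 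This is precisely \cite[Definition and Lemma 2.6]{8}, which the paper invokes at this point and which is absent from your proposal. Moreover, your fallback for the reducible cases (13)--(22) --- a ``block-diagonal maximal order compatible with $GL_2(D_{p,\infty})\times D_{p,\infty}^{\times}\hookrightarrow GL_3(D_{p,\infty})$'' --- does not exist as stated: block-diagonal matrices span only the proper subalgebra $M_2(D_{p,\infty})\oplus D_{p,\infty}$ of $M_3(D_{p,\infty})$, so such a subring is not a full $\Z$-lattice in $M_3(D_{p,\infty})$ and hence not an order there at all; in particular it could never equal $\textrm{End}_k(X')$ for $X'$ isogenous to $E^3$, and \cite[Theorem 3.13]{12} would not apply. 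The fix is the same in all cases: embed $G$ (block-diagonally, in the reducible cases) into $GL_3(D_{p,\infty})$ and run the $G$-invariant-lattice construction above, exactly as the paper does uniformly for items (1)--(22).
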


\begin{proof}
Suppose first that there exists a finite field $k=\F_q$ ($q=p^a$) and a supersingular elliptic curve $E$ over $k$ (all of whose endomorphisms are defined over $k$) such that $G$ is the automorphism group of a polarized abelian threefold over $k,$ which is maximal in the isogeny class of $X:=E^3.$ In particular, we have $\textrm{End}_k^0(X)=M_3(D_{p,\infty}).$ Thus, by assumption, $G$ is a maximal finite subgroup of $GL_3(D_{p,\infty})$ (up to isomorphism). If $G$ is absolutely irreducible, then $G$ must be one of the 6 groups $(1)\sim(6)$ in the above list by Theorems~\ref{prim aimf of GL3} and~\ref{imprim aimf GL2}. If $G$ is irreducible, but not absolutely irreducible, then, as we have seen in Section~\ref{quat mat rep}, $G$ must be one of the 6 groups ($7)\sim(12$) in the above list. If $G$ is reducible, then by Theorems~\ref{type I red new} and~\ref{type II red}, $G$ is one of the groups in cases $(13) \sim (22)$. Hence, it suffices to show the converse. We prove the converse by considering them one by one. \\

Groups (1)-(6): We provide a detailed proof for (1), and then, we can proceed in a similar fashion with the specified choices for (2)-(6). \\
\indent (1) Let $k=\F_{4}.$ Then there is a supersingular elliptic curve $E$ over $k$ such that $\textrm{End}_k^0(E)=D_{2,\infty}$ by \cite[Theorem 4.1]{12}. Let $X=E^3.$ Then we have $D:=\textrm{End}_k^0(X)=M_3(D_{2,\infty}).$ Let $V=D_{2,\infty}^3, \mathcal{O} = \Z \left[i, j, ij, \frac{1+i+j+ij}{2} \right],$ and $G=SL_2(\F_5).$ By Theorem~\ref{prim aimf of GL3}, $G$ is a primitive absolutely irreducible maximal finite subgroup of $GL_3 (D_{2,\infty}).$ Recall also that $\mathcal{O}$ is a maximal $\Z$-order in $D_{2,\infty}.$ By \cite[Definition and Lemma 2.6]{8}, there is a $G$-invariant $\mathcal{O}$-lattice $L$ in $V,$ and then, by Theorem~\ref{mat max 2}, it follows that $\mathcal{O}^{\prime}:=\textrm{Hom}_{\mathcal{O}}(L,L)$ is a maximal $\Z$-order in $\textrm{Hom}_{D_{2,\infty}}(V,V)=M_3(D_{2,\infty}).$ By the choice of $L,$ $G$ can be regarded as a subgroup of $(\mathcal{O}^{\prime})^{\times}.$ Also, by \cite[Theorem 3.13]{12}, there exists an abelian threefold $X^{\prime}$ over $k$ such that $X^{\prime}$ is $k$-isogenous to $X$ and $\textrm{End}_k(X^{\prime})=\mathcal{O}^{\prime}$. Then it follows that $G \leq \textrm{Aut}_k(X^{\prime}).$ \\
\indent Now, let $\mathcal{L}$ be an ample line bundle on $X^{\prime}$, and put $\displaystyle \mathcal{L}^{\prime}:=\bigotimes_{f \in G} f^* \mathcal{L}.$ Then $\mathcal{L}^{\prime}$ is also an ample line bundle on $X^{\prime}$ that is preserved under the action of $G$ so that $G \leq \textrm{Aut}_k(X^{\prime},\mathcal{L}^{\prime}).$ Since $\textrm{Aut}_k (X^{\prime},\mathcal{L}^{\prime})$ is a finite subgroup of $\textrm{Aut}_k(X^{\prime})=(\mathcal{O}^{\prime})^{\times},$ it follows from the maximality of $G$ that $G=\textrm{Aut}_k(X^{\prime},\mathcal{L}^{\prime}).$ Furthermore, by a similar argument, we can see that $G$ is maximal in the isogeny class of $X.$  \\
\indent (2) For the group $G=(SL_2(\F_3))^3 \rtimes \textrm{Sym}_3$, we can proceed as above with the choices $k=\F_4$ and a supersingular elliptic curve $E$ over $k$ with $\textrm{End}_k^0(E)=D_{2,\infty}.$ Here, we recall that $G$ is an imprimitive absolutely irreducible maximal finite subgroup of $GL_3(D_{2,\infty}).$ Similarly: \\
\indent (3)-(5) For the groups $G=\pm U_3(3)$ or $G=\pm 3_{+}^{1+2}.GL_2(\F_3)$ or $G=(\textrm{Dic}_{12})^3 \rtimes \textrm{Sym}_3,$ we take $k=\F_{9}$, a supersingular elliptic curve $E$ over $k$ with $\textrm{End}_k^0(E)=D_{3,\infty}$, and a (unique) maximal $\Z$-order $\mathcal{O}$ in $D_{3,\infty}$. Here, we recall that the first two groups (resp.\ the last group) are primitive (resp.\ imprimitive) absolutely irreducible maximal finite subgroups of $GL_3(D_{3,\infty}).$  \\
\indent (6) For the group $G=\pm L_2(7).2,$ we take $k=\F_{49}$, a supersingular elliptic curve $E$ over $k$ with $\textrm{End}_k^0(E)=D_{7,\infty}$, and a (unique) maximal $\Z$-order $\mathcal{O}$ in $D_{7,\infty}$. Here, we recall that $G$ is a primitive absolutely irreducible maximal finite subgroup of $GL_3(D_{7,\infty}).$  \\

Groups (7)-(12): We provide a detailed proof for (7), and then, we can proceed in a similar fashion with the specified choices for (8)-(12). \\
\indent (7) Let $k=\F_{169}.$ Then there is a supersingular elliptic curve $E$ over $k$ such that $\textrm{End}_k^0(E)=D_{13,\infty}$ by \cite[Theorem 4.1]{12}. Let $X=E^3.$ Then we have $D:=\textrm{End}_k^0(X)=M_3(D_{13,\infty}).$ Let $V=D_{13,\infty}^3, \mathcal{O}$ a maximal $\Z$-order in $D_{13,\infty}$, and $G=GL_3(\F_2) \times C_2.$ By Lemma~\ref{imag lem} and Example~\ref{GL(2,3)}, $G$ is an irreducible maximal finite subgroup of $GL_3 (D_{13,\infty}).$ By \cite[Definition and Lemma 2.6]{8}, there is a $G$-invariant $\mathcal{O}$-lattice $L$ in $V,$ and then, by Theorem~\ref{mat max 2}, it follows that $\mathcal{O}^{\prime}:=\textrm{Hom}_{\mathcal{O}}(L,L)$ is a maximal $\Z$-order in $\textrm{Hom}_{D_{13,\infty}}(V,V)=M_3(D_{13,\infty}).$ Then by a similar argument as in the proof of (1), we can see that there exists an abelian threefold $X^{\prime}$ over $k$ (being $k$-isogenous to $X$) with a polarization $\mathcal{L}^{\prime}$ such that $G=\textrm{Aut}_k(X^{\prime}, \mathcal{L}^{\prime})$ and $G$ is maximal in the isogeny class of $X.$ \\
\indent (8)-(10) For the groups $G=C_4^3 \rtimes \textrm{Sym}_3$ or $G=C_6^3 \rtimes \textrm{Sym}_3$ or $G=[([(C_3 \times C_6) \rtimes \textrm{Sym}_3 ] \cdot C_2) \cdot C_2 ]\cdot C_3,$ we take $k=\F_{121}$, a supersingular elliptic curve $E$ over $k$ with $\textrm{End}_{k}^0(E)=D_{11,\infty}$, and a maximal $\Z$-order $\mathcal{O}$ in $D_{11,\infty}$. Here, we recall that those groups are irreducible maximal finite subgroups of $GL_3(D_{11,\infty})$ (see Example~\ref{GL(2,3)}). \\
\indent (11) For the group $G=\textrm{Dic}_{28}$, we take $k= \F_{49}$, a supersingular elliptic curve $E$ over $k$ with $\textrm{End}_k^0(E)=D_{7,\infty}$, and a maximal $\Z$-order $\mathcal{O}$ in $D_{7,\infty}.$ Here, we recall that $G$ is an irreducible maximal finite subgroup of $GL_3(D_{7,\infty})$ (see Lemma~\ref{div alg lem}).  \\
\indent (12) For the group $G=C_2 \wr \textrm{Sym}_3,$ we take $k=\F_{11881}$, a supersingular elliptic curve $E$ over $k$ with $\textrm{End}_k^0(E)=D_{109,\infty}$, and a maximal $\Z$-order $\mathcal{O}$ in $D_{109,\infty}.$ Here, we recall that $G$ is an irreducible maximal finite subgroup of $GL_3(D_{109,\infty})$  (see Lemma~\ref{irre max Q mat}). \\

Groups in (13)-(22): We provide a detailed proof for the groups in case (13), and then, we can proceed in a similar fashion with the specified choices for the groups in cases (14)-(22). \\
\indent (13) Let $k=\F_{4}.$ Then there is a supersingular elliptic curve $E$ over $k$ such that $\textrm{End}_k^0(E)=D_{2,\infty}$ by \cite[Theorem 4.1]{12}. Let $X=E^3.$ Then we have $D:=\textrm{End}_k^0(X)=M_3(D_{2,\infty}).$ Let $V=D_{2,\infty}^3, \mathcal{O} = \Z \left[i, j, ij, \frac{1+i+j+ij}{2} \right],$ and $G=2_{-}^{1+4}.\textrm{Alt}_5 \times \mathfrak{T}^*$ or $G=(SL_2(\F_3) \times \textrm{Sym}_3) \times \mathfrak{T}^*.$ By Theorem~\ref{type I red new}, $G$ is a reducible maximal finite subgroup of $GL_3 (D_{2,\infty}).$ Recall also that $\mathcal{O}$ is a maximal $\Z$-order in $D_{2,\infty}.$ By \cite[Definition and Lemma 2.6]{8}, there is a $G$-invariant $\mathcal{O}$-lattice $L$ in $V,$ and then, by Theorem~\ref{mat max 2}, it follows that $\mathcal{O}^{\prime}:=\textrm{Hom}_{\mathcal{O}}(L,L)$ is a maximal $\Z$-order in $\textrm{Hom}_{D_{2,\infty}}(V,V)=M_3(D_{2,\infty}).$ Then by a similar argument as in the proof of (1), we can see that there exists an abelian threefold $X^{\prime}$ over $k$ (being $k$-isogenous to $X$) with a polarization $\mathcal{L}^{\prime}$ such that $G=\textrm{Aut}_k(X^{\prime}, \mathcal{L}^{\prime})$ and $G$ is maximal in the isogeny class of $X.$ \\
\indent (14) For the groups in (14), we take $k=\F_9$, a supersingular elliptic curve $E$ over $k$ with $\textrm{End}_k^0(E)=D_{3,\infty},$ and a maximal $\Z$-order $\mathcal{O}$ in $D_{3,\infty}.$ Here, we recall that those groups are reducible maximal finite subgroups of $GL_3(D_{3,\infty}).$ \\
\indent (15) For the groups in (15), we take $k=\F_{25}$, a supersingular elliptic curve $E$ over $k$ with $\textrm{End}_k^0(E)=D_{5,\infty},$ and a maximal $\Z$-order $\mathcal{O}$ in $D_{5,\infty}.$ Here, we recall that those groups are reducible maximal finite subgroups of $GL_3(D_{5,\infty}).$ \\
\indent (16) For the groups in (16), we take $k=\F_{1369}$, a supersingular elliptic curve $E$ over $k$ with $\textrm{End}_k^0(E)=D_{37,\infty},$ and a maximal $\Z$-order $\mathcal{O}$ in $D_{37,\infty}.$ Here, we recall that those groups are reducible maximal finite subgroups of $GL_3(D_{37,\infty}).$ \\
\indent (17) For the groups $G=GL_2(\F_3) \times C_4,$ $G=(C_{12} \rtimes C_2)\times C_4$, $G=(\mathfrak{T}^* \rtimes C_4) \times C_4$, or $G=\textrm{Dic}_{24} \times C_4$, we take $k=\F_{961}$, a supersingular elliptic curve $E$ over $k$ with $\textrm{End}_k^0(E)=D_{31,\infty},$ and a maximal $\Z$-order $\mathcal{O}$ in $D_{31,\infty}.$ Here, we recall that those groups are reducible maximal finite subgroups of $GL_3(D_{31,\infty}).$ \\
\indent For the group $G=\mathcal{O}^* \times C_4,$ we take $k=\F_{361}$, a supersingular elliptic curve $E$ over $k$ with $\textrm{End}_k^0(E)=D_{19,\infty},$ and a maximal $\Z$-order $\mathcal{O}$ in $D_{19,\infty}.$ Here, we recall that $G$ is a reducible maximal finite subgroup of $GL_3(D_{19,\infty}).$ \\
\indent (18) For the groups in (18), we take $k=\F_{2809}$, a supersingular elliptic curve $E$ over $k$ with $\textrm{End}_k^0(E)=D_{53,\infty},$ and a maximal $\Z$-order $\mathcal{O}$ in $D_{53,\infty}.$ Here, we recall that those groups are reducible maximal finite subgroups of $GL_3(D_{53,\infty}).$ \\
\indent (19)-(20) For the groups in (19) and (20), we take $k=\F_{5041}$, a supersingular elliptic curve $E$ over $k$ with $\textrm{End}_k^0(E)=D_{71,\infty},$ and a maximal $\Z$-order $\mathcal{O}$ in $D_{71,\infty}.$ Here, we recall that those groups are reducible maximal finite subgroups of $GL_3(D_{71,\infty}).$ \\
\indent (21) For the group $G=\mathfrak{I}^* \times C_4$, we take $k=\F_{1849}$, a supersingular elliptic curve $E$ over $k$ with $\textrm{End}_k^0(E)=D_{43,\infty},$ and a maximal $\Z$-order $\mathcal{O}$ in $D_{43,\infty}.$ Here, we recall that $G$ is a reducible maximal finite subgroup of $GL_3(D_{43,\infty}).$ \\
\indent (22) For the groups in (22), we take $k=\F_{58081}$, a supersingular elliptic curve $E$ over $k$ with $\textrm{End}_k^0(E)=D_{241,\infty},$ and a maximal $\Z$-order $\mathcal{O}$ in $D_{241,\infty}.$ Here, we recall that those groups are reducible maximal finite subgroups of $GL_3(D_{241,\infty}).$ \\
\indent This completes the proof.
\end{proof}

\section*{Acknowledgement}
WonTae Hwang was supported by a KIAS individual Grant (MG069901) at Korea Institute for Advanced Study. Bo-Hae Im was supported by Basic Science Research Program through the National Research Foundation of Korea(NRF) funded by the Ministry of  Science \& ICT(NRF-2020R1A2B5B01001835). Bo-Hae Im would also like to thank KIAS (Korea Institute for Advanced Study) for its hospitality.


\end{document}